 \numberwithin{equation}{section}
\newtheorem{thmx}[equation]{Theorem}
\newtheorem{corx}[equation]{Corollary}
\newtheorem{lemx}[equation]{Lemma}
\numberwithin{equation}{section}
\newtheorem{thm}[equation]{Theorem}
\newtheorem{lem}[equation]{Lemma}
\newtheorem{prop}[equation]{Proposition}
\newtheorem{cor}[equation]{Corollary}
\theoremstyle{Definition}
\newtheorem{defn}[equation]{Definition}
\theoremstyle{remark}
\newtheorem{rmk}[equation]{Remark}
\newtheorem{ex}[equation]{Example}
\renewcommand{\emptyset}{\font\cmsy = cmsy10 at 10pt
	\hbox{\cmsy \char 59}
}
\newcommand\N{\mathbb{N}}
\newcommand{\ccc}{\mathbf{c}} 
\newcommand{\ddd}{\mathbf{d}}
\DeclareMathAlphabet\EuR{U}{eur}{m}{n}
\SetMathAlphabet\EuR{bold}{U}{eur}{b}{n}
\mathchardef\mathdash="2D
\newcommand\GS{\mathsf {GS}}
\newcommand\CSM{\mathsf {MO}}
\newcommand{\OOO}{\mathbb {O}}
\newcommand\Klgr{\Xi}%{\widetilde{\mathsf{\Gr}_*}}
\newcommand{\pr}[1]{\mathsf{psh}(#1)}
\newcommand{\sh}[1]{\mathsf{sh}(#1)}%%not until topology
\newcommand{\TT}{\mathbb T}
\newcommand{\DD}{\mathbb D}
\newcommand{\TTp}{\mathbb T_*}
\newcommand\Set{\mathsf{Set}}
\newcommand\Cat{\mathsf {Cat}} %%FIRST USED IN WEBER SECTION
\newcommand\fin{\mathsf{Set_f}}
\newcommand\sSet{\mathsf{sSet}}%%FIRST USED IN WEBER SECTION
\newcommand{\mm}[1][m]{\mathbf{#1}}
\newcommand{\nul}{\mathbf{0}}
\newcommand{\one}{\mathbf{1}}
\newcommand{\two}{\mathbf{2}}
\newcommand{\nn}{\mathbf{n}}
\newcommand{\CCat}{ \mathsf{C}}
\newcommand{\DCat}[1][D]{ \mathsf{#1}}
\newcommand{\ElP}[2]{\ensuremath{\mathsf{el}_{#2}(#1)}}%{\ensuremath{\mathsf{El}_{#1}{(#2)}}}
\newcommand{\ov }{/}
\newcommand{\defeq}{\overset{\textup{def}}{=}}
\newcommand{\fiso}{{\mathbf{B}}}%{{\mathsf{Bij}}}%{\mathbb {P}}%NEED BETTER NOTATION HERE...
\newcommand{\fisinv}{{\fiso}^{\scriptstyle{\bm \S}}}%{{\fiso}^{\circlearrowleft}}%{\mathbb{P}^\circlearrowleft}
\newcommand{\fisinvdi}{{(\fiso \times \fiso^{\mathrm{op}})^{ \scriptstyle{\bm \downarrow}}}}
\newcommand{\Bifiso}{{\fiso^{\scriptstyle{\bm \downarrow}}}}%{{\fiso}^{(\downarrow) }}
\newcommand{\Comm}{{K}}
\newcommand{\CComm}[2][(\CCC,\omega)]{{{\Comm}^{#1}_{#2}}}
\newcommand\CCC{\mathfrak{C}}
\newcommand\DDD[1][D]{\mathfrak{#1}}
\newcommand{\CGS}[1][(\CCC,\omega)]{{\GS^{#1}}}
\newcommand\Di{\mathfrak{Di}}% \{\In, \Out\}
\newcommand{\Disig}{\sigma_{\Di}}% involution on \Di
\newcommand{\Dipal}{(\Di, \sigma_{\Di})}%
\newcommand{\In}{{\mathrm{ in }}}
\newcommand{\Out}{{\mathrm{ out }}}
\newcommand{\Grbig}{\mathsf{Gr}\GrShape}
\newcommand{\Gret}{\mathsf{Gr_{et}}}
\newcommand\Gr{\mathsf{C}\Gret}%%Changes for $\mathsf{etGr}$ 16/07
\newcommand{\Griso}{\mathdash\mathsf{CGr_{iso}}}
\newcommand{\elG}[1][\mathcal{G}]{\ensuremath{\mathsf{el}{(#1)}}}
\newcommand{\ElS}[1][S]{\ensuremath{\mathsf{el}{(#1)}}}%{\ensuremath{\fisinv \ov {#1}}}
\newcommand{\ovP}[2]{\ensuremath{#2}\ov{#1}}
\newcommand{\Grets}[1][S]{\ensuremath{\Gret \ov {#1}}}
\newcommand{\Grs}[1][S]{\ensuremath{\Gr \ov {#1}}}
\tikzset{->-/.style={decoration={
			markings,
			mark=at position #1 with {\arrow{>}}},postaction={decorate}}}
\tikzset{-<-/.style={decoration={
			markings,
			mark=at position #1 with {\arrow{<}}},postaction={decorate}}}
	\newcommand\pto{\rightharpoonup}
	\newcommand\nuCSM{\mathsf {MO}^-}
		\newcommand{\comCinv}{\mathsf{Comp_{inv}}}
	\newcommand\CO{\mathsf {CO}}
	\newcommand\E{\mathcal{E}}
\newcommand{\MM}{\mathbb {M}}
\newcommand{\MMop}{\MM_{\mathsf{Op}}}%^\mathrm{OP}}
\newcommand{\Mop}{ M_{\mathsf{Op}}}
\newcommand{\Fop}{\text{free}}%{ {F}_{\mathsf{Cat}}}
\newcommand{\Uop}{\text{forget}}%{ {U}_{\mathsf{Op}}}
\newcommand{\Fcat}{\text{free}}%{ {F}_{\mathsf{Cat}}}
\newcommand{\Ucat}{\text{forget}}%{ {U}_{\mathsf{Cat}}}
\newcommand{\boffcat}[1][\MM, \DCat]{ \Theta_{#1}}
\newcommand{\Op}{ \mathsf{Op}}
\newcommand{\diag}{\mathsf{\mathtt D}}
\newcommand{\GrShape}{\mathsf{psh_f}(\mathtt D)}
\newcommand{\Tr}[1][T]{\mathfrak{#1}}
\newcommand{\Grp}{\ensuremath{\mathsf{CGr}_*}}
\newcommand{\GSp}{\ensuremath{\mathsf{\GS}_*}}
\newcommand{\fisinvp}{\ensuremath{\fisinv_*}}%%First used in WEBER
\newcommand\G{\mathcal{G}}
\renewcommand\H{\mathcal{H}}
\newcommand\W{\mathcal W}
\newcommand\I{\mathcal{I}}
\newcommand\C{\mathcal{C}}
\newcommand{\Lk}[1][k]{\ensuremath{\mathcal L^{#1}}}
\newcommand{\Wl}[1][m]{\ensuremath{\mathcal W^{#1}}}
\newcommand{\Wm}[1][m]{\ensuremath{\mathcal W^{#1}}}
\newcommand{\tauG}[1][\shortmid]{\ensuremath{\tau_{#1}}}
\newcommand\CX[1][X]{{\mathcal C_{#1}}}
\newcommand{\Fgraph}{ \xymatrix{
		E \ar@(lu,ld)[]_\tau&& H \ar[ll]_s \ar[rr]^t&& V}}
\newcommand{\Fgraphdash}{\xymatrix{
		E \ar@(lu,ld)[]_{\tau } &&H \ar[ll]_{s } \ar[rr]^{t }& &V }}
\newcommand{\Fgraphvar}[6]{\xymatrix{
		*[r] {#1}\ar@(ul,dl)[]_{#6} && {#2} \ar[ll]_-{#4} \ar[rr]^-{#5}&& {#3}}}
\newcommand\Cv[1][v]{{\mathcal C_{\mathbf{#1}}}}
\newcommand{\EI}{\ensuremath{{E_\bullet}}}
\newcommand{\vH}[1][v]{\sfrac{H}{#1}}
\newcommand{\vE}[1][v]{\sfrac{E}{#1}}
\newcommand{\nV}[1][n]{V_{#1}}
\newcommand{\nH}[1][n]{H_{#1}}
\newcommand{\nE}[1][n]{E_{#1}}
\newcommand\shorte[1][\tilde e]{{\shortmid_{#1}}}
\newcommand{\esG}[1][\mathcal{G}]{\ensuremath{\mathsf{es}{(#1)}}}
\newcommand{\esv}[1][v]{\ensuremath{\iota_{{#1}}}}
\newcommand{\ese}[1][\tilde e]{\ensuremath{\iota_{{#1}}}}%{\ensuremath{\mathbf{\tilde{#1}}}}
\newcommand{\esh}[1][h]{\ensuremath{\delta_{{#1}}}} %WARN: Is $\kappa$ a good choice?
\newcommand{\yet}{\Upsilon}
\newcommand{\yetp}{{\yet_*}}
\newcommand\X{\mathcal X}
\newcommand\Gg{\mathbf \Gamma}
\newcommand\Gdg{\mathbf \Lambda}
\newcommand\Gid[1][\G]{\mathbf{I}^{#1}}
\newcommand\coGg[1][\G]{{\Gg}({#1})}%{\overline{\Gg}}
\newcommand\GSg[1][S]{\mathbf \Gamma_{#1}}
\newcommand{\GrSG}[1][\G]{\ensuremath{(\ovP{S}{\Gr})^{(#1)}}}
\newcommand{\TJK}{T^{{\scriptscriptstyle{\mathrm{ds}}}}}
\newcommand{\muJK}{\mu^{\scriptscriptstyle
		{\mathrm{ds}}}}
\newcommand{\etaJK}{\eta^{\scriptscriptstyle{\mathrm{ds}}}}
\newcommand{\XGrJK}[1][X]{{{#1}\mathdash\mathsf{CGr}_{\mathsf{iso}}^{\scriptscriptstyle{\mathrm{ds}}}}}
\newcommand{\elpG}[1][\mathcal{G}]{\ensuremath{\mathsf{el}_*{(#1)}}}
\newcommand{\Gnov}[1][W]{\ensuremath{\mathcal G_{\setminus #1}}}
\newcommand{\Enov}[1][W]{\ensuremath{E_{\setminus #1}}}
\newcommand{\Hnov}[1][W]{\ensuremath{H_{\setminus #1}}}
\newcommand{\Vnov}[1][W]{\ensuremath{V_{\setminus #1}}}
\newcommand{\snov}[1][W]{\ensuremath{s_{\setminus #1}}}
\newcommand{\tnov}[1][W]{\ensuremath{t_{\setminus #1}}}
\newcommand{\taunov}[1][W]{\ensuremath{\tau_{\setminus #1}}}
\newcommand{\delW}[1][W]{\mathsf{del}_{\setminus #1}}
\newcommand\Grsimp{\mathsf{CGr}_{\mathsf{sim}}}
\newcommand\XGrsimp{\mathdash\mathsf{CGr}_{\mathsf{sim}}}
\newcommand{\mup}{\ensuremath{\mu^{\TTp}}}
\newcommand{\etap}{\ensuremath{\eta^{\TTp}}}
\newcommand{\Tp}{\ensuremath{T_*}}
\newcommand{\factcat}[1][\beta]{\ensuremath{\mathsf{fact}_*(#1)}}
\newcommand{\factcatup}[1][\beta]{\ensuremath{\mathsf{fact}(#1)}}
\newcommand{\prs}[1]{{\mathsf{psh}_\sSet}(#1)}
\title{Graphical combinatorics and a distributive law for modular operads}
\author{Sophie Raynor}
\thanks{The author acknowledges the support of Australian
	Research Council grants DP160101519 and FT160100393.}
\begin{document}
	%%%% Create the title page and standard declaration.

	\begin{abstract}
		This work presents a detailed analysis of the combinatorics of modular operads. These are operad-like structures that admit a contraction operation as well as an operadic multiplication. Their combinatorics are governed by graphs that admit cycles, and are known for their complexity. In 2011, Joyal and Kock introduced a powerful graphical formalism for modular operads. This paper extends that work. A monad for modular operads is constructed and a corresponding nerve theorem is proved, using Weber's abstract nerve theory, in the terms originally stated by Joyal and Kock. This is achieved using a distributive law that sheds new light on the combinatorics of modular operads.
		%, particularly when the multiplication is unital. T
	%	In 2011, Joyal and Kock introduced a powerful graphical formalism for modular operads,
		
%	{Modular operads} are operad-like structures that admit a contraction operation as well as an operadic multiplication. %A general notion of modular operads introduced in 2011 by Joyal and Kock incorporates a number of operadic structures.
%	 The combinatorics of these structures are governed by graphs that admit cycles, and are known for their complexity. In 2011, Joyal and Kock introduced a powerful graphical formalism for modular operads, and described an endofunctor on a certain presheaf category, whose algebras are modular operads with a multiplicative unit. However, this functor does not extend to a monad, and hence does not provide a suitable notion of nerve. This paper extends that work. A monad for modular operads is constructed via a distributive law. This sheds light on the combinatorics of modular operads, and enables a proof, using Weber's abstract nerve theory, of a corresponding nerve theorem in the terms originally stated by Joyal and Kock. 
%		
		
	\end{abstract}

\maketitle
	
	%\tableofcontents
	
	%\input{Narrative}

	%\input{currentIntroCSMdist}
%	\input{notationtest}
	\section*{Introduction}

Modular operads, introduced in \cite{GK98} to study moduli spaces of Riemann surfaces, are a
``{`higher genus' analogue of operads \dots in which graphs replace trees in the definition.}'' {\cite[Abstract]{GK98}.

	Roughly speaking, modular operads are $\N$-graded objects $P = \{P(n)\}_{n \in \N}$ that, alongside an operadic {multiplication} (or {composition}) $\circ\colon P(n) \times P(m) \to P({m+n -2})$ for $m,n \geq 1$, admit a {contraction} operation $\zeta \colon P(n) \to P(n-2)$, $n \geq 2$. For example, as in \cref{fig. gluing}, we may multiply two oriented surfaces by gluing them along chosen boundary components, or contract a single surface by gluing together two distinct boundary components.
	\begin{figure}[htb!]
		\includegraphics[width=.9\textwidth]{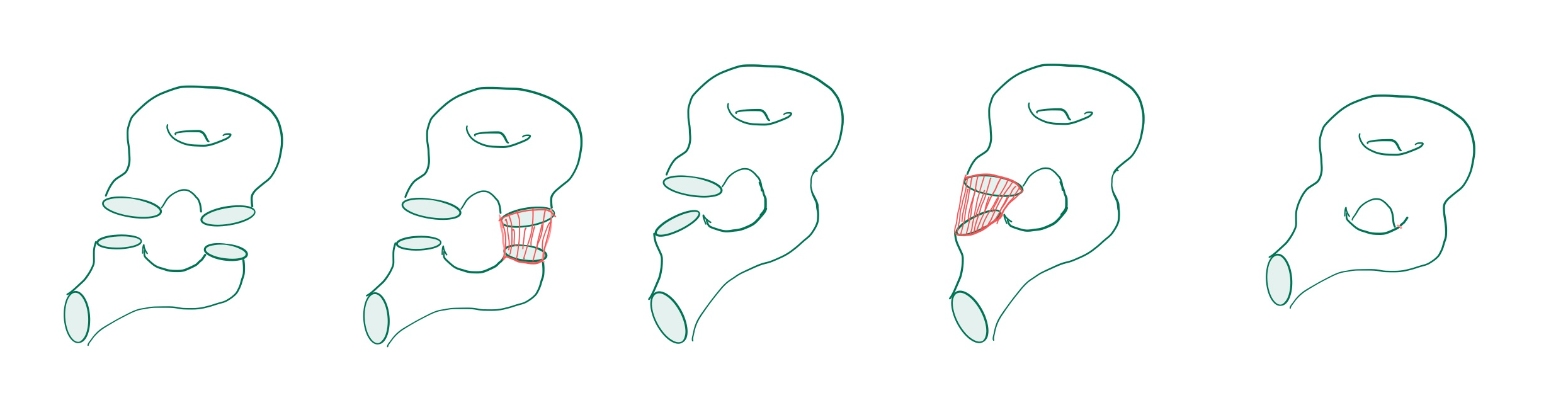}
		\caption{Gluing (multiplication) and self-gluing (contraction) of surfaces along boundary components. Moduli of geometric structures -- such as Riemann surfaces -- provide many examples of modular operads.}\label{fig. gluing}
	\end{figure}

	%\warn{wheeled properads in fixed point theory, compact closed categories and roles of duality..}
 % In my PhD thesis \cite{Ray18}, I used `CSM' to refer to modular operads with multiplicative units, and reserved `modular operad' for those without.} 
	%As will be made precise in \cref{sec: definitions}, Examples \ref{ex:direction} and \ref{ex: gk mod op}, modular operads in the original sense of \cite{GK98} describe undirected systems. 	

This work considers a notion of modular operads due to Joyal and Kock \cite{JK11},%, in which every colour has an involutive dual.
	\footnote{Joyal and Kock used the term {`compact symmetric multicategories (CSMs)'} in \cite{JK11} to refer to what are here called `modular operads'. Indeed, I adopted their terminology in \cite{Ray18} and in a previous version of this paper.} 
	that incorporates a broad compass of related structures, including modular operads in the original sense of \cite{GK98} (see \cref{ex: gk mod op}) and their coloured counterparts \cite{Gia13}, but also %. Under this general definition, 
	wheeled properads \cite{HRY15, JY15} (see \cref{ex: wheeled prop}). %-- a version of properads \cite{Val07} equipped with a contraction operation -- 
%	are `directed' modular operads (see \cref{sec: definitions}, Examples \ref{ex:direction} and \ref{ex: gk mod op} where this is made precise). % as well as directed modular operads, or `wheeled properads' \cite{HRY15, JY15} (see also \cref{ex: wheeled prop}). 
	More generally, compact closed categories \cite{KL80} %-- monoidal categories in which every object has a symmetric dual-- 
	provide examples of modular operads \cite{Ray21b} (see \cref{ex: compact closed}). These are closely related to circuit algebras that are used in the study of finite-type knot invariants \cite{BND17, DHR20} (see \cref{ex: circuits}). As such, modular operads have applications across a range of disciplines.
	%%   %\srnote{0403 - replaced virtual tangles with finite-type knot invariants...}
	
	However, the combinatorics of modular operads are complex. In modular operads equipped with a multiplicative unit, contracting this unit leads to an exceptional `loop', that can obstruct the proof of general results. This paper undertakes a detailed investigation into the graphical combinatorics of modular operads, and provides a new understanding of these loops.

	%The combinatorics of modular operads are known to be complex. In modular operads equipped with a multiplicative unit, contracting the unit seems to result in creates particularly difficult challenges, usually described in terms of `exceptional' or `degenerate' loops. The goal of this paper is to explain and solve this \textit{problem of loops}.
	
	%This work the combinatorics of modular operads. understanding the combinatorics of modular operads with a multiplicative unit.
	%The combinatorics of modular operads are governed by undirected graphs that admit cycles. They are 

	In \cite{JK11}, which forms the inspiration for this work, Joyal and Kock construct modular operads as algebras for an endofunctor on a category $\GS$ of coloured collections called `graphical species'. %equipped with an involutive action. 
	Their machinery is significant in its simplicity. It relies only on minimal data and basic categorical constructions, that lend it considerable formal and expressive power.
	
	%However, the combinatorics of modern operads are complex. This is due to the presence of a contraction operation which means that modular operads are governed by graphs that admit cycles, which has presented particular challenges for understanding the combinatorics of modular operads with a multiplicative unit. % Particularly when the multiplication is required to be unital, the combinatorics have not been well understood.%Particularly when the multiplication is required to be unital, they are not well understood. 

	However, the presence of exceptional loops means that their modular operad endofunctor %defined in \cite{JK11} 
	does not extend to a monad %(\cref{sec: Weber}) 
	on $\GS$. As a consequence, it does not lead to a precise description of the relationship between modular operads and their graphical combinatorics. (See \cref{degenerate} for details.)%\footnote{Sections \ref{degenerate}, \ref{s. nerve}, and \cref{comment on JK HRY} provide more information on the main theorem in \cite[Section~6]{JK11}, and its restatement and proof, by other means, in {\cite[Theorem~4.1]{HRY19b}}.}
	
	This paper contains proofs of the following statements that first appeared in \cite{JK11} (and were proved -- by similar, though slightly less general methods than those presented here -- in my PhD thesis \cite{Ray18}):
	
	%   %\srnote{23Feb labelling of theorems - ASK}
	\begin{thmx}%{Monad Existence}
		[Monad existence \cref{CSM monad DT}]%, \cref{s. Unital}]
		\label{thm: main intro}
		The category $\CSM$ of modular operads is isomorphic to the Eilenberg-Moore category of algebras for a monad $\OOO$ on the category $\GS$ of graphical species.
	\end{thmx}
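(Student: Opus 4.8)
The plan is to realise $\OOO$ not directly, but as a composite of two simpler monads on $\GS$ glued together by a distributive law, so that the problematic \enquote{exceptional loops} are isolated into a single, controllable factor. As recalled in \cref{degenerate}, the obstruction to promoting the Joyal--Kock endofunctor to a monad is precisely that contracting a multiplicative unit creates a loop, so the endofunctor's multiplication fails to be well defined on the nose. The strategy is therefore to separate the genuinely monadic \enquote{graph-substitution} part of the structure from the \enquote{loop-creating} part, and to record their interaction by a distributive law $\lambda$ of the form $\mathbb{T}\mathbb{L} \Rightarrow \mathbb{L}\mathbb{T} = \LT$.

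First I would construct a monad $\TT$ on $\GS$ whose algebras encode the well-behaved part of the modular-operad structure: the operations freely generated by (connected) graph substitution that do \emph{not} produce exceptional loops. Its monad structure should follow from the strict associativity of iterated graph substitution, up to the coherent isomorphisms already present in $\GS$. Separately I would construct the \enquote{loop} monad $\mathbb{L}$ that freely adjoins the degenerate loops arising from unit contraction; fibrewise this is close to a free based-set construction, so its monad axioms are routine. Each factor is comparatively easy to handle in isolation --- the genuine difficulties in the combinatorics arise only from their interaction.

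The heart of the argument is then the construction of the distributive law $\lambda\colon \mathbb{T}\mathbb{L} \Rightarrow \mathbb{L}\mathbb{T}$ and the verification of Beck's four coherence axioms relating $\lambda$ to the two units and the two multiplications. Given such a $\lambda$, Beck's theorem supplies a composite monad $\OOO$ with underlying endofunctor $\LT$, and identifies $\GS^{\OOO}$ with the category of $\lambda$-bialgebras: objects of $\GS$ carrying compatible $\TT$- and $\mathbb{L}$-actions. The final step is a direct comparison showing that this bialgebra data is precisely a modular operad --- the $\TT$-action supplying multiplication and loop-free contraction, the $\mathbb{L}$-action supplying the contracted units, and $\lambda$-compatibility encoding exactly the axiom relating contraction of a unit to the exceptional loop --- whence $\GS^{\OOO} \cong \CSM$.

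I expect the distributive law to be the main obstacle. Because contraction interacts with multiplication in a way that can both create and absorb loops, checking that $\lambda$ is natural and satisfies the pentagon-type axioms amounts to a careful accounting of how loops propagate through iterated graph substitution. This is precisely the phenomenon that defeats the naive monad structure, so turning it into a clean, axiom-satisfying distributive law --- rather than an ad hoc patch --- is where the real work lies. Once $\lambda$ is established, both the existence of $\OOO$ and the isomorphism with $\CSM$ follow formally.
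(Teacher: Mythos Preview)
Your overall strategy --- split into two monads and combine via a Beck distributive law --- is exactly what the paper does, and you have correctly identified the distributive law as the place where the real work happens. However, your proposed decomposition has a genuine gap: neither of your two factors supplies the \emph{multiplicative units} $\epsilon\colon S_\S \to S_\two$. You describe $\TT$ as giving ``multiplication and loop-free contraction'' and $\mathbb L$ as adjoining ``the degenerate loops arising from unit contraction'', i.e.\ the contracted-unit elements in arity $\nul$. But a modular operad also carries distinguished arity-$\two$ elements $\epsilon_c$, and without them there is nothing to contract; the axiom $o = \zeta\epsilon$ you hope $\lambda$ will encode is not even stateable. If instead you try to put units into $\TT$ by allowing the degenerate stick-substitution, you are back to the Joyal--Kock endofunctor that fails to be a monad, for the reasons in \cref{degenerate}.

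The paper's resolution is to package the units and their contractions together. Its second monad $\DD$ adjoins \emph{both} a formal unit $\epsilon^+_c \in S_\two$ for each colour $c$ \emph{and} a formal contracted unit $o^+_{\tilde c} \in S_\nul$ for each $\tau$-orbit, so that $\DD$-algebras are ``pointed graphical species'' (presheaves on a small extension $\fisinvp$ of $\fisinv$). The first monad $\TT$ is exactly your non-unital factor: its algebras are graphical species with multiplication and contraction satisfying (M1)--(M4). The distributive law $\lambda\colon TD \Rightarrow DT$ then does not merely record the single relation $o=\zeta\epsilon$; rather, it systematically \emph{deletes} bivalent vertices decorated by the adjoined units (via the ``similarity'' morphisms $\delW$ of \cref{grp cat}), and in the one exceptional case --- a wheel all of whose vertices are so decorated --- sends the configuration to the formal contracted unit. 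This is what makes the Beck axioms go through: the reduction is to a canonical (reduced) representative rather than to a colimit that does not exist in $\Gr$.
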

	In particular, $\OOO$ is the \textit{algebraically free monad} \cite{Kel80} on the endofunctor of \cite{JK11}. \cref{thm: nerve intro} -- the `nerve theorem' -- characterises modular operads in terms of presheaves on a category $\Klgr$ of graphs.
	
%	Recall that a subcategory $\DCat \hookrightarrow \CCat$ induces a nerve functor $\DCat \to \ps{\CCat}$ f
%	The nerve theorem characterises modular operads in terms of presheaves on a category $\Klgr$ of graphs such that $P \colon \Klgr^{\mathrm{op}} \to \Set$ is the nerve of a modular operad if and only if, for all graphs $\G \in \Klgr$, 
	\begin{thmx}[Nerve \cref{nerve theorem}]
		\label{thm: nerve intro}
		The category $\CSM$ has a full subcategory $\Klgr$ whose objects are graphs. 
		The induced (nerve) functor $N$ from $\CSM$ to the category $\pr{\Klgr}$ of presheaves on $\Klgr$ is fully faithful. 
		
		There is a canonical (restriction) functor $R^*\colon \pr{\Klgr} \to \GS$, and the essential image of $N$ consists of precisely those presheaves $P \colon \Klgr^{\mathrm{op}} \to \Set$ that satisfy the so-called `Segal condition': \newline $P$ is in the essential image of $N$ if and only if it is completely determined by the graphical species $R^*P$. %are completely determined by their image under $that satisfy a certain sheaf condition, or `Segal condition', stated in terms of $\GS$. % {Segal condition}.% in the sense of \cite[Lemma~3.6]{BMW12}, 
	\end{thmx}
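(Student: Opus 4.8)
The plan is to exhibit \cref{nerve theorem} as an instance of Weber's abstract nerve theorem for monads with arities, applied to the monad $\OOO$ produced by \cref{thm: main intro}. Write $\GS = \pr{\diag}$, so that the elementary graphs---the edge and the corollas---are the representables and therefore form a dense subcategory $\diag \hookrightarrow \GS$. I would enlarge this to the full subcategory $\mathcal A \subseteq \GS$ of the graphical species underlying connected graphs; since $\mathcal A$ contains the dense representables, it is itself dense. The objects of $\Klgr$ are then the free modular operads on the graphs in $\mathcal A$, realised as a full subcategory of $\CSM \cong \GS^{\OOO}$ through the free functor---equivalently, each connected graph is regarded as the modular operad it freely generates, with the morphisms of $\CSM$ between them.

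The technical heart is to check that $\OOO$ \emph{has arities} $\mathcal A$. First I would show that $\OOO$ is a parametric right adjoint, so that every map out of a graph has a generic factorisation; this should follow from the explicit graph-indexed description of $\OOO$ obtained in the construction of \cref{thm: main intro}, in which an element of $\OOO X$ sitting over a corolla is a connected graph decorated by $X$. The arities condition then reduces to verifying that, for a graph $\Gamma \in \mathcal A$ and a map $\Gamma \to \OOO X$, the generic part of the factorisation is again a decoration of a graph in $\mathcal A$, i.e. that generic factorisations do not leave $\mathcal A$. Granting this, Weber's theorem gives at once that $\Klgr$ is dense in $\CSM$, whence the induced nerve $N\colon \CSM \to \pr{\Klgr}$, $N(M) = \CSM(-,M)$, is fully faithful; this establishes the first two assertions.

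For the essential image, the canonical restriction functor $R^*\colon \pr{\Klgr} \to \GS$ is the one induced by the inclusion $R\colon \diag \hookrightarrow \Klgr$ of the elementary graphs, and the free--forgetful adjunction shows that $R^* N$ coincides with the forgetful functor $\CSM \to \GS$. Weber's theorem characterises the essential image of $N$ as the presheaves that are local with respect to the Segal maps attached to each graph. The sharper statement of \cref{nerve theorem} amounts to the assertion that, for a graph $\Gamma$, these Segal maps exhibit $P(\Gamma)$ as a limit indexed by the elementary subgraphs of $\Gamma$---its edges and corollas, glued along shared edges. I would therefore finish by proving that every graph is the colimit in $\CSM$ of this diagram of elementary subgraphs, so that locality with respect to all Segal maps becomes equivalent to $P \cong R_* R^* P$, that is, to $P$ being completely determined by the graphical species $R^* P$.

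The main obstacle, I expect, is the arities verification of the second paragraph, precisely because of the exceptional loops emphasised in the introduction. Contracting the multiplicative unit produces a loop, and this means that the naive gluing decomposition of a free modular operad can gain or lose a component, so that a generic factorisation threatens to leave $\mathcal A$ and the elementary Segal decomposition threatens to fail to be a genuine colimit. Controlling this is exactly where the distributive law of \cref{thm: main intro} enters: decomposing $\OOO$ through the distributive law separates the loop-creating contraction of the unit from the honest gluing operations, so that the parametric-right-adjoint property and the stability of generic factorisations can be checked one factor at a time.
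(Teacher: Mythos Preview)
Your strategy has a genuine gap: the monad $\OOO = \DD\TT$ on $\GS$ does \emph{not} have arities $\Gr$ (your $\mathcal A$), and the paper proves this explicitly. Take $S = \yet(\shortmid)$, $\G = \C_\nul$, and $\beta$ the contracted unit $o = z\colon \C_\nul \to (\shortmid)$. Then the unpointed factorisation category contains both $\C_\nul \to \C_\nul \xrightarrow{z} (\shortmid)$ and $\C_\nul \to \W \xrightarrow{\kappa} (\shortmid)$ as objects, and these lie in disjoint components because $\Gr$ contains no non-trivial morphisms with domain or codomain $\C_\nul$. So the factorisation category is disconnected, and by \cite[Proposition~2.5]{BMW12} the arities condition fails. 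Your hope that the distributive law lets you ``check one factor at a time'' for $\OOO$ on $\GS$ is exactly what breaks down here: the problem is not that $\TT$ or $\DD$ separately lack arities, but that the composite on $\GS$ genuinely does.

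The paper's actual route is to use the distributive law differently. Rather than attempting arities for $\OOO$ on $\GS$, one passes to the category $\GSp = \GS^{\DD}$ of pointed graphical species, on which the distributive law induces a lifted monad $\TTp$ with $\GSp^{\TTp} \cong \CSM$. The enlarged graph category $\Grp$ (obtained by adjoining the morphisms $u\colon \C_\two \to (\shortmid)$ and $z\colon \C_\nul \to (\shortmid)$) is dense in $\GSp$, and now the crucial factorisation categories \emph{are} connected---precisely because $\Grp$ contains the similarity morphisms $z$ and $\kappa$ that connect the two components above. Hence $\TTp$ has arities $\Grp$, Weber's machinery applies to give the fully faithful nerve $N\colon \CSM \to \pr{\Klgr}$, and the Segal condition in terms of the unpointed $\elG$ follows because the inclusion $\elG \hookrightarrow \elpG$ is final. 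The move from $\GS$ to $\GSp$ is not a convenience but the essential step; your proposal omits it.
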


%   %\srnote{\warn{ASK about new statement 2403} .} %such that, for all graphs $\G \in \Klgr$, $P(\G)$ is completely determined

	An obvious motivation for establishing such results is provided by the study of weak, or $(\infty, 1)$-modular operads, by weakening the Segal condition of \cref{thm: nerve intro}. % (\cref{nerve theorem}). 
  To this end, %Indeed, motivated by applications of $(\infty, 1)$-modular operads, 
	Hackney, Robertson and Yau have also recently proved versions of Theorems \ref{thm: main intro} and \ref{thm: nerve intro}, by different methods, and used them to obtain a model of $(\infty,1)$-modular operads that are characterised in terms of a weak Segal condition \cite{HRY19a, HRY19b}. A number of potential applications of such structures are discussed in the introduction to \cite{HRY19a}.%.motivated by applications of $(\infty, 1)$-modular operads, \cite{HRY, HRY} have also recently proved versions these theorems.

	The aim of this work is to prove Theorems \ref{thm: main intro} and \ref{thm: nerve intro} in the manner originally proposed by \cite{JK11} -- using the abstract nerve machinery introduced by Weber \cite{Web07,BMW12} (see \cref{sec: Weber}) -- and to use these proofs as a route to a full understanding of the underlying combinatorics, and the contraction of multiplicative units in particular. This method places strict requirements on the relationship between the modular operad monad $\OOO$ and the graphical category $\Klgr$. In fact, to apply the results of \cite{BMW12}, 
	the category $\Klgr$ must -- in a sense that will be made precise in \cref{sec: Weber} -- arise naturally from the definition of $\OOO$. % Precisely, the construction uses the abstract nerve machinery introduced by Weber \cite{Web07,BMW12}, As will be explained in Sections \ref{sec: Weber} and \ref{s. nerve},
	 %   %\srnote{Check this change against reviewer's comment. I didn't provide a reference just before the theorem, and instead reqorte the text afterwards} 
%	The aim of this work is to prove Theorems \ref{thm: main intro} and \ref{thm: nerve intro} by methods closely based on those of \cite{JK11}, and to use these proofs as a route to fully understanding the underlying combinatorics, and the contraction of multiplicative units in particular. %   %\srnote{here something about Weber methods...} 

	Neither the construction of the monad $\OOO$ for modular operads, nor the proof of \cref{thm: nerve intro} is entirely straightforward. First, the method of \cite{JK11}, which is closely related to analogous constructions for operads (Examples \ref{ex: operad endo}, \ref{ex: operad units}, c.f.~\cite{HRY15,Koc16, MMS09,MW07}) does not lead to a well-defined monad. Second, as a consequence of the contracted units, 
%	%   %\srnote{"as a consequence of"} %as will be shown in \cref{s. nerve}, 
	the desired monad, once obtained, does not satisfy the conditions for applying the machinery of \cite{BMW12}. %the abstract nerve machinery introduced by Weber \cite{Web07,BMW12}. 
	To prove Theorems \ref{thm: main intro} and \ref{thm: nerve intro}, it is therefore necessary to break the problem into smaller pieces, %dissect the problem into its constituent components, 
	thereby rendering the graphical combinatorics of modular operads completely explicit.
%	\medskip
	
	Since the obstruction to obtaining a monad in %extending the modular operad endofunctor of 
	\cite{JK11} arises from the combination of the modular operadic contraction operation and the multiplicative units (see \cref{degenerate}), the approach of this work is to first treat these structures separately -- via a monad $\TT $ %= (T, \mu^\TT, \eta^\TT)$ 
	on $ \GS$ whose algebras are non-unital modular operads, and a monad $\DD $ %= (D, \mu^\DD, \eta^\DD)$
	on $\GS$ that adjoins distinguished `unit' elements -- and then combine them, using the theory of distributive laws \cite{Bec69}.
	
	 	\cref{thm: main intro} is then a corollary of:
	\begin{thmx}[\cref{monads distribute} {\&} \cref{CSM monad DT}]\label{thm: composite intro}
		There is a distributive law $\lambda$ for $ \TT$ over $\DD$ such that the resulting composite monad $\DD\TT$ on $\GS$ is precisely the modular operad monad $\OOO$ of \cref{thm: main intro}.
	\end{thmx}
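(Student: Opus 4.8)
The plan is to apply Beck's theory of distributive laws \cite{Bec69}. To produce the composite and identify it with $\OOO$ it suffices to carry out three steps: (i) exhibit a natural transformation $\lambda\colon \TT\DD \Rightarrow \DD\TT$; (ii) verify the four Beck axioms relating $\lambda$ to the units $\eta^\TT,\eta^\DD$ and multiplications $\mu^\TT,\mu^\DD$, so that $\DD\TT$ acquires a canonical monad structure; and (iii) check that the resulting monad coincides with $\OOO$. The first two steps are formal consequences of Beck's theorem once $\lambda$ is in hand; the genuine content lies in constructing $\lambda$ and in the single axiom where the contraction of units is exercised.

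For (i) I would read off $\lambda$ directly from the graphical descriptions of the two monads. An element of $\TT\DD X$ is an $X$-decorated connected graph in which each vertex is labelled by an element of $\DD X$, that is, by either a genuine element of $X$ or one of the formally adjoined units; a unit label makes its vertex behave as an identity $2$-valent edge. I would define $\lambda$ to \emph{smooth} every unit-labelled vertex, merging its two incident half-edges into a single edge. The underlying graph then carries only genuine $X$-labels, giving an element of $\TT X$, and the adjoined-unit data that survives this reduction is recorded by the outer copy of $\DD$ (well-definedness on isomorphism classes and naturality being straightforward). The reason the codomain must be $\DD\TT$, and not merely $\TT$, is the exceptional loop: when a cycle of unit-labelled vertices closes up with no genuine vertex left, smoothing produces a vertex-free loop that has no home in the non-unital $\TT X$ and must instead be carried by the $\DD$-layer. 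Isolating this phenomenon inside $\lambda$ is exactly the mechanism that repairs the failure described in \cref{degenerate}.

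For (ii), the two unit axioms $\lambda\circ(\eta^\TT\DD)=\DD\eta^\TT$ and $\lambda\circ(\TT\eta^\DD)=\eta^\DD\TT$ are immediate, since a single generating vertex or a single adjoined unit leaves nothing to smooth. The axiom governing $\mu^\DD$, namely $\lambda\circ\TT\mu^\DD=\mu^\DD\TT\circ\DD\lambda\circ\lambda\DD$, is also routine, as the multiplication of $\DD$ only collapses nested units and commutes harmlessly with smoothing. I expect the axiom governing $\mu^\TT$, namely $\lambda\circ\mu^\TT\DD=\DD\mu^\TT\circ\lambda\TT\circ\TT\lambda$, to be the main obstacle. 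Substitution of graphs can splice a unit-labelled vertex of an outer graph onto unit- or genuinely-labelled vertices of the inserted graphs, so unit-chains --- and in particular the loop-forming cycles above --- can be created \emph{across} the substitution boundary. Establishing this axiom amounts to proving that smoothing commutes with substitution and that the exceptional loop is recorded identically whether one smooths before or after substituting. This is precisely the delicate associativity of contraction-in-the-presence-of-units, and I would settle it by a case analysis organised according to how maximal unit-chains meet the substituted vertices, treating the closed-chain (loop) case separately.

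Finally, for (iii), with $\DD\TT$ now a monad I would match it to $\OOO$ by comparing the two constructions directly: the endofunctor $X\mapsto\DD\TT X$ sends $X$ to unit-adjunctions of $X$-decorated connected graphs with contraction, which is exactly the graphical description underlying $\OOO$, and the induced multiplication --- graph substitution followed by the collapse of units and loops encoded by $\lambda$ --- agrees with that of $\OOO$ on the nose. Equivalently, Beck's theorem identifies the Eilenberg--Moore category of $\DD\TT$ with the category of $\lambda$-bialgebras, namely non-unital modular operads equipped with a compatible system of units, which is the data of a modular operad and hence of an algebra for the \cite{JK11} endofunctor; the universal property of the algebraically free monad then pins $\DD\TT$ down as $\OOO$. \cref{thm: main intro} follows at once from this bialgebra description.
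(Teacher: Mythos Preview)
Your strategy matches the paper's: define $\lambda$ by deleting unit-decorated vertices (the paper formalises this as \emph{vertex deletion} and the similarity relation of \cref{def: similar}), catch the non-admissible residues in the outer $D$-layer, verify Beck's axioms, and identify the algebras.

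There is, however, a genuine gap in your description of $\lambda$. You claim the codomain must be $\DD\TT$ rather than $\TT$ \emph{only} because of the exceptional loop. But $D$ adjoins two families of distinguished elements, not one: formal units $\epsilon^+_c\in DS_\two$ \emph{and} formal contracted units $o^+_{\tilde c}\in DS_\nul$ (see the proof of \cref{forget monadic}). Correspondingly, three exceptional cases arise: a wheel of units $\Wm(\epsilon_c)$ (your loop), a \emph{line} of units $\Lk(\epsilon_c)$, and the isolated vertex $(\C_\nul,o^+_{\tilde c})$. Smoothing a line of units yields the stick $(\shortmid,c)$, which has no vertex and is therefore not an admissible $\two$-graph (\cref{X graph}), so it cannot land in $TS_\two$ and must be sent to the formal unit $\epsilon^{DTS}_c$ in the outer $D$-layer; the isolated-vertex case must likewise go to $o^{DTS}_{\tilde c}$. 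The paper's $\lambda$ (\cref{monads distribute}) treats all three explicitly. Once you recognise that the exceptional cases are precisely those whose reduced similar structure is the inadmissible stick, and that the $D$-layer records whether that stick arose with or without boundary, your smoothing description goes through.

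On the Beck axioms: you single out the $\mu^\TT$ axiom as the obstacle, anticipating cross-boundary loop creation. The paper calls all four axioms ``straightforward but tedious'' and in fact illustrates the $\mu^\DD$ axiom rather than $\mu^\TT$. With $\lambda$ defined via reduced similar representatives, the only fact needed is that iterated vertex deletions compose, $(\X_{\setminus W_1})_{\setminus W_2}=\X_{\setminus(W_1\amalg W_2)}$; no separate cross-substitution loop analysis is required.
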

	The graphical category $\Klgr$, used to define the modular operadic nerve, arises canonically via the unique fully faithful--bijective on objects factorisation of a functor used in the construction of $\OOO$. Therefore, if the monad $\OOO$ satisfies certain formal conditions -- if it `has arities' (see \cite{BMW12}) -- then \cref{thm: nerve intro} follows from \cite[Section~1]{BMW12}. %In this case the monad is said to `have arities'. %   %\srnote{brackets deleted} %\cref{sec: Weber}, and \cite[Sections~1~\&~2]{BMW12}.))
	
	Though the monad $\OOO$ on $\GS$ does not have arities, % these conditions are not satisfied by $\OOO$ on $\GS$, 
	the distributive law in \cref{thm: composite intro} implies that there is a monad $\TTp$, on the category $\GSp$ of $\DD$-algebras, whose algebras are modular operads. Moreover, \cref{thm: nerve intro} follows from: % It is then sufficient to show that the Weber machinery can be applied to this new monad $\TTp$.%provides us with an another approach. By the classical theory of distributive laws \cite{Bec69}, there is a monad $\TTp$ on the % there is a monad $ \TTp = (U_*F_*, \mu^{\TTp},\eta^{\TTp})$ defined on the 
	%Eilenberg Moore category $\GSp$ of $\DD$-algebras, whose algebras are also modular operads. Then the nerve theorem \ref{thm: nerve intro} follows easily from
	
	\begin{lemx}[\cref{connected}]\label{prop: arities intro}
		The monad $\TTp$ on $\GSp$ has arities, and hence satisfies the conditions of 
		\cite[Theorem~1.10]{BMW12}.%\footnote{This was also proved in my PhD thesis \cite{Ray18}.}
	\end{lemx}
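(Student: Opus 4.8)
The plan is to exhibit a small dense subcategory $\Theta \hookrightarrow \GSp$ of \emph{arities} for $\TTp$ and to verify the criterion underlying the abstract nerve machinery: a monad has arities $\Theta$ precisely when $\Theta$ is dense and the monad preserves the canonical $\Theta$-colimits, equivalently when the induced nerve $\GSp^{\TTp} \to \pr{\Theta_{\TTp}}$ is fully faithful \cite{Web07, BMW12}. I would take $\Theta$ to be the full subcategory of $\GSp$ on the \emph{connected} graphs, regarded as $\DD$-algebras, which accounts for the name of the lemma. The first step is to check that $\Theta$ is dense in $\GSp$. Since $\GS$ is a presheaf category whose elementary objects (edges and stars) are themselves connected graphs, and since the extra data of a $\DD$-algebra -- the distinguished unit/loop elements -- is again carried by a connected graph, every $\DD$-algebra is canonically the colimit of the connected graphs mapping into it, so $\Theta$ is dense.

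The substance of the argument is that $\TTp$ is a parametric right adjoint whose generic factorisations have connected underlying graphs; by Weber's theory this forces $\TTp$ to have arities $\Theta$. Concretely, I would use the distributive law $\lambda$ of \cref{thm: composite intro} to realise $\TTp$ as the lift to $\GSp$ of the graph-substitution monad $\TT$, and compute the free $\TTp$-algebra on a $\DD$-algebra $A$ levelwise: its value at an elementary shape is a coproduct, indexed by isomorphism classes of connected $A$-decorated graphs of that shape, of the associated spaces of decorations. Any map from an arity into the $\Theta$-nerve of $\TTp A$ then admits an essentially unique generic--free factorisation through exactly one such connected graph, the generic part recording the graph and the free part the decoration. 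This factorisation identifies the arities with the connected graphs and presents $\TTp A$ as the predicted canonical colimit, so that $\Theta_{\TTp}$ is the graphical category $\Klgr$ and the nerve lands in $\pr{\Klgr}$.

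The main obstacle is precisely this generic-factorisation step, because it is here that the contraction-of-units loops obstruct the corresponding statement for $\OOO$ on $\GS$: on $\GS$ a contracted unit can be split off along an arbitrary edge, so the relevant factorisations fail to be unique and $\OOO$ is not a parametric right adjoint. Passing to $\GSp$ repairs this, since the exceptional loop becomes genuine structure of the ambient $\DD$-algebra rather than an output of $\TT$; the connected configurations produced by $\TTp$ are then exactly the honest connected graphs, each counted once, and uniqueness is restored. I would verify this by a careful analysis of the connected components arising in graph substitution inside a $\DD$-algebra, checking that no degenerate loop is ever produced and that the generic part is always connected. Once $\TTp$ is shown to be a parametric right adjoint with connected generics, the density of $\Theta$ together with \cite{Web07} gives that $\TTp$ has arities $\Theta$, so the hypotheses of \cite[Theorem~1.10]{BMW12} are satisfied, which completes the proof.
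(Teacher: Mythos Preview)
Your overall plan---take the connected graphs $\Grp$ as a dense subcategory of $\GSp$ and verify the arities criterion of \cite{BMW12}---is exactly the paper's plan. The gap is in the mechanism you propose for the verification: you assert that $\TTp$ is a parametric right adjoint with an ``essentially unique'' generic--free factorisation, and this is false. Consider the contracted unit $o^{\Tp S}_{\tilde c} \in (\Tp S)_\nul$. It is represented by $(\C_\nul, o_{\tilde c})$ and by $(\Wm, \Wm(\epsilon_c))$ for every $m \geq 1$, giving infinitely many non-isomorphic objects of the relevant factorisation category. Since $\C_\nul$ has no edges, $\Grp(\Wm, \C_\nul) = \emptyset$, so $(\C_\nul, o_{\tilde c})$ is not terminal; and for each $m$ there are $2m$ distinct morphisms $ch_e \circ z \colon \C_\nul \to \Wm$ in $\Grp$, all of which lie in the factorisation category, so it is not initial either. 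There is no generic factorisation. Passing from $\GS$ to $\GSp$ does not restore uniqueness---it restores \emph{connectedness}, which is strictly weaker than PRA but is all that \cite[Theorem~1.10]{BMW12} requires.

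The paper instead invokes \cite[Proposition~2.5]{BMW12}: $\TTp$ has arities $\Grp$ if and only if every factorisation category $\factcat$ is connected. This falls out of the explicit description of $\Tp$ in \cref{prop: TpX} as $\Tp S_X = \pi_0(\ovP{S_*}{X\XGrsimp})$. For $\G = \CX$ a corolla, two objects of $\factcat$ represent the same $\beta$ precisely when they lie in one component of $\ovP{S_*}{X\XGrsimp}$, i.e.\ are joined by a zigzag of similarity (vertex-deletion) morphisms in $\Grp$, and that zigzag \emph{is} a path in $\factcat$. For general connected $\G \not\cong \C_\nul$ one argues vertex-by-vertex, using that no element of $\elG$ has arity $\nul$, so similarity passes to the colimit graph of graphs. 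The new morphism $z\colon \C_\nul \to (\shortmid)$ of $\Grp$ is precisely what connects the otherwise-disjoint pieces $(\C_\nul, o_{\tilde c})$ and $(\Wm, \Wm(\epsilon_c))$; its absence from $\Gr$ is exactly why $\DD\TT$ fails to have arities $\Gr$ (\cref{rmk: Gr no arities}).
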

	%In this case $\TTp$ is said to `have arities'. (See \cref{sec: Weber}, and \cite[Sections~1~\&~2]{BMW12}.)
	
	I conclude this introduction by briefly mentioning three (related) benefits of this abstract approach.
	
	In the first place, the results obtained by this method provide a clear overview of how modular operads fit into the wider framework of operadic structures, and how other general results may be modified to this setting. For example, by \cref{prop: arities intro}, $\TTp$ and $\Klgr$ satisfy the Assumptions 7.9 of \cite{CH15}, which leads to a suitable notion of weak modular operad via the following corollary:
	
	\begin{corx}[\cref{cor: weak}]\label{cor: model}
		There is a model structure on the category of presheaves in simplicial sets on $\Klgr$. The fibrant objects are precisely those presheaves that satisfy a weak Segal condition.
	\end{corx}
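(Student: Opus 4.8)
The plan is to invoke the general existence theorem for Segal-type model structures of \cite{CH15}. That machine takes as input a small category of arities $\Klgr$ together with a monad $\TTp$ whose arities are exactly $\Klgr$, and, once the list of hypotheses recorded as the Assumptions~7.9 of \cite{CH15} is verified, produces a left Bousfield localization of the injective model structure on $\prs{\Klgr}$ at the Segal core inclusions. Its fibrant objects are precisely the injectively fibrant presheaves that are local with respect to those inclusions, that is, the presheaves satisfying the weak Segal condition. By \cref{prop: arities intro} the pair $(\TTp, \Klgr)$ already supplies the central ingredient of this package: the nerve $N \colon \CSM \to \pr{\Klgr}$ is fully faithful, and its essential image consists of exactly those presheaves that satisfy the strict Segal condition. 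Thus the corollary should follow by checking that the remaining structural hypotheses of \cite{CH15} hold and then quoting the theorem verbatim.

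First I would recall the precise form of Assumptions~7.9 of \cite{CH15} and match each condition against the present setting. The arities property established in \cref{prop: arities intro} discharges the nerve-theoretic hypotheses directly. The conditions that remain are of a more routine, structural character: that $\Klgr$ is essentially small, which holds because it is the full subcategory of $\CSM$ on graphs and these form a set up to isomorphism; that the base category $\GSp$ and the presheaf category $\prs{\Klgr}$ are locally presentable, so that the required Bousfield localization exists; and that the generating Segal maps, assembled from the elementary graphs indexing the arities, form a set of monomorphisms. Each of these follows from the explicit combinatorial description of $\Klgr$ and of $\GSp$ developed in the body of the paper.

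With the assumptions verified, both the existence of the model structure and the identification of its fibrant objects are immediate consequences of the corresponding theorem of \cite{CH15}, applied to the data $(\TTp, \Klgr)$. The only point requiring genuine care, and hence the main potential obstacle, is confirming that the combinatorics of $\Klgr$ -- which, unlike the tree case underlying ordinary operads, includes cycles and the contracted multiplicative units analysed above -- still fit the abstract framework: in particular, that the elementary objects generating the Segal condition are sufficiently well behaved and that the arities are \emph{generic} in the sense demanded by \cite{CH15}. Since \cref{prop: arities intro} has already shown that $\TTp$ has arities, however, this difficulty has in effect been resolved upstream, and the verification of the residual hypotheses is essentially formal.
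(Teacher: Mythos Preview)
Your overall strategy matches the paper's: verify the hypotheses of \cite[Assumptions~7.9]{CH15} and invoke their result. Two points of correction, however. First, the arities for $\TTp$ are $\Grp$, not $\Klgr$; the latter is the theory category $\Theta_{\TTp,\Grp}$ obtained in the bo-ff factorisation of $\Grp \to \GSp \to \CSM$, so your phrase ``a monad $\TTp$ whose arities are exactly $\Klgr$'' conflates the arities with the Kleisli-type category built from them. Second, the paper identifies the specific residual hypothesis to check as the connectedness and essential smallness of $\elG$ for each connected graph $\G$ --- this is what feeds the limit-sketch formalism of \cite[Section~7.5]{CH15} --- rather than the list (essential smallness of $\Klgr$, local presentability, monomorphicity of Segal maps) that you give. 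The paper then obtains the Segal model structure on $\sSet$-valued models for the sketch $L=(\Grp,\{(\G\ov{\fisinv}^{\mathrm{op}})_{\G\in\Gr}\})$ and \emph{transfers} it to $\prs{\Klgr}$ along a Quillen equivalence via \cite[Proposition~7.1]{CH15}, rather than constructing it directly as a Bousfield localisation of the injective model structure. The difference is largely in the packaging, not the substance, but you should be aware that the route through the sketch and the transfer is what \cite{CH15} actually provides in this generality.
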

	
	% Marcy Robertson and I are comparing this model structure with the model structure for weak modular operads in \cite{HRY19a}, and exploring whether $\Klgr$ itself admits a generalised Reedy structure.
	%
	
	Second, since this work makes the combinatorics of modular operads -- including the tricky bits -- completely explicit, it provides a clear road map for working with and extending the theory. 
	
	One fruitful direction for extending this work is to use iterated distributive laws \cite{Che11} to generalise constructions presented here. In \cite{Ray21a}, an iterated distributive law is used to construct {circuit operads} -- {modular operads with an extra product operation, closely related to small compact closed categories -- as algebras for a composite monad on $\GS$ (\cref{ex: circuits}). Once again, the distributive laws play an important role in describing the corresponding nerve. The approach of \cite[Section~3]{Che11} %Iterated distributive laws 
		may also be used to construct higher (or $(n, k)$-) modular operads. %This which have applications to, for example, . Using methods similar to, it is possible to obtain an appropriate notion of $n$-modular operad for $n \geq 0$. 
		This can be used to give a modular operadic description of {extended cobordism categories}. 
	
	Finally, the complexities of the combinatorics of contractions can provide new insights into the structures they are intended to model. In current work, also together with L.\ Bonatto, S.\ Chettih, A.\ Linton, M.\ Robertson, N.\ Wahl, I am using these ideas to explore singular curves in the compactification of moduli spaces of algebraic curves. (See also \cref{ex: gk mod op}, and c.f.~\cite{BCL22} for the genus 0 case.)%(\cref{fig: contracting units} is strongly suggestive)%\warn{conformal structures on surfaces}..
	%   %\srnote{2603 -\warn{reference to figures??} changed to singular curves...}
	
	%This work falls into three parts. 
	\medspace

	%\subsection{Acknowledgements} 
	{This work owes its existence to the ideas of A. Joyal and J. Kock and I thank Joachim for taking time to speak with me about it. P. Hackney, M. Robertson and D. Yau's work has been an invaluable resource. Conversations with Marcy have been particularly helpful. I gratefully acknowledge the anonymous reviewer whose insights have not only improved the paper, but also increased my appreciation of the mathematics.%   %\srnote{1503 unsure about anything humble}
		
		This article builds on my PhD research at the University of Aberdeen, UK and funded by the EPFL, Switzerland, and I thank my supervisors R. Levi and K. Hess. 
		Thanks to the members of the Centre for Australian Category Theory at Macquarie University for providing the ideal mathematical home for these results to mature, and to R. Garner and J. Power in particular, for their reading of this work. 
		
		\begin{rmk}\label{rmk correction}
			The following errors appear in the published version \cite{Ray20} and are corrected here:
			
			In \cite[Section~4.1]{Ray20}, graph embeddings (\cref{def embedding}) were mistakenly identified with graph monomorphisms (\cite[Proposition~4.8]{Ray20}) and the terminology of `monomorphisms' was used throughout the paper. The incorrect \cite[Proposition~4.8]{Ray20} -- which served only to establish terminology in \cite{Ray20} -- has been deleted and \cite[Lemma~4.7]{Ray20} has been replaced with \cref{def embedding}, of graph `embeddings', and \cref{lem: mono}. The examples in \cref{ssec embeddings} have also been modified accordingly. The terminology of graph embeddings is due to \cite[Section~1.3]{HRY19a}, and replaces the incorrect use of the term (graphical) `monomorphism' in \cite{Ray20}. 
			
			On \cite[page~61]{Ray20}, there is a sentence that begins ``\textit{But then $\elG[\bigcirc]\cong \elG[\shortmid]$, and hence $S(\bigcirc)\cong S(\shortmid)$ \dots }". This should simply read ``\textit{But this would imply that $S(\bigcirc)\cong S(\shortmid)$ \dots }", and the rest of \cref{degenerate} is unchanged.

		\end{rmk}
	%	%   %\srnote{1503 - small changes ast sentence}
		\subsection*{Overview and key points}
		The opening two sections provide context and background for the rest of the work. An axiomatic definition of modular operads is given in \cref{sec: definitions}. \cref{sec: Weber} gives a brief review of Weber's abstract nerve theory, that provides a framework for the later sections. Both these introductory sections include a number of examples to motivate the constructions that follow. %, and in particular, the discussion of graph categories that follows in Sections \ref{s. graphs} and \ref{sec: topology}.
		
		%The second part, comprising Sections \ref{s. graphs} -- \ref{sec: non-unital}, establishes the graphical and functorial machinery for studying the combinatorics of modular operads. In Sections \ref{sec: definitions} and \ref{sec: Weber},
\cref{s. graphs} is a detailed introduction to the (Feynman) graphs of \cite{JK11}, and % are introduced and discussed in detail in \cref{s. graphs}. 
		\cref{sec: topology} focuses on their \'etale morphisms. The monad $ \TT$ for {non-unital modular operads} is constructed in \cref{a free monad}. 
		
	%Sections \ref{degenerate} and \ref{s. Unital} deal with the problem of contracting multiplicative units. 
		
		% Before diving into the construction of unital modular operads in \cref{s. Unital}, we pause for a moment in In we %provide some context and motivation by looking 
		% look in detail at 
		\cref{degenerate} acts as a short intermezzo in which the appearance of exceptional loops in the theory, and why they are problematic in the construction of \cite{JK11}, is explained.

		%and the essence of why loops are problematic for defining the modular operad monad is revealed (see, in particular, the discussion around equation (\ref{eq: isos don't respect colimt}) and after \cref{rmk: HRY solution}). %Points to note in this section are:
%		\begin{itemize}
%			\item Remarks \ref{rmk: standard solution} and \ref{rmk: HRY solution} refer to other approaches in the literature to the issue of loops. As far as I am aware, the graphical construction presented in this paper is unique in that it \textit{does not} incorporate some version of the exceptional loop into the graphical calculus, in order to model contractions of units. (\cref{rmk: construction unique}.)
%			\item The essence of why loops are problematic for defining the modular operad monad is revealed in the discussion around equation (\ref{eq: isos don't respect colimt}) and after \cref{rmk: HRY solution}: the monad multiplication should be defined in terms of colimit constructions that are defined \textit{on the nose} by a colimit construction, but these are not invariant under isomorphisms that define the monad endofunctor.% that do not respect certain objects, defined \textit{on the nose} by a colimit construction. 
%			%   %\srnote{1503 - tricky} %colimits, that must be defined on the nose. %There appears to be an arrow in the wrong direction.
%		\end{itemize}
		%\cref{s. Unital} is the longest and most important section of the work, and contains the construction of the monad $\OOO$. % for modular operads happens in \cref{s. Unital} most of the new contributions
		
		The construction of the monad $\OOO$ for modular operads happens in \cref{s. Unital}. This is the longest and most important section of the work, and contains most of the new contributions. Finally, \cref{s. nerve} contains the proof of the Nerve \cref{thm: nerve intro}, as well as a % in contains the proof of the nerve theorem, , \cref{cor: model} and a 
		short discussion on weak modular operads. %\footnote{Most of Sections \ref{s. graphs}-\ref{sec: non-unital} and \cref{s. nerve} appeared in my PhD thesis \cite{Ray18}. There are some more substantial differences between \cite{Ray18} and Sections \ref{degenerate} and \ref{s. Unital}, since \cite{Ray18} did not make use of the distributive law.}% as well as a brief discussion of the results.

			There have been many other approaches to the issue of loops, some of which are mentioned in Remarks \ref{rmk: standard solution} and \ref{rmk: HRY solution}. But the graphical construction presented in this paper %(\cref{s. Unital}) 
			is unique, as far as I am aware, in that it \textit{does not} incorporate some version of the exceptional loop into the graphical calculus, in order to model contractions of units. (See \cref{rmk: construction unique}.)
			%\item %These other approaches 
		%	Combinatorially, the problem of contracting units is related to 
%In terms of the graphical calculus for modular operads, contraction of units should be encoded by a relationship between the point and wheel graphs (see \cref{fig: contracting units}). 

		In other approaches, the contraction of units is described by 
%The approaches mentioned in Remarks \ref{rmk: standard solution} and \ref{rmk: HRY solution} 
%describe contraction of units by 
adjoining a formal colimit of a diagram of graphs, %(the grey box in \cref{fig. lim and colim})%$id, \tau \colon (\shortmid) \rightrightarrows (\shortmid)$ 
 resulting in the exceptional loop object (see \cref{deg loop}). By contrast, we will see in \cref{s. Unital} that the definition of modular operads (\cref{defn: Modular operad}) implies that the contracted units are, in fact, described in terms of %adjoining a morphism (specifically, $z: \C_\nul \to (\shortmid)$, \cref{pointed graphical species}) that acts as 
a formal limit of the very same diagram. This is illustrated in \cref{fig. lim and colim}.

Moreover, this construction leads to a graphical description of the unit contraction, not by an exceptional loop, but as the singularity of a `double cone' of wheel-shaped graphs (see \cref{subs. similar} and \cref{fig: contracting units}).
	%	\end{itemize}
\begin{figure}
	[htb!]\begin{tikzpicture}[scale = .45]
%		\draw [gray, decorate,decoration={brace,amplitude=5pt, raise=4pt},yshift=0pt]
%	(-5,-2) -- (-5,2) %node [black,midway,xshift=0.8cm] {\footnotesize
%	%	$P_2$}
%	;
%	\draw [gray,decorate,decoration={brace,amplitude=5pt,mirror,raise=4pt},yshift=0pt]
%		(5,-2) -- (5,2)% node [black,midway,xshift=0.8cm] {\footnotesize
%	%		$P_2$}
%	;
	\draw [gray] (-5,-2.2)--(5,-2.2)--(5,2.3)--(-5,2.3)--(-5,-2.2);
		\draw [red,decorate,decoration={brace,amplitude=5pt, raise=4pt},yshift=0pt]
	(-5,2.5) -- (14,2.5) node [black,midway,yshift=0.5cm] {\footnotesize
	{Formal colimit: glue endpoints of edge to form a loop object.}}
	;
		\draw [blue,decorate,decoration={brace,amplitude=5pt,mirror,raise=4pt},yshift=0pt]
	(-14,-2.4) -- (5,-2.4)node [black,midway,yshift=-0.5cm] {\footnotesize
			{Formal limit: pick out midpoint of edge.}}
		
	% node [black,midway,xshift=0.8cm] {\footnotesize
	%		$P_2$}
	;
	
	\draw[->] (-10,0)--(-6,0);
	\node at (-8,.3){\small{$z$}};
		\draw[->] (6,0)--(10,0);
	
	\node at (-12,0){
		\begin{tikzpicture}[scale = .5]
		\filldraw (0,0) circle (3pt);
	\end{tikzpicture}};
	\node at (12.5,0){
	\begin{tikzpicture}[scale = .6]
	\draw [ultra thick] (0,0) circle (.8cm);
	\end{tikzpicture}};
	\node at (0,0){
		\begin{tikzpicture}[scale = .6]
		\draw[ ultra thick]
		(0,0) -- (0,2)
		(6,0) -- (6,2);
		\draw[->]
		(1,.5)--(5,.5);
			\draw[->]
		(1,1.5)--(5,1.5);
		\node at (3, 2.2){ \begin{tikzpicture}
			[scale = .3] 	\draw[thick] 
			(0,0) -- (0,2);
				\draw [thin,gray, <->] (-.1,1.9) arc [start angle=110, 
			end angle=250,
			y radius= .9 cm, 
			x radius= 1 cm];
%			\node at (.3,0){\tiny{1}};
%					\node at (.3,2){\tiny{2}};
					\node at (-3.5 ,.7){ \small { flip edge } };%\tiny{$1 \leftrightarrow 2$}};
				%	\draw [|->](1,1) -- (4,1);
%			(-1,0)-- (-1,1)
%			(1,0)-- (1,1);
					\end{tikzpicture}};
					\node at (3, -.2){ \begin{tikzpicture}
%					[scale = .3] 	\draw[thick] 
%					(0,0) -- (0,2);
%					\draw [thin,gray, <->] (-.1,1.9) arc [start angle=110, 
%					end angle=250,
%					y radius= .9 cm, 
%					x radius= 1 cm];
					%			\node at (.3,0){\tiny{1}};
					%					\node at (.3,2){\tiny{2}};
					\node at (-3.5 ,.7){ \small { id } };%\tiny{$1 \leftrightarrow 2$}};
					%	\draw [|->](1,1) -- (4,1);
					%			(-1,0)-- (-1,1)
					%			(1,0)-- (1,1);
					\end{tikzpicture}};
		\end{tikzpicture}
	};
	\end{tikzpicture}
\caption{An edge graph with no vertices may be flipped or left unchanged. The exceptional loop that `glues the edge ends together' arises as the formal colimit of these endomorphisms. In \cref{s. Unital}, the graph category of Sections \ref{s. graphs}-\ref{sec: non-unital} (and \cite{JK11}) is enlarged to include the morphism $z$ that `picks out the midpoint' of the edge graph with no vertices. } \label{fig. lim and colim} % is has a non-trivial endomorphism has twoloop is the formal colimit of the In \cref{s. Unital}, contraction of units
\end{figure}

\section{Definitions and examples}

\label{sec: definitions}

The goal of this section is to give an axiomatic definition of modular operads (\cref{defn: Modular operad}), and to provide some motivating examples. As mentioned in the introduction, the term `modular operad' refers here to what are called `compact symmetric multicategories (CSMs)' in \cite{JK11}.

\subsection{Graphical species}
\label{ssec: graphical species}
%\subsection{General notation}
%   %\srnote{0403 - some rejigging, also in response to R16}
After establishing some basic notional conventions, we discuss %the section begins with a discussion of 
Joyal and Kock's %category $\GS$ of 
graphical species \cite{JK11} that generalise various notions of coloured collection used in the study of operads.

Let $\Set$ be the category of sets and all morphisms between them. A \emph{presheaf} on a category $\CCat$ is a functor $ P\colon \CCat^\mathrm{op} \to \Set$. The corresponding functor category %or \emph{presheaf category} 
is denoted $\pr{\CCat}$. %The notion of elements of a presheaf will be used extensively in what follows:%A presheaf $P$ on $\CCat$ describes a category $\mathsf{el}_{\CCat}(P)$:

\begin{defn}%   %\srnote{R16}
	\label{defn: general element} %An \textit{element} of a presheaf $P \colon \CCat^{\mathrm{op}} \to \Set$ is a pair $(c, x)$ where $c$ is an object of $\CCat$ and $x \in P(c)$. The elements of $P$ are the objects of a 
	Objects of the category $\ElP{P}{\CCat}$ of \textit{elements of a presheaf $P \colon \CCat^{\mathrm{op}} \to \Set$} are pairs $(c, x)$ -- called \emph{elements of $P$} -- where $c$ is an object of $\CCat$ and $x \in P(c)$. Morphisms $(c,x) \to (d,y)$ in $\ElP{P}{\CCat}$ are given by morphisms $f \in \CCat (c,d)$ such that $P(f)(y) = x$. 

\end{defn}
%
%
%
%
%
%Let $F\colon \DCat \to \CCat$ and $G\colon \DCat[E] \to \CCat$ be functors. Objects of the \textit{comma category} $F \ov G$ (or $F_{\DCat}\ov G_{\DCat[E]}$) are triples $ (d, \gamma, e)$ with $d \in \DCat, e \in \DCat[E]$ and $\gamma \in \CCat(F(d), G(e))$. Morphisms in $F\ov G((d, \gamma, e), (d',\gamma',e'))$ are pairs of morphisms $f \in \DCat (d,d')$ and $g \in \DCat[E](e,e')$ such that 
%\[ \xymatrix{ F(d) \ar[rr]^-{F(f)} \ar[d]_{\gamma}&& F(d') \ar[d]^{\gamma'}\\
%G(e) \ar[rr]_-{G(g)} && G(e')}\] commutes in $\CCat$.
If a presheaf $P $, on an essentially small category $\CCat$, is of the form $ \CCat(-, c)$, for some $c \in \CCat$, then $\ElP{P}{\CCat}$ is the \textit{slice category} $\CCat\ov c$ whose objects are pairs $(d,f)$ where $f \in \CCat (d,c)$, and morphisms $(d,f) \to (d', f')$ are given by %$g \in \CCat (d, d')$ such that $f = f' \circ g$. 
 by commuting triangles in $\CCat$:
\[ \xymatrix{ d \ar[rr]^-g \ar[dr]_{f} && d' \ar[dl]^-{f'}\\&c.&}\] 

Given a functor $\iota \colon \DCat \to \CCat$, let $\iota^* \CCat(-, c)$, $d \mapsto \CCat(\iota (d),c)$ be the induced pullback on presheaves. For all $c \in \CCat$, the \textit{slice category of $\DCat$ over $c$} is defined by $\DCat \ov c\defeq\ElP{\iota^*\CCat(-, c) }{\DCat}$. (This involves a small abuse of notation, and $ \DCat \ov c$ is more accurately denoted by $\iota \ov c$.)
%The pullback $\iota^* \CCat(-,c)$, $d \mapsto \CCat(\iota (d),c)$ of a functor $\iota\colon\DCat \rightarrow \CCat$ defines a presheaf on $\DCat$, and the \emph{slice category $ \iota \ov c $ (or $\DCat \ov c$) of $\iota$ (or $\DCat$) {over} $c$}, %also denoted by $DCat \ov c$ 
%is defined by $\iota \ov c \defeq\ElP{\iota^*\CCat(-, c) }{\DCat}$. 

In particular, the Yoneda embedding $\CCat \to \pr{\CCat}$ induces a canonical isomorphism $ \ElP{P}{\CCat} \cong \CCat \ov P$ for all presheaves $P$ on $\CCat$, and these categories will be identified in this work. 

% is called the \emph{slice category of $\iota$ {over} $c$}. % is defined as $ \iota \ov c \defeq\ElP{\CCat(-, c) \circ \iota}{\DCat}$. So, its %morphisms 
% in $\CCat$, has 
%Its objects are pairs $(d,f)$ with $d \in \DCat$ and $f \in \CCat(\iota (d),c)$, and its morphisms 
%$(d, f) \to (d', f')$ are given by those $g \in \DCat (d, d')$ such that $ \iota (g)$ describes a morphism in $ \CCat \ov c$. If $\iota$ is an inclusion of subcategories $\DCat \hookrightarrow \CCat$, then we write $\DCat \ov c \defeq \iota \ov c$. 
%The categories $ c \ov \iota $ and $c \ov \DCat$ are defined similarly. %   %\srnote{2603 - do I use?}

%For $n \in \N$, the set $ \{1,\dots, n\}$ is denoted by $\nn$, so $\nul $ denotes the empty set $\emptyset$. 
%Let $\Sigma$ denote 
\medspace

 The groupoid of finite sets and bijections is denoted by $\fiso$. For $n \in \N$, the set $ \{1,\dots, n\}$ is denoted by $\nn$. So $\nul = \emptyset$ is the empty set.% $\emptyset$. 

%I denote the category of finite sets by $\fin$.

\begin{rmk}\label{rmk skelet}
	
%The permutation groupoid $\Sigma$ -- with objects $n \in \N$ and morphisms 
%$\Sigma (m, n) = Aut(\nn) $ if $ m = n$, and $ \Sigma(m,n) = \emptyset$ if $m \neq n$ -- is skeletal in $\fiso$. 
Let $\Sigma\subset \fiso$ denote the skeletal subgroupoid on the objects $\nn$, for $n \in \N$. A presheaf $P\colon \fiso^\mathrm{op} \to \Set$ on $\fiso$, also called a \emph{(monochrome} or \emph{single-sorted) species} \cite{Joy81}, determines a presheaf on $\Sigma$ by restriction. %, or \emph{symmetric sequence}, 
	 Conversely, a $\Sigma$-presheaf $Q$ may always be extended to a $\fiso$-presheaf $Q_{\fiso}$, by setting
	\[ Q_{\fiso}(X) \defeq \mathrm{lim}_{(\nn,f) \in \Sigma \ov X} Q(\nn) \ \text{ for all } n \in \N.\]% if $\nn \defeq \{ 1,\dots, n\}.$
%%   %\srnote{got rid of convenience remark...}	
%For convenience, we work with presheaves over (categories containing) $\fiso$ instead of $\Sigma$.
 %\warn{However, in some contexts, $\Sigma$ will be used, particularly for examples where the reader is likely to be more familiar with a grading by $\N$.}
%	\warn{21.5do briefly consider what actually happens in the paper..}}
 \end{rmk}

%\begin{ex}%   %\srnote{R.11 -example added?}
%	An uncoloured symmetric operad in $\Set$ is described, up to equivalence, by a species $O \colon \fiso^{\mathrm{op}}\to \Set$, together with the data of a unital operadic composition. 
%	
%	However, more is required to specify
%
%\end{ex}

%\subsection{Graphical species}

%\warn{REV. 11. Question - what type of intuition here? Operad example above? Usual problems ASK RICHARD} %   %\srnote{R11. recall previous versions...}
Graphical species, defined in \cite[Section~4]{JK11}, are a \textit{coloured} or \textit{multi-sorted} version of species. 

Let the category $\fisinv$ be obtained from $\fiso $ by adjoining a distinguished object $\S$ that satisfies %according to% such that 
%%   %\srnote{change ok? should I reorder and include equivariance...?
%Also, single and multi-shorted}
\begin{itemize}
	\item $\fisinv (\S, \S) = \{ id, \tau \} $ with $\tau^2 = id$,
	%	\item for all finite sets $X$ and $Y$, $\fisinv(X,Y) = \fiso (X,Y)$,
	\item for each finite set $X$ and each element $x \in X$, there is a morphism $ch_x \in \fisinv (\S, X)$ that `chooses' $x$, and $\fisinv(\S, X)= \{ch_x\ ,\ ch_x \circ \tau \}_{x \in X}$, % \amalg \{ch_x \circ \tau \}_{x \in X}
	\item $\fisinv(X,Y) = \fiso (X,Y)$ for all finite sets $X$ and $Y$, and morphisms are equivariant with respect to the action of $\fiso$. That is, $ ch_{f(x)} = f \circ ch_x \in \fisinv (\S, Y)$ for all $ x \in X$ and all bijections $f\colon X \xrightarrow {\cong} Y$.
\end{itemize}
% \[
%\begin{array}{lll} \fisinv (\S, \S) &=& \{ id, \tau \} \ \text{ with } \tau^2 = id,\\ 
%\fisinv(\S, X)&=& \{ch_x\}_{x \in X} \amalg \{ch_x \circ \tau \}_{x \in X} \\
%\fisinv(X,Y)& = &\fiso (X,Y), \ \text{ where } X \text{ and } Y \text{ are finite sets.}\end{array}\] 

\begin{defn}\label{defn: graphical species}
 A \emph{graphical species} is a presheaf $S\colon {\fisinv}^\mathrm{op} \to \Set$. 
 
 The element category of a graphical species $S$ is denoted by $\elG[S] \defeq \ElP{S}{\fisinv}$, and the category of graphical species by $\GS\defeq\pr{\fisinv}$. % is denoted $\GS\defeq\pr{\fisinv}$. 

\end{defn}

% An \emph{involutive set} is a set $\CCC$ equipped with an involution $\omega\colon \CCC \to \CCC$, $\omega^2 = id$. Given an involutive set $(\CCC, \omega)$, the set of $\omega$-orbits in $\CCC$ is denoted by $\widetilde \CCC$, and $\tilde c \in \widetilde \CCC$ is the orbit of $c \in \CCC$.

%   %\srnote{consistency in using $S_\tau$...}
Hence, a graphical species $S$ is described by a species $(S_X)_{X \in \fiso}$, and a set $S_\S$ with involution $S_\tau \colon S_\S \to S_\S$, % an involutive set $(\CCC, \omega) = (S_\S, S_\tau)$ called the \emph{palette} of $S$, % and involution $\omega = {S_{\tau}}\colon \CCC \to \CCC$, 
together with, for each finite set $X$, and $x \in X$ a $\fiso$-equivariant {projection} $S(ch_x)\colon S_X \to S_\S$.

%    %\srnote{0104}
%For any graphical species $S$, the Yonedan embedding $\fisinv \hookrightarrow \GS$ induces a canonical isomorphism between the the slice category $\fisinv \ov S$ and the category $\ElP{S}{\fisinv}$ of elements of $S$ (see \cref{defn: general element}). Where the context is clear, these categories will not be distinguished and %$\ElP{S}{\fisinv}$ 
%will be denoted simply by $\ElS$.

%These categories will be identified in what follows, and called the \textit{category of elements of $S$}, denoted by $\fisinv \ov S$.

\begin{defn}\label{c arity}%   %\srnote{1503 - R11 amalgamated definitions}
	Given a graphical species $S$, the pair $(S_\S, S_\tau)$ is called the \emph{(involutive) palette} of $S$ and elements $c \in S_\S$ are \textit{colours of $S$}. If $S_\S$ is trivial then $S$ is a \emph{monochrome graphical species}.
%\end{defn}
%
%\begin{defn}

	For each element $ \underline c = (c_x)_{x \in X} \in {S_\S}^{X}$, the \emph{$\underline c$-(coloured) arity $ {S_{\underline c}}$} is the fibre above $ \underline c \in {S_\S}^{X}$ of the map $(S(ch_x))_{x \in X}\colon {S_X} \to{S_\S}^{X}$.

	%	\warn{check notation for this map}
	
\end{defn}

%\warn{R11. tie in with intuition}

% In this case $S$ is called a \emph{$(\CCC, \omega)$-coloured graphical species} and elements $c \in \CCC$ are \textit{colours of $S$}. If $(\CCC, \omega)\cong \{*\}$ is trivial then $S$ is a \emph{monochrome graphical species}.

	%\footnote{In this work, the singular term `palette' replaces the more standard multi-word `colour set'. } 
%\end{defn}

\begin{rmk} \label{rmk: involution} %(See also \cref{rmk: visualise}.) %   %\srnote{R 13}
The involution $\tau$ on $\S$ is responsible for much of the heavy lifting in the constructions that follow. Initially however, its role may seem obscure. I mention two key features here. First, the involution provides the expressive power necessary to describe composition rules involving colours, such as particle spin, that may have an orientation, or direction. %encode structure such as (topological) type of boundary components, or particle spin, that may, or may not have an orientation. % thereby bolstering the expressive power of graphical species. 
(Directed graphical species are discussed in \cref{ex:direction}.)

The second is more fundamental. 
As will be explained in \cref{ex: embedding fin}, $\fisinv$ embeds in a certain category of graphs. Under this embedding, the distinguished object $\S$ is represented as the exceptional edge with no vertices, and the involution $\tau$ as the `flip' map that swaps its ends (see \cref{fig. lim and colim}). %as the exceptional edge with no vertices, and the flip map that swaps the ends. 
This enables us to encode formal compositions in graphical species -- described in terms of graphs -- as categorical limits, and thereby derive the results of this paper % our results %, and the Segal condition for \cref{thm: nerve intro} (\cref{nerve theorem}), in particular, using 
by purely abstract methods. For example, the involution underlies a well-defined notion of graph nesting, or substitution, in terms of diagram colimits, without the need to specify extra data (see Sections \ref{sec: non-unital} and \ref{degenerate}, and compare with, e.g.\ \cite{JY15,HRY15}).

\end{rmk}

\begin{ex}\label{Comm}
	The terminal graphical species $\Comm$ has trivial palette and $ \Comm_X = \{*\}$ for all finite sets $X$.
\end{ex}

\begin{defn}\label{def: palette preserving]}

	A morphism $ \gamma \in \GS(S, S')$ is \emph{palette-preserving} if its component $\gamma_\S$ at $\S$ is the identity on ${S_\S}$. For a fixed palette $(\CCC, \omega)$, $\CGS$ is the subcategory of $\GS$ on the $(\CCC, \omega)$-coloured graphical species and palette-preserving morphisms.% is denoted $\CGS$.
	
	\end{defn}

%   %\srnote{2803 - sentence deleted. Obvious}
%If $ \gamma \in \GS(S, S')$ is a palette-preserving morphism of $(\CCC, \omega)$-coloured graphical species, 
%$\gamma_X ({S_{\underline c}}) \subset {S' _{\underline c}} $ for all finite sets $X$ and all $ \underline c \in \CCC^X$.
%   %\srnote{0203 - slight notation change.}

%   %\srnote{0203 - slight order change}
\begin{ex}\label{Comm c}
	For any palette $(\CCC, \omega)$, the terminal $(\CCC, \omega)$-coloured graphical species $\CComm{}$ in $\CGS$ is described by $\CComm{X}= \CCC^X$ with $ \CComm{\underline c} = \{*\}$ for all finite sets $X$ and all $ \underline c \in \CCC^X.$

	In particular, let $\Disig$ be the unique non-identity involution on the set $\Di \defeq \{\In, \Out\}$. A \textit{monochrome directed graphical species} is a graphical species with palette $\Dipal$. The terminal monochrome directed graphical species is denoted by $Di \defeq\CComm[\Dipal]{}$. See also \cref{ex:direction}. % be the terminal $\Dipal$-coloured graphical species. 
\end{ex}

\begin{rmk} \label{rmk: visualise} %(This remark is made precise in \cref{subs. sheaves}.) 
	%As will be explained in \cref{ex: embedding fin}, $\fisinv$ embeds in a certain category of graphs. Under this embedding, the distinguished object $\S$ and the involution $\tau$ on $\S$, are represented, as in \cref{fig. lim and colim}, as the exceptional edge with no vertices, and the flip map that swaps the ends. 
	 %labelled by $1$ and $2$. The involution $\tau$ on $\S$ is the map is represented by the map $1 \leftrightarrow 2$, that flips the edge. 
	In the graphical representation of the category $\fisinv$, mentioned in \cref{rmk: involution}, a finite set $X$ is represented by a \textit{corolla} or \textit{star graph} $\CX$ with legs in bijection with $X$ (\cref{fig. species vis} left side). %For each $x \in X$, the map $ch_x \colon \S \to X$ corresponds to the inclusion $1 \mapsto x $, of the exceptional edge as an edge of the corolla.
%	If $S$ is a graphical species, then 
	
	An element $ \phi \in S_X$ of a graphical species $S$ is represented as a labelling or \textit{decoration} of the unique vertex of $\CX$, and a \textit{colouring} of the legs of $\CX$ by $S_\S$ according to $S(ch_x)$ for $x \in X$ (\cref{fig. species vis} right side). %
%	
%	%   %\srnote{Refers to R18. 24.2 This is tricky...}
%	
%Given a graphical species $S$ and $ \underline c = (c_x)_{x \in X}, \ c_x \in S_\S$, % is a $(\CCC, \omega)$-coloured graphical species, 
%an element $\phi \in S_{\underline c} $ % for some $ \underline c = (c_x)_{x \in X} \in \CCC^X$ for some finite set $X$. An element $\phi \in S_{\underline c} \subset S_X$ 
%may be visualised, as in the right hand side of \cref{fig. species vis}, as a corolla with vertex \textit{decorated by $\phi$} and legs bijectively \textit{coloured} by $(c_x)_{ x \in X}$. %A colour $c \in S_\S$ may then be visualised as a line with ends labelled by $c$ and $S_\tau c$ 
%(See, for example \cref{fig: directed elements}.)
	
	\begin{figure}[htb!]
	\[	\begin{tikzpicture}[scale = 0.55]
	\node at(0,0){\begin{tikzpicture}[scale = 0.55]
		\draw [thick] (-8.5,3.5) --(-6.5,3.5);
	%	\node at (-5.8, -0.5) {\tiny {$(x)$}};
	\node at (-8.3, 3.1) {\tiny{ $1$}};
	\node at (-6.7, 3.1) {\tiny{ $2$}};

	\node at (-7.5,1.7){$\S$};
	\node at (0,1.7){$X = \{x,y,z\}$};
	
	\draw[->] (-5.5,3.5)--(-3.5,3.5);
	\node at (-4.5,3.1){\scriptsize{$1 \mapsto x$}};
		\node at (-4.5,3.9){\scriptsize{$ch_x$}};
	\draw [ ultra thick] (0,3.5) -- (-2.25,3.5);
	\draw (0,3.5)--(1.8,4.5)
	(0,3.5) --(1.8,2.5);
\filldraw (0,3.5) circle (3pt);	
	\node at (-1, 3.2) {};
	\node at (0.3, 4.5) {};
	\node at (-2.2, 3) {\scriptsize {$x$}};
	\node at (1.7, 4.05) {\tiny {$y$}};
	\node at (1.7, 2.75) {\tiny{ $z$}};
	\node at (0.3,2.5) {}; 
	
		\draw [gray, decorate,decoration={brace,amplitude=5pt, raise=4pt},yshift=0pt]
	(-9,2) -- (-9,5) %node [black,midway,xshift=0.8cm] {\footnotesize
	%	$P_2$}
	;
	\draw [gray,decorate,decoration={brace,amplitude=5pt,mirror,raise=4pt},yshift=0pt]
(2.5,2) -- (2.5,5)% node [black,midway,xshift=0.8cm] {\footnotesize
%		$P_2$}
	;
	
	\end{tikzpicture}};
	
	\draw[gray, thick, ->] (7.5,0)--(9.5,0);
	\node at (8.5,.5) {$S$};
	\node at(17, 0){\begin{tikzpicture}[scale = 0.55]

	%\node at(0,-2) {\small{$\phi_2 $}};
%%%%%%%%%%%%%%%
	\draw [cyan, thick] (-8.5,0) --(-6.5,0);
	%	\node at (-5.8, -0.5) {\tiny {$(x)$}};
	\node at (-8.3, 0.3) {\tiny{ $c_x$}};
	\node at (-6.7, 0.3) {\tiny{ $\omega c_x$}};

	\draw[<-] (-5.5,0)--(-3.5,0);
	\node at (-4.5,.4){\scriptsize{$S(ch_{x})$}};
	
	\draw [ cyan, ultra thick] (0,0) -- (-2.25,0);
	\draw (0,0)--(1.8,1)
	(0,0) --(1.8,-1);
	
	\node at (-1, -0.3) {};
	\node at (0.3, 1) {};
	\node at (-2.2, -0.5) {\tiny {$(x)$}};
	\node at (-2.2, 0.3) {\tiny{ $c_{x}$}};
%		\node at (-.85, 0.3) {\tiny{ $\omega c_{x}$}};
		\node at (1.7, .55) {\tiny {$(y)$}};
	\node at (1.7, 1.2) {\tiny{ $c_{y}$}};
		\node at (1.7, -.6) {\tiny {$c_{z}$}};
	\node at (1.7, -1.25) {\tiny{ $(z)$}};
	\node at (0.3,-1) {};
	
	\draw [ draw=red, fill=white]
	(0,0) circle (15pt);
	
	\node at(0,0) {\small{$\phi $}};
	%\node at(0,-2) {\small{$\phi_2 $}};
		\draw [gray, decorate,decoration={brace,amplitude=5pt, raise=4pt},yshift=0pt]
	(-9,-1.5) -- (-9,1.5) %node [black,midway,xshift=0.8cm] {\footnotesize
	%	$P_2$}
	;
	\draw [gray, decorate,decoration={brace,amplitude=5pt,mirror,raise=4pt},yshift=0pt]
	(2.5,-1.5) -- (2.5,1.5)% node [black,midway,xshift=0.8cm] {\footnotesize
	%		$P_2$}
	;
		\node at (-7.5,-1.8){\small{${\color{cyan}{c_x}} \in S_\S$}};
	\node at (-.6,-1.8){\small{$\phi \in S_{X}$}};
	\end{tikzpicture}
		};
		\end{tikzpicture}
	\]
	\caption{%Visualising $\S$ as a loose edge and finite sets as corollas. 
	Graphical species may be represented graphically: % by decorations of Morphisms in $\fisinv$ may be represented graphically (left), and 
		$\phi \in S_{X}$ is represented as a $X$-corolla $\CX$ with vertex decorated by $\phi$ and $x$-leg coloured by ${c_x = S(ch_{x})}$.}\label{fig. species vis}
		\end{figure}
\end{rmk}

\begin{ex}\label{ex:direction}%   %\srnote{R18. better lay out with example} %The combinatorics of PROPs \cite{MacL65} may be descri

The graphical species $Di$ was defined in \cref{Comm c}. %let $\Disig$ be the unique non-identity involution on the set $\Di \defeq \{\In, \Out\}$ and let $Di \defeq\CComm[\Dipal]{}$ be the terminal $\Dipal$-coloured graphical species. 
For each finite set $X$, $Di_X =\{\In, \Out\}^X$ is the set of partitions $X = X_{\In}\amalg X_{\Out}$ of $X$ into \textit{input} and \textit{output} sets, with blockwise action of the partition-preserving isomorphisms in $\ElS[Di]$. %Hence $\fiso \times \fiso ^{\mathrm{op}}\subset \ElS[Di]$

In other words, $\ElS[Di]$ is equivalent to the category $\fisinvdi$, obtained from $\fiso\times \fiso^{\mathrm{op}} $ by adjoining a distinguished object $(\downarrow)$ (see \cref{fig: directed elements}(a)) with
%\begin{itemize} \item
trivial endomorphism group, % $\fisinvdi(\downarrow, \downarrow) = \{id_\downarrow\}$,
%\item 
and -- for all pairs $(X, Y)$ of finite sets -- \textit{input} morphisms $i_x\colon(\downarrow) \to (X,Y)$ for all $x \in X$, and \textit{output} morphisms $o_y\colon(\downarrow) \to (X,Y)$ for all $y \in Y$, that are compatible with the action of $\fiso\times \fiso^{\mathrm{op}} $ (see \cref{fig: directed elements}(d)).

% 
%%takes each finite set $Z$ to the set $\{\In, \Out\}^Z$ of ordered partitions $Z = Z_{\In}\amalg Z_{\Out}$ of $Z$ into \textit{input} and \textit{output} sets. % and $\ElS[Di]((X, (X_{\In}, X_{\Out})), (Y, (Y_{\In}, Y_{\Out})))$ is precisely the set of bijections $g \colon Y \to X$ that preserve the partitions. 
%%   %\srnote{double use of $Y,Z$ throughout.}
%%A \textit{directed } \warn{R 18. This is where I'm up to.... I don't know how to do this!!} 
%So $\ElS[Di]$ is equivalent to the category $\fisinvdi$ obtained from $\fiso\times \fiso^{\mathrm{op}} $ by adjoining a distinguished element $(\downarrow)$ with
%%\begin{itemize} \item
% trivial endomorphism group, % $\fisinvdi(\downarrow, \downarrow) = \{id_\downarrow\}$,
%%\item 
%and, for all pairs $(X, Y)$ of finite sets, and all $x \in X$ and $y \in Y$, \textit{input} morphisms $i_x\colon(\downarrow) \to (X,Y)$ and \textit{output} morphisms $o_y\colon(\downarrow) \to (X,Y)$ that are compatible with the action of $\fiso\times \fiso^{\mathrm{op}} $ (see \cref{fig: directed elements}(d)). 
%%\end{itemize} 

%Following \cref{rmk: visualise}, t
The objects $(X,Y)$ of $\fisinvdi$ may be represented, %of \textit{directed corollas} whose objects are either \textit{directed corollas} $ C_{X;Y}$ , 
as in \cref{fig: directed elements}(b), as \textit{directed corollas} %$ C_{X;Y}$, 
and the distinguished object $(\downarrow) $ as a %directed loose edge %. for all pairs $(X, Y)$ of finite sets, or, the distinguished 
\textit{directed exceptional edge} %$(\downarrow)$
 (\cref{fig: directed elements}(a)). %Elements $d \in \DDD$ and $\phi \in P_{X;Y}$ of a directed graphical species $P$ are visualised as decorated directed exceptional edges and corollas (compare \cref{fig. species vis}).
If $Y = \{*\}$ is a singleton, then $(X, \{*\})$ describes a %directed corolla $C_{X;\{r\}}$ is called a 
 \emph{rooted corolla} % and denoted by $t_X \defeq C_{X;\{*\}}$ 
 as in \cref{fig: directed elements}(c). %See also Examples \ref{ex: operad}, \ref{ex: dendroidal}, \ref{ex: operad endo} on operads.
 
%There is a canonical equivalence $\ElS[Di] \simeq \fisinvdi$, and h
Hence $\GS \ov \Di$ is equivalent to the category 
 %So $ \GS \ov Di \cong \pr{\ElS[Di]}$ is equivalent to the category 
 $\pr{\fisinvdi}$ of \textit{directed graphical species}. The subcategory $\CGS[\Dipal] \ov Di $ of \textit{monochrome directed graphical species} is equivalent to $ \pr{\fiso \times \fiso^{\mathrm{op}}}$. 
 %   %\srnote{0203 shortened sentence}

%Morphisms or directed corollas are in $\ElS[Di]$ are equivalent to morphisms $i_x, o_y\colon(\downarrow) \to C_{X, Y}$ (induced by $Di(ch_x)$ and $Di(ch_y \circ \tau)$ in $\fisinv(\S, X \amalg Y))$)all $x \in X$ and $y \in Y$, there are morphisms $i_x, o_y\colon(\downarrow) \to (X,Y)$ in $\ElS[Di]$ (induced by $Di(ch_x)$ and $Di(ch_y \circ \tau)$ in $\fisinv(\S, X \amalg Y))$). (See \cref{fig: directed elements}.) 
% 
 %Note that $(\downarrow)$ has no non-trivial endomorphisms in $\ElS[Di]$. 
% So
% So, $\ElS[Di]$ is equivalent to the category $\Bifiso$, obtained from $\fiso \times \fiso ^\mathrm{op}$ by adjoining a distinguished element $\downarrow$ and, for all $x \in X$ and $y \in Y$, morphisms $i_x, o_y\colon(\downarrow) \to (X,Y)$ (induced by $Di(ch_x)$ and $Di(ch_y \circ \tau)$ in $\fisinv(\S, X \amalg Y))$).

\begin{figure}[htb!]
	\[	\begin{tikzpicture}
	
	\node at (1.4,-2.3) {(a)};
	\node at (1,0){\begin{tikzpicture}[scale = .6]
		
		\draw[thick]%(0,0)--(0,2)
		%(0,0)--(-1.5,-2)
		%	(0,0) --(0,2)
		(0,1)--(0,3);

		\node at (-.5, 1.3) {\tiny{$\Out$}};
		\node at (-.5, 2.7) {\tiny{$\In$}};

		\end{tikzpicture}};

	\node at (1.8,0){\begin{tikzpicture}[scale = .6]
		
		\draw[ -<-=.5]%(0,0)--(0,2)
		%(0,0)--(-1.5,-2)
		%	(0,0) --(0,2)
		(0,1)--(0,3);
		
		\end{tikzpicture}};
	
			\node at (5.3, -2.3) {(b)};
	\node at (4,0){\begin{tikzpicture}[scale = .55]
		\draw[thick]%(0,0)--(0,2)
		%(0,0)--(-1.5,-2)
		%	(0,0) --(0,2)
		(-1.5,0)--(0,2)
		(-1,0)--(0,2)
		(1,0)--(0,2)
		(1.5,0)--(0,2)
		%(0,0)--(1.5,-2)
		(0,2)--(-1,4)
		(0,2)--(-.5,4)
		(0,2)--(1,4);
		
		\draw[dashed]
		(-. 8, 0)--(0.8,0)
		(-.3,4)--(.8,4);
		
		\draw[decoration={brace, raise=5pt},decorate]
		(-1,4) -- node[above=6pt] {\small{$X$}} (1,4);
		\draw[decoration={brace,mirror, raise=5pt},decorate]
		(-1.5,0) -- node[below=6pt] {\small{$Y$}} (1.5,0);
		
		\node at (-1.9, .2) {\tiny{$\Out$}};
		\node at (1.9, .2) {\tiny{$\Out$}};
		\node at (.8, 1.6) {\tiny{$(\In)$}};
		%	\node at (0, -.3){$x_0$};
		\node at (-1.4,3.8) {\tiny{$\In$}};
		\node at (-.8,2.4) {\tiny{$(\Out)$}};
		\node at (1.4, 3.8) {\tiny{$\In$}};

		%\draw[decoration={brace,raise=5pt},decorate]
		% (-1.5,2.1) -- node[above=6pt] {\small{$n$ inputs}} (1.5,2.1);
		
		\draw [ draw=black, fill=white]
		(0,2)circle (8pt);
		%(0,-0) circle (15pt);
		
		%\node at(0,0) {\small{$\iota $}};
		%	\node at(0,2) {\small{$\phi $}};
		
		\end{tikzpicture}};
	
	\node at (6.6,0){\begin{tikzpicture}[scale = .55]
		\draw[-<-=.3]%(0,0)--(0,2)
		%(0,0)--(-1.5,-2)
		%	(0,0) --(0,2)
		(-1.5,0)--(0,2);
		\draw[-<-=.3]	(-1,0)--(0,2);
		\draw[-<-=.3]	(1,0)--(0,2);
		\draw[-<-=.3]	(1.5,0)--(0,2);
		%(0,0)--(1.5,-2)
		\draw[-<-=.7](0,2)--(-1,4);
		\draw[-<-=.7](0,2)--(-.5,4);
		\draw[-<-=.7]	(0,2)--(1,4);
		
		\draw[dashed]
		(-. 8, 0)--(0.8,0)
		(-.3,4)--(.8,4);
		
		\draw[decoration={brace, raise=5pt},decorate]
		(-1,4) -- node[above=6pt] {\small{$X$}} (1,4);
		\draw[decoration={brace,mirror, raise=5pt},decorate]
		(-1.5,0) -- node[below=6pt] {\small{$Y$}} (1.5,0);

		\draw [ draw=black, fill=white]
		(0,2)circle (8pt);
		
		\end{tikzpicture}};
		\node at (10,0.1){	\begin{tikzpicture}
		
		\node at (2,0){\begin{tikzpicture}[scale = .6]
			\draw[ultra thick]%(0,0)--(0,2)
			%(0,0)--(-1.5,-2)
			(0,0) --(0,2);
			%(0,0)--(1.5,-2)
			\draw
			(0,2)--(-1,4)
			(0,2)--(-.5,4)
			(0,2)--(1,4);
			
			\draw[dashed]
			(-.3,4)--(.8,4);
			
			\draw[decoration={brace, raise=5pt},decorate]
			(-1,4) -- node[above=6pt] {\small{$X$}} (1,4);
			
			\draw [ draw=black, fill=white]
			(0,2)circle (8pt);
			%(0,-0) circle (15pt);
			\node at (0, -.3){$*$};
			%\node at(0,0) {\small{$\iota $}};
			%	\node at(0,2) {\small{$\phi $}};
			
			\end{tikzpicture}};
		\end{tikzpicture}
	};
		\node at (10,-2.3) {(c)};
	
	\node at (14.5,0){\begin{tikzpicture}[scale = .6]
		\draw[red,thick, -<-=.5]
		(-2.5, 2)--(-2.5,4);
		\draw [red, ->]
		(-1.8,3)--(-1,3);
		\node at (-1.4,3.3) {\tiny{$i_x$}};
		\draw[cyan,thick, ->-=.5]
		(3, 2)--(3,0);
		\draw [cyan, ->]
		(2.5,1)--(1.7,1);
		\node at (2.2,.7) {\tiny{$o_y$}};
		\draw[-<-=.3]%(0,0)--(0,2)
		%(0,0)--(-1.5,-2)
		%	(0,0) --(0,2)
		(-1.5,0)--(0,2);
		\draw[ -<-=.3]	(-1,0)--(0,2);
		\draw[cyan, thick,-<-=.3]	(1,0)--(0,2);
		\node at (.55, .3){\scriptsize{$y$}};
		\draw[ -<-=.3]	(1.5,0)--(0,2);

		%(0,0)--(1.5,-2)
		\draw[-<-=.7](0,2)--(-1,4);
		\draw[red, thick,-<-=.7](0,2)--(-.5,4);
		
		\node at (-.2,3.6){\scriptsize{$x$}};
		\draw[-<-=.7]	(0,2)--(1,4);
		
		\draw[dashed]
		(-. 8, 0)--(0.8,0)
		(-.3,4)--(.8,4);
		
		\draw[decoration={brace, raise=5pt},decorate]
		(-1,4) -- node[above=6pt] {\small{$X$}} (1,4);
		\draw[decoration={brace,mirror, raise=5pt},decorate]
		(-1.5,0) -- node[below=6pt] {\small{$Y$}} (1.5,0);

		\draw [ draw=black, fill=white]
		(0,2)circle (8pt);
		
		\end{tikzpicture}};

		\node at (14.5,-2.3) {(d)};

	\end{tikzpicture}\]
	\caption{(a) The directed exceptional edge $(\downarrow)$; (b) the pair $(X,Y) \in \fisinvdi$ describes a directed corolla; (c) $(X, \{*\})$ describes a rooted corolla $t_X$; (d) input and output morphisms in $\fisinvdi$.}
	\label{fig: directed elements}
\end{figure}

A \textit{PROP} \cite{MacL65} is a strict symmetric monoidal category $(\DCat[E], + , 0)$ whose objects are natural numbers and whose monoidal product $+$ is addition on objects. %Equivalently, $\DCat[E]$ is described by a presheaf $P = P_{\DCat[E]} \colon \left(\fiso \times \fiso^{\mathrm{op}}\right)^{\mathrm{op}} \to \Set$ with $P(X;Y) = \DCat[E](|X|, |Y|)$, together with the distinguished element $id_1 \in P(\one;\one) = \DCat[E](1,1)$, and maps defining the composition and monoidal product. 
More generally, for any set $\DDD$, a $\DDD$-\textit{coloured PROP} $(\DCat[E]^{\DDD}, \oplus , \emptyset)$ is a strict symmetric monoidal category whose monoid of objects is freely generated by $\DDD$. 
By \cref{rmk skelet}, this is equivalently a presheaf $P^{\DDD}$ on $\fisinvdi$ with $P^{\DDD}(\downarrow) = \DDD$ and, 
\[P^{\DDD}(X;Y) = \mathrm{lim}_{{(\mm,f)\in \Sigma \ov X}\atop {(\nn,g)\in \Sigma \ov Y}} \left(\coprod_{(\ccc, \ddd) \in \DDD^{\mm} \times \DDD^{\nn} } \DCat[E]^{\DDD}(\ccc, \ddd)\right) \text{ for all pairs $(X,Y)$ of finite sets,} \]
together with composition and monoidal product maps, and an injection $P^{\DDD}(\downarrow) \hookrightarrow P^{\DDD}(\one;\one)$ that induces the identities for composition. In particular, PROPs may be described in terms of graphical species. % of monochrome directed graphical species. 

\end{ex}

\subsection{Multiplication and contraction on graphical species}

Intuitively, a multiplication $\diamond$ on a graphical species $S$ is a rule for combining (gluing) distinct elements of $S$ along pairs of legs (called `ports') with dual colouring as in \cref{multiplication}: %whose colours are matched by the involution. 
 \begin{figure}[htb!]
	
	%\begin{figure}[H] 
	%\centering{
	\begin{tikzpicture}
	\node at (0,0){\begin{tikzpicture}[scale = 0.5]
		%\filldraw(0,0) circle(3pt);
		\foreach \angle in {-0,90,180,270} 
		{
			\draw(\angle:0cm) -- (\angle:1.5cm);

		}
		\draw [ thick] (0,0) -- (1.5,0);
		
		\node at (-1, -0.3) {};
		\node at (0.3, 1) {};
		\node at (1, -0.5) {\tiny {$(x)$}};
		\node at (1, 0.3) {\tiny c};
		\node at (0.3,-1) {};
		
		\draw [ draw=red, fill=white]
		(0,0) circle (15pt);
		
		\node at(0,0) {\small{$\phi $}};
		%\node at(0,-2) {\small{$\phi_2 $}};
		\end{tikzpicture}
	};

	\node at (2,0){\begin{tikzpicture}[scale = 0.5]
		%\filldraw(0,0) circle(3pt);
		\foreach \angle in {60,180, 300} 
		{
			\draw(\angle:0cm) -- (\angle:1.5cm);

		}
		\draw [ thick] (0,0) -- (-1.5,0);
		\node at (-1, 0.3) {\tiny {$\omega c$}};
		\node at (-1, -0.5) { \tiny{$(y)$} };
		\node at (1, 0.8) {};
		\node at (1, -0.8) {};
		\draw [ draw=red, fill=white]
		(0,0) circle (15pt);
		
		\node at(0,0) {\small{$\psi $}};

		\end{tikzpicture}};
	
	% \node at (2,-1.5){$\CX[Y]$};
	
	\node at (4,0){\large $\longrightarrow$};
	
	\node at (6,0) 
	{\begin{tikzpicture}[scale = 0.5]
		%\filldraw(0,0) circle(3pt);
		\foreach \angle in {-0,90,180,270} 
		{
			\draw(\angle:0cm) -- (\angle:1.5cm);
			\draw[thick]
			(0,0)--(2.1,0);
			\draw [ draw=red, fill=white]
			(0,0) circle (15pt);
			
			\node at(0,0) {\small{$\phi $}};

		}
		
		\end{tikzpicture}};
	
	\node at (7,0){\begin{tikzpicture}[scale = 0.5]
		%\filldraw(0,0) circle(3pt);
		\foreach \angle in {60,180, 300} 
		{
			\draw(\angle:0cm) -- (\angle:1.5cm);

		}
		\draw [ thick] (0,0) -- (-1.5,0);
		\draw [ draw=red, fill=white]
		(0,0) circle (15pt);
		
		\node at(0,0) {\small{$ \psi $}};
		
		\end{tikzpicture}};
	
	\draw [ draw=blue]
	(6.5,0) ellipse (30pt and 14 pt);
%	\node at (6.25,0.1){\tiny{$\omega c$}};
	
	\node at (6.5,-1.5){ $\phi \diamond_{x,y} \psi$};
	
\end{tikzpicture}

\caption{Multiplication}
\label{multiplication} 	
\end{figure}

The notation `$\pto$' denotes a partial map of sets. So $f \colon A \pto B$ is given by a subset $A ' \subset A$ and a function $A' \to B$. % with $A' \subset A$.
 
\begin{defn}\label{defn: multiplication}\label{coloured mult cont} %Let $S$ be a $(\CCC, \omega)$- coloured graphical species. 
	A \emph{multiplication} $\diamond$ on a graphical species $S$ is given by a family of partial maps %%   %\srnote{RG? in terms of limits?}
	\begin{equation} \label{eq: mult def} - \diamond^{X,Y}_{x,y} - \colon S_{X \amalg \{x\}} \times S_{Y \amalg \{y\}} \pto S_{X \amalg Y},\end{equation} 
	defined (for all $X,Y$ and $x,y$) whenever %for all pairs $X, Y$ of finite sets and all pairs 
	$\phi \in S_{X \amalg \{x\}}, \psi \in S_{Y \amalg \{y\}}$ satisfy $S(ch_x)(\phi) = S(ch_y \circ \tau)(\psi)$. 

The multiplication $\diamond$ satisfies the following conditions:
 \begin{enumerate}[(m1)]%%   %\srnote{Do I use the commutativity axiom anywhere? Iso follows from equivariance. Remove axiom? Possibly use in uniqueness of unit. leave as is..}
 	\item \emph{(Commutativity axiom.)}\\
 	Wherever $\diamond^{X,Y}_{x,y}$ is defined, %   %\srnote{R20}
 	\[\psi \diamond^{Y,X}_{y,x} \phi = \phi \diamond^{X,Y}_{x,y} \psi \] %for all finite sets $X,Y$ and all pairs $\phi \in S_{X \amalg \{x\}}$, $\psi \in S_{Y \amalg \{y \}}$ for which the map is defined.
 	\item \emph{(Equivariance axiom.)}\\
 	%The multiplication $\diamond$ respects the action of $\fiso$ on $S$:
 	For all bijections $ \hat \sigma\colon X \xrightarrow{\cong} W$ and $\hat \rho\colon Y \xrightarrow{\cong} Z$ that extend to bijections $ \sigma \colon X \amalg \{x\} \xrightarrow{\cong} W \amalg \{w\}$ and $ \rho \colon Y \amalg \{y\} \xrightarrow{\cong} Z \amalg \{z\}$,
 		\[ S(\hat \sigma \sqcup \hat \rho)(\phi \diamond ^{W,Z}_{w,z} \psi) = S(\sigma)(\phi) \diamond^{X,Y}_{x,y} S(\rho)(\psi),\]
 	(where $ \hat \sigma \sqcup \hat \rho\colon X \amalg Y \xrightarrow{\cong} W \amalg Z$ is the block permutation).
 	
 \end{enumerate}
 
 A \emph{unit for the multiplication $\diamond$} is a map 
 $\epsilon\colon S_\S \to S_\two $, $c \mapsto \epsilon_c$ such that, for all $X$ and all $\phi \in S_{X \amalg \{x\}}$ with $S(ch_x) = c \in S_\S$, 
 \[ \phi \diamond^{X, \{1\}}_{x,2} \epsilon_c = \epsilon_c \diamond^{\{1\},X}_{2,x} \phi = \phi.\]
A multiplication $\diamond$ is called \emph{unital} if it has a unit $\epsilon$. In this case 
% If $\epsilon$ is a unit for $\diamond$ then $\diamond$ is said to be \emph{unital} and 
 %The map $\epsilon$ is called a \emph{unit} for $\diamond$, and, 
$ \epsilon _c$ is a \emph{$c$-coloured unit for $\diamond$}. 

\end{defn}

If $(\diamond,\epsilon\colon \CCC \to S_\two)$ is a unital multiplication on a $(\CCC, \omega)$-coloured graphical species $S$, then $\epsilon_c \in S_{(c, \omega c)}$ for all $c \in \CCC$. %So a unit map $\epsilon\colon \CCC \to S_\two$ is injective. %Observe that there is an involution $ S(\sigma_\two) $ on $S_\two$ induced by the unique non-identity endomorphism in 
Let $\sigma_\two\in \fisinv (\two, \two) $ be the unique non-identity endomorphism. %   %\srnote{shortened sentence}

%Let $S$ be a $(\CCC, \omega)$-coloured graphical species with multiplication $\diamond$.
\begin{lem} \label{lem: unique and equivariant}If $\diamond$ admits a unit $\epsilon: \CCC \to S_\two$, it is unique. Moreover, $\epsilon$ is compatible with the involutions $\omega = S_\tau$ on $\CCC$ and $S(\sigma_\two)$ in that %   %\srnote{removed incorrect set membership}
	\begin{equation}
	\label{eq: unit compatible with involution}
	\epsilon \circ \omega = S(\sigma_\two) \circ \epsilon\colon \ \CCC \to S_\two.
	\end{equation}
	%	
	%	, for each $c \in \CCC$, the $c$-coloured unit $\epsilon_c$ is unique, and commutes with the involutions $\omega\colon \CCC\to \CCC$ and the non-identity automorphism $\sigma_\two \in \fisinv(\two, \two)$. That is
\end{lem}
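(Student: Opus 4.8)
The plan is to extract both claims from a single computation: the partial product of $\epsilon_c$ with $\epsilon_{\omega c}$, glued along their ``second'' legs, evaluated in the two possible factor orders. Since $(\CCC,\omega) = (S_\S, S_\tau)$ we have $\omega = S_\tau$, so the first equality in \eqref{eq: unit compatible with involution} is a tautology, and the real content of the lemma is the single identity $\epsilon_{\omega c} = S(\sigma_\two)(\epsilon_c)$ together with uniqueness. Notably, only the commutativity axiom (m1) and the unit equation will be needed; the equivariance axiom (m2) plays no role here.

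First I would record the colours. As observed in the paragraph preceding the statement, $\epsilon_c \in S_{(c,\omega c)}$, so $S(ch_1)(\epsilon_c) = c$ and $S(ch_2)(\epsilon_c) = \omega c$; applying this with $\omega c$ in place of $c$ gives $\epsilon_{\omega c}\in S_{(\omega c, c)}$. Hence the leg $2$ of $\epsilon_c$ carries colour $\omega c$ and the leg $2$ of $\epsilon_{\omega c}$ carries colour $c$, and since $\omega c = S_\tau(c)$ the matching condition $S(ch_2)(\epsilon_c) = S(ch_2\circ\tau)(\epsilon_{\omega c})$ holds, so the product $\epsilon_c \diamond^{\{1\},\{1\}}_{2,2}\epsilon_{\omega c}$ and its transpose are defined.

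Next I would evaluate this product in each factor order. Reading $\epsilon_{\omega c}$ as the right-hand unit and taking $\phi = \epsilon_c$ (whose glued leg $2$ indeed carries the colour $\omega c$ for which $\epsilon_{\omega c}$ is a unit), the unit equation gives $\epsilon_c \diamond^{\{1\},\{1\}}_{2,2}\epsilon_{\omega c} = \epsilon_c$. Symmetrically, reading $\epsilon_c$ as the right-hand unit and taking $\phi = \epsilon_{\omega c}$, it gives $\epsilon_{\omega c}\diamond^{\{1\},\{1\}}_{2,2}\epsilon_c = \epsilon_{\omega c}$. The commutativity axiom (m1) identifies these two products; but the two expressions a priori live in $S_{X\amalg Y}$ and $S_{Y\amalg X}$ with $X=Y=\{1\}$, so the identification is along the block transposition, which under the canonical identifications of both sets with $S_\two$ is precisely $\sigma_\two$. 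Tracking the two surviving legs through this transposition therefore yields $\epsilon_{\omega c} = S(\sigma_\two)(\epsilon_c)$, the required involution compatibility.

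Finally, for uniqueness I would run the identical computation with two units $\epsilon$ and $\epsilon'$: gluing leg $2$ of $\epsilon_c$ to leg $2$ of $\epsilon'_{\omega c}$ and simplifying each factor order with the appropriate unit equation gives $\epsilon_c\diamond^{\{1\},\{1\}}_{2,2}\epsilon'_{\omega c} = \epsilon_c$ and $\epsilon'_{\omega c}\diamond^{\{1\},\{1\}}_{2,2}\epsilon_c = \epsilon'_{\omega c}$, whence $\epsilon'_{\omega c} = S(\sigma_\two)(\epsilon_c) = \epsilon_{\omega c}$ by the compatibility just proved. As $\omega$ is a bijection of $\CCC$, this forces $\epsilon'_{d} = \epsilon_{d}$ for all $d\in\CCC$, i.e.\ $\epsilon' = \epsilon$. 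I expect the only delicate point throughout to be the bookkeeping in the commutativity step: one must verify that exchanging the two factors acts on the glued two-legged result exactly by the transposition $\sigma_\two$, so that the involution $S(\sigma_\two)$ appears in the conclusion. Once this is pinned down, both assertions follow immediately from the unit equation.
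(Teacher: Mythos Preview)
Your proposal is correct, and the overall strategy---computing $\epsilon_c \diamond^{\{1\},\{1\}}_{2,2}\epsilon_{\omega c}$ two ways---is the same as the paper's. There are, however, two genuine differences in execution.

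For the involution compatibility, the paper explicitly invokes the equivariance axiom (m2): it writes
\[
S(\sigma_\two)(\epsilon_c) \;=\; S(\sigma_\two)(\epsilon_c)\diamond^{\{1\},\{1\}}_{2,2}\epsilon_{\omega c} \;=\; S(\hat\sigma_\two\sqcup id_{\{1\}})\bigl(\epsilon_c\diamond^{\{2\},\{1\}}_{1,2}\epsilon_{\omega c}\bigr) \;=\; \epsilon_c\diamond^{\{2\},\{1\}}_{1,2}\epsilon_{\omega c} \;=\; \epsilon_{\omega c},
\]
using (m2) to move $S(\sigma_\two)$ across the product. You instead extract the transposition purely from (m1), by observing that the two unit equations identify the single element of $S_{\{1\}\amalg\{1\}}$ with $\two$ via bijections that differ by $\sigma_\two$. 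Your route is more economical in axioms, at the cost of the bookkeeping you correctly flag as the delicate point; the paper's route makes the appearance of $\sigma_\two$ mechanical via (m2).

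For uniqueness, the paper gives the direct ``$e = e\cdot e' = e'$'' argument first, independent of the involution statement (and indeed with a small colour slip: it writes $\lambda_c$ where $\lambda_{\omega c}$ is needed for the matching condition). You instead deduce uniqueness \emph{from} the compatibility just proved, obtaining $\epsilon'_{\omega c} = S(\sigma_\two)(\epsilon_c) = \epsilon_{\omega c}$ and then using that $\omega$ is a bijection. The paper's order is more self-contained; yours reuses the main computation and is colour-correct as written.
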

\begin{proof}
	
	If $\epsilon \colon S_\S \to S_{\two}$ is a unit for $\diamond$ then, by definition $S(\sigma_\two)\epsilon_c = (S(\sigma_\two)\epsilon_c)\diamond^{\{2\},\{1\}}_{1,2}\epsilon_{\omega c}$ for all $c \in \CCC$. By equivariance $(S(\sigma_\two)\epsilon_c)\diamond^{\{2\},\{1\}}_{1,2}\epsilon_{\omega c} = \epsilon_c \diamond^{\{1\}, \{1\}}_{2,2} \epsilon _{\omega c}$, so 
	\[ S(\sigma_\two)\epsilon_c = (S(\sigma_\two)\epsilon_c)\diamond^{\{2\},\{1\}}_{1,2}\epsilon_{\omega c} = \epsilon_c \diamond^{\{1\}, \{1\}}_{2,2} \epsilon _{\omega c} = \epsilon_{\omega c},\]
	whereby the second statement is proved.
	
	Now, let $\lambda\colon \CCC \to S_\two$, $c \mapsto \lambda_c$ be another unit for $\diamond$. Then, for all $c \in \CCC$,
	\[ \epsilon_c = \epsilon_c \diamond^{\{2\},\{1\}}_{1,2} \lambda_c =\left(S(\sigma_\two) \epsilon_c\right) \diamond^{\{1\},\{1\}}_{2,2} \lambda_c = \epsilon_{\omega c} \diamond^{\{1\},\{1\}}_{2,2} \lambda_c  = \lambda_c. \] Hence multiplicative units are unique. %The second statement follows from the defining properties of multiplication. Namely,
%		\[\begin{array}{c}
%	S(\sigma_\two) (\epsilon_c) = S(\sigma_\two)(\epsilon_{ c})\diamond^{\{1\}, \{1\}}_{2,2}\epsilon_{\omega c} = S(\hat \sigma_\two \sqcup id_{\{1\}})(\epsilon_c \diamond^{\{2\}, \{1\}}_{1,2} \epsilon_{\omega c}) %&\ \text{ by equivariance of $\diamond$,}\\
%	 = \epsilon_c \diamond^{\{2\}, \{1\}}_{1,2} \epsilon_{\omega c} %\ \text{ since $\hat{\sigma _\two}$ is a bijection of singleton sets,}\\
%	 = \epsilon_{\omega c} \diamond^{\{2\}, \{1\}}_{1,2}\epsilon_c%&\ \text{ by commutativity of $\diamond$,}
% = \epsilon_{\omega c}.%&\ \text{ by definition of the unit.}
%	\end{array}\]
\end{proof}

\begin{rmk}	\label{rmk: mult notation}
%	(See also \cref{rmk: cont notation}.)%   %\srnote{0203 - I hate these remarks}
Equivalently, a multiplication $\diamond$ on $ (\CCC, \omega)$-coloured graphical species $S $ is a family of maps 
 	\begin{equation}\label{eq. coloured mult}- \diamond^{\underline c, \underline d}_{c} - \colon S_{(\underline c, c) } \times S_{(\underline d, \omega c)} \to S_{(\underline c \underline d)}, \ \text{ for } c \in \CCC,\ \underline c \in \CCC^X, \ \underline d \in \CCC^ Y,\end{equation}
 % for $c \in \CCC$, $\underline c \in \CCC^X$, $\underline d \in \CCC^ Y$.
Both (\ref{eq: mult def}) and (\ref{eq. coloured mult}) are used in what follows. Where the context is clear, the superscripts may be dropped altogether.
 
% The notation $\diamond^{X,Y}_{x,y}$ introduced in \cref{defn: multiplication} is clumsier but has the advantage of greater precision, particularly in describing the symmetry actions. 

\end{rmk}

As one would expect, a multiplication $\diamond$ on a graphical species $S$ is called `associative' if the result of several consecutive multiplications does not depend on their order. This is stated precisely in condition (M1) of \cref{defn: Modular operad}, and visualised in the figure therein.% (See also the figure therein, for a visualisation of the associativity axiom.)
%   %\srnote{0203 - sentence made more compact}

\begin{ex}\label{cyclic}
	
	A graphical species $O$ equipped with a unital, associative multiplication $(\diamond , \epsilon)$ is a cyclic operad in the sense of \cite{DCH19}. When the involution is trivial, these are the \textit{entries-only} cyclic operads of \cite{CO20} (see there for a comparison with cyclic operads as introduced in \cite{GK95}).

		Some advantages of the involutive, graphical species approach to cyclic operads are discussed in \cite{DCH19} and \cite[Introduction]{HRY19b}.%   %\srnote{0303 small change, added the comparison to classical defn}
\end{ex}

\begin{ex}
	\label{ex: operad}%   %\srnote{RG?? 2602 some changes} 
	%Recall Examples \ref{Comm c} and \ref{ex:direction}, 
	Operads (see e.g.~\cite{BM07}) admit a description as graphical species with unital multiplication:
	
	Recall, from Examples \ref{Comm c} and \ref{ex:direction}, the graphical species $Di$, and the category $\fisinvdi\simeq \elG[Di]$ whose objects are either the exceptional directed edge $(\downarrow)$, or pairs $(X, Y)$ of finite sets. % -- and the graphical species $Di$ (\cref{Comm c}), described by the equivalence
%	To describe operads in terms of graphical species, recall
%the graphical species $Di$ introduced in \cref{Comm c}, and that 
%where objects of $\fisinvdi$ are either the exceptional directed edge $(\downarrow)$, or pairs $(X, Y)$ of finite sets. 

If $Y \cong \{*\}$ is a singleton, then $(X, \{*\})$ is called a rooted corolla, and denoted by $t_X$ (\cref{fig: directed elements}(c)). Let $\Bifiso \subset \fisinvdi$ be the full subcategory on $(\downarrow)$ and all rooted corollas $t_X$. % (for all finite sets $X$).

%tion in terms of directed graphical species equipped with a unital multiplication. 

%The $\Dipal$-coloured graphical species $Di$ was introduced in \cref{comm c}, and the category $\fisinvdi$ such that $\ElS[Di] \cong \fisinvdi$ was described in \cref{ex:direction}. Objects of $\fisinvdi$ are either the exceptional directed edge $(\downarrow)$, or pairs $(X, Y)$ of finite sets. When $Y \cong \{*\}$ is a singleton, $(X, \{*\})$ is called a rooted corolla, and denoted by $t_X$.% that, $\ElS[Di] \cong \fisinvdi$,for a finite set $X$
%	
%Objects of the form $(X, \{*\})$ in $\fisinvdi$ Recall from \cref{ex:direction} that, for a finite set $X$, the rooted corolla $t_X$ is an object of the form $(X, \{*\})$ in $\fisinvdi$, and that $(\downarrow) \in \fisinvdi$ denotes the exceptional directed edge. %the , and the definition of rooted corollas $t_X$ in terms of objects $(X, \{*\})$ of $\fisinvdi$, where $X$ is a finite set. 

 Presheaves $O \colon{ \Bifiso }^{\mathrm{op}} \to \Set$ are described by a set $\DDD = O(\downarrow)$ and sets $O(\underline c;d)$, defined for all $d \in \DDD$ and $\underline c \in \DDD^X$ (for all $X$), and such that the action of $\fiso$ on $O$ induces isomorphisms $O((c_x)_x; d) \cong O((c_{f(x)})_x;d)$ for all $f \colon X \xrightarrow {\cong} Y$. Hence, a $\DDD$- coloured operad is a $\Bifiso$ presheaf $O$, together with an operadic composition, and a $d$-coloured unit $1_d \in O(d;d)$ for each $d \in \DDD$. % for all $d \in \DDD = O(\downarrow)$.

%The essential image (see \cref{s. graphs}) of the inclusion $\Bifiso \subset \fisinvdi$ under the equivalence $\fisinvdi \overset{\simeq}{\hookrightarrow} \ElS[Di]$ describes the 
The graphical species $RC \subset Di$ is given by $ \elG[RC] \xrightarrow \simeq \Bifiso$ under the restriction of the equivalence $\elG[Di] \xrightarrow \simeq \fisinvdi$. So, $RC_\S = Di_\S = \{\In, \Out\}$, $RC_\nul = \emptyset$, and $RC_{X \amalg \{*\}}$ consists of those $\phi \in Di_{X \amalg \{*\}}$ such that 
\[ Di (ch_x)(\phi) = (\In) \text { for all } x \in X, \text { and } Di (ch_{*}) (\phi) =(\Out). \] 
%In other words, 
%
%is defined as the graphical species whose element category $\elG[RC]$ is %$\Dipal$-coloured 
% the essential image (see \cref{s. graphs}) of $\Bifiso \subset \fisinvdi$ under the equivalence $\fisinvdi \overset{\simeq}{\hookrightarrow} \ElS[Di]$. So, 

Clearly, $RC$ inherits the trivial unital multiplication from $Di$. Moreover, a presheaf $O \colon {\Bifiso}^{\mathrm{op}} \to \Set$ has the structure of an operad precisely when the corresponding graphical species $O^{\GS} \in \GS \ov RC$ is equipped with an associative unital multiplication. %precisely when the corresponding presheaf $O \colon {\Bifiso}^{\mathrm{op}} \to \Set$ has the structure of an operad. 
% There is a graphical species $RC \subset Di$ such that 
Hence, the category $\Op$ of (symmetric) operads %(see e.g.\ \cite{BM07}) 
 is equivalent to the category whose objects are objects of $\GS \ov RC $ %(or\textit{ species of rooted corollas}) 
with an associative unital multiplication,
%\[ \diamond^{X,(Y \amalg \{r_y\})\setminus \{y\}}_{r, x} \colon S(X;\{r_x\})\times S(Y; \{r_y\}) \to S(X \amalg Y\setminus \{y\}; \{r_y\})\],
and whose morphisms are morphisms in $ \GS \ov RC$ that preserve the multiplication and units.

\end{ex}

\begin{rmk}
	Examples \ref{cyclic} and \ref{ex: operad} highlight the expressive power of graphical species. The involution $\tau$ on $\S$ means that (undirected) cyclic operads and (directed) operads may be expressed in terms of presheaves on the same underlying category. (See also Examples \ref{ex:direction} and \ref{ex: wheeled prop}.)
\end{rmk}

%Just as a while loop in a simple piece of code creates the possibility of infinite runtime, 

%   %\srnote{0203 slight reordering}
Intuitively, a contraction $\zeta$ on a graphical species $S$ may be thought of as a rule for `self-gluing' single elements of $S$ along pairs of ports with dual colouring (\cref{contraction}). The presence of a contraction operation enables modular operads to encode algebraic structures -- such as those involving trace -- that ordinary operads cannot \cite{MMS09, Mer10}.
%%   %\srnote{R21 - Merkulov.... I would like to add a nice physical example here...}
\begin{figure}[htb!] %\label{multiplication}
	
	\begin{tikzpicture}
	
	\node at (0,0){\begin{tikzpicture}[scale = 0.5]
		%\filldraw(0,0) circle(3pt);
		\foreach \angle in {-0,90,180,270} 
		{
			\draw(\angle:0cm) -- (\angle:1.5cm);

		}
		\draw [ thick] (0,0) -- (1.5,0);
		\draw [ thick] (0,0) -- (-1.5,0);
		\node at (-1, 0.3) {\tiny c};
		\node at (-1, -0.3) {\tiny (x)};
		% \node at (0.3, 1) {};
		\node at (1, -0.3) {\tiny (y)};
		\node at (1, 0.3) {\tiny {$\omega c$}};
		
		\draw [ draw=red, fill=white]
		(0,0) circle (15pt);
		
		\node at(0,0) {\small{$ \phi $}};
		%\node at (0.3,-1) {\tiny 4};
		\end{tikzpicture}};

	\node at (2.5,0){\large $\longrightarrow$};
	
	\node at (5,0) 
	{\begin{tikzpicture}[scale = 0.5]
		%\filldraw(0,0) circle(3pt);
		\draw (0,-1.5)--(0,3.5);
		\draw[ thick] 
		(0,0)..controls (3,3) and (-3,3)..(0,0);
		\node at (-1.5,1) {\tiny {$(y)$}};
		% \node at (0.3, 1) {\tiny 2};
		\node at (1.5, 1) {\tiny {$(x)$}};
		%\node at (0.3,-1) {\tiny 2};

		\draw [ draw=red, fill=white]
		(0,0) circle (15pt);
		
		\node at(0,0) {\tiny{$\phi $}};
		\draw [ draw=blue]
		(0,1) ellipse (30pt and 60 pt); 
		\end{tikzpicture}};

	\node at (5,-2){ $ \zeta_{x,y} (\phi)$};
	
\end{tikzpicture}
\caption{ Contraction}\label{contraction}

\end{figure}

\begin{defn}\label{defn: contraction}

A \emph{contraction} $\zeta$ on $ S$ is given by a family of partial maps 
\begin{equation}\label{eq: cont def} \zeta^X_{x,y}\colon S_{X \amalg \{x,y\}} \pto S_{X}\end{equation}
defined for all finite sets $X$ and all $\phi \in S_{X \amalg \{x, y\}},$ such that $ S(ch_x)(\phi) = S(ch_y\circ \tau)(\phi)$, and {equivariant} with respect to the action of $\fiso $ on $S$: 
If $\sigma\colon X \amalg \{x,y\} \xrightarrow{\cong} Z \amalg\{w,z\}$ extends the bijection $ \hat \sigma\colon X \xrightarrow{\cong} Z$ by $\sigma(x) = w,\sigma(y) = z$, then for any $\phi \in S_{Z \amalg \{w,z\}}$, we have
\[ S(\hat \sigma) \left(\zeta^Z_{w,z}(\phi) \right) = \zeta^X_{x,y}\left (S(\sigma)(\phi) \right).\]
 \end{defn}

If $\zeta$ is a contraction on $S$, then by, equivariance, $\zeta^X_{x,y}(\phi)= \zeta^X_{y,x} (\phi)$ wherever defined. 

\begin{rmk}\label{rmk: cont notation}%(See also \cref{rmk: mult notation}.) %   %\srnote{limits?}
		 A contraction $\zeta$ on a $(\CCC, \omega)$-coloured graphical species $S$ is equivalently a family of maps 
		\[ \zeta ^{\underline c}_c\colon S_{(\underline c, c, \omega c)} \to S_{\underline c }\]
for $c \in \CCC$, and $\underline c \in \CCC^X$. Depending on context, both $\zeta ^{\underline c}_c$ (and even $\zeta_{c}$%, where the context is clear
) and (\ref{eq: cont def}) will be used. % will often be used in place of the more precise, but clunkier $\zeta^X_{x,y}$.

\end{rmk}

Let $S$ be a $(\CCC, \omega)$-coloured graphical species equipped with a unital multiplication $(\diamond, \epsilon)$ and contraction $\zeta$. By \cref{lem: unique and equivariant}, there is a \emph{contracted unit} map%%   %\srnote{dimension? RG?}
%\[ o \defeq \zeta \epsilon\colon \CCC \to S_\nul, \] such that, for all $c \in \CCC$, by \cref{lem\colon unique and equivariant} 
\begin{equation}\label{eq: unit contraction} o \defeq \zeta \epsilon: \CCC \to S_\nul, \text{ satisfying } \zeta_c (\epsilon_c) = \zeta_{\omega c} (\epsilon_{\omega c}) \text{ for all } c \in \CCC. \end{equation}

%\begin{rmk}
%We shall see 
As will be explained in Sections \ref{degenerate} and \ref{s. Unital}, the contracted units $o\colon {S_\S}\to S_\nul$ present the main challenge for describing the combinatorics of modular operads.
%\end{rmk}

\subsection{Modular operads: definition and examples}
Modular operads are graphical species with multiplication and contraction operations that satisfy the nicest possible (mutual) coherence axioms.% \warn{These axioms are visualised in Figure \ref{fig: coherence}. }

\begin{defn}\label{defn: Modular operad}
A \emph{modular operad} is a graphical species $S$, with palette $(\CCC, \omega)$, say, together with a unital multiplication $(\diamond,\epsilon)$, % with unit $\epsilon\colon {S_\S}\to S_{\two}$,
 and a contraction $\zeta$, that together satisfy the following four \emph{coherence axioms} %-- (M1)-(M4) in the statement of \cref{unpointed modular operad} -- 
governing their composition:

%%T\warn{make text in diagrams larger and add mapsto arrows to the grey arrows.}
%
%%\begin{itemize}
%%\item 
%%Let $S$ have palette $(\CCC, \omega)$.
%%   %\srnote{1703 these equals signs in diagrams}
%%\hspace{-10pt}
\begin{minipage}{.55\textwidth}
\emph{(M1)}	\emph{Multiplication is associative.}\\
For all $\underline b \in \CCC^{X_1}, \underline c \in \CCC^{X_2}, \underline d\in \CCC^{X_3}$ and all $c, d \in \CCC$, the following square commutes:
%\begin{equation}\label{eq: C1}
\[
\xymatrix{
	S_{(\underline b, c)} \times S_{(\underline c, \omega c, d)} \times S_{(\underline d, \omega d)} 
	\ar[rr]^-{\diamond_c \times id}
	\ar[dd]_{id \times \diamond_d} &&
	S_{(\underline b \underline c, d)}\times S_{(\underline d, \omega d)} 
	\ar[dd]^{\diamond_d }\\
	&{}&\\
	S_{(\underline b, c)}\times S_{(\underline c, \omega c, \underline d)}
	\ar[rr]_-{\diamond_c} &&
	S_{\underline b\underline c\underline d}. }%\end{equation}
\]

\end{minipage}
\begin{minipage}{.45\textwidth}
	\begin{figure}[H]
	\includestandalone[width = \textwidth]{standalones/axiom1}
%	\caption{Associativity}
\end{figure}
\end{minipage}

\begin{minipage}{.55\textwidth}
	\emph{(M2)} \emph{Order of contraction does not matter.} \\For all $\underline c \in \CCC^X$ and $c, d \in \CCC$, the following square commutes:% the following diagram commutes:
	 	 \[\xymatrix{
	S_{(\underline c, c, \omega c, d, \omega d)} \ar[rr]^-{\zeta_c }\ar[dd]_{\zeta_d} && 
	S_{(\underline c, d, \omega d) }\ar[dd]^{\zeta_d}\\
	&{}&\\\
	S_{(\underline c, c, \omega c)}\ar[rr]_-{\zeta_c} &&S_{\underline c}.}\]

\end{minipage}
\begin{minipage}{.45\textwidth}
	\begin{figure}[H]
		\includestandalone[width = \textwidth]{standalones/axiom2}
		%	\caption{Associativity}
	\end{figure}
\end{minipage}

\begin{minipage}{.5\textwidth}
	\emph{(M3)}\emph{ Multiplication and contraction commute.} \\For all $\underline c \in \CCC^{X_1}$, $\underline d \in \CCC^{X_2}$ and $c, d \in \CCC$, the following square commutes.
	\[
	\xymatrix{
	S_{(\underline c, c, \omega c, d) } \times S_{(\underline d, \omega d)}
	\ar[rr]^-{\zeta_c \times id}
	\ar[dd]_ {\diamond_d} &&
	S_{(\underline c, d)} \times S_{(\underline d, \omega d)} 
	\ar[dd]^{\diamond_d}\\
	&{}&\\
	S_{(\underline c,c,\omega c,\underline d)}
	\ar[rr]_-{\zeta_c} &&
	S_{\underline c\underline d}}\]
\end{minipage}
\begin{minipage}{.48\textwidth}
	\begin{figure}[H]
		\includestandalone[width = \textwidth]{standalones/axiom3}
		%	\caption{Associativity}
	\end{figure}
\end{minipage}

\begin{minipage}{.55\textwidth}
	\emph{(M4)} \emph{`Parallel multiplication' of pairs.}\\For all $\underline c \in \CCC^{X_1}$, $\underline d \in \CCC^{X_2}$, and $c, d \in \CCC$, the following square commutes:
%	\includestandalone[width = 1\textwidth]{axiom4}
	\[
	\xymatrix{
	S_{(\underline c, c, d)} \times S_{(\underline d, \omega c, \omega d)}
	\ar[rr]^-{ \diamond_c}
	\ar[dd]_{ \diamond_d}&&
	S_{(\underline c, d, \underline d, \omega d)}
	\ar[dd]^{\zeta_d}\\&{}&\\
	S_{(\underline c,c, \underline d, \omega c)}\ar[rr]_-{\zeta_c}&&
	S_{\underline c \underline d}}\]

\end{minipage}
\begin{minipage}{.45\textwidth}
	\begin{figure}[H]
		\includestandalone[width = \textwidth]{standalones/axiom4}
		%	\caption{Associativity}
	\end{figure}
\end{minipage}

Modular operads form a category $\CSM$ whose morphisms are morphisms of the underlying graphical species that preserve multiplication, contraction and multiplicative units. 

\end{defn}

 Informally, the multiplication and contraction operations describe rules for \textit{collapsing} edges of graphs that represent formal compositions of elements. %contraction and multiplication. 
 The coherence axioms (M1)-(M4) say that this is independent of the order in which the edges are collapsed. %%   %\srnote{small change to this sentence}% does not matter.

%\begin{figure}[!htb]
%	{
%		\begin{tabular}{ll}
%			
%			
%			\includestandalone[width = .5\textwidth]{axiom1}
%			&
%			\includestandalone[width = .5\textwidth]{axiom2}\\
%			
%			\includestandalone[width = .5\textwidth]{axiom3}
%			&
%			\includestandalone[width = .5\textwidth]{axiom4}
%			
%		\end{tabular}
%	}\label{fig: coherence}
%	\caption{Visualising the coherence axioms (M1)-(M4).} 
%	\end{figure}
%\warn{add labels for the figures..}

\begin{rmk} \label{rmk:non-unital}
 A \emph{non-unital modular operad} $(S, \diamond, \zeta)$ is a graphical species $S$ equipped with a multiplication $\diamond$ and contraction $\zeta$ satisfying (M1)-(M4), but without the requirement of a multiplicative unit. These form a category $\nuCSM$ whose morphisms are morphisms in $\GS$ that preserve the multiplication and contraction operations. Non-unital modular operads are the subject of \cref{sec: non-unital}.
\end{rmk}
%
%%\begin{ex}\label{ex:cyclic}
%%	(See also \cref{ex: general cyclic}.)
%%A \emph{cyclic operad} in the sense of \cite{DCH19}	is a graphical species $S$ equipped with a \emph{unital} multiplication satisfying condition (M1), but with no contraction. As mentioned in \cref{ex: general cyclic}, if $S$ has palette $(\DDD \times \{\In, \Out\}, s)$ \warn{notation for this involution}, and 
%%\[ S(c_1, \dots...) = \emptyset
%%\]
%%\warn{NOTATION}
%%unless..., then $S$ describes a $\DDD$ coloured operad. Say more.
%%
%%
%%%graphical species $S$ admitting a multiplication and contraction that satisfy conditions (M1)-(M4) of \cref{defn: Modular operad} but without a distinguished multiplicative unit.
%%\end{ex}
%
%
% 
%% \subsection{Examples of modular operads} 
% \label{subs: mo examples}
% 
% %   %\srnote{0203 GO THROUGH EXAMPLES}
 To provide context and motivation for the constructions that follow, the remainder of this section is devoted to examples. 
%	In the next subsection, review the abstract nerve theorem of Weber and motivate the graphical approach to modular operads.}
%\warn{2.6 - reconsider examples in light of reading.. So compact closed categories should be based on Ross's book and kelly's article. Others based on closer consideration. But this section is nice.}
\begin{ex}\label{ex: gk mod op}%   %\srnote{1703 - work on this example}
\label{cob} {\textbf{Getzler-Kapranov modular operads.}}
The monochrome graphical species $M$ given by $M_{X} = \N$ for all $X \in \fiso$, admits a 
%We equip the monochrome graphical species $M : (M_{X} = \N)_{X \in \fiso}$, with a  
unital multiplication $(+, 0 \in M_\two)$ induced by addition in $\N$, and a contraction $t$ induced by the successor operation:
\[ +\colon M_{\mm} \times M_\nn \to M_{\mathbf{m + n - 2}}, \ (g_m, g_n) \mapsto g_m+g_n \ (m,n \geq 1); \] 
	\[ t\colon M_\nn \to M_{\mathbf{n - 2}} , \ g_n \mapsto g_n+1 \ ( n \geq 2).\]
%\warn{I don't like term multiplication since it is symmetric...}

Since a compact oriented surface with boundary is determined, up to homeomorphism, by its genus and number of boundary components, the combinatorics of $(M, + , s)$ describe gluing of topological surfaces along boundary components (see \cref{fig. gluing}). A monochrome object $(S, \gamma)$ of the slice category $\CSM\ov M$ describes a bigraded set $(S^\gamma(g,n))_{g,n}$ with operations
\[ +^S \colon S^\gamma(g_1, n_1) \times S^\gamma(g_2, n_2) \to S^\gamma (g_1 + g_2, n_1 + n_2 -2) \text { for } n_1, n_2 \geq 1, \] \[
t^S\colon S^\gamma(g, n) \to S^\gamma(g+1, n-2) \text { for } n \geq 2,\]
and may encode (moduli spaces) of geometric structures on surfaces. For example, % $S^\gamma(g,n)$ could be the %compactified moduli space of compact genus $g$ Riemann surfaces with $n$ boundary components, that %compactified moduli space of genus $g$ Riemann surfaces with $n$ boundary components, or, equivalently, the 
the Deligne-Mumford compactification $\overline {\mathcal M}_{g,n}$ of the moduli space of genus $g$ smooth algebraic curves with $n$ marked points, may be described, via Belyi's Theorem, in terms of the space of genus $g$ Riemann surfaces with $n$ nodes, and the spaces $\overline {\mathcal M}_n \defeq \coprod_{g \in \N} \overline {\mathcal M}_{g,n}$ form a monochrome modular operad (c.f.~\cite[Example~6.2]{GK98}). %   %\srnote{1703 - check} %the space of conformal structures on a genus $g$ surface $\Gamma$ with $n$ boundary components.
%
%\warn{may then describe extra geometric structure on surfaces}. %\warn{Careful - need to add in colour. Restricting to monochrome modular oeprads...} may be viewed as describing extra structure \warn{what what what} on surfaces. 
%%%   %\srnote{Chose examples: Is this the same as the modular operads in \emph{operads revisited}?}
%
%
%A pair $(S, \gamma)$ of a monochrome species, and a morphism $\gamma \in (S, M)$ 

%%   %\srnote{question on stability.. is this correct?}
Getzler and Kapranov originally defined modular operads \cite{GK98} in terms of the restriction to the \emph{stable part} $M^{st} \subset M$ of the graphical species $M$, bigraded by pairs $(g,n)$ such that $2g + n -2 > 0$. So $M^{st}_\nn = M_\nn$ for $n >2$ but $M^{st}_\nul = \{ 2, 3, 4, \dots\}$ and $M^{st}_{\mathbf{1}} = M^{st}_{\two} = \{1, 2, 3, 4, \dots\}.$

In particular, since $0 \not \in M^{st}_{\two}$, %\warn{cylinder has infinite which automorphism group...See Birman etc.\. to work this out}, 
modular operads in the original sense of \cite{GK98} are non-unital. %(A number of examples are provided in \cite[Section~???]{GK98}.)

These ideas may be extended to many-coloured cases: for example, one can describe a 2-coloured modular operad for gluing surfaces along %-coloured stable modular operads arising from gluing of surfaces along 
{open and closed} subsets of their boundaries. (See, e.g.~\cite{Gia13}.)% describes 2-coloured modular operads 
%This is discussed in, e.g.\ \cite{Gia13}.
%Open-closed modular operads are considered in \cite{DJM15}, and a sequel paper \cite{DJP17} gives many of the same constructions, but this time in terms of open-closed 2-cobordism categories (see \cref{ex: compact closed} below).
%   %\srnote{deleted reference}
\end{ex}

\begin{ex}\label{ex: compact closed} {\textbf{Compact closed categories}}, introduced in \cite{Kel72}, are symmetric monoidal categories $(\CCat, \otimes, e)$ for which every object $c \in \CCat$ has a symmetric \textit{categorical dual} (see \cite{BG99,KL80}): there is an object $c^* \in \CCat$, and morphisms $\cap_c \colon e \to c^* \otimes c $ and $ \cup_c\colon c \otimes c^* \to e $ such that
	\[(\cup_c \otimes id_c) \circ(id_c \otimes \cap_c) = id_c = (\cap_{c^*}\otimes id_c)\circ (id_c \otimes \cup_{c^*}).\]
	
	%This implies also that they are equipped with a trace operation.\cite{SV92}
	Examples of compact closed categories include categories of finite dimensional vector spaces over a given field, or, more generally, finite dimensional projective modules over a commutative ring. Cobordism categories provide other important examples. %Other important compact closed categories include, for example, cobordism categories. % provide more examples. %\footnote{Though some treatments of cobordisms of manifolds exclude certain cases --for example by insisting that cobordisms have non-empty outgoing boundary -- to avoid technical difficulties, thereby breaking the compact closed structure. One of the hopes for this work is that it will provide insights into some of these cases.}, and they appear in many other contexts (see \cite{KL80} for a nice list from 1980 - of course much more work has been done since!).
	
There is a canonical monadic adjunction $\CSM \leftrightarrows \comCinv$, where $\comCinv$ is the category of \emph{involutive compact closed categories}, whose objects are small compact closed categories $\CCat$ 
%The category of small compact closed categories contains an equivalent full subcategory $\comC$ of those categories $\CCat$ 
such that $c = c^{**}$ for all $c \in \CCat$. %Moreover, there is a canonical monadic adjunction $\CSM \leftrightarrows \comC$. 
	The right adjoint takes an involutive compact closed category $(\CCat, \otimes, e,*)$ with object set $\CCat_0$ to a $(\CCat_0, *)$-coloured modular operad $S^\CCat$ with coloured arities
\[ S^\CCat_{(d_1, \dots,d_n, c_m^*,\dots, c_1^*)} =\CCat(c_1 \otimes \dots \otimes c_m, d_1 \otimes \dots \otimes d_n).\] 
The modular operad structure on $S^\CCat$ is induced by composition in $\CCat$ together with $\cup$ and $\cap$. The left adjoint $\CSM \to \comCinv$ is induced by the free monoid functor on palettes and arities. 

These observations underly the proof, in \cite{Ray21b}, of an `operadic' nerve theorem for compact closed categories in the style of \cref{s. nerve}. %This has a left adjoint.

\end{ex}

\begin{ex}%   %\srnote{0403 new example}%\textbf{Circuits in low dimensional topology.}
	\label{ex: circuits} \textbf{Circuit algebras} -- so named because of their resemblance to electronic circuits -- are a symmetric version of Jones's planar algebras, introduced to study finite-type invariants in low-dimensional topology \cite{BND17, DHR20}. 
	
The category of $\Set$-valued circuit algebras is equivalent to a category $\CO$ of \textit{circuit operads} \cite{Ray21a} whose objects are modular operads equipped, via a monadic adjunction $\CSM \leftrightarrows \CO$, with an extra `external product' operation. Moreover, the adjunction between modular operads and involutive compact closed categories in \cref{ex: compact closed} factors through the adjunction $\CSM \leftrightarrows \CO$. %Circuit operads in a $\Set$ (or a linear category $\E$) are precisely the circuit algebras in $\Set$ (or $\E$).
	
%The adjunction between modular operads and compact closed categories in \cref{ex: compact closed} factors though the adjunction $\CSM \leftrightarrows \CO$. %as $\CSM \leftrightarrows \CO\leftrightarrows \comC$, 
%Moreover, every compact closed category in $\comC$ is equivalent to the free compact closed category on a circuit operad. 

%	
%	\warn{Any small compact closed category $\CCat$ with $c^{**} = c$ for all $c$, is equivalent to a category $\CO^\times$ of modular operads equipped with an extra monoidal structure. These \textit{representable circuit operads} are closely related to circuit algebras studied in low dimensional topology \cite{BND17, DHR20}. This is discussed in \cite{Ray21a}. In \cite{Ray21b}, I prove a Weber style nerve theorem for circuit operads, via an iterated distributive law.}%(Details \cite{Ray20b,Ray20c}.)}
	
	This formal perspective on modular operads, circuit algebras, and compact closed categories leads to interesting questions in a number of directions. For example, we can study the analogous relationships if the definition of modular operads is relaxed by replacing the symmetric action with a braiding, or by considering higher dimensional versions. Related ideas are being explored by Dansco, Halacheva and Robertson in their work on algebraic and categorical structures in low-dimensional topology \cite{DHR21}.
\end{ex}

\begin{ex}\label{ex: wheeled prop}
	\label{wheeled properads intro} {\textbf{Wheeled properads.}} 
		Wheeled properads have been studied extensively in \cite{HRY15} and \cite{JY15}. They describe the \textit{connected part} (c.f.~\cite[Introduction]{Val07}) of wheeled PROPs (i.e.~coloured PROPs with a contraction) that have applications in geometry, deformation theory, and other areas \cite{MMS09,Mer10}. 
	
	The category $\mathsf{WP}$ of ($\Set$-valued) wheeled properads is canonically equivalent to the slice category $\CSM\ov Di$ of \textit{directed modular operads}. This is well-defined since the terminal directed graphical species $Di$ trivially admits the structure of a modular operad (see \cref{ex:direction}). %, the slice category $\CSM\ov Di$ of \textit{directed modular operads} is well-defined. It is canonically equivalent to the category of ($\Set$-valued) wheeled properads, that have been studied extensively in \cite{HRY15}, \cite{JY15}. 
An equivalence between wheeled PROPs in linear categories and directed circuit algebras is established in \cite{DHR20}. %The equivalence between $\CSM \ov Di$ and $\mathsf{WP}$ is extended to an equivalence between wheeled PROPs and circuit operads in \cite{Ray21a}.
	
%	%   %\srnote{R 23. The simplest way to say something about wheeled properads is to state the equivalence and give references.}

\end{ex}

\section{Abstract nerve theorems and distributive laws}\label{sec: Weber}

The purpose of this largely formal section is to review some basic theory of distributive laws, and provide an overview of Weber's abstract nerve theory. The simplicial nerve for categories, and the dendroidal nerve for operads provide motivating examples for the latter.

For an overview of monads and their Eilenberg--Moore (EM) categories of algebras, see for example {\cite[Chapter~VI]{Mac98}.}

\subsection{Monads with arities and abstract nerve theory}

Given an essentially small category $\CCat$, a functor $F\colon \DCat \to \CCat$ induces a \emph{nerve functor}
 $N_{\DCat}\colon \CCat \to \pr{\DCat}$ by $N_{\DCat}(c)(d) = \CCat(F d, c) $ for all $c \in \CCat$, $ d \in \DCat$. If $N_{\DCat}$ is fully faithful, and $F$ and $\DCat$ are suitably nice, then $N_{\DCat}$ provides a useful tool for studying $\CCat$.
%\[ %\nu
%N\colon \CCat \to \pr{\DCat}, \ c \longmapsto \left (d \mapsto \mathsf C(\iota (d), c) \right) , \text{ for all } c \in \CCat, \ d \in \DCat.\] 

 In the crudest sense, monads with arities are monads whose EM category of algebras may be characterised in terms of a fully faithful nerve, the construction of which is entirely abstract. %More precisely, a subcategory $\DCat \hookrightarrow \CCat$ is said to provide arities for a monad $\MM$ on $\CCat$ if 
The aim of this section is to explain, without proofs, the key points of this abstract nerve theory (details may be found in \cite[Sections~1-3]{BMW12}). This motivates the framework of this paper, and underlies the proof of the nerve theorem for modular operads, \cref{nerve theorem} in \cref{s. nerve}.

\vskip 2ex

 Recall that every functor %$F\colon \DCat[E] \to \CCat$ 
admits an (up to isomorphism) unique \textit{bo-ff factorisation} as a bijective on objects functor followed by a fully faithful functor. For example, if $\MM$ is a monad on a category $\CCat$, and $\CCat^{\MM}$ is the EM category of algebras for $\MM$, then the free functor $ \CCat \to \CCat^\MM$ has bo-ff factorisation $\CCat \rightarrow \CCat_{\MM} \rightarrow \CCat^{\MM}$, where $\CCat_{\MM}$ is the {Kleisli category} of free $\MM$-algebras (see e.g.\ \cite[Section~VI.5]{Mac98}). 

Hence, for any subcategory $\DCat $ of $ \CCat$, the bo-ff factorisation of the canonical functor $\DCat \hookrightarrow \CCat \xrightarrow {\text{ free}} \CCat^\MM$ factors through the full subcategory $\Theta_{\MM, \DCat}$ of $\CCat_\MM $ with objects from $\DCat$. 

By construction, the defining functor $\Theta_{\MM, \DCat} \to \CCat^{\MM}$ is fully faithful. It is natural to ask if there are conditions on $\DCat$ and $\MM$ that ensure that the induced nerve $N_{\MM, \DCat} \colon \CCat^{\MM} \to \pr{\Theta_{\MM, \DCat}}$ is also fully faithful. This is the motivation for describing monads with arities. 

%In this case the nerve $

 %Then $\MM$ induces a free-forgetful adjunction $\CCat \leftrightarrows\CCat^{\MM}$,
 
%In particular, the free functor $\CCat \to \CCat^{\MM}$ %\end{ex}

% The motivation for monads with arities is to describe conditions on $\iota \colon \DCat \hookrightarrow \CCat$ and $\MM$ that ensure that the induced nerve functor $N_{\MM} \colon \CCat^{\MM} \to \pr{\DCat_\MM}$ is fully faithful, and moreover, its image may be characterised -- via a purely formal construction -- in terms of the nerve $N_{\DCat} \colon \CCat \to \pr{\DCat} $. %%   %\srnote{R27. last sentence here?}
% 	
 	 	
%Recall the following terminology: 
\begin{defn}\label{def. replete}
%As usual, a subcategory $ \DCat \hookrightarrow \CCat$ is called \emph{replete} if any object $c \in \CCat$ such that $c \cong d$ for some $d \in \DCat$, is, itself, an object of $\DCat$. %it contains all objects $c \in \CCat$ that are isomorphic to some $d \in \DCat$. 
The \emph{essential image} $im^{es}(F)$ of a functor $F\colon \DCat[E] \to \CCat$ is the smallest subcategory of $\CCat$ that contains the image $im (F)$ of $F$ in $\CCat$ and is closed under isomorphisms in $\CCat$. %$ im^{es}(F) \hookrightarrow\CCat$ containing the image $im(F)$ of $F$ in $\CCat$.

A subcategory $\iota \colon \DCat \hookrightarrow \CCat$ is a \emph{dense subcategory} (and $\iota$ is a \emph{dense functor}) if the induced nerve $N_{\DCat }\colon \CCat \to \pr{\DCat}$ is full and faithful.
\end{defn}

Once again, let $\MM= (M, \mu^\MM, \eta^\MM)$ be a monad on $\CCat$. Let $\iota \colon \DCat \to \CCat$ be the inclusion of a dense subcategory, and let $\boffcat$ be obtained in the bo-ff factorisation of $\DCat \to \CCat^\MM$. There is an induced diagram of functors 
\begin{equation} \label{eq: arities}
\xymatrix{ 
	\boffcat\ar[rr]^{\text{f.f.}}						&& 	\CCat^\mathbb{M}\ar@<2pt>[d]^-{\text{forget}}
	\ar [rr]^-{N_{\mathbb{M}, \DCat}}				&&\pr{	\boffcat}\ar[d]^{j^*}	\\
	\DCat\ar@{^{(}->} [rr]^-{\text{dense}}_-{\text{}}	 \ar[u]_{j}^{\text{b.o.}}		&&\CCat \ar [rr]_{\text{f.f.}}^-{N_{\DCat}}	 \ar@<2pt>[u]^-{\text{free}}	
	&&\pr{\DCat}.
}
\end{equation} where $j^*$ is the pullback of the bijective on objects functor $j \colon \DCat \to \Theta_{\MM, \DCat} $. The left square of (\ref{eq: arities}) commutes by definition, and the right square commutes up to natural isomorphism. %(See \cite[Section~1]{BMW12} for details.)
%   %\srnote{Ok that I don't refer again to BMW?}

By \cite[Proposition~5.1]{Lam66}, %\cite{Isb60, Ulm68}, 
the inclusion %   %\srnote{R25. Richard, reference?} 
$\iota \colon \DCat \to \CCat$ is dense if and only if every object $c$ of $\CCat$ is given canonically by the colimit of the functor $\DCat \ov c \to \CCat$, $(d,f) \mapsto \iota(d)$. % \mathrm{colim}_{(d,f) \in \DCat \ov c} \ \iota (d) $ in $\CCat$.

The monad \textit{$\MM$ has arities $\DCat$} if $N_{\DCat } \circ M $ takes the canonical colimit cocones $\DCat \ov c$ in $\CCat$ to colimit cocones in $\pr{\DCat}$. In this case, by \cite[Section~4]{Web07}, the full inclusion $\boffcat \to \CCat^{\MM}$ is dense, and the essential image of the induced fully faithful nerve $N_{\MM, \DCat}\colon \CCat^{\MM} \to \pr{\boffcat}$ is the full subcategory of $\pr{ \boffcat}$ on those presheaves $P$ whose restriction $j^* P$ to $\DCat$ is in the essential image of $N_{\DCat} \colon \CCat \to \pr{\DCat}$. % -- if $N_{\DCat } \circ M $ takes the canonical colimit cocones $\DCat \ov c$ in $\CCat$ to colimit cocones in $\pr{\DCat}$. If this is the case, then $\MM$ \textit{has arities $\DCat$}. % It is natural to ask under what conditions the fully faithful functor $\boffcat \to \CCat^{\MM}$ is dense, and hence induces a fully faithful nerve. By \cite[Section~4]{Web07}, this is certainly the case if the monad $\MM$ \textit{has arities $\DCat$} \cite{Web07}, meaning that %\footnote{The notion of a monad with arities was introduced in \cite[Section~4]{Web07}, on a suggestion of Steve Lack.} 
%for each $c \in \CCat$, the functor $ N_{\DCat} \circ T\colon \CCat \to \pr{\DCat}$ maps the canonical cocone $\DCat \ov c$ in $\CCat$ to a colimit cocone in $\pr{\DCat}$. 
%(See \cite[Section~2]{BMW12} for details.)
%   %\srnote{removed a reference to BMW here.}
\begin{rmk}
	\label{ex: arities not necessary}
	The condition that $\mathbb{M}$ has arities $\DCat \hookrightarrow \CCat$ is sufficient, but not necessary, for the induced nerve $\CCat^\MM \to \pr{\boffcat}$ to be fully faithful. %for the nerve $N_{\MM, \DCat}$ to be fully faithful and the Segal condition to be satisfied. 
	
In fact, by \cref{nerve theorem} and \cref{rmk: Gr no arities}, the modular operad monad $\OOO$ on the category of graphical species, together with the full dense subcategory $\Gr\hookrightarrow \GS$ of connected graphs and \'etale morphisms (described in \cref{sec: topology}), provides an example of a monad that does not have arities, but for which the nerve theorem holds. %for which the nerve thedoes not have arities, with the full dense subcategory $\Gr\subset \GS$ of connected graphs and \'etale morphisms (see \cref{sec: topology}), provides an example of this.
	
	Necessary conditions on $\MM$ and $\DCat \hookrightarrow \CCat$, for the induced nerve to be fully faithful % fully faithful nerve %for the induced nerve $\CCat^\MM \to \pr{\boffcat}$ to be fully faithful are 
	%	The precise conditions in which the Segal condition \cref{eq: Segal arities} holds (so $\boffcat \to \CCat^{\MM}$ is dense) but $\DCat$ does not provide arities for $\MM$ are 
	are	described in \cite{BG19}. %%   %\srnote{Check with Richard..}

\end{rmk}

\begin{ex} \label{ex: classical nerve}%   %\srnote{0203 reordered. \warn{also, is everything true?}} 
	
	Recall that directed graphs $ \mathbf G = \left(\mathfrak s, \mathfrak t\colon E \rightrightarrows V\right)$ are presheaves over the small diagram category 
	$\E \defeq \bullet \rightrightarrows\bullet$, and that the canonical forgetful functor from $\Cat$ to $\pr{\E}$ -- that assigns to a small category $\CCat$, the directed graph $\mathbf G_\CCat$ with vertex set $V_\CCat$ indexed by objects of $\CCat$, and edge set $E_\CCat$ indexed by morphisms of $\CCat$ -- is monadic. So, every directed graph freely generates a small category.
	
%	Let $\MMcat$ be the corresponding monad on $\pr{\E}$.
	
For $n \in \N$, the finite ordinal $[n]$ may be viewed as a directed linear graph: \begin{equation}\label{eq: cat Segal}[n] = \overset{0}{\bullet} \longrightarrow \overset{1}{\bullet} \longrightarrow \dots \longrightarrow \overset{n}{\bullet}. \end{equation} 

The free category on $[n]$ is the $n$-simplex $\Delta (n)$, and $\Delta$ is the \textit{simplex category} of \textit{simplices} $\Delta(n)$, $n \in \N$, and functors between them. %Equivalently, $\Delta$ is the category of non-empty finite ordinals and order-preserving maps. 
%	Let $\Delta$ be the simplex category of non-empty finite ordinals, or \textit{$n$-simplices} $[n] \defeq (0\to 1 \to \dots \to n)$, $n \in \N$ and order preserving maps. The functor $ \{0, \dots, n\} \mapsto \Delta(n) \defeq \Delta \ov [n]$ describes a full embedding of $\Delta$ in the category $\Cat$ of small categories.
	% let $\sSet \defeq \pr{\Delta}$ denote 
	 The category of $\Delta$-presheaves, or \emph{simplicial sets}, is denoted by $\sSet$.
	
	The \textit{classical nerve theorem} states that the induced nerve functor $N_{\Delta} \colon \Cat \to \sSet$ is fully faithful. Moreover, its essential image consists of precisely those $P \in \sSet$ that satisfy the classical Segal condition, originally formulated in \cite{Seg68}: a simplicial set $P$ is the nerve of a small category if and only if, for $n >1$, the set $P_n$ of $n$-simplices is isomorphic to the $n$-fold fibred product 
	\begin{equation}\label{eq. classical Segal} P_n \cong \underbrace{P_1 \times _{P_0} \dots \times_{P_0} P_1}_{n \text{ times}}.\end{equation} 
	
	The nerve theorem and Segal condition (\ref{eq. classical Segal}) may be derived using abstract nerve theory: % of \cite{BMW12}:
		
%	Directed graphs $\mathfrak s, \mathfrak t\colon E \rightrightarrows V$ are presheaves over the small diagram category 
%	$\E \defeq \bullet \rightrightarrows\bullet$.
	
%	The canonical forgetful functor from $\Cat$ to the category of directed graphs -- that takes a small category to the directed graph with vertices in $V$ indexed by its objects, and directed edges in $E$ indexed by its morphisms -- has a left adjoint: Every directed graph can be freely completed to a small category. 
	
%Moreover, for each $n \in \N$, the simplex $\Delta (n)$, is the free category on the directed linear graph \begin{equation}\label{eq: cat Segal}[n] = \overset{0}{\bullet} \longrightarrow \overset{1}{\bullet} \longrightarrow \dots \longrightarrow \overset{n}{\bullet}. \end{equation}
%The category $\E$ embeds as the full subcategory with objects $[0]$ and $[1]$ of the category

%So, $\Delta$ is the category obtained in the bo-ff factorisation of the functor $ \Delta_0 \hookrightarrow \pr{\E} \to \Cat$ where 
Let $ \Delta_0 \subset \pr{\E}$ be the full subcategory on the directed linear graphs $[n]$ whose morphisms $f \colon [m] \to [n]$ %\in \Delta_0 ([m], [n])$ 
satisfy $f(i+1) = f(i)+ 1$ for all $0 \leq i <m$. In particular, $\E$ embeds in $\Delta_0$ as the full subcategory on the objects $[0]$ and $[1]$, and the full inclusion $\Delta_0 \hookrightarrow \pr{\E}$ is precisely the nerve induced by the inclusion $\E\hookrightarrow \Delta_0$. Hence $\E$ is dense in $\Delta_0$. Since $\E \hookrightarrow \Delta_0$ is fully faithful, so is $N_{\Delta_0}$ (by \cite[Section~VII.2]{MM94}), so $\Delta_0$ is also dense in $\pr{\E}$.

 %moreover, induces a canonical Grothendieck topology on $\Delta_0$ (see \cite{MM94} and \cref{subs. sheaves}) such that $N_{\Delta_0}$ is equivalent to

%mple of abstract nerve a Weber nerve. 
%
%The classical nerve theorem for small categories in terms of this abstract nerve theory: %   %\srnote{??}
%	
%	
%
%Let $\Delta_0\hookrightarrow \pr{\E}$ be the full subcategory of directed linear graphs $[n]$, $n \in \N$\begin{equation}\label{eq: cat Segal}[n] \defeq \ \overset{0}{\bullet} \longrightarrow \overset{1}{\bullet} \longrightarrow \dots \longrightarrow \overset{n}{\bullet} \end{equation} and successor-preserving morphisms. So $\E$ embeds in $\Delta_0$ as the full subcategory on the objects $[0]$ and $[1]$.
%	
%The canonical free-forgetful adjunction $\Fcat\colon \pr{\E} \leftrightarrows \Cat\colon \Ucat$ is monadic, and the simplex category $\Delta$ of finite non-zero ordinals and order-preserving maps is obtained in the bo-ff factorisation of the induced functor $\Delta_0 \to \Cat$.

Since $\Delta$ is the category obtained in the bo-ff factorisation of $\Delta_0 \to \pr{\E} \to \Cat$, we consider the following diagram of functors %it follows that there is a diagram of functors
	\begin{equation} \label{eq: cat weber}
\xymatrix{ 
	%\E \ar@{^{(}->} [rr]			
		&&
	\Delta\ar@{^{(}->} [rr]_-{\text{f.f.}}				&& 
	\Cat \ar@<2pt>[d]^{\Ucat} \ar[rr]^-{N_{\Delta}}	&&
\sSet
	\ar[d]^{j^*}\\ %^{{\Fcat}^*}	\\
	\E \ar@{^{(}->} [rr]^-{\text{dense}}_-{\text{ f.f.}} 	&&
	\Delta_0 \ar@{^{(}->}[rr]^-{\text{dense}}_-{\text{ f.f.}} 	 \ar[u]^{j}_{\text{b.o.}}		&&
	\pr{\E} \ar@{^{(}->}[rr] \ar@<2pt>[u]^{\Fcat}\ar[rr]^{N_{\Delta_0}}_-{\text{ f.f.}} 	&&
	\pr{\Delta_0}. }
\end{equation}	
%where the left square commutes, and the right square commutes up to natural isomorphism. 
%(Here $\sSet \defeq \pr{\Delta}$ denotes the category of $\Delta$-presheaves, or {simplicial sets}.)

It is straightforward to prove -- using for example \cite[Sections~1~\&~2]{BMW12} -- that the category monad on $\pr{\E}$ has arities $\Delta_0$. Hence $N_\Delta \colon \Cat \to \sSet$ is fully faithful, and a simplicial set $P$ is in its essential image if and only if $j^* P$ is in the essential image of $N_{\Delta_0}$. Segal's condition (\ref{eq. classical Segal}) follows from the fact that $\E$ is dense in $\Delta_0$. % is obtained -- using basic results on Grothendieck topologies (see e.g.~\cite{MM94}) -- from the characterisation of the essential image of $ N_{\Delta_0}$. %and its essential image consists of those simplicial sets $P$ such that $ j^* P$ is in the essential image of 

%Hence, the categorical nerve functor $N\colon \Cat \to \sSet$ is fully faithful and its essential image consists of precisely those $P \in \sSet$ that satisfy the classical Segal condition (originally formulated in \cite{Seg68}) that for $n >1$, the set $P_n$ of $n$-simplices is isomorphic to the $n$-fold fibred product 
%	\begin{equation}\label{eq. classical Segal} P_n \cong \underbrace{P_1 \times _{P_0} \dots \times_{P_0} P_1}_{n \text{ times}}.\end{equation}

%   %\srnote{0203 RG Sufficient?}	
\end{ex}

\begin{rmk}
	The notion of graph in \cref{ex: classical nerve} is different and, in a suitable sense, dual to the one used in \cref{ex: dendroidal}, and in the rest of this paper (from \cref{s. graphs}), where edges function as `objects' and connections between them as `morphisms'. %This is also the case in \cref{ex: dendroidal}.
\end{rmk}

The classical Segal condition (\ref{eq. classical Segal}) may be generalised as follows: 

As before, let $\DCat \subset \CCat$ be a dense subcategory, and, as in \cref{ex: classical nerve}, let $\CCat = \pr{\E} $ be the category of presheaves on a dense subcategory $ \E$ of $\DCat$. So, the dense inclusion $\DCat \hookrightarrow \CCat$ is also full. If $\DCat$ provides arities for a monad $\MM$ on $\CCat$, then by \cite[Lemma~3.6]{BMW12}, % and, as in \cref{ex: classical nerve}, let $\CCat = \pr{\E} $ be the category of presheaves on a dense subcategory $ \E$ of $\DCat$. So, the dense inclusion $\DCat \hookrightarrow \CCat$ is also full. %The essential image of $N_{\DCat}$ in $\pr{\DCat}$ coincides with the inclusion of sheaves for a unique canonical Grothendieck topology on $\DCat$. Then, 
a presheaf $P \colon \boffcat ^{\mathrm{op}} \to \Set$ is in the essential image of $N_{\MM, \DCat}$ if and only if %for each $ d \in \DCat$, % it satisfies the following (generalised) \textit{Segal condition}%and the essential image of $\N \hookrightarrow \DCat$ is a dense subcategory, then $ \DCat \hookrightarrow \CCat$ is fully faithful and the Segal condition %\cref{eq: Segal arities} 
%has the form:
\begin{equation}%[Segal condition]
\label{eq: sheaf condition}
P(jd) = \mathrm{lim}_{(e,f) \in \E \ov d } \ j^*(P)(e) \ \text{ for all } d \in \DCat .
\end{equation}
Equation (\ref{eq: sheaf condition}) %, %which generalises (\ref{eq. classical Segal}), 
is called the \textit{Segal condition} for the nerve functor $N_{\MM, \DCat}$.% %   %\srnote{RG Warning here... Say something about inclusion of sheaves?}

\begin{ex}\label{ex: dendroidal}%   %\srnote{0403 - changed base category... and grafting expo..}

%A \textit{dendroidal nerve theorem} for operads was originally proved in \cite[Section~4]{MW07}, and a corresponding \textit{dendroidal Segal condition} was first given in \cite[Corollary~2.6]{CM13}. % by another method:
%These may also be derived using abstract nerve machinery:
The category $\Bifiso$ -- whose objects are the directed exceptional edge $(\downarrow)$, and the rooted corollas $t_X$ (for all finite set $X$) -- was describe in  \cref{ex: operad}. Recall that an operad is a presheaf $O$ on $\Bifiso$, % \colon{ \Bifiso}^{\mathrm{op}} \to \Set$, 
	together with a unital composition operation satisfying certain axioms. The forgetful functor $\Op \to \pr{\Bifiso}$ is monadic, so every presheaf $O$ on $\Bifiso$ freely generates an operad. Let $\MMop$ be the induced monad.
	
%	The \textit{dendroidal category $\Omega$}, introduced in \cite{MW07}, is a full and dense subcategory of the category $\Op$ of symmetric operads, and hence induces a fully faithful nerve $N_{\Omega}\colon \Op \to \pr{ \Omega}$ (see \cite[Section~4]{MW07}). %By \cite[Corollary~2.6]{CM13}, a presheaf on $\Omega$ is in the essential image of the nerve functor if and only if it satisfies a dendroidal Segal condition, stated in terms of $\Bifiso$. %the nerve of an operad, if and only, for all trees $\Tr$, 
%	\[ O(\Tr) \cong \mathrm{lim}_{ (t,i) \in \Bifiso \ov \Tr}\]
	
	%%   %\srnote{check cm ref}
	
	%Objects of $\Bifiso$ are the rooted corollas $t_X$ for finite $X$, and the directed exceptional edge $(\downarrow)$.
	 \textit{Rooted trees} $\Tr$ are obtained as formal colimits of finite diagrams in $\Bifiso$ that describe \textit{grafting} of objects of $\Bifiso$ root-to-leaf as in \cref{fig:grafting}(b). Let $\Omega_0$ be the category whose objects are such rooted trees $\Tr$ and whose morphisms $\Tr[S] \to \Tr$ are (up to isomorphism) inclusions of rooted trees that preserve vertex valency (as in \cref{fig:grafting}(a)). Then $\Bifiso \subset \Omega_0$ is the full and dense subcategory of rooted trees with zero or one vertex.

	\begin{figure}[htb!]
		\[
		\begin{tikzpicture} \node at (3.3,2){(a)};
			
			\node at (4.5,0){	\begin{tikzpicture}[scale = 0.3]%subtree
				
				\draw[thick, cyan]
	
				(1,0)--(1,2)
				(1,2)--(3,4);
				\node at (1.6,.8){\scriptsize{$e_0$}};
					\draw[thick,cyan]
				
				(1,2)--(-.5,3.5)
				(3,4)--(3,6)%just subtree
		%		;
		%		\draw[cyan, thick]
		%		(-2,2)--(0,0)
		%		(-2,5)--(-0,3)
		%		(3,5.5)--(2,7)
		%		(3,5.5)--(4,7)% just leaves
		%		
				(3,4)--(1,6)
				(3,4)--(4.5,6)
				(3,4)--(6,6);% leaves and subtree
				
				\filldraw [cyan]
	
				(1,2) circle (6pt)
				(3,4) circle (6pt)
				;
				
				\end{tikzpicture}};
			\draw[thick, ->](5,0)--(6,0);
		%	\node at (5.8,0){\large{$\longrightarrow$}};%arrow
			
			\node at (8, 0){	\begin{tikzpicture}[scale = 0.3]%maintree
				
				\draw[thick]
				(0,-2)--(0,0)
				(0,3)--(0,4.5)%not leaves or subtree
				
				(-2,2)--(0,0)
				(-2,5)--(-0,3)
				(3,5.5)--(2,7)
				(3,5.5)--(4,7)% just leaves
				;
				
				\draw[thick,cyan]
				(0,0)--(1,2)
				(1,2)--(3,4);
				\draw[thick, cyan]
				(1,2)--(0,3)
				(3,4)--(3,5.5)%just subtree
				
				(3,4)--(1,6)
				(3,4)--(4.5,6)
				(3,4)--(6,6);% leaves and subtree
				
				\filldraw 
				(0,0) circle (6pt)
				(0,3) circle (6pt)
				(0,4.5) circle (6pt)
				(3,5.5) circle (6pt);
		\filldraw[cyan]
				(1,2) circle (6pt)
				(3,4) circle (6pt)
				;
				\node at (1.2, .8){\scriptsize{$e_0$}};
				
				\end{tikzpicture}};

		\node at (11,2){(b)};	
		\node at (12.5,0)
		{
		\begin{tikzpicture}[scale = 0.25]
%		\node at (-30,0){
%			\begin{tikzpicture}[scale = 0.3]
			\node at (0,-5){\begin{tikzpicture}[scale = 0.3]
				
				\draw[thick](0,-1)--(0,-3)
				(0,-1)--(-1.5,1)
				%	(0,-1) --(0,2)
				(0,-1)--(1.5,1)
				%		(0,2)--(-1,4)
				%		(0,2)--(-.5,4)
				%		(0,2)--(1,4)
				;
				
				\draw[red, thick, -<-=.5]	(0,-1) --(0,2);

				\draw [ draw=black, fill=white]
				%(0,2)circle (8pt)
				(0,-1) circle (8pt);
				
				%	\node at(0,-1) {\small{$\phi_1 $}};
				%	\node at(0,2) {\small{$\phi_2 $}};
				%	

				\end{tikzpicture}};
			
			\node at (0,3){\begin{tikzpicture}[scale = 0.3]
				
				\draw[thick]
				%	(0,-1)--(0,-3)
				%	(0,-1)--(-1.5,1)
				%	(0,-1) --(0,2)
				%	(0,-1)--(1.5,1)
				(0,2)--(-1,4)
				(0,2)--(-.5,4)
				(0,2)--(1,4)
				;
				
				\draw[blue, thick, -<-=.5]	(0,-1) --(0,2);
				
				\draw[dashed]
				(-.3,4)--(.8,4);
				
				%	\draw[decoration={brace, raise=5pt},decorate]
				%	(-1,4) -- node[above=6pt] {\small{$m$}} (1,4);
				%	
				
				%\draw[decoration={brace,raise=5pt},decorate]
				% (-1.5,2.1) -- node[above=6pt] {\small{$n$ inputs}} (1.5,2.1);
				
				\draw [ draw=black, fill=white]
				(0,2)circle (8pt);

				%	\node at(0,-1) {\small{$\phi_1 $}};
				%	\node at(0,2) {\small{$\phi_2 $}};
				%	

				\end{tikzpicture}};
			
			\node at (-10, 0){ \begin{tikzpicture}[scale = 0.3]

				\draw[black, thick, -<-=.5]	(0,-1) --(0,2);

				\end{tikzpicture}};
			
			\draw[ultra thick, blue, ->-=1]	(-8, 1) --(-3,2);
			\draw[ultra thick, red, ->-=1]	(-8, -1) --(-3,-3);
			
			\end{tikzpicture}
		};
		
		\draw[%
		gray, thick,
		decorate,decoration={%
			,zigzag
			,amplitude=2pt
			,segment length=2mm,pre length=8pt
		}
		] (14.7,0) -- (15.7,0);
		\draw[gray, thick, ->-=1](15.7,0) -- (16,0);
		\node at (15.3, .3){\small{colimit}};
		%\draw[->, thick] (14.7,0)--(15.7,0);
		
		\node at (17.5,0){
			\begin{tikzpicture}[scale = 0.45]
			
			\draw[thick](0,-1)--(0,-3)
			(0,-1)--(-1.5,1)
			%	(0,-1) --(0,2)
			(0,-1)--(1.5,1)
			(0,2)--(-1,4)
			(0,2)--(-.5,4)
			(0,2)--(1,4)
			;
			
			\draw[black, ultra thick, -<-=.5]	(0,-1) --(0,2);
			
			\draw[dashed]
			(-.3,4)--(.8,4);
			
			%	\draw[decoration={brace, raise=5pt},decorate]
			%	(-1,4) -- node[above=6pt] {\small{$m$}} (1,4);
			%	
			
			%\draw[decoration={brace,raise=5pt},decorate]
			% (-1.5,2.1) -- node[above=6pt] {\small{$n$ inputs}} (1.5,2.1);
			
			\draw [ draw=black, fill=white]
			(0,2)circle (8pt)
			(0,-1) circle (8pt);
			
			%	\node at(0,-1) {\small{$\phi_1 $}};
			%	\node at(0,2) {\small{$\phi_2 $}};
			%	

			\end{tikzpicture}};%\end{tikzpicture}};
		\end{tikzpicture}	\]
	\caption{(a) Subtree inclusion, (b) grafting of rooted corollas to form a rooted tree.}	\label{fig:grafting}
	\end{figure}

Hence, the induced nerve $\Omega_0 \to \pr{\Bifiso}$ is full and faithful, and $\Bifiso$ canonically induces a topology on $ \Omega_0$ whose sheaves are precisely $\Bifiso$-presheaves. In particular, $ \Omega_0 \to\pr{\Bifiso}$ is also dense (see e.g.~\cref{subs. sheaves} for comparison),
and there is a diagram of functors %is the full subcategory in $\Op$ of free operads on rooted trees $\Tr$
%	
%%	Hence the induced embedding $ \Omega_0 \to\pr{Bifiso}$ is fully faithful, and $\Bifiso$ canonically induces a topology on $ \Omega_0$ whose sheaves are precisely $\Bifiso$-presheaves. In particular, $ \Omega_0 \to\pr{Bifiso}$ is also dense. (See \cref{subs. sheaves} for comparison).
%%	%%   %\srnote{look for best reference}
%
%	
%	The free operad functor $\Fop$ (see also \cref{ex: operad endo}) is left adjoint to the forgetful functor $\Uop$ from the category $\mathsf{Op}$ of symmetric operads to $\pr{\Bifiso}$, and hence the following diagram commutes:
		\begin{equation} \label{eq: op weber}
	\xymatrix{ 
		%\ElS[RC] \ar@{^{(}->} [rr]		
		&&
		\Omega\ar@{^{(}->} [rr]				_-{\text{ f.f.}}		&& 
	\Op \ar@<2pt>[d]^{\Uop} \ar[rr]	^-{N_{\Omega}}	&&
		\pr{\Omega}
		\ar[d]^-{j^*}\\ %^{{\Fcat}^*}	\\
	\Bifiso %^{\Fcat}_{\cong}
		\ar@{^{(}->} [rr]	^-{\text{dense }}	_-{\text{ f.f.}}	&&
		\Omega_0 \ar@{^{(}->}[rr] 	^-{\text{dense }}	_-{\text{ f.f.}}	 \ar[u]^{j}_{\text{b.o.}}		&&
		\pr{\Bifiso} \ar@{^{(}->}[rr]	^-{N_{\Omega_0}}	_-{\text{ f.f.}} \ar@<2pt>[u]^{\Fop}	&&
		\pr{\Omega_0} }
	\end{equation}
	in which the left square commutes and the right square commutes up to natural isomorphism.
%	\begin{equation} \label{eq: op weber}
%	\xymatrix{ 
%		%\ElS[RC] \ar@{^{(}->} [rr]		
%				&&
%		\Omega\ar@{^{(}->} [rr]				&& 
%		\mathsf{Op} \ar@<2pt>[d]^{\Uop} \ar[rr]^-N	&&
%		\pr{\Omega}
%		\ar[d]^-{j^*}\\ %^{{\Fcat}^*}	\\
%		\ElS[RC] %^{\Fcat}_{\cong}
%		\ar@{^{(}->} [rr]	&&
%		\Omega_0 \ar@{^{(}->}[rr] 	 \ar[u]^{j}_{\text{b.o.}}		&&
%		\pr{\ElS[RC]} \ar@{^{(}->}[rr] \ar@<2pt>[u]^{\Fop}	&&
%		\pr{\Omega_0}. }
%	\end{equation}
%	Here, $j\colon \Omega_0 \to \Omega$ is the restriction to $\Omega_0$ of $\Fop$, which further restricts to the identity on $\ElS[RC]$. 
The full subcategory of $\Op$ induced by the bo-ff factorisation of the functor $ \Omega_0 \to \pr{\Bifiso} \to \Op$ %-- whose objects are the free operads on rooted trees $\Tr$ -- 
is the \emph{dendroidal category} $\Omega$ of free operads on rooted trees. This is described in \cite{MW07}, where  % that induces the dendroidal nerve functor $N_\Omega \colon \Op \to \pr{\Omega}$ in \cite{MW07}. 
it was established that the full inclusion $\Omega \hookrightarrow \Op$ is dense, and hence the \textit{dendroidal nerve} $N_\Omega$ is fully faithful.

It is easy to show, e.g.~using methods similar to those described in \cref{s. nerve}, that the monad $\MMop$ on $\pr{\Bifiso}$ has arities $\Omega_0$. Hence, the abstract nerve theory of \cite{BMW12} may also be used to show that the nerve functor $N_\Omega\colon \Op\to \pr{\Omega}$ is fully faithful and its essential image consists of those $\Omega$-presheaves (or \textit{dendroidal sets}) 
 $O\colon \Omega^\mathrm{op} \to \Set$ that satisfy the \textit{dendroidal Segal condition} first proved in \cite[Corollary~2.6]{CM13}:
	\begin{equation}
	\label{eq: dendroidal segal}
	O(\Tr) = \mathrm{lim}_{ (t, i) \in (\Bifiso \ov \Tr)} j^*O(t) \ \text{ for all symmetric rooted trees } \Tr.
	\end{equation}
%	originally proved, by different methods, in \cite[Corollary~2.6]{CM13}. 

\end{ex}

In particular, since $ \Delta_0$ is the full subcategory of linear trees in $ \Omega_0$, the simplicial nerve theorem for categories is a special case of the dendroidal nerve theorem for operads.

%\begin{rmk}
	%%   %\srnote{some big questions about this remark.. Richard?}
%%	Before describing the theory of distributive laws that will be used, in \cref{s. Unital}, to construct the modular operad monad $\OOO$ on $\GS$, it is worth observing that 
%
%
\begin{defn}\label{defn: pointed endo} 
	A \emph{pointed endofunctor} on a category $\CCat$ is an endofunctor $E$ on $\CCat$ together with a natural transformation $\eta^E\colon 1 _{\CCat}\Rightarrow E$. An \emph{algebra for a pointed endofunctor} $(E, \eta^E)$ on $\CCat$ is a pair $(c, \theta)$ of an object $c$ of $\CCat$ and a morphism $\theta \in \CCat (Ec,c)$ such that $ \theta \circ \eta^{\CCat}_c = id_c \in \CCat(c,c).$ \end{defn}
%%   %\srnote{R25. should I add a reference? Which reference? Richard?}

%An algebra for an endofunctor $E$ on a category $\CCat$ is just a pair $(c, \theta)$ of an object $c$ of $\CCat$ and a morphism $\theta \in \CCat (Ec,c)$. If there is a natural transformation $\eta^E\colon 1 _{\CCat}\Rightarrow E$, then $(c, \theta)$ is an algebra for the \textit{pointed endofunctor} $(E, \eta^E)$ if it satisfies $ \theta \eta^{\CCat}_c = id_c \in \CCat(c,c).$ 
For example, modular operads are algebras for the pointed endofunctor on $\GS$ described in \cite{JK11}. However, as discussed in \cref{degenerate}, the abstract nerve machinery of \cite{BMW12} cannot be modified for algebras of (pointed) endofunctors:

For any monad $\MM = (M, \mu^\MM, \eta^\MM)$ on a category $\CCat$, the EM category $\CCat^{\MM}$ of $\MM$-algebras embeds canonically in the category $\CCat^M$ of algebras for the pointed endofunctor $(M, \eta^\MM)$. The induced free functor $C  \to \CCat^M$, $c \mapsto (Mc, \mu^{\MM}c) $ factors through $\CCat^{\MM}$ and depends crucially on the monadic multiplication $\mu^\MM \colon M^2 \Rightarrow M$ of $\MM$. 

%In particular, for a pointed endofunctor $(E, \eta^E)$ on $\CCat$ with category of algebras $\CCat^E$, there is, in general, no functor $\CCat \to \CCat^E$.

% 
%Let $E$ be an endofunctor on a category $\CCat$, and let $\eta^E\colon 1 _{\CCat}\Rightarrow E$ be a natural transformation. %Such a pair $(E, \eta^E)$ is called a \textit{pointed endofunctor}. 
%An \textit{algebra for $(E, \eta^E)$} is a pair $(c,\theta)$ of an object $c$ of $\CCat$ and a morphism $\theta \in \CCat (Ec,c)$ such that $ \theta \eta^{\CCat}_c = id_c \in \CCat(c,c).$ 
%	
%	
%	
%
%Now, if $\MM = (M, \mu^\MM, \eta^\MM)$ is a monad on a category $\CCat$, then the EM category $\CCat^{\MM}$ of algebras for $\MM$ embeds canonically in the category of algebras for the pointed endofunctor $(M, \eta^\MM)$ on $\CCat$. However, the free functor $C \to \CCat^\MM$, $c \mapsto (Mc, \mu^{\MM}c) $, depends crucially on the monadic multiplication $\mu^\MM \colon M^2 \Rightarrow M$ of $\MM$. Hence, for a pointed endofunctor $(E, \eta^E)$ on $\CCat$ with category of algebras $\CCat^E$, there is, in general, no functor $\CCat \to \CCat^E$. 

By contrast, for an arbitrary pointed endofunctor $(E, \eta^E)$ is on $\CCat$, there is, in general, no canonical choice of functor $\CCat \to \CCat^E$. %It is, however, quite possible (as is the case in \cite{JK11}) for both $\CCat$ and $\CCat^E$ to admit dense subcategories on the same set of objects. 

%\end{rmk}

% \cite{JK11}, $\CSM$ is constructed as the category of algebras for a pointed endofunctor on $\GS$. Moreover, there is a full subcategory $\Gr$ of $\GS$, and a subcategory
%
%
% the image of an object $c\in \CCat$ under the free functor $\CCat \to \CCat^\MM$ is given by the pair $(Mc, \mu^\MM c)$ sincethe category of for all objects $c$ of $\CCat$, the image $(Mc, \mu^\MM c)$ of $c$ of the free functor $\CCat \to \CCat^\MM$ is an algebra for $
%

%\begin{ex}
%	An analogous result for cyclic operads is obtained by forgetting orientation.
%\end{ex}

\subsection{Distributive laws}\label{subs: dist}

With Examples \ref{ex: classical nerve} and \ref{ex: dendroidal} in mind, let us return to the case of modular operads. Recall that graphical species are presheaves on the category $\fisinv$ and that modular operads are graphical species equipped with certain operations. 

Informally, monads are gadgets that encode, via their algebras, {(algebraic) structure} on objects of categories. In \cite{JK11}, it is the combination of the contraction structure $\zeta$, and the multiplicative unit structure $\epsilon$ that provides an obstruction to extending the modular operad endofunctor on $\GS$ to a monad (see \cref{degenerate}). So, one approach to constructing the modular operad monad $\mathbb{O}$ on $\GS$ could be to find monads for the modular operadic multiplication, contraction, and unital structures separately, and then attempt to combine them. 

In general, monads do not compose. Given monads $\MM = (M, \mu^{\MM}, \eta^{\MM})$ and $ \MM' = ({M'}, \mu^{\MM'}, \eta^{\MM'})$ on a category $\CCat$, there is no obvious choice of natural transformation $\mu\colon (M{M'})^2 \Rightarrow M{M'}$ defining a monadic multiplication for the endofunctor $M{M'}$ on $\CCat$.%, and hence $\MM$, and $\MM'$ do not compose. 

Observe, however, that any natural transformation $\lambda\colon {M'}M \Rightarrow M{M'}$ induces a natural transformation
\begin{equation}
	\mu_\lambda\colon \xymatrix{(M{M'})^2 \ar@{=>} [rr]^- {M\lambda {M'}} && M^2 {M'}^2\ar@{=>}[rr]^- {\mu^{\mathbb{M}}\mu^{\mathbb{{M'}}}} &&M{M'}. }
\end{equation}

\begin{defn}\label{defn: dist}
	A \emph{distributive law} \cite{Bec69} for $\MM$ and $ \MM'$ is a natural transformation $\lambda\colon{M'}M \Rightarrow M{M'}$ such that the triple $(M{M'}, \mu_\lambda, \eta^{\mathbb{M}}\eta^{\mathbb{{M'}}})$ defines a monad $\mathbb{M} \mathbb{{M'}}$ on $\CCat$.

\end{defn}
A distributive law $\lambda \colon M'M \Rightarrow M' M$ determines how the $\MM$-structures and $\MM'$-structures on $\CCat$ interact to form the structure encoded by the composite monad $\MM \MM'$.

%Distributive laws are characterised by four axioms \cite[Section~1]{Bec69}. 

\begin{ex}%\label{ex: dist cats}
	\label{ex: category composite} The category monad on $\pr{\E}$ (\cref{ex: classical nerve}) may be obtained as a composite of the \textit{semi-category monad}, which governs associative composition, and the \textit{reflexive graph monad} that adjoins a distinguished loop at each vertex of a graph $\mathbf G \in \pr{\E}$. 
	The corresponding distributive law encodes the property that the adjoined loops provide identities for the semi-categorical composition. 
	
	(There is also a distributive law in the other direction, %: the reflexive graph monad that adjoins distinguished units to directed graphs distributes over the free semi-category monad, 
	but the two structures do not interact in the composite. See also \cref{rmk: TD}.)
		
\end{ex}

As usual, let $\CCat^\MM$ denote the EM category of algebras for a monad $\MM$ on $\CCat$.

By \cite[Section~3]{Bec69}, given monads $\MM, \MM'$ on $\CCat$, and a distributive law $\lambda\colon {M'}M \Rightarrow M{M'}$, there is a commuting square of strict monadic adjunctions:
\begin{equation}\label{eq: adjunction diag}\small
{\xymatrix@C = .5cm@R = .3cm{
\CCat^{\MM'} \ar@<-5pt>[rr]\ar@<-5pt>[dd]&\scriptsize{\top}&\CCat^{\MM\MM'} \ar@<-5pt>[ll]\ar@<-5pt>[dd]\\
	\vdash&&\vdash\\
\CCat \ar@<-5pt>[uu] \ar@<-5pt>[rr]&\scriptsize{\top}& \CCat^\MM. \ar@<-5pt>[ll]\ar@<-5pt>[uu]}}\end{equation}

%So, there are lifted monads $\widetilde {\MM}$ on $\CCat^{\mathbb{{M'}}}$, and $ \mathbb {{M'}}_*$ on $\CCat^{\mathbb{M}}$, such that the corresponding EM categories of algebras ${\CCat^{\mathbb{{M'}}}}^{\widetilde \MM}$ and $ {\CCat^{\mathbb{M}}}^{\MM'_*}$ are each canonically isomorphic to the EM category $\CCat^{\MM \MM'}$ of algebras for $\MM\MM'$. This factorisation is essential in the proof of the nerve theorem for modular operads, \cref{nerve theorem}.%   %\srnote{R29}

%Along similar lines to \cref{ex: category composite}, 

%The category $\Gr$ of connected Feynman graphs and \'etale morphisms -- defined in \cite{JK11} is discussed in detail in \cref{sec: topology}. 

In \cref{sec: topology}, it is shown that the category $\Gr$ of connected Feynman graphs and \'etale morphisms (first defined in \cite{JK11}) fits into a chain $\fisinv \hookrightarrow \Gr \hookrightarrow \GS$ of fully faithful dense embeddings. And, in \cref{s. Unital}, the modular operad monad $\mathbb O$ on $\GS$ is constructed as a composite $\DD\TT$ of monads $\TT$ (that governs contraction and non-unital multiplication) and $\DD$ (that governs multiplicative units) on $\GS$.

 Hence, by (\ref{eq: adjunction diag}), there is a monad $\TTp$ on the EM category $\GSp$ of $\DD$-algebras, such that $\GSp^{\TTp} \cong \CSM$ and a diagram of functors 
\begin{equation} \label{big picture}
\xymatrix { 
	&&\Klgr\ar@{^{(}->} [rr]^-{\text{ }}_-{\text{ f.f.}}						&& \CSM\ar@<2pt>[d]^-{\text{forget}} \ar [rr]^-N				&&\pr{\Klgr}\ar[d]^{j^*}	\\
	\fisinvp \ar@{^{(}->} [rr]	^-{\text{dense }}	_-{\text{ f.f.}}			&& \Grp \ar@{^{(}->} [rr] ^-{\text{dense }}	_-{\text{ f.f.}}	 \ar[u]^{j}_{\text{b.o.}}			&& \GSp \ar@<2pt>[d]^-{\text{forget}} \ar@{^{(}->} [rr]^-{\text{ }}_-{\text{f.f.}}	 \ar@<2pt>[u]^-{\text{free}}	&& \pr{\Grp}\ar[d] \\
	\fisinv\ar@{^{(}->} [rr]^-{\text{dense }}_-{\text{ f.f.}}\ar[u]^{\text{b.o.}}	&& \Gr \ar@{^{(}->} [rr]^-{\text{dense}}_-{\text{ f.f.}}	 \ar[u]_{\text{b.o.}}	&& \GS \ar@{^{(}->} [rr]_-{\text{ }}_-{\text{f.f.}}	 \ar@<2pt>[u]^-{\text{free}}		&& \pr{\Gr}}
\end{equation}
in which the categories $\fisinvp$, $\Grp$ and $\Klgr$ are obtained via bo-ff factorisations. 

In \cref{s. nerve}, it is shown that $\TTp$ has arities $\Grp$ (see \cref{sec: Weber}), whence it follows that the induced nerve $N\colon \CSM \to \pr{\Klgr}$ is fully faithful and its essential image is characterised in terms of $\fisinvp$. %%   %\srnote{nothing about final form of segal condition.}%Hence, using \cite[Sections~1~\&~2]{BMW12}, prove that the nerve $N\colon \CSM \to \pr{\Klgr}$ is fully faithful, and its essential image is characterised in terms of $\fisinvp$. % for the fully faithful nerve . % is fully faithful, and then use the fact that $\fisinv$ is dense in $\Grp$ to establish the desired Segal condition for modular operads/% in terms of $\fisinv$.
%%   %\srnote{R4}

	\newcommand{\defretract}[5]{\xymatrix{*[r]{#1} \ar@<1ex>[rrr]^-{#3} \ar@(ul,dl)[]_{#5} &&& #2 \ar@<1ex>[lll]^-{#4}}}

\section{Graphs and their morphisms }\label{s. graphs}

This section is an introduction to Feynman graphs as defined in \cite{JK11}. Most of this section and the next stay close to the original constructions there. Since \cite{JK11} was just a short note, it contained very few proofs, and so relevant results are proved in full here. Extensive examples are also given. Where possible, definitions and examples are presented in a way that builds on \cref{sec: definitions} and highlights similarities with familiar concepts in basic topology. 

This section deals with basic definitions and examples. The following section is devoted to a more detailed study of the topology of Feynman graphs, in terms of their \'etale morphisms.

%Definitions will follow \cite{JK11}. Details of proofs etc. etc

%Stuff about lots of examples...presenting in a way familiar to anyone who has }
\subsection{Graph-like diagrams and Feynman graphs}\label{subs:feynman graphs}

 Roughly speaking, a graph consists of a finite set of vertices $V$ and a finite set of connections $\widetilde E$, together with an incidence relation: if $\widetilde E$ is the set of orbits of a set $E$ under an involution $\tau$, then the incidence is a partial function $E \pto V$ that attaches connections to vertices. In this paper, all graphs are finite, and may have loops, parallel edges, and loose ends (ports).% subset of $E \times V$. % $E \pto V$ is a (partial) function %In order to glue the vertices and connections together, $\widetilde E$ is as the orbit space of an involutive set $(E, \tau)$ .
%   %\srnote{partial maps comment}

\begin{ex}\label{ex: BM graphs}
	Section 15 of \cite{BB17} provides a nice overview of various graph definitions that appear in the operad literature. %, and a proof of the equivalence of the objects described.
		The definition that is perhaps most familiar %in the operad literature 
		is that found in, for example, \cite{GK98} and \cite{BM08}. There, a graph $G$ is described by sets $V$ of vertices and $E$ of edges, an involution $\hat \tau\colon E \to E$, and an incidence function $\hat t\colon E \to V$. The ports of $G$ are the fixed points of the involution $\hat \tau$. A formal exceptional edge graph $\eta$ is also allowed. Morphisms $\eta \to G$ are choices $\{* \} \to E$ of elements of $E$.
	%   %\srnote{1703 - some small changes here}
\end{ex}

Feynman graphs are defined similarly to the graphs described in \cref{ex: BM graphs}, except the involution on $E$ must be fixed-point free, while the incidence is allowed to be a partial map $ E \pto V$. These subtle differences make it possible to encode the whole calculus of Feynman graphs in terms of the formal theory of diagrams in finite sets.

\medspace
The category of \emph{graph-like diagrams} is the category $\GrShape$ of functors $\diag^{\mathrm{op}} \to \fin$, %were the diagram $\GrShape \defeq \Fun[\diag,\fin]$ 
where $\diag$ is the small category $\label{grdiag}
\xymatrix{
	\bullet \ar@(lu,ld)[]_{}&\bullet \ar[l] _{}\ar[r]^-{}&\bullet},$ and $\fin$ is the category of finite sets and all maps between them.

%Since $\fin$ admits finite (co)limits, so does $\GrShape$, and these are computed pointwise. 
%   %\srnote{1703 - stuff about limits deleted}
The initial object in $\GrShape$ is the empty graph-like diagram:
\[
\xymatrix{\oslash =&\nul\ar@(lu,ld)[] && \nul\ar[ll] \ar[rr] &&\nul,}\] and the terminal object $\bigstar$ is the trivial diagram of singletons:
\[
\xymatrix{\bigstar =&{\one}\ar@(lu,ld)[] && {\one} \ar[ll] \ar[rr] &&{\one}.}\]
%\warn{should I use $\nul$ in place of $\emptyset$? $\mathbf{1}$ in place of $*$?}

Feynman graphs, introduced in \cite{JK11}, are graph-like diagrams satisfying extra properties:

\begin{defn}\label{defn: graph}
	
	A \emph{Feynman graph} is a graph-like diagram
	\[ \G = \Fgraph\]
	such that $s\colon H \to E$ is injective and $\tau\colon E\to E $ is an involution without fixed points.
	
	A \emph{subgraph} $\H \hookrightarrow \G$ of a Feynman graph $\G$ is a subdiagram that inherits a Feynman graph structure from $\G$.	
	
	The full subcategory on graphs in $\GrShape$ is denoted by $\Grbig$.
\end{defn}

Elements of $V$ are \textit{vertices} of $\G$ and elements of $E$ are called \textit{edges} of $\G$. For each edge $e$, $\tilde e$ is the $\tau$-orbit of $e$, and $\widetilde E$ is the set of $\tau $-orbits in $E$. % has elements $\tilde e$, where $\tilde e \in \widetilde E$ is the orbit of an edge $e \in E$, is the set of connections in $\G$. 
Elements of $H$ are \textit{half-edges} of $\G$. Together with the maps $s$ and $t$, $H$ encodes a partial map $ E \pto V$ describing the incidence for the graph. A half-edge $h\in H$ may also be written as the ordered pair $h = (s(h), t(h))$. 

%\begin{rmk}\label{rmk: graph definitions}

%	The fundamental difference between the Feynman graph formalism (\cref{defn: graph}) and the graphs discussed in \cref{ex: BM graphs} is that the exceptional graph $\eta$ has trivial endomorphism monoid, whereas $\Grbig(\shortmid, \shortmid) \cong Aut(\two)$. This involutive property underlies all the constructions that follow, as well as the difficulties described in \cref{degenerate}.%A good comparison of various graph formalisms, including that of \cite{JK11}, can be found in \cite[Section~15]{BB17}.

In general, unless I wish to emphasise a point that is specific to the formalism of Feynman graphs, I will refer to Feynman graphs simply as `graphs'. %about the specific graph formalism introduced in \cite{JK11}.\warn{21.04-think about this here..Phrase better... think about the times I actually do this-can just search 'Feynman'}
%\end{rmk}

\begin{rmk}\label{geom real} 
	A graph $\G$ may be realised geometrically by a one-dimensional space $|\G|$ obtained from the discrete space $\{ *_v\}_{v \in V}$, and, for each $e \in E$, a copy $[0,\frac{1}{2}]_e$ of the interval $[0,\frac{1}{2}]$ subject to the identifications 
%	\begin{itemize}
%		\item 
		$0_{s(h)}\sim *_{t(h)}$ for $h \in H$, and 
	%	\item 
		$(\frac{1}{2})_{e} \sim (\frac{1}{2})_{\tau e}$ for all $e \in E$. % so that each orbit of $\tau$ is represented by a closed interval $[0,1]$.
		
	%\end{itemize}

\end{rmk}

\begin{ex}\label{stick} (See also \cref{fig: stick and corollas}(a).)
	%	The image of the distinguished object $\S$ under the embedding $\fisinv \hookrightarrow \Grbig$ is t
	The graph $(\shortmid)$ has edge set $\two = \{1,2\}$ and no vertices.
	\[ \xymatrix{
		(\shortmid)\defeq&\mathbf 2\ar@(lu,ld)[] & \mathbf 0 \ar[l] \ar[r]&\mathbf 0.}\]

	A \emph{stick graph} is a graph that is isomorphic to $(\shortmid)$.

\end{ex}

%\warn{6.5Dual of port stuff... This is isomorphic to boundary of $\G$ if and only if $\G$ has no stick components.}
For any set $ X$, $X^\dagger \cong X$ denotes its formal involution. 
\begin{ex}\label{corolla}(See also \cref{fig: stick and corollas}(b), (c).)
	%For a finite set $X$ be a finite set, and let $X^\dagger \cong X$ be its formal involution. 
	The \emph{$X$-corolla} $\CX$ associated to a finite set $X$ has the form
	\[ \xymatrix{
		\CX: && *[r] {X \amalg X^\dagger }
		\ar@(lu,ld)[]_{\dagger}&& 
		X^\dagger \ar@{_{(}->}[ll]_-{\text{inc}}\ar[r]& \{*\}.
	}\]
	
\end{ex}

\begin{figure}[htb!]\centering{
		\begin{tikzpicture}
		\node at (-2,2.5) {(a)};
		\draw (0,1)--(0,2);
		\node at (-.3,1.2) { \small{$2$}};
		\node at (-.3, 1.9) {\small{$ 1$}};
		
		\node at (3,2.5) {(b)};
		\draw (5,1)--(5,2);
		\node at (4.7,1.3) { \small{$x^\dagger$}};
		\node at (4.7, 1.8) {\small{$ x$}};
		%\node at (5,-1) {$|\C_{\{x\}}|$};
		\filldraw 
		(5,1)circle (2pt);
		
		\node at (8,2.5) {(c)};
		\node at (11,1){\begin{tikzpicture}
			\filldraw(0,0) circle(2pt);
			\foreach \angle in {-60,60,180} 
			{
				\draw (\angle:0cm) -- (\angle:1.2cm);
				
				\node at (-0.2,-.2) {\tiny{$1^\dagger$} };
				\node at (0.3,-.2) {\tiny{$2^\dagger$} };
				\node at (0,.3) {\tiny{$3^\dagger$} };

				\node at (-1,-.2) {\small $ 1$};
				\node at (.8,-.8) {\small$ 2$};
				\node at (.2,1) {\small$ 3$};
				
				% \node at (-1.6,0) {(1)};
				% \node at (.8,-1.5) {(2)};
				% \node at (.8,1.4) {(3)};
			}\end{tikzpicture}};
		
		%\node at (12, -1) {$|\C_{\mathbf 3}|$};

	\end{tikzpicture}
	\caption{ Realisations of (a) the stick graph $(\shortmid)$, and the corollas (b) $\CX[\{x\}]$ and (c) $ \C_\mathbf3$.}
	\label{fig: stick and corollas}
}\end{figure}

\begin{defn}\label{defn: inner edges}%   %\srnote{1703}
An \emph{inner edge} of $\G$ is an element $e \in E$ such that $\{e , \tau e \}\subset im(s)$. 
	The set $\EI \subset E$ of {inner edges} of $\G$ is the maximal subset of $im(s) \subset E$ that is closed under $\tau$, and $\widetilde {\EI}$ is the set of \emph{inner $\tau$-orbits} $\tilde e \in \widetilde E$ such that $e \in \EI$. % is denoted by $\widetilde {\EI}$.%the $\tau$-orbit $\{e, \tau e\}$ of $e$ is contained in the image of $s$. 
%The set of {inner edges} of $\G$ is denoted $\EI$. 

 The set $E_0 = E\setminus im(s)$ is the \emph{boundary} of $\G$. Elements $e \in E_0$ are \emph{ports} of $\G$. 
 
 A \emph{stick component} of a graph $\G$ is a pair $\{e, \tau e\}$ of edges of $\G$ such that $e$ and $\tau e$ are both ports. %The edges labelled by (a) 1 and 2, (b) $x$ and (c) 1, 2 and 3 represent the ports of the three graphs in \cref{fig: stick and corollas}.
 
\end{defn}
 Graph morphisms preserve inner edges by definition. The stick graph $(\shortmid)$ has $E_0(\shortmid) = E(\shortmid) = \two$, and, for all finite sets $X$, the $X$-corolla $\CX$ has boundary $E_0 (\CX) = X$. 
 
%Inner edges are preserved under graph morphisms: $f(\EI(\G))\subset \EI(\G')$ for all graphs $\G, \G'$ and all $f \in \Grbig(\G, \G')$. 
% Since $\EI$ is closed under $\tau$, the set $\widetilde{\EI} \subset $ of \emph{inner $\tau$-orbits} has elements of the form $\tilde e$ for $e \in \EI$. %   %\srnote{0403 small change here stuff removed.}

%   %\srnote{0403 - slight change to order}
Since $\fin$ admits finite (co)limits, so does $\GrShape$, and these are computed pointwise. And, since $\Grbig$ is full in $\GrShape$, (co)limits in $\Grbig$, when they exist, correspond to (co)limits in $ \GrShape$.

\begin{ex}\label{ex: initial and terminal}%   %\srnote{1703 - moved loop colimit to here}

	The empty graph-like diagram $\oslash$ is trivially a graph, and is therefore initial in $ \Grbig$. However, there is no non-trivial involution on a singleton set, so the terminal diagram $\bigstar$ in $\GrShape$ is not a graph. Hence, $\Grbig$ is not closed under finite limits in $\GrShape$. (By Examples \ref{deg loop} and \ref{not cocomplete}, $\Grbig$ is also not closed under finite colimits in $\GrShape$.)
\end{ex}

The cocartesian monoidal structure on $\fin$ is inherited by $\GrShape$ and $\Grbig$, making these into strict symmetric monoidal categories under pointwise disjoint union $\amalg$, and with monoidal unit given by the empty graph $\oslash$.

\begin{ex}\label{ex: M graph}%   %\srnote{0403 small adjustments}
	
Let $X$ and $ Y$ be finite sets. The graph $\mathcal M^{X,Y}_{x_0,y_0}$, illustrated in \cref{fig: MN}, has two vertices and one inner edge orbit (highlighted in bold-face in \cref{fig: MN}). It is obtained from the disjoint union $\CX[X \amalg \{x_0\}] \amalg \CX[Y \amalg \{y_0\}]$ by identifying %be %We can consider 
% of the corollas $\CX[X \amalg \{x_0\}]$ and $\CX[Y \amalg \{y_0\}]$ in $\GrShape$ 
%and identify 
the $\tau$-orbits of the ports $x_0$ and $y_0$ according to $x_0 \sim \tau y_0, y_0 \sim \tau x_0$. So, % to obtain a graph $\mathcal M^{X,Y}_{x_0,y_0}$ with two vertices and one inner edge orbit, 

\[\mathcal M^{X,Y}_{x_0,y_0} = \ \xymatrix{
	*[r] { \left(\small
		{
			(X \amalg Y) \amalg (X \amalg Y)^\dagger 
			\amalg \{x_0,y_0\} }\right)
	}
	\ar@(lu,ld)[]_{\tau} &&&&
{ \left(\small
	{ (X \amalg Y)^\dagger 
		\amalg \{x_0,y_0\} }\right)
}\ar@{_{(}->}[llll]_-{s} \ar[r]^-{t}& \small{\{v_X, v_Y\},}}\] where $s$ is the obvious inclusion, and the involution $\tau$ is described by $x_0 \leftrightarrow y_0$ and $ z \leftrightarrow z^\dagger$ for $z \in X \amalg Y$. The map $t$ is described by $t^{-1}(v_X) = X^\dagger \amalg \{y_0\}$ and $t^{-1}(v_Y) = Y^\dagger\amalg \{x_0\}$. %This is illustrated in \cref{fig: MN}, where the inner edge orbit is highlighted in bold-face.

In the construction of modular operads, graphs of the form $\mathcal M^{X,Y}_{x_0,y_0}$ are used to encode formal multiplications in graphical species. \end{ex}

\begin{ex}\label{ex: N graph}
Formal contractions in graphical species are encoded by graphs of the form $\mathcal N^X_{x_0,y_0}$ (see \cref{fig: MN}): For $X$ a finite set, the graph $\mathcal N^X_{x_0,y_0}$ is the quotient of the corolla $\CX[X \amalg \{x_0,y_0\}]$ obtained %(as in \cref{ex: M graph}) 
by identifying the $\tau$-orbits of the ports $x_0$ and $y_0$ according to $x_0 \sim \tau y_0$ and $y_0 \sim \tau x_0$. It has boundary $E_0 = X$, one inner $\tau$-orbit $\{x_0,y_0\}$ (bold-face in \cref{fig: MN}), and one vertex $v$. So, 
\[\mathcal N^X_{x_0,y_0} = \ \xymatrix{
	*[r] { \left (
		X \amalg X^\dagger 
		\amalg \{x_0,y_0\} 
		\right) }
	\ar@(lu,ld)[]_{\tau} &&&{ \left (
	 X^\dagger 
		\amalg \{x_0,y_0\} 
		\right) }\ar@{_{(}->}[lll]_-{s} \ar[r]^-{t}& \{v\}. }\] %with the obvious involution $\tau$, and projection maps $s$, and $t$. % described by $ z \leftrightarrow z^\dagger$ for $z \in X \amalg Y$ and $x_0 \leftrightarrow y_0$. 
\end{ex}

\begin{figure}[htb!] %\label{multiplication graph}
\begin{tikzpicture}

\node at(0,0){
	\begin{tikzpicture}{
		\node at (6,0) 
		{\begin{tikzpicture}[scale = 0.5]
			\filldraw(0,0) circle(3pt);
			\foreach \angle in {0,120,240} 
			%	\foreach \angle in {-0,90,180,270} 
			{
				\draw(\angle:0cm) -- (\angle:1.5cm);

			}
			\draw [ ultra thick] (0,0) -- (1.5,0);
			\node at (.5, -0.3) {\tiny $y_0$};
			% \node at (0.3, 1) {};
			\node at (2.2, -0.3) {\tiny$ x_0$};

			\end{tikzpicture}};
		
		\node at (7,0){\begin{tikzpicture}[scale = 0.5]
			\filldraw(0,0) circle(3pt);
			%	\foreach \angle in {60,180, 300} 
			\foreach \angle in {-0,90,180,270} 
			{
				\draw(\angle:0cm) -- (\angle:1.5cm);

			}
			\draw [ ultra thick] (0,0) -- (-1.5,0);
			% \node at (-1, -0.3) {\tiny 2};
			% \node at (1, 0.8) {\tiny 5};
			%\node at (1, -0.8) {\tiny 4};

			\end{tikzpicture}};

		\node at (6.5,-2){ |$\mathcal M^{X,Y}_{x_0,y_0}$|};
}\end{tikzpicture}};

\node at (5, 0){	\begin{tikzpicture}

\node at (5,0) 
{\begin{tikzpicture}[scale = 0.5]
	\filldraw(0,0) circle(3pt);
	\draw (0,-1.5)--(0,1.5);
	\draw[ultra thick] 
	(0,0)..controls (3,3) and (-3,3)..(0,0);
	\node at (-1,.6) {\tiny $y_0$};
	% \node at (0.3, 1) {\tiny 2};
	\node at (1, .6) {\tiny $x_0$};
	%\node at (0.3,-1) {\tiny 2};
	\end{tikzpicture}};

\node at (5,-2){ |$\mathcal N^{X}_{x_0,y_0}$|};

\end{tikzpicture}
};
\end{tikzpicture}
\caption{ Realisations of $\mathcal M^{X,Y}_{x_0,y_0}$ and $\mathcal N^{X}_{x_0,y_0}$ for $X \cong \two, \ Y \cong \mm[3]$.}
\label{fig: MN}
\end{figure}

%Since $\Grbig$ is full in $\GrShape$, (co)limits in $\Grbig$, when they exist, correspond to (co)limits in $ \GrShape$.
%
%\begin{ex}\label{ex: initial and terminal}
%
%The empty graph-like diagram $\oslash$ is trivially a graph, and is therefore initial in $ \Grbig$. However, there is no non-trivial involution on a singleton set, so the terminal diagram $\bigstar$ in $\GrShape$ is not a graph. Hence, $\Grbig$ is not closed in $\GrShape$ under finite limits. (By \cref{not cocomplete}, $\Grbig$ is also not closed under finite colimits in $\GrShape$.)
%\end{ex}

\label{Hv Ev} 
Let $\G$ be a graph with vertex and edge sets $V$ and $E$ respectively. For each vertex $v$, define $\vH \defeq t^{-1}(v) \subset H$ to be the fibre of $t$ at $v$, and let $\vE\defeq s(\vH) \subset E$.
\begin{defn}\label{defn: valency} %   %\srnote{1703}
	
Edges in the set $\vE$ are said to be \emph{incident on $v$}. % its image under $s$. So, $s(H) = \coprod_{v \in V} \vE $.%, and a vertex $v$ is isolated if and only if $\vH = \vE = \emptyset$. %\marginpar{21.04-is this clear enough?}

The map $|\cdot|\colon V \to \N$, $v \mapsto |v| \defeq |\vH|$, defines the \emph{valency} of $v$ and $\nV \subset V$ is the set of \emph{$n$-valent} vertices of $\G$. A \emph{bivalent graph} is a graph $\G$ with $V = \nV[2]$. 

A vertex $v$ is \emph{bivalent} if $|v| = 2$. An \emph{isolated vertex} of $\G$ is a vertex $v \in V(\G)$ such that $|v| = 0$. % are \emph{bivalent} vertices, and vertices with valency $0$, are called \emph{isolated vertices}.%are particularly important in what follows. 

\end{defn}

Bivalent and isolated vertices are particularly important in \cref{s. Unital}.

\label{Hn En} Vertex valency also induces an $\N$-grading on the edge set $E$ (and half-edge set $H$) of $\G$: For $n \geq 1$, define $\nH \defeq t^{-1}(\nV)$ and $\nE \defeq s(\nH)$. Since $s(H) = E\setminus E_0 = \coprod_{n \geq 1} \nE$, 
\[E = \coprod_{n \in \N } \nE.\]% is not a port.

\begin{ex}%\warn{adjust to fit with previous examples.}
Recall the stick graph $(\shortmid)$ from \cref{stick}. Since $H(\shortmid)$ is empty, both edges of $(\shortmid)$ are ports: $E(\shortmid) = E_0(\shortmid)$. The corolla $\CX$ (\cref{corolla}) with vertex $*$ has $X \cong \vE[*] = \vH[*] $. If $|X| = n$, then $|*| = n$, so $ V(\C_\nn) = \nV[n]$, and $E = \nE[n] \amalg \nE[0]$. % with $\nE[i] \cong X$ for $i = 0, k$.
\end{ex}

\begin{ex}\label{ex: more M and N} For finite sets $X$ and $Y$, the graph $\mathcal M^{X,Y}_{x_0,y_0}$ (\cref{ex: M graph}) has $ \vE[v_X] = X^\dagger \amalg \{y_0\}$ and $ \vE[v_Y] = Y^\dagger \amalg \{x_0\}$. If $X \cong \nn$ for some $n \in \N$, then $ v_X \in \nV[n+1]$, and $ \vE[v_X] \subset \nE[n+1]$. %If $ Y \not \cong \nn $, then $ X ^\dagger \amalg \{y\} = \nE[n+1]$.

The graph $ \mathcal N^X_{x_0,y_0}$ (\cref{ex: N graph}) has $ \vE = X ^\dagger \amalg \{x_0, y_0\} \cong H$, so $V = \nV[n+2]$ when $ X\cong \nn$. \end{ex}

Since $\Grbig$ is full in the diagram category $\GrShape$, morphisms $f \in \Grbig(\G, \G') $ are commuting diagrams in $\fin$ of the form
\begin{equation}\label{morphism}
\xymatrix{
\G\ar[d]_f&& E \ar@{<->}[rr]^-\tau\ar[d]_{f_E}&&E \ar[d]_{f_E}&& H \ar[ll]_s\ar[d]_{f_H} \ar[rr]^t&& V\ar[d]^{f_V}\\
\G'&& E' \ar@{<->}[rr]_-{\tau'}&&E'&& H' \ar[ll]^{s'} \ar[rr]_{t'}&& V'.}\end{equation}

\begin{lem} \label{all fE}
For any morphism $f = (f_E, f_H,f_V) \in \Grbig(\G, \G')$, the map $f_H$ is completely determined by $f_E$. 
Moreover if $\G$ has no isolated vertices, then $f_E$ also determines $f_V$, and hence $f$.

If $\G$ has no stick components or isolated vertices, then $f$ is completely determined by $f_H$.

\end{lem}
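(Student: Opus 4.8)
The plan is to reconstruct each component of $f$ from the data named in each clause, using only the injectivity of $s$ and $s'$, the surjectivity of $t$ under the stated hypotheses, and the equivariance of $f_E$ with respect to the involutions. Throughout I take $f$ to be a given graph morphism, so all three squares in (\ref{morphism}) commute; the content of the lemma is that the listed component already forces the others.

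First I would prove that $f_E$ determines $f_H$. Commutativity of the relevant square in (\ref{morphism}) gives $s' \circ f_H = f_E \circ s$, and since $s'$ is injective, $f_H(h)$ must be the unique preimage under $s'$ of $f_E(s(h))$; equivalently $f_H = (s')^{-1} \circ f_E \circ s$, where $(s')^{-1}$ is the partial inverse of $s'$ on $im(s')$. (The image condition is automatic, as $f_E(s(h)) = s'(f_H(h))$.) Next, assuming $\G$ has no isolated vertices, the map $t$ is surjective by \cref{defn: valency}, so for each $v \in V$ I may choose $h \in t^{-1}(v)$ and read off $f_V(v) = f_V(t(h)) = t'(f_H(h))$. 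As $f_H$ is already determined by $f_E$, so is $f_V$, whence all of $f$ is determined by $f_E$.

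Finally, for the third clause I would show that $f_H$ determines $f_E$ (and hence $f$) when $\G$ has neither stick components nor isolated vertices. On $im(s)$ this is immediate, since $f_E(s(h)) = s'(f_H(h))$. It remains to fix $f_E$ on the ports $E_0 = E \setminus im(s)$, and this is where the no-stick-components hypothesis is used: for a port $e$, the orbit $\{e, \tau e\}$ is not a stick component, so $\tau e \in im(s)$ and $f_E(\tau e)$ is already known; equivariance $f_E \circ \tau = \tau' \circ f_E$ then forces $f_E(e) = \tau'(f_E(\tau e))$. Thus $f_E$ is determined on all of $E$, and since the no-isolated-vertices clause recovers $f_V$ as before, $f$ is determined by $f_H$.

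I expect the only subtle point to be this final step — recovering $f_E$ on the ports. It hinges on the interplay between the fixed-point-freeness of $\tau$ and the absence of stick components, which together guarantee that every port is the $\tau$-image of an edge in $im(s)$, and on the equivariance of $f_E$. Everything else follows directly from injectivity of $s'$ and surjectivity of $t$.
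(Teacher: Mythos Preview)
Your proof is correct and follows essentially the same approach as the paper: recover $f_H$ from $f_E$ via injectivity of $s'$, recover $f_V$ from $f_H$ (hence $f_E$) via non-emptiness of each $\vH$, and for the last clause recover $f_E$ on ports using that, absent stick components, each port $e$ has $\tau e \in im(s)$ so equivariance pins down $f_E(e)$. Your write-up is in fact slightly more careful than the paper's, which somewhat confusingly invokes injectivity of $s$ rather than $s'$ in the first step.
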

(A directed version of \cref{all fE} appeared as \cite[Proposition~1.1.11]{Koc16}.)

\begin{proof}%   %\srnote{small change: isolated vertices}
The map $f_H \colon H \to H'$ given by $h \mapsto(s')^{-1}f_E s(h) $ is well defined since $s$ is injective. %\begin{equation}
%$f_H (h) = (s')^{-1}f_E s(h)$
%% \end{equation} \label{H in terms of E} 
%is well-defined for $h \in H$.
 If $\G$ has no isolated vertices, then, for each $v \in V$, $ \vH$ is non-empty and the map %$\vH \neq \emptyset$ for all $v \in V$. So, given $v \in V$, for all $h \in \vH$, 
$ f_V \colon V \to V'$ given by $v \mapsto t'(s')^{-1}f_Es(h)$ does not depend on the choice of $h \in \vH$.

%Since $s\colon H \to E$ is injective by definition,
 If $\G$ has no stick components then, for each $e \in E$, there is an $h \in H$ such that $e = s(h)$ or $ e = \tau s(h)$, and the last statement of the lemma follows from the first.\end{proof}

\begin{ex}\label{ex: choose}
For any graph $\G$ with edge set $E$, %there is a canonical (up to unique isomorphism) bijection 
$\Grbig(\shortmid, \G) \cong E$. The morphism $1 \mapsto e \in E$ in $\Grbig(\shortmid, \G)$ -- that \emph{chooses} an edge $e$ -- is denoted $ ch_e$, or $ch^\G_e$. %\warn{careful here of edge set notation. Say anything about direction?}\\

\end{ex}

\begin{ex}\label{deg loop} %By \cref{ex: choose}, t
	The stick graph $(\shortmid)$ has endomorphisms $ ch_1 = id$ and $ch_2 = \tau$ in $\Grbig$. The coequaliser of $id, \tau\colon (\shortmid) \rightrightarrows (\shortmid)$ in the category $\GrShape$ of graph-like diagrams is the \emph{exceptional loop} $\bigcirc$:
	\[ 
	\bigcirc\defeq \Fgraphvar{\mathbf 1}{\mathbf 0}{\mathbf 0}{}{}{}.\]
	Clearly $\bigcirc $ is not a graph since a singleton set does not admit a non-trivial involution. Hence $\Grbig$ does not admit all finite colimits. %s not closed under finite colimits in $\GrShape$. 
	This example is the subject of \cref{degenerate}.%\footnote{A construction enlarging $\Grbig$ to include $\bigcirc$ is discussed in \cref{degenerate}.}
	
\end{ex}
%   %\srnote{1703 - this has been moved from the monad section}

\begin{defn}\label{def: locally inj sur}
A morphism $ f \in \Grbig (\G, \G')$ is \emph{locally injective} if, for all $v \in V$, the induced map $f_v\colon\vE \to \sfrac{E'}{f(v)}$ is injective, and \emph{locally surjective} if $f_v\colon\vE \to \sfrac{E'}{f(v)}$ is surjective for all $v \in V$.

Locally bijective morphisms are called \emph{\'etale}. 

%A morphism $f \in \Grbig (\G, \G')%
\end{defn}

%Locally bijective or `
Local bijections are preserved under composition, so \'etale morphisms form a subcategory, $\Gret$ of $\Grbig$. This is the subject of \cref{sec: topology}. %   %\srnote{0403 added inverted commas}

\begin{ex} \label {morphism ex1}
The following display illustrates the two morphisms $f_a$ and $f_b$ in $\Grbig$ described by the commuting diagrams (a) and (b) below. % two examples of morphisms in $\Grbig$, described below. 
Both morphisms are locally injective, and (b) is surjective, and also locally surjective, hence \'etale. By \cref{all fE}, both $f_a$ and $f_b$ are completely determined by the image of $E(\C_\two)$.

\[%	\begin{figure}[H]
\begin{tikzpicture}
\node at (-7,1.5){(a)};
\node at (-4,0) {		\begin{tikzpicture}[scale = 0.5]

\filldraw (0,0) circle (4pt);
\draw[thick] (0,0)--(0,2)
(0,0)--(-2,1);
%(0,0)--(-1,-2)
%(0,0)--(2,-.5);
\node at (0,-2) {$|\CX[\two]|$};
\node at (-.5, 0){\tiny{$e_1$}};
\node at (.4, 0.3){\tiny{$e_2$}};

%\draw [thick]	
%(5,0)--(4,-2)
%(5,0)--(7,-.5);
\draw [thick]
(5,0)..controls (4.1,-0.5) and (3.8,1.2)..(4,1.5)
(5,0)..controls (5.2,0.8) and (4.8,1.2)..(4,1.5);

\filldraw [gray]
(4,1.5) circle (4pt);
%(5,0)--(6,-1.5);
\filldraw (5,0) circle (4pt);
\draw [thick]
(5,0)--(7,0.5);

\node at (5,-2) {$|\G|$};

\draw[->](1.3,0)--(3.3,0);
%\node at (2.5,0){\Large $\longrightarrow$};
\node at (2.5,.5){$f_a$};

\end{tikzpicture}};
\node at (1,1.5){(b)};
\node at (4,0) {		\begin{tikzpicture}[scale = 0.5]

%\node at (8,3) {(b)};
\filldraw (11,0) circle (4pt);
\draw[thick] (11,0)--(11,2)
(11,0)--(9,1);
%(11,0)--(10,-2)
%(11,0)--(13,-.5);
\node at (10.5, 0){\tiny{$e_1$}};
\node at (11.4, 0.3){\tiny{$e_2$}};

\node at (11,-2) {$|\CX[\two]|$};
%\draw [thick]	
%(16,0)--(15,-2)
%(16,0)--(18,-.5);
\draw [thick]
(16,0)..controls (18,2.5) and (14,2.5)..(16,0);
%(15,0)..controls (15.2,0.8) and (14.8,1.2)..(14,1.5);

%	\draw [gray]
%	%(14,1.5)--(15.5,3);
%	(16,0)--(17,-1.5);
%\filldraw [gray]
%(14,1.5) circle (4pt);
%\draw [gray]
%(14,1.5)--(15.5,3);
%(16,0)--(17,-1.5);
\filldraw(16,0) circle (4pt);
\node at (13.5,0.5){$f_b$};
\draw[->](12.3,0)--(14.3,0);
%\node at (13.5,0){\Large $\longrightarrow$};
\node at (16,-2) {$|\W|$};

\end{tikzpicture}};
\end{tikzpicture}
\]%\end{figure}

In each example, the horizontal maps are the obvious projections, and the columns in the edge sets represent the orbits of the involution. 

%%   %\srnote{1703 - got rid of text at beginning of examples}

(a) %The map $f_a$ is determined by the image of the edges $e_1$ and $ e_2$. % $e_i\mapsto ae_i$ for $j = 1,2$. 
\[ \hspace {-.8cm}\small{\xymatrix { 
\CX[\two] \ar[d]_{f_a} 
&
*[r] {\left \{ \begin{array}{ll}
1,&2,\\
1^\dagger, & 2^\dagger \end{array}\right \} }
\ar@(lu,ld)[]_{\dagger}\ar[d]&& 
{\left \{ (1^\dagger, *), (2^\dagger,*) \right \} }\ar[ll]\ar[d]\ar[r]& {\{*\}} \ar[d]^{* \mapsto v_1}\\%&v \ar@{|->}[d]\\
\G& 
*[r] { \left \{ 
\begin{array}{lll}
{f_a(1),} & {f_a(2),} &\tau{e_3,} \\ 
{\tau f_a(1)},&{ \tau f_a(2)},&{ \tau e_3}\end{array}
\right \} }
\ar@(lu,ld)[]_{\tau} &&
{\left \{ \begin{array} {lll} (f_a(1), v_2), & (f_a(2), v_2), \\
(\tau f_a(1), v_1), & (\tau f_a(2), v_1), &(e_3, v_1) \end{array} \right \} }\ar[ll]_-{} \ar[r]^-{}& {\{v_1, v_2\}. }}}\] %& v_1}}\]

(b) %The map $f_b$ is determined by 
%$e_1, \tauX[\two] e_2 \mapsto f_b (e_1) \text { and } e_2, \tauX[\two] e_1 \mapsto f_b(e_2).$
\[ \small{ \xymatrix {
\CX[\two] \ar[d]_{f_b} &&
*[r] {\left \{ \begin{array}{ll}
		1,&2,\\
		1^\dagger, & 2^\dagger \end{array}\right \} }
\ar@(lu,ld)[]_{\dagger}\ar[d]&& 
{\left \{ (1^\dagger, *), (2^\dagger,*) \right \} }\ar[ll]\ar[d]\ar[rr]&&{\{*\}} \ar[d]\\
\W&& 
*[r] { \{ {f_b(1^\dagger),}
{f_b(2^\dagger)} \} }\ar@(lu,ld)[]_{\tau'} &&
{\left \{ (f_b(1^\dagger), w), (f_b(2^\dagger), w) \right \} }\ar[ll]_{} \ar[rr]^-{}&& \{w\}}}\]

\end{ex}

\begin{ex}\label{ex: locally injective MN}
Recall Examples \ref{ex: M graph} and \ref{ex: N graph}, above. For finite sets $ X$ and $ Y$, the canonical morphisms
$ \xymatrix{ \CX \ar[r] & \mathcal M^{X,Y}_{x_0,y_0} & \CX[Y]\ar[l]} $ and $\xymatrix{\CX \ar[r]& \mathcal N^X_{x_0,y_0}}$ are locally injective, but not locally surjective.

The canonical morphisms $\xymatrix{ \CX[X \amalg \{x_0\}] \ar[r] & \mathcal M^{X,Y}_{x_0,y_0}& \CX[Y\amalg \{y_0\}]\ar[l] }$ are locally injective and locally surjective (hence \'etale), but neither is surjective. However, the canonical morphism $\CX[X \amalg \{x_0, y_0\}] \to \mathcal N^X_{x_0,y_0}$ is \'etale and surjective. (See \cref{morphism ex1}(b) for the case $X = \emptyset$.)%The morphism $f_b \colon \C_\two \to \W$ in \cref{morphism ex1} is of this form, with $X = \emptyset$.% is of this form, with $X = \emptyset$.

\end{ex}

\begin{ex}\label{ex: embedding fin}%\warn{4.6-is this example in the right place?}
The assignment $X \mapsto \CX$ describes a full embedding of $\fin$ into $\Grbig$. Since $\Grbig(\shortmid, \shortmid) \cong \fisinv(\S, \S) $ canonically, and any morphism in $\Grbig$ with domain $(\shortmid)$ is \'etale, it follows that $\fisinv$ embeds in $\Grbig$ as the subcategory of \'etale morphisms between the corollas $\CX$ ($X \in \fin$), and $(\shortmid)$. 
\end{ex}
\begin{rmk}
\label{rmk: fisinv graphs}
By \cref{ex: embedding fin}, $\fisinv$ will henceforth also be viewed as a subcategory of $\Grbig$. The choice of notation for objects -- $(\shortmid)$ or $\S$, $X$ or $\CX$ -- will depend on the context. The same notation will be used for morphisms in $\fisinv$ and their image in $\Grbig$. So $ch_x \in \fisinv (\S, X)$ may also be written as $ch_x \in \Grbig ((\shortmid), \CX)$, and $f \in \fisinv (X, Y)$ also describes an \'etale morphism $f \in \Grbig(\CX, \CX[Y])$.
\end{rmk}
%   %\srnote{1703 -new examples moved from later}

\begin{ex}\label{ex: first gluing example} %The graphs $\mathcal M^{X,Y}_{x_0,y_0}$, and $\mathcal N^X_{x_0,y_0}$ introduced in Examples \ref{ex: M graph}, \ref{ex: N graph}, and \ref{ex: locally injective MN}.

For all finite sets $X$ and $Y$, the diagram %of parallel morphisms 
\begin{equation}
\label{eq: M coeq}
\xymatrix{\CX[X \amalg \{x_0\}] && \ar[ll]_-{ch_{x_0}}(\shortmid) \ar[rr]^-{ch_{y_0}\circ \tau}&& \CX[Y \amalg \{y_0\}]
}
% ch_{x_0}, ch_{y_0} \circ \tau\colon (\shortmid) \rightrightarrows \left(\CX[X \amalg \{x_0\}] \amalg \CX[Y \amalg \{y_0\}]\right).
\end{equation} is in the image of the inclusion $\fisinv \hookrightarrow \Grbig$. It has colimit $\mathcal M^{X,Y}_{x_0,y_0}$ in $\Grbig$. % given by the graph %the canonical morphism $\CX[X \amalg \{x_0\}] \amalg \CX[Y \amalg \{y_0\}] \to \mathcal M^{X,Y}_{x_0,y_0}$ is surjective and locally bijective. Moreover, 
%$\mathcal M^{X,Y}_{x_0,y_0}$ defined in \cref{ex: M graph}. % is easily seen to be the colimit of the pair of parallel morphisms 
%\begin{equation}
%\label{eq: M coeq} ch_{x_0}, ch_{y_0} \circ \tau\colon (\shortmid) \rightrightarrows \CX[X \amalg \{x_0\}] \amalg \CX[Y \amalg \{y_0\}].
%\end{equation}

The graph $\mathcal N^X_{x_0,y_0}$ is the colimit in $\Grbig$ of the diagram of parallel morphisms 
\begin{equation}
\label{eq: N coeq} ch_{x_0}, \ ch_{y_0}\circ \tau\colon (\shortmid) \rightrightarrows \CX[X \amalg \{x_0, y_0\}]
\end{equation} 
in the image of $\fisinv$ in $\Grbig$. (See also \cref{ex: still more M N} and \cref{fig: glue MN}.)
\end{ex}

As will be shown in \cref{sec: topology}, all graphs may be constructed canonically as colimits of diagrams in the image of $\fisinv \subset \Grbig$. % in \cref{ex: embedding fin}.

%%endmorphisms, begin connectedness
\subsection{Connected components of graphs}
%
%
%
%The cocartesian monoidal structure on $\fin$ is inherited by $\GrShape$ and $\Grbig$. So these are strict symmetric monoidal categories under pointwise disjoint union $\amalg$, and with monoidal unit given by the empty graph $\oslash$.

%6.5 categorical connectedness moved to the top.
A graph is connected if it cannot be written as a disjoint union of non-empty graphs. Precisely:
\begin{defn}\label{conn}\label{component}
A non-empty graph-like diagram $\G$ is \emph{connected} if, 
for each $\mathsf f \in \GrShape(\G, \bigstar \amalg \bigstar)$, the pullback of $f$ along the inclusion $\bigstar \overset {incl_1}{\hookrightarrow} \bigstar \amalg \bigstar$
is either the empty graph-like diagram $\oslash$ or $\G$ itself. A graph $\G$ is connected if it is connected as a graph-like diagram.

A \emph{(connected) component} of a graph $\G$ is a maximal connected subdiagram of $\G$.
\end{defn}

%\warn{6.5 think I can use the other characterisation of connectedness immediately in the proof of the original lemma. don't need to say anything about it here but can use it freely.}
By \cref{defn: graph}, a subdiagram $\H \hookrightarrow \G$ is a subgraph precisely when $E(\H) \subset E$ is closed under $\tau \colon E \to E$. Hence:
%The following lemma is immediate from the definition of graphs.
\begin{lem}
A connected component of a graph $\G$ inherits a subgraph structure from $\G$. If $\H \hookrightarrow \G$ is a subgraph of $\G$, then so is its complement $\G \setminus \H$. 
\end{lem}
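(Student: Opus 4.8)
The plan is to deduce both statements from the criterion recorded immediately before the lemma: a subdiagram $\H \hookrightarrow \G$ is a subgraph exactly when its edge set $E(\H) \subseteq E$ is closed under the involution $\tau$. Thus in each case it suffices to exhibit the relevant object as a subdiagram of $\G$ and to check that its set of edges is a union of $\tau$-orbits; the fixed-point-freeness of $\tau$ and the injectivity of $s$ are then inherited automatically from $\G$, so the inherited data really is a Feynman graph.

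For the first statement, let $\G_0 \hookrightarrow \G$ be a connected component, that is, a maximal connected subdiagram. Being a subdiagram is part of the definition, so the only point to verify is that $E(\G_0)$ is $\tau$-closed. First I would observe that the loop of the terminal diagram $\bigstar$ is forced to be the identity of a singleton, so the involution on $\bigstar \amalg \bigstar$ is the identity of $\two$. Consequently, for any $\mathsf f \in \GrShape(\G, \bigstar \amalg \bigstar)$ the square expressing compatibility with the loops forces $f_E \circ \tau = f_E$, i.e.\ $f_E$ is constant on $\tau$-orbits. Hence no morphism to $\bigstar \amalg \bigstar$ can separate an edge $e$ from $\tau e$, so $e$ and $\tau e$ always lie in the same connected component (\cref{conn}). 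In particular, if $e \in E(\G_0)$ then $\tau e \in E(\G_0)$, so $E(\G_0)$ is a union of $\tau$-orbits and the criterion identifies $\G_0$ as a subgraph.

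For the second statement, let $\H \hookrightarrow \G$ be a subgraph, so that $E(\H)$ is $\tau$-closed, and let $\G \setminus \H$ denote the complement, whose edge set is $E \setminus E(\H)$ and whose half-edges and vertices are those inherited from $\G$; as such it is a subdiagram of $\G$. Since $\tau$ is an involution, hence a bijection of $E$, and since it preserves $E(\H)$, it also preserves the complement: $\tau(E \setminus E(\H)) = E \setminus \tau(E(\H)) = E \setminus E(\H)$. Thus $E(\G \setminus \H)$ is again $\tau$-closed, and applying the criterion once more shows that $\G \setminus \H$ is a subgraph.

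The only real work is in the first statement: translating the pullback formulation of connectedness into the concrete assertion that $\tau$-orbits are never split across components. The key input — that the involution on $\bigstar \amalg \bigstar$ is trivial, forcing $f_E$ to be $\tau$-invariant — is precisely what ties connectivity to $\tau$-closedness, and I expect this to be the main (though small) obstacle. Once it is in hand the second statement is purely formal, since the complement of a $\tau$-invariant subset under an involution is again $\tau$-invariant.
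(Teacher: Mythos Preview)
Your argument is correct and in the same spirit as the paper, which gives no proof at all beyond the word ``Hence:'' after stating the $\tau$-closure criterion. You have simply filled in what the paper leaves implicit.

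One small point worth tightening: the inference ``no morphism to $\bigstar \amalg \bigstar$ can separate $e$ from $\tau e$, so $e$ and $\tau e$ lie in the same connected component'' uses a characterisation of components by separating maps, whereas \cref{component} defines a component as a \emph{maximal} connected subdiagram. To close the gap, argue by maximality: if $e \in E(\G_0)$ but $\tau e \notin E(\G_0)$, form $\G_0'$ by replacing $E(\G_0)$ with its $\tau$-closure $E(\G_0) \cup \tau(E(\G_0))$ (this is $\tau$-stable since $\tau^2 = id$). Any $\mathsf f \colon \G_0' \to \bigstar \amalg \bigstar$ has $f_E$ constant on $\tau$-orbits by your observation, and $f|_{\G_0}$ factors through a single $\bigstar$ by connectedness of $\G_0$; hence so does $f$, and $\G_0'$ is connected. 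This contradicts maximality of $\G_0$, so $E(\G_0)$ was already $\tau$-closed. That is exactly what you intend, but the appeal to maximality should be explicit.
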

Therefore, every graph is the disjoint union of its connected components.

\begin{rmk}
A graph $\G$ is connected if and only if its realisation $|\G|$ (\cref{geom real}) is a connected space. %(See also \warn{stuff on realisation of $\esG$}.)
\end{rmk}

\begin{ex}\label{ex: stick examples} 
Following the terminology of \cite{Koc16}, a \emph{shrub} $\mathcal S$ is a graph that is isomorphic to a disjoint union of stick graphs. Hence a shrub $\mathcal S = \mathcal S(J)$ is determined by a set $J$ (of edges) equipped with a fixed-point free involution $\tau_J \colon J \xrightarrow{\cong} J$. 
%Up to isomorphism, $(\shortmid)$ is the only connected shrub. %   %\srnote{0403 added - to reduce elsewhere}
A morphism in $\Grbig$ whose domain is a shrub is trivially \'etale.

Given any graph $\G$, the shrub $\mathcal S(E)$ determined by $(E, \tau)$ is canonically a subgraph of $\G$. Components of $\mathcal S (E)$ are of the form 
%Let $\sigma$ be fixed-point free involution on a finite set $J$. A graph $\mathcal S = \mathcal S(J)$ with $E(\mathcal S) = J$, and empty vertex set, is isomorphic to a disjoint union of stick graphs. Components of $\mathcal S$ are indexed by the set $\tilde J$ of $\sigma$-orbits in $J$. 
%
%
%In particular, let $\G$ be a graph with edge set $E$ and involution $\tau$, and for each $\tau$-orbit $\tilde e \in \widetilde E$, let 
\[ \xymatrix{
(\shorte)\defeq&\{e, \tau e\}\ar@(lu,ld)[] & \emptyset \ar[l] \ar[r]&\emptyset, } \] for each $\tilde e \in \widetilde E$. %This is a subgraph of $ \G$ under the \textit{essential morphism $\ese\colon (\shorte) \hookrightarrow \G$ at $\tilde e$ (for $\G$)} induced by 
The inclusion $ \{e, \tau e \} \hookrightarrow E$ induces a subgraph inclusion $\ese\colon (\shorte) \hookrightarrow \G$ called the \textit{essential morphism at $\tilde e$ (for $\G$)}. 

%The shrub $\mathcal S(E)$ on $E$ is the disjoint union of stick graphs indexed by $\widetilde E$:
%\[\mathcal S(E) = \coprod{\tilde e \in \widetilde E} (\shorte), \]
%and the identity map $id_E$ on $E$, includes $\mathcal S(E)$ as a strong subgraph of $\G$.
%	%	\[ \xymatrix{
%	%		\mathcal S (E) \ar@{^{(}->}[d]&\defeq& E \ar@{<->}[rr]^-\tau\ar[d]_{f_E}&&E \ar[d]_{f_E}&& \emptyset \ar[ll]\ar[d] \ar[rr]&& \emptyset \ar[d]\\
%	%		\G&& E \ar@{<->}[rr]_-{\tau}&&E&& H \ar[ll]^{s} \ar[rr]_{t}&& V}\]
%	
%	Each $\tau$-orbit $\tilde e $ in $ \widetilde E$ describes a component of $\mathcal S(E) = \coprod_{ \tilde e \in \widetilde E} (\shorte)$
%	\[ \xymatrix{
%		(\shorte)\defeq&\{e, \tau e\}\ar@(lu,ld)[] & \emptyset \ar[l] \ar[r]&\emptyset, } \] 
%	and the canonical morphism $\ese^\G $ or $\ese\colon (\shorte) \to \G$, induced by inclusion of edges is called the \textit{essential morphism at $\tilde e$ (for $\G$).} %written $\ese $, or $\ese^\G \in \Grbig (\shorte, \G)$.

Recall from \cref{defn: inner edges} that a stick component of $\G$ is a $\tau$-orbit $\{e, \tau e\}$ in the boundary $E_0$ of $\G$. In particular, $\{e, \tau e\}$ is a stick component of $\G$ if and only if $(\shorte)$ is a connected component of $\G$.
%If $e$ and $ \tau e $ are both in the boundary $E_0$, then $(\shorte) \hookrightarrow \G$ is a connected component of $\G$.

\end{ex}

\begin{ex}\label{ex: vertex examples}
Recall that, for each $v \in V$, $\vE \defeq s(t^{-1}(v))$ is the set of edges incident on $v$. Let $\mathbf{v} = (\vE)^\dagger$ denote its formal involution. Then the corolla $\Cv$ is given by 
\[ \Cv = \qquad \xymatrix{	
*[r] { \left(\vE \amalg ({\vE})^\dagger \right)}\ar@(lu,ld)[] && \vH \ar[ll]_-s \ar[r]^t&{\{v\}}. 
}\] 

The inclusion $\vE \hookrightarrow E$ induces a morphism $\esv^\G$ or $\esv \colon \Cv \to \G$ called the \textit{essential morphism at $v$ for $\G$}. If there exists an edge $e$ such that both $e $ and $\tau e$ are incident on $v$, then $\esv$ is not injective on edges.

If $ \vE$ is empty -- so $\Cv$ is an isolated vertex -- then $ \Cv \hookrightarrow \G$ is a connected component of $\G$.
%
%	
%	In particular, a graph $\G$ such that $ H(\G) = \emptyset$ is a disjoint union of stick graphs and isolated vertices.

\end{ex}

%   %\srnote{1803 - wheel and line conventions 2503 slight changes}
\begin{ex}\label{def Lk} %(See \cref{fig. line and wheel}.) 
	For $k \geq 0$, the \emph{line graph} $\Lk$ (illustrated in \cref{fig. line and wheel}) is the connected bivalent graph with boundary $ E_0 = \{1_{\Lk}, 2_{\Lk}\}$, and 
\begin{itemize}
\item ordered set of edges $E (\Lk)= (l_j)_{j = 0}^{2k+1}$ where $l_0 = 1_{\Lk} \in E_0$ and $l_{2k+1} = 2_{\Lk} \in E_0$, and the involution is given by $\tau (l_{2i}) = l_{2i +1}$, for $0 \leq i \leq k$, 
\item ordered set of $k$ vertices $ V(\Lk) = (v_i)_{i = 1}^k $, such that $ \vE[v_i] = \{ l_{2i -1}, l_{2_i}\}$ for $1 \leq i \leq k$.
\end{itemize}
So, $\Lk$ is described by a diagram of the form 
%\begin{equation}\label{eq: general line}
%\Lk \cong
 $\Fgraphvar{\ \two \amalg 2(\mathbf k)\ }{2(\mathbf k)}{\mathbf k.}{}{}{} $%\end{equation}
%So, $ \tau(2_{\Lk}) = \tau l_{2k+1} = l_{2k}$ (and $\tau 1_{\Lk} = \tau l_0 = l_1$) in $E(\Lk)$.
\end{ex}

\begin{ex}%   %\srnote{0403 moved $\W $ first}
	The \emph{wheel graph} $\W = \Wm[1]$ with one vertex is the graph 	\begin{equation}\label{wheel}
	\W \defeq \ \Fgraphvar{\{a, \tau a\}}{\{a, \tau a\}}{\{*\}}{}{}{}\end{equation} %$	\W = \ \Fgraphvar{\{a, \tau a\}}{\{a, \tau a\}}{\{*\}}{}{}{}$ 
	obtained as the coequaliser in $\Grbig$ of the morphisms $ch_1, ch_2 \circ \tau \colon (\shortmid) \rightrightarrows \C_\two$ (see \cref{morphism ex1}(b)). %:

\label{wheels}%(See \cref{fig. line and wheel}.) 
More generally, for $m \geq 1$, the {wheel graph} $\Wm$ (illustrated in \cref{fig. line and wheel}) is the connected bivalent graph obtained as the coequaliser in $\Grbig$ of the morphisms $ch_{ 1_{\Lk[m]}}, ch_{2_{\Lk[m]}} \circ \tau \colon (\shortmid) \rightrightarrows \Lk[m]$. So $\Wm$ has empty boundary and
\begin{itemize}
	\item $2m$ cyclically ordered edges $E(\Wl) = (a_j)_{j = 1}^{2m}$, such that the involution satisfies $\tau a _{2m} = a_1$ and $\tau (a_{2i}) = a_{2i +1}$ for $1 \leq i < m$,
\item $ m$ {cyclically} ordered vertices $V(\Wl) = (v_i)_{i = 1}^m$, that $ \vE[v_i] = \{ a_{2i -1}, a_{2i }\}$ for $1 \leq i \leq m$. %   %\srnote{0403 small correction}

\end{itemize} 
So $\Wm$ is described by a diagram of the form $\Fgraphvar{\ 2(\mathbf m)\ }{2(\mathbf m)}{\mathbf m.}{}{}{}$
%\begin{equation}\label{eq: general wheel}
%%\Wl \ \cong \
% \Fgraphvar{\ 2(\mathbf m)\ }{2(\mathbf m)}{\mathbf m.}{}{}{}\end{equation}

%When $m = 1$, $\Wl$ is also denoted by $\W$.	\begin{equation}\label{wheel}
%\W \cong \ \Fgraphvar{\{a, \tau a\}}{\{a, \tau a\}}{\{*\}}{}{}{}.\end{equation}

\begin{figure}[htb!]
%	\begin{minipage}[t]{0.5\textwidth}
%	\begin{figure}[H]\centering{
\begin{tikzpicture}
\node at (0,-1.5) {$\mathcal L^4$};
\node at (0,0) { \begin{tikzpicture}[scale = 0.4]
\draw (0,0) -- (10,0);
\filldraw (2,0) circle (3pt);
\filldraw (4,0) circle (3pt);
\filldraw		(6,0) circle (3pt);
\filldraw		(8,0) circle (3pt);
\node at (0,-.7) {\tiny $l_0$};
\node at (1.7,-.7) {\tiny $l_1$};
%\node at (6,-.7) {\tiny $\epsilon_c$};
\node at (10,-.7) {\tiny $l_9$};

%\node at (-3,0) {$\mathcal L^4$};
%\node at (0,-.4){\tiny$c$};
%\node at (10,-.4){\tiny{$\omega c$}};
\end{tikzpicture}
};
\node at (10,-1.5) {$\mathcal W^4$};
\node at(10,0){\begin{tikzpicture}[scale = 0.4]
\draw (0,0) circle (2cm);
\filldraw (2,0) circle (3pt);
\filldraw (0,2) circle (3pt);
\filldraw		(-2,0) circle (3pt);
\filldraw		(0,-2) circle (3pt);

\node at (-2.4,0.3) {\tiny{$a_8$}};	
\node at (-.6,2.2) {\tiny{$a_1$}};
%\node at (.6,2.2) {\tiny{$a_2$}};
%\node at (-5,0) {$\mathcal W^4$};
\end{tikzpicture}};
\node at (5.7,-1.5) {$\W = \Wl[1]$};
\node at(5.7,0){\begin{tikzpicture}
%				\node at (0,-1) {$\W = \Wl[1]$};
\draw (0,0) circle (15pt);
\filldraw (.53,0) circle (1.5pt);

\end{tikzpicture}};

\end{tikzpicture}

\caption{Line and wheel graphs.}
\label{fig. line and wheel}
\end{figure}

\end{ex}

In \cref{bivalent graphs}, it will be shown that a connected bivalent graph is isomorphic to $\Lk$ or $\Wl$ for some $k \geq 0$ or $m \geq 1$.

\begin{ex}\label{not cocomplete}%\label{wheel}
The wheel graph $\W$ with one vertex is weakly terminal in $\Grbig$: Since $\widetilde{E}(\W) \cong V(\W) \cong \{*\}$, by \cref{all fE}, morphisms in $\Grbig(\G, \W)$ are in canonical bijection with projections in $\Grbig (\mathcal S (E), \shortmid) $. Hence, for all graphs $\G$, there are precisely $ 2^{ |\tilde E|} \geq 1$ morphisms $\G \to \W$ in $\Grbig$ and %\to \W$ is determined by a projection $\mathcal S(E) {\rightarrow } (\shortmid) $ followed by a morphism $(shortmid) \W$. So,
 every diagram in $\Grbig$ forms a cocone over $\W$. %, and there are precisely $ 2^{ |E|} \geq 1$ morphisms $\G \to \W$. \
%   %\srnote{1803 - I didn't understand this...}
%For all graphs $\G$, $\Grbig(\G, \W)$ is non-empty. Namely, b

In particular, \[\Grbig (\W, \W) = \{id_\W, \tauG[\W]\}\cong \Grbig(\shortmid, \W) \cong \Grbig(\shortmid, \shortmid).\] %and $\tauG[\W]$ is induced by the involution on $E(\W) = \{a, \tau a\}$.	

The morphisms $id_\W, \tauG[\W]\colon \W \rightrightarrows \W$ do not admit a coequaliser in $\Grbig$ since their coequaliser in $\GrShape$ is the terminal diagram $\bigstar$, which is not a graph. %(See also \cref{deg loop}.)%Since $\bigstar$, is not a graph, $\Grbig$ is not closed under finite colimits in $\GrShape$. 
\end{ex}

\cref{not cocomplete} %The observation that $\W$ is weakly terminal in $\Grbig$ 
leads to another characterisation of connectedness:
\begin{prop}\label{conn 1}\label{conn 2}
The following are equivalent:
\begin{enumerate}
\item 	A graph $\G $ is connected;
\item $\G$ is non-empty and, for every morphism $f \in \Grbig(\G, \W \amalg \W)$, the pullback in $\GrShape$ of $f$ along the inclusion $inc_1\colon \W \hookrightarrow \W \amalg \W$ is either the empty graph $\oslash$ or isomorphic to $\G$ itself;
\item for every finite disjoint union of graphs $\coprod_{i = 1}^k \H_i$, \begin{equation}\label{conn cond} \Grbig(\G, \coprod_{i = 1}^k \H_i) \cong \coprod_{i = 1}^k \Grbig(\G, \H_i).\end{equation}
\end{enumerate}

%A graph $\G \in ob(\Gr)$ is connected if and only if 

\end{prop}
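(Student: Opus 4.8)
The plan is to establish the cycle of implications $(1)\Rightarrow(3)\Rightarrow(2)\Rightarrow(1)$, exploiting throughout that finite limits in $\GrShape$ — and in particular the pullbacks appearing in \cref{conn} and in statement~(2) — are computed pointwise in $\fin$.

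For $(1)\Rightarrow(3)$, I would first note that there is always a canonical comparison map $\Phi\colon\coprod_{i=1}^k\Grbig(\G,\H_i)\to\Grbig(\G,\coprod_{i=1}^k\H_i)$, sending $(i,g)$ to $inc_i\circ g$. Since the subgraphs $\H_i\hookrightarrow\coprod_j\H_j$ have pairwise disjoint images, $\Phi$ is injective: an edge or a vertex of $\G$ (one of which exists, as $\G$ is connected hence non-empty) detects the index $i$. The content is surjectivity. Given $f\colon\G\to\coprod_j\H_j$ and any index $i$, I would compose $f$ with the morphism $\coprod_j\H_j\to\bigstar\amalg\bigstar$ in $\GrShape$ sending $\H_i$ to the first copy of the terminal object $\bigstar$ and all other $\H_j$ to the second (such a map exists because $\bigstar$ is terminal in $\GrShape$). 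Applying \cref{conn} to the composite shows that the pullback $f^{-1}(\H_i)$ is either $\oslash$ or all of $\G$; that is, $f$ either avoids $\H_i$ entirely or factors through it. As $\G$ is non-empty it cannot avoid every $\H_i$, and it cannot factor through two disjoint components, so $f$ factors through a unique $\H_i$, proving surjectivity of $\Phi$.

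For $(3)\Rightarrow(2)$, I would specialise (3) to $k=2$ and $\H_1=\H_2=\W$. Testing against $\G=\oslash$ shows the two sides have one and two elements respectively, so (3) forces $\G$ to be non-empty. Surjectivity of $\Phi$ then says every $f\colon\G\to\W\amalg\W$ factors as $inc_i\circ g$; computing pointwise, the pullback of such an $f$ along $inc_1\colon\W\hookrightarrow\W\amalg\W$ is $\G$ when $i=1$ and $\oslash$ when $i=2$, which is exactly condition (2).

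The implication $(2)\Rightarrow(1)$ is where I expect the real work to lie, because (1) is phrased via the terminal graph-like diagram $\bigstar$, which is not itself a graph (\cref{ex: initial and terminal}), whereas (2) lives entirely inside $\Grbig$ through the weakly terminal graph $\W$. The bridge is \cref{not cocomplete}: every graph admits at least one morphism to $\W$. Given $\mathsf f\colon\G\to\bigstar\amalg\bigstar$ in $\GrShape$, I would observe that, since the endomorphisms of $\bigstar$ are trivial, $\mathsf f$ amounts to a $\tau$-invariant, incidence-compatible two-colouring of $\G$; the pointwise preimages $P_1,P_2$ of the two copies of $\bigstar$ are therefore genuine subgraphs with $\G=P_1\amalg P_2$, and the pullback of $\mathsf f$ along $incl_1\colon\bigstar\hookrightarrow\bigstar\amalg\bigstar$ is exactly $P_1$. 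Choosing, by weak terminality, morphisms $g_i\colon P_i\to\W$ and assembling them into $\tilde{\mathsf f}=(inc_1 g_1)\amalg(inc_2 g_2)\colon\G\to\W\amalg\W$, the pullback of $\tilde{\mathsf f}$ along $inc_1$ is again $P_1$. Condition (2) then forces $P_1\cong\oslash$ or $P_1\cong\G$; since $P_1\hookrightarrow\G$ is a monomorphism, this means $P_1=\oslash$ or $P_1=\G$ as a subdiagram, and together with the non-emptiness supplied by (2) this is precisely connectedness in the sense of \cref{conn}. The only points needing care are that the colouring really makes $P_1,P_2$ bona fide Feynman subgraphs and that the two pullbacks genuinely coincide, both of which follow from computing the relevant limits pointwise in $\fin$.
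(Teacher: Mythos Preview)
Your proposal is correct and uses essentially the same ideas as the paper: lifting maps $\G\to\bigstar\amalg\bigstar$ to $\G\to\W\amalg\W$ via weak terminality of $\W$, and separating summands of $\coprod_i\H_i$ by projecting to $\bigstar\amalg\bigstar$. The only difference is organisational: you run the cycle $(1)\Rightarrow(3)\Rightarrow(2)\Rightarrow(1)$, whereas the paper proves $(1)\Leftrightarrow(2)$ in one terse sentence and then $(1)\Rightarrow(3)\Rightarrow(2)$; your $(2)\Rightarrow(1)$ is just the paper's $(1)\Leftrightarrow(2)$ unpacked, and you are slightly more careful than the paper in extracting non-emptiness of $\G$ from condition~(3).
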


%\marginpar{Should I lose this stuff on connectedness? What do I actually use?}
\begin{proof}
$(1)\Leftrightarrow (2)$: Since $\W$ is weakly terminal, any morphism $ f \in \GrShape (\G, \bigstar \amalg \bigstar)$ factors as a morphism $\tilde f \in \Grbig (\G, \W \amalg \W)$ %(that is unique if and only if $ E(\G) = \emptyset$) 
followed by the componentwise projection $ \W \amalg \W \to \bigstar \amalg \bigstar $ in $\GrShape$. 

$(1)\Rightarrow (3)$: For any finite disjoint union of graphs $\coprod_{i = 1}^k \H_i$, and $1 \leq j \leq k$, let $p_j \in \GrShape(\coprod_{i = 1}^k \H_i , \bigstar \amalg \bigstar)$ be the morphism that projects $\H_j$ onto the first summand, and $\coprod_{i \neq j}\H_i$ onto the second summand. Then, for any graph $\G$ and any $f \in \Grbig(\G, \coprod_{i = 1}^k \H_i)$, the diagram 
\begin{equation}\label{conn proof}\xymatrix{
\mathcal P_j \ar[rr] \ar[d]&& \G \ar[d]^{f}\\
\H_j \ar@{^{(}->}[rr]_{inc_j} \ar[d] && \coprod_{i = 1}^k \H_i \ar[d]^{p_j}\\
\bigstar \ar@{^{(}->}[rr]_{inc_1} && \bigstar \amalg \bigstar}\end{equation}
where the top square is a pullback, commutes in $\GrShape$. Since the lower square is a pullback by construction, so is the outer rectangle.

In particular, if $ \G$ is connected, then $\mathcal P_j$ is either empty or isomorphic to $\G$ itself. But this implies that there is some unique $1 \leq j \leq k$ such that $f$ factors through the inclusion $ inc_j \in \Grbig(\H_j , \coprod_{i = 1}^k \H_k)$. In other words, $ \Grbig (\G, \coprod_{i = 1}^k \H_i) \cong \coprod_{i = 1}^k \Grbig(\G, \H_i)$.

$(3)\Rightarrow (2)$: If $\G$ satisfies condition (3), then $\Grbig(\G, \W \amalg \W) \cong \Grbig(\G, \W) \amalg \Grbig(\G, \W)$. So, taking $\coprod_{i = 1}^k \H_k = \W \amalg \W$ in (\ref{conn proof}), we have $\mathcal P_j = \oslash$ or $\mathcal P_j \cong \G$ for $j = 1,2$. 
\end{proof}

%%   %\srnote{1703 - examples moved}
%\begin{ex}\label{ex: first gluing example} Recall the graphs $\mathcal M^{X,Y}_{x_0,y_0}$, and $\mathcal N^X_{x_0,y_0}$ introduced in Examples \ref{ex: M graph}, \ref{ex: N graph}, and \ref{ex: locally injective MN}.
%
%For all finite sets $X$ and $Y$, the canonical morphism $\CX[X \amalg \{x_0\}] \amalg \CX[Y \amalg \{y_0\}] \to \mathcal M^{X,Y}_{x_0,y_0}$ is surjective and locally bijective. Moreover, $\mathcal M^{X,Y}_{x_0,y_0}$ is easily seen to be the colimit of the pair of parallel morphisms 
%\begin{equation}
%\label{eq: M coeq} ch_{x_0}, ch_{y_0} \circ \tau\colon (\shortmid) \rightrightarrows \CX[X \amalg \{x_0\}] \amalg \CX[Y \amalg \{y_0\}].
%\end{equation}
%
%And, $\mathcal N^X_{x_0,y_0}$ is the colimit of a pair of parallel morphisms 
%\begin{equation}
%\label{eq: N coeq} ch_{x_0}, \ ch_{y_0}\circ \tau\colon (\shortmid) \rightrightarrows \CX[X \amalg \{x_0, y_0\}]. 
%\end{equation}
%(See also \cref{ex: still more M N} and \cref{fig: glue MN}.)
%\end{ex}
%
%As will be shown in \cref{sec: topology}, all graphs may be constructed canonically as colimits of diagrams in the image of the embedding $\fisinv \hookrightarrow \Grbig$ (\cref{ex: embedding fin}).

\subsection{Paths and cycles} %   %\srnote{R30, R31}%\warn{this whole section needs reviewing}
Paths and cycles in a graph $\G$ may be defined using line and wheel graphs (Examples \ref{def Lk} and \ref{wheels}). %, it is possible to define paths and cycles in a graph $\G$.
%This section concludes with some topologically flavoured examples of graph morphisms. 

%Recall that, for $k \geq 0$, the line graph $\Lk$ is connected with $V(\Lk) = (v)

%%   %\srnote{RG some reminder of the definitions? Added here..}
%
%The connected wheel and line graphs were defined in Definitions \ref{def Lk} and \ref{wheels}.
%Recall Definitions \ref{def Lk} and \ref{wheels} of the connected line and wheel graphs. % $\Lk$ with boundary $ \{1_{\Lk}, 2_{\Lk}\}$ and wheel graph $\Wl$ with empty boundary. 

\begin{defn}\label{ex. paths and loops 1}\label{ex: paths}%   %\srnote{Have to get the orientation stuff in order...}

For any graph $\G$, a morphism $p \in \Grbig(\Lk, \G)$ is called a \emph{path of length $k$} in $\G$. %from $p(1_{\Lk})$ to $p (2_{\Lk})$ 
Given any pair $ x_1, x_2 \in E \amalg V$, $x_1$ and $x_2$ are \emph{connected by a path $ p \in \Grbig(\Lk,\G)$} %is said to \emph{connect $x_1$ and $x_2$} 
if $\{x_1, x_2\} \subset im(p)$. %\warn{get orientation stuff right here...} %$ are in the image of $p$. 
%Given a path $p \in \Grbig(\Lk, \G)$, there is a unique path $p^- = p \circ \sigma_{ \Lk} \in \Grbig (\Lk, \G)$ where $\sigma_{ \Lk}$ is the unique non-identity automorphism of $\Lk$ in $\Grbig$ that changes the \emph{orientation} of $p$.

\label{rmk: path connectedness}
A non-empty graph $\G$ is \emph{path connected} if each pair of distinct elements $ x_1, x_2 \in E \amalg V$ is connected by a path in $\G$. %, there exists some $k \in \N$ and a path $ p \in \Grbig(\Lk,\G)$ connecting $x_1$ and $x_2$ in $\G$.

%For each pair $ x_1, x_2 \in E \amalg V$, objects of the \emph{path poset} $\mathsf{path}(x_1, x_2)$, are isomorphism classes $(p, p^-)$ in $\G$, connecting $x_1$ and $x_2$, with $p_1 \leq p_2$ if $im (p_1) \subset im(p_2)$.
%
%A graph $\G$ is \emph{simply connected} if and only if it is connected and, for each pair of edges $e_1, e_2 \in E$, the $\mathsf{path}(e_1, e_2)$ has an initial element.

%A graph is \emph{simply connected} if, for each pair of distinct ports $e_1, e_2$ in $E_0$, the distinct elements $ x_1, x_2 \in E \amalg V$, the poset $\mathsf{path}(x_1, x_2)$ of paths connecting $x_1$ and $x_2$, and \warn{inclusion} is contractible and has 

\end{defn}

\begin{ex}%   %\srnote{0103 this example was added}
	The isolated vertex $\C_\nul$ is trivially path connected. % since $\Grbig (\Lk, \C_\nul)$ is empty for all $ k \geq 0$. 
Since $ \Grbig (\Lk, \C_\one)$ is non-empty only when $k = 0 $ or $k = 1$, the unique path $\Lk[1] = \C_\two \to \C_\one$ is the only path that connects the unique vertex $v$ of $\C_\one$ with an edge $e \in E(\C_\one)$. % is the unique path $|Lk[1] = \C_\two \to \C_\one$. % are given by the two morphisms $ \Lk[1] = \C_\two \to \C_\one$ in $\Grbig$. % with $E_0 ()Up to precomposition with the unique non-identity isomorphism $\sigma_\two$ on $\C_\two = \Lk[1]$, there is precisely one path 
\end{ex}

%\begin{ex} 
%	The isolated vertex $\C_\nul$ is trivially simply connected since it is connected, and has no edges. The corolla $\C_{\mathbf 1}$ with edge set $ \{e, \tau e\}$ is trivially connected since $\mathsf{path}(e, \tau e)$ is the terminal category. 
%		
%	If $\G = \Wm$ for $m \neq 1$, then $\mathsf{path}(e_1, e_2)$ has an initial object $p_0$ if and only if $e_2 = \tau e_1$, and $p_0 = ch_{e_1} \in \Grbig (\Lk[0], \Wm)$. For $m =1$, $E = \{e, \tau e\}$ and the poset $\mathsf{path}(e, \tau e)$ has elements $ch_e$ and $p ..$ CAREFUL...
%	
%	
%\end{ex}

\begin{cor}[Corollary to \cref{conn 1}]\label{cor:path connected}
	A graph $\G$ is connected if and only if it is path connected.

\end{cor}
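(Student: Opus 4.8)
The plan is to obtain both implications from the characterisations of connectedness in \cref{conn 2}, using throughout that every line graph $\Lk$ is connected and that $\Lk[0] = (\shortmid)$ is the stick graph of \cref{def Lk}.

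\emph{Path connected $\Rightarrow$ connected.} I argue by contraposition. If $\G$ is not connected, write $\G = \G_1 \amalg \G_2$ with both summands non-empty. Since each $\Lk$ is connected, condition $(3)$ of \cref{conn 2} (applied with $\Lk$ as domain) gives $\Grbig(\Lk, \G_1 \amalg \G_2) \cong \Grbig(\Lk, \G_1) \amalg \Grbig(\Lk, \G_2)$, so every path in $\G$ factors through a single summand and hence has image contained in one $\G_i$. Choosing $x_1 \in E(\G_1) \amalg V(\G_1)$ and $x_2 \in E(\G_2) \amalg V(\G_2)$, no path can contain both, so $\G$ is not path connected.

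\emph{Connected $\Rightarrow$ path connected.} Here the idea is to realise the path-components as genuine subgraphs and let connectedness collapse them to one. Define a relation on $E \amalg V$ by $x \sim y$ iff $x = y$ or $x$ and $y$ lie in the image of a common path. Reflexivity and symmetry are immediate. Two elementary facts feed the argument: the image of a path is closed under $\tau$ (the edges of $\Lk$ occur in $\tau$-orbits and $f_E$ commutes with $\tau$); and an edge $e$ together with a vertex $v$ incident on it are always joined by a path, obtained by sending the bivalent vertex of $\Lk[1]$ to $v$ and its two half-edges to the half-edge of $v$ over $e$. Granting transitivity (discussed below), each $\sim$-class $P$ then spans a subgraph $\H_P$ with $E(\H_P) = P \cap E$: this edge set is $\tau$-closed by the first fact, and the incidence compatibility needed for a subdiagram---if $e \in P$ then its incident vertex lies in $P$, and if $v \in P$ then every edge incident on $v$ lies in $P$---follows from the incidence fact and transitivity. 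As the classes partition $E \amalg V$, we obtain a decomposition $\G = \coprod_P \H_P$ into non-empty subgraphs; connectedness (\cref{conn}) forbids more than one summand, so there is a single class and any two elements are joined by a path. (Isolated vertices cause no trouble: such a vertex forms its own component $\C_\nul$, by \cref{ex: vertex examples}, and its own class.)

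\emph{Main obstacle.} The one genuinely technical point is transitivity of $\sim$, that is, concatenation of paths. Given $p \in \Grbig(\Lk[k], \G)$ and $q \in \Grbig(\Lk[m], \G)$ with a common image element $x$, I would first restrict to the sub-line-graphs of $\Lk[k]$ and $\Lk[m]$ spanned by the edges lying between the outer elements and $x$---each of these is again a line graph by the explicit description in \cref{def Lk}---and then glue them along the shared data to produce a single line graph mapping to $\G$. When $x$ is an edge this gluing is a pushout along an edge-orbit; when $x$ is a vertex it additionally identifies the abutting half-edges. Checking that the result is again a line graph, and that the gluing map is a well-defined morphism in $\Grbig$, is the crux; the remaining steps are bookkeeping in $\GrShape$.
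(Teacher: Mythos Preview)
Your proof is correct, and you take a more explicit route than the paper. The paper's argument is a compressed two-sentence biconditional using characterisation (2) of \cref{conn 1} directly: a morphism $\G \to \W_1 \amalg \W_2$ that fails to factor through either inclusion exists precisely when some pair $x_1, x_2 \in E \amalg V$ is sent to different summands, and---since each $\Lk$ is connected---this occurs if and only if no path joins $x_1$ and $x_2$. Your path-component decomposition unpacks what the paper leaves implicit in the backward direction of that last biconditional: that when $x_1, x_2$ are not joined by a path, the path-component of $x_1$ is a genuine proper subgraph, so one can build a separating morphism (using weak terminality of $\W$). Both arguments therefore rest on the same technical input you single out---transitivity of the path relation, i.e.\ concatenation of paths---but you make it explicit while the paper suppresses it. What your approach buys is clarity about where the actual work lies and independence from the weakly terminal object $\W$; what the paper's approach buys is brevity and a uniform treatment of both directions in one stroke.
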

\begin{proof}
	%path connectedness implies connectedness. Conversely, if t
	%The existence of a
	A morphism $f\colon \G \to \W_1 \amalg \W_2$ that does not factor through an inclusion $\W\hookrightarrow \W_1 \amalg \W_2$ exists if and only if there are distinct $x_1, x_2 \in E \amalg V$ such that $f(x_1) \in \W_1$ and $f(x_2) \in \W_2$. By \cref{conn 1}, since $\Lk$ is connected for all $k$, this is the case if and only if there is no $ p \in \Grbig(\Lk,\G)$ connecting $x_1$ and $ x_2$.
\end{proof}

\begin{lem}\label{lem: injective path}%   %\srnote{this proof was adapted}
 Let $\G$ be a connected graph. For any pair $(e_1, e_2)$ of edges of $\G$, there % $e_1$ and $ e_2 $ are distinct edges of a connected graph $\G$, then there
  is a locally injective path connecting $e_1$ and $e_2$ in $\G$. 
\end{lem}

\begin{proof}For all edges $e$ of $\G$, $ch_e \colon (\shorte) = \Lk[0] \to \G$ describes an injective path connecting $e$ and $\tau e$.
	
So, let $e_1$ and $e_2 \neq \tau e_1$ be distinct edges of a connected graph $\G$. By \cref{cor:path connected}, there is a path $p \in \Grbig (\Lk, \G)$ connecting $ e_1$ and $e_2$ in $\G$. Moreover, we may assume, without loss of generality, that, for $i = 1,2$, $p (i_{\Lk}) \in \{e_i, \tau e_i\}$: if not, we may replace $p$ with a path $p \circ \iota$ -- where $\iota \colon \Lk[k'] \to \Lk$ ($1\leq k' < k$) is injective -- for which this holds.

If $p$ is not locally injective then there is some $1 \leq j \leq k$, such that %the path $p \colon \Lk \to \G$ fails to be injective at a vertex $v_j$ of $\Lk$ (with $\vE[v_j] = \{ l_{2j-1}, l_{2j}\} $) if and only if 
$p(l_{2j-1}) = p(l_{2j}) \in E (\G)$. 

In this case, if $j = 1$, then $p$ may be replaced by a path $p_{ \hat 1} \colon \Lk[k-1] \to \G$ obtained by precomposing $p$ with the \'etale inclusion $\Lk[k-1] \hookrightarrow \Lk$, $l'_{i} \mapsto l_{i+2}$, $0 \leq i \leq 2k-1$. %for edges $(l'_i)_{i = 0}^{2k-1}$. %described on vertices by $v'_i \mapsto v_{i+1}$, $1 \leq i \leq k -1$.

If $1 <j <k$, then % (so $k >2$), 
$p(l_{2j-1}) = p(l_{2j})$ implies that $p(v_{j-1}) = p(v_{j+1})$. Therefore, $p$ may be replaced with a path $p_{ \hat j} \colon \Lk[k-2]$ of length $ k-2$ given by 
% \[p_{ \hat j} (v'_m) = \left \{ \begin{array}{ll}
%p(v_m) & \text{for } 1 \leq m <j, \\
%p(v_{m+2}) & \text{for } j \leq m \leq k-2, \end{array} \right . \] on vertices, and
 \[p_{ \hat j} (l'_i) = \left \{ \begin{array}{ll}
p(l_i) & \text{for } 0 \leq i \leq 2j -3, \\
p(l_{i + 4}) & \text{for } 2j -2 \leq i \leq 2k -3. \end{array} \right . \] %on edges. 
Finally, if $ j = k$, then replace $p$ with the path $p_{\hat k} \colon \Lk[k-1] \to \G $ obtained by precomposing $p$ with the inclusion $\Lk[k-1] \hookrightarrow \Lk$, $l'_i \mapsto l_i$, $1 \leq i \leq k -1$.

By iterating this process (always starting with the lowest value of $j$ for which the path $p$ is not injective at $v_j$), we obtain a unique, locally injective path $p_I$ connecting $ e_1$ and $e_2$.
\end{proof}
% equivalence relation on paths in a graph $\G$. may be modified to show that every path may be it is possible to define an equivalence relation on paths in $\G$: two paths $p_1 \colon \Lk[k_1] \to \G$ and $p_2 \colon \Lk[k_2] \to \G$ are equivalent or \textit{homotopic} if the

%For any graph $\G$ and any pair $ x_1, x_2 \in E \amalg V$, let $\mathsf{path_{I}}(x_1, x_2)$ be the poset whose objects are injective paths $p_I \in \Grbig (\Lk, \G)$ connecting $x_1$ and $x_2$ up to composition with isomorphisms of $\Lk$ Using \cref{lem: injective path}, we may define, for each pair 

%\begin{ex}\label{ex. trees}
%	Let $\G$ be a graph whose realisation is simply connecte
%	\end{ex}

%For any graph $\G$ and any pair $ x_1, x_2 \in E \amalg V$, we can consider the poset $\mathsf{path} (x_1, x_2)$ of paths connecting $x_1$ and $x_2$ ordered by $ p_1 \leq p_2$ whenever $ p_1$ factors through $p_2$. \warn{careful...check how much I actually need.... }
%\begin{ex}
%	Let $\G$ be a connected graph with no univalent vertices ($V_1 = \emptyset$) and at least two ports. 
%\end{ex}
 
% Using \cref{lem: injective path}, we 
 % and an injective path connecting $x_1$ and $x_2$. %since we may always apply Euler-style reasoning to remove loops and folds in order to ensure that $p_i$ is injective on vertices.% in $im(p)\subset \G$.

%A \emph{cycle} in $\G$ is a morphism $c \in \Grbig(\Wl, \G)$ for some $m \geq 1$. A cycle $c$ is non-trivial if it does not factor through a path, as $c\colon \Wl \to \Lk \xrightarrow{p} \G$.

%%   %\srnote{WARN WARN WARN - this is wrong!}
Morphisms from wheel graphs $\Wm$ describe the higher genus structure of graphs (see \cref{path homotopy}).

\begin{defn}

\label{cycle}\label{ex. paths and loops 2}

A \emph{cycle} in $\G$ is a morphism $c \in \Grbig(\Wl, \G)$ for some $m \geq 1$.

A connected graph $\G$ is \emph{simply connected} if it has no locally injective cycles. % $c \in \Grbig (\Wm, \G)$, $m \geq 1$. %Otherwise $\G$ has \emph{non-trivial genus}.

A cycle $c \colon \Wm \to \G$ is \emph{trivial} if there is a simply connected graph $\H$ such that $c$ factors through $\H$. % $ \Wm \xrightarrow{c'}\H \to \G$. Otherwise $c$ is \emph{non-trivial}.

% A cycle $c \in \Grbig (\Wl, \G)$ is non-trivial if it does not factor as $\Wl \to \Hthere is a $m_i \leq m$ and an injective cycle $c_i \colon \Wl[m_i]\to \G$ such that $im (c_i)\subset im (c)$ in $\G$. 
% and, \warn{some order-preserving condition on vertices}... if it does not factor through a path, as $c\colon \Wl \to \Lk \xrightarrow{p} \G$. %A graph $\G$ has 

%A connected graph $\G$ is \emph{simply connected} if it does not admit a non-trivial cycle. 

\end{defn}
%It follows from the proof of \cref{lem: injective path} that $\G$ admits a locally injective cycle if and only if it admits a pointwise injective cycle. 
It is straightforward, using the cyclic ordering on the edges of each $\Wm$, to verify that a graph $\G$ is simply connected if and only if its geometric realisation $|\G|$ is.

%Hence a graph $\G$ may be said to have \textit{non-trivial genus} precisely when $\G$ admits a non-trivial cycle, and hence $|\G|$ has non-trivial genus.

%If $\G$ admits a non-trivial cycle $c \in \Grbig (\Wl, \G)$, then, by an argument similar to the proof of \cref{lem: injective path}, there is an $1 \leq l \leq m$ and a pointwise injective non-trivial cycle $ c_i \in \Grbig(\Wl[l], \G)$ with $im(c_i) \subset im (c) \subset \G$. % In this case, $\G$ is said to \emph{admit a non-trivial cycle}, and hence have \emph{non-trivial genus}. 

\begin{ex}\label{ex: flattening}
	For all finite sets $X$, the corolla $\CX$ is trivially simply connected since it has no inner edges and therefore does not admit any cycles.% since it does not admit any cycles. 
	
Since the edge sets of the line graphs $\Lk$ are totally ordered for all $k$, there can be no locally injective morphism $\Wm \to \Lk$. Hence $\Lk$ is simply connected. % since their edges are totally ordered, and any locally injective morphism into a line $\Lk$ preserves (or strictly reverses) the ordering
However, for all $k \geq 1$, there are morphisms $ \Grbig(\Wl[2k], \Lk[k+1])$ that are surjective on vertices and inner edges. For example, let $V(\Wl[2k]) = (w_i)_{ i =1}^{2k}$ and $V(\Lk[k+1]) = (v_j)_{j =1 }^{k +1 }$ be canonically ordered as in Examples \ref{def Lk}, and \ref{wheels}. Then the assignment $ w_i\mapsto v_i$ for $1 \leq i \leq 2k+1$, and $w_{k+ 1 +j }\mapsto v_{k+ 1 -j }$ for $1 \leq j< k$ induces a morphism $q \colon \Wl[2k] \to \Lk[2k+1]$ that \textit{flattens} $\Wl[2k]$ (\cref{fig: flattening}(a)). This fails to be a local injection at $w_1$ and $w_k$.

	More generally, by flattening $\Wm[2]$ as above, we see that, for any graph $\G$, the set $\EI$ of inner edges of $\G$ is non-empty if and only if $\Grbig (\Wl[2], \G)$ is (see \cref{fig: flattening}(b)).
%	In fact, by precomposing $q$ with any automorphism of $\Wl[2k]$ induced by a cyclic permutation of its vertices, obtain $k$ distinct morphisms $\Wl [2k] \to \Lk[k+1]$ that are surjective on vertices and inner edges.

	\begin{figure}
		[htb!]\begin{tikzpicture}%[scale = .6]
		\node at (0,0){\begin{tikzpicture}[scale =.45]
		\draw (0,0) circle (2cm);
	\filldraw (2,0) circle (4pt);
	\filldraw (0,2) circle (4pt);
	\filldraw		(-2,0) circle (4pt);
	\filldraw (0,-2) circle (4pt);
%	\filldraw[red] (2,0) circle (9pt);
%	\filldraw[red] (0,2) circle (9pt);
%	\filldraw[red]	(-2,0) circle (9pt);
%	\filldraw[red]		(0,-2) circle (9pt);
	
	\node at (2.5,0){{\scriptsize{ $w_2$}}};
	\node at (0,-2.5){{\scriptsize{ $w_3$}}};
	\node at (-2.6, 0){{\scriptsize{ $w_4$}}};
	\node at (0,2.3){\scriptsize{ $w_1$}};
	\draw [->] (4,0)--(6.7,0);
		\node at (5.3,.5){\small{$q$}};
	\node at (5.3,-.5){\scriptsize{{(flatten)}}};
	\draw (8,-3)--(8,3);
	\draw[ultra thick] (8,-2) --(8,2);
		\filldraw (8,-2) circle (4pt);
	\filldraw (8,0) circle (4pt);
	\filldraw		(8,2) circle (4pt);
		\node at (8.5,-2){{\scriptsize{ $v_3$}}};
	\node at (8.5,0){{\scriptsize{ $v_2$}}};
	\node at (8.5,2){{\scriptsize{ $v_1$}}};
%\node at (0,2.3){\scriptsize{ $w_1$}};
%	\node at (1.6,.4) {\tiny {$\omega c$}};
%	\node at (1.6,-.4) {\tiny {$ c$}};
%	\node at (-1.6,.4) {\tiny {$c$}};
%	\node at (-1.6,-.4) {\tiny {$\omega c$}};
%	\node at (-.6, 1.6) {\tiny {$\omega c$}};
%	\node at (.6, 1.6) {\tiny {$c$}};
%	\node at (-.6, -1.6) {\tiny {$ c$}};
%	\node at (.6, -1.6) {\tiny {$\omega c$}};
		\end{tikzpicture}};
		\node at (-3,1.5){(a)};
		\node at (9,0){\begin{tikzpicture}[scale =.45]
			\draw (0,.5) circle (1 cm);
				\filldraw (0,1.5) circle (4pt);
			\filldraw (0,-.5) circle (4pt); 
			\draw [->] (2,.5)--(4,.5);
			
			\draw[ultra thick](5,-.5)--(5,1.5);
				\filldraw (5,1.5) circle (4pt);
			\filldraw (5,-.5) circle (4pt); 
			
				\draw [->] (6,.5)--(8,.5);
%	%	\node at (5.3,.5){\scriptsize{{flatten}}};	
				\filldraw(10,0) circle(4pt);
					\filldraw(11.5,0) circle(4pt);
			\draw (10,-1.5)--(10,1.5)
			(10,0)--(11.5,0)
			(11.5,0)--(12.5,1.2)
			(11.5,0)--(12.5,-1.2);
			\draw[ultra thick] 
			(10,0)..controls (13,3) and (7,3)..(10,0);
			
			%\node at (-1,.6) {\tiny $y_0$};
			% \node at (0.3, 1) {\tiny 2};
		%	\node at (1, .6) {\tiny $x_0$};
				\end{tikzpicture}};
			\node at (5.3,1.5){(b)};
			
		\end{tikzpicture}
	\caption{(a) A morphism $q \colon \Wm[4] \to \Lk[3]$ in $\Grbig$ that flattens $\Wm[4]$.\; (b) If a graph $\G$ has an inner edge, then $\Grbig (\Wm[2], \G)$ is non-empty.} \label{fig: flattening}
	\end{figure}
%	\warn{FIGURE HERE like a morse function projection onto vertical axis - also inner edges.. 2 parts.} 
 %   %\srnote{R31}

\end{ex}

\begin{rmk} \label{path homotopy} %   %\srnote{1803 - added} 
	For any graph $\G$, we may define an equivalence relation %Using a modification of the proof of \cref{lem: injective path}, it is quite straightforward to construct an equivalence relation 
	of \textit{path homotopy} on paths in $\G$. Two paths in $\G$ are \textit{homotopic} if applying the proof of \cref{lem: injective path} to each leads to the same locally injective path $p_I$ in $\G$. % are \emph{homotopic} if $p_I = p_I'$ where $p_I, p'_I$ are the corresponding locally injective paths as constructed in the proof of \cref{lem: injective path}. % as such that each path in a graph $\G$ is homotopic to a unique locally injective path in $\G$. 
	When $\EI \neq \emptyset$, this relation extends to an equivalence relation on cycles in $\G$. If $\G$ is also connected, the set of equivalence classes of cycles has a canonical group structure that is isomorphic to the fundamental group $\pi_i(|\G|)$ of the geometric realisation of $\G$. 
	
	% that can be used to describe the fundamental group of $\G$: if a connected graph $\G$ has inner edge $e$, the set $\pi_1(\G, e)$ of homotopy classes of cycles $c$ in $\G$ with $c(a_1) =e $ admits a group structure is independent of $e$ and agrees with the fundamental group of the geometric realisation $|\G|$ of $\G$.
	
The fundamental group construction can be extended, using \cref{Gnov construction}, to all graphs $\G $ without isolated vertices. These ideas are not developed in the current work.%, this can be extended to all graphs $\G $ without isolated vertices.

%	is trivial if and only if $\G$ and only if $\G$ is canonically admits a group structure: If $p: \Lk \to \G$ is a path that is not locally injective at the vertex $v_j$ of $\Lk$, then $ p(v_{j-1}) = p(v_{j+1})$ in $V(G)$, so we may replace $p$ with a path 
%	$p_{\hat j} \colon \Lk[k-1] \to \G$ satisfying $p_{\hat j} (1_{\Lk[k-1]}) = p (1_{\Lk})$, and such that $p_{\hat j} (v'_i) = p(v_i)$ for $i <j$ and $p_{\hat j}(v'_i) = p(v_{i+1})$ for $i \geq j$. This process may be iterated to obtain a locally injective path $\hat p \colon \Lk[k_p] \to \G$, and paths $p_1$ and $p_2$ are \textit{homotopic} in $\G$ if $\hat{p_1} = \hat{p_2}$.
%	
%	In particular, for any vertex $w$ of $\G$, the set $\pi_1(\G, w)$ of homotopy classes of cycles $c$ in $\G$ with $c(v_1) = w$ may be obtained by precomposition of cylces $c \colon \Wm \to \G$ with the canonical map $\Lk[m] \to \Wm $, and it is easy to check that $\pi_1(\G,w)$ has a group structure, and depends only on the connected component of $w$ in $\G$. For a connected graph $\G \not \cong \C_\nul$, $\pi_1(\G, w) $ is trivial if and only if $\G$ is simply connected. 
\end{rmk}

	\section{The \'etale site of graphs}\label{sec: topology}

Recall that $\Gret \subset \Grbig$ is the bijective on objects subcategory of graphs and \'etale morphisms and that, by \cref{ex: embedding fin}, there is a canonical categorical embedding $\fisinv \hookrightarrow \Grbig$ whose image consists of the exceptional graph $(\shortmid)$, the corollas $\CX$, and the \'etale (locally bijective) morphisms between them. 

%Local bijections are preserved under composition so %by \cref{def: locally inj sur}, 
%\'etale morphisms form a subcategory $\Gret$ of $\Grbig$. The full subcategory of $\Gret$ on the connected graphs is denoted by $\Gr$.

%A composition of \'etale morphisms is clearly \'etale by \cref{def: locally inj sur}, and the category all graphs and \'etale morphisms (introduced in \cite[Section~3]{JK11}) will be denoted by $\Gret \subset \Grbig$, and $\Gr \subset \Gret$ is the full subcategory of connected graphs and \'etale morphisms. % is denoted by $\Gr$.

The goal of this section is to describe $\Gret$ -- and its full subcategory $\Gr$ on the connected graphs --  in detail, and establish the chain 
\[ \xymatrix{ \fisinv \ar@{^{(}->}[r] &\ \Gr \ \ar@{^{(}->}[r]& \GS}\]
	%\hookrightarrow \Gr \hookrightarrow \GS \] 
	of dense fully faithful categorical embeddings discussed in \cref{sec: Weber}. 

%\warn{ In particular,... sheaves ....this justifies the idea that etale maps are open maps..}

%\begin{rmk}\label{rmk: gret or gr}
%	In this section we will actually work in greater generality and consider the category $\Gret \subset \Grbig$ of \textit{all} graphs and \'etale morphisms, of which $\Gr$ is the full subcategory on the connected graphs. From \cref{subs. unpointed multiplication} onwards it will be assumed that all graphs are connected.
%\end{rmk}
The following is immediate from \cref{def: locally inj sur} and the universal property of pullbacks of sets:
\begin{prop}\label{etale}\label{etale condition}%\label{2 out of 3}
 A morphism $f \in \Grbig (\G, \G')$ is \emph{\'etale} if and only if the right square in the defining diagram (\ref{morphism})
%	\begin{equation}\label{etale diagram}\xymatrix{
%		\G\ar[d]_f&& E \ar@{<->}[rr]^-\tau\ar[d]_{f_E}&&E \ar[d]_{f_E}&& H \ar[ll]_s\ar[d]_{f_H} \ar[rr]^t&& V\ar[d]^{f_V}\\
%		\G'&& E' \ar@{<->}[rr]_-{\tau'}&&E'&& H' \ar[ll]^{s'} \ar[rr]_{t'}&& V'}\end{equation}
	is a pullback of finite sets.

\end{prop}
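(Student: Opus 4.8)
The plan is to unwind what it means for the right-hand square to be a pullback of finite sets and to match that condition, fibre by fibre over the vertices, with local bijectivity. The right square of (\ref{morphism}) is the $t$-square
\[
\xymatrix{ H \ar[rr]^{t} \ar[d]_{f_H} && V \ar[d]^{f_V} \\ H' \ar[rr]_{t'} && V', }
\]
which commutes by definition of a morphism of graphs. A commuting square of finite sets is a pullback precisely when the canonical comparison map $\theta\colon H \to H' \times_{V'} V$, $h \mapsto (f_H(h), t(h))$, is a bijection, where $H' \times_{V'} V = \{(h', v) : t'(h') = f_V(v)\}$ and $\theta$ is well-defined because the square commutes. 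So the claim reduces to showing that $\theta$ is a bijection if and only if $f$ is \'etale.

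Next I would exploit that both the source and the pullback decompose as disjoint unions indexed by $V$. On the source, $H = \coprod_{v \in V}\vH$ with $\vH = t^{-1}(v)$ (\cref{defn: valency}); on the pullback, grouping a pair $(h',v)$ by its second coordinate gives $H' \times_{V'} V \cong \coprod_{v \in V}\big( t'^{-1}(f_V(v)) \times \{v\}\big) = \coprod_{v \in V} \sfrac{H'}{f_V(v)}$. Since the square commutes, $f_H$ carries $\vH = t^{-1}(v)$ into $t'^{-1}(f_V(v)) = \sfrac{H'}{f_V(v)}$, and $\theta$ respects the two decompositions, restricting on the $v$-component to the fibre map $f_H|_v\colon \vH \to \sfrac{H'}{f_V(v)}$. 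Hence $\theta$ is a bijection if and only if $f_H|_v$ is a bijection for every $v \in V$.

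Finally I would identify $f_H|_v$ with the local map $f_v$ of \cref{def: locally inj sur}. Because $\G$ is a Feynman graph, $s$ is injective (\cref{defn: graph}), so it restricts to a bijection $\vH \xrightarrow{\cong} \vE$; likewise $s'$ gives $\sfrac{H'}{f_V(v)} \xrightarrow{\cong} \sfrac{E'}{f_V(v)}$. Under these identifications $f_H|_v$ corresponds exactly to $f_v\colon \vE \to \sfrac{E'}{f_V(v)}$, so $f_H|_v$ is bijective for all $v$ if and only if each $f_v$ is bijective, i.e.\ if and only if $f$ is \'etale. The only subtlety is this last translation: one must invoke the injectivity of $s$ and $s'$ built into the definition of a Feynman graph to pass between half-edges at a vertex and edges incident on it, so that the purely set-theoretic pullback condition on $(H,V,H',V')$ becomes the local bijectivity condition phrased in terms of incident edges. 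Everything else is a routine bookkeeping of disjoint-union decompositions.
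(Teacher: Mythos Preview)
Your proof is correct. The paper gives no proof at all: it simply states that the proposition is ``immediate from \cref{def: locally inj sur} and the universal property of pullbacks of sets.'' Your argument is precisely the routine fibrewise unwinding the paper leaves implicit, so the two approaches coincide.
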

%\begin{proof}The first statement is immediate from the definition of \'etale morphisms as local bijections, %Since bijections satisfy a two out of three property, so do local bijections and the first statement 
%	and the second statement follows from \cref{def: locally inj sur} and the universal property of pullbacks of sets. 
%\end{proof}

%   %\srnote{1803 streamlined}
%By the following proposition, which follows directly from \cref{etale}, a graph morphism is \'etale precisely when it is a local bijection: % tells us that the \'etale morphisms are precisely the local bijections:%(See \cref{def: locally inj sur}.)% those that \emph{locally preserve (vertex) valency}.

%\begin{prop}\label{etale condition}\label{2 out of 3}
%	A morphism $ f \in \Grbig(\G, \G')$ is \'etale if and only if it is locally bijective. % for all $v \in V$, the restriction of $f$ to $\vH$ induces an isomorphism $\vH\xrightarrow{\cong}\sfrac{H'}{f(v)}$.
%%	, that is
%%	\[\vH \cong f_H(\vH) \ \text{ and } \ f_H(\vH) = \vH[f(v)].\] 
%Hence, for composable morphisms $f,g$ in $\Grbig$, if any two of $f, g$ and $g \circ f$ are \'etale, then so is the third.\end{prop}

\begin{ex}\label{ex: esv ese etale}
%A morphism $f\colon \mathcal S \to \G$ from a shrub $\mathcal S$ is trivially \'etale by definition. %In particular, morphisms from stick graphs are \'etale.
For any graph $\G$ and each edge $e$ of $\G$, the essential morphism $\ese \colon (\shorte)\to \G$ (\cref{ex: stick examples}) is trivially \'etale. For each vertex $v$ of $\G$, the essential morphism $\esv\colon \Cv \to \G$ (\cref{ex: vertex examples}) is also \'etale.
	
%	For any graph $\G$ and all $v \in V(\G)$, the essential morphism $\esv\colon \Cv \to \G$ defined in \cref{ex: vertex examples} is \'etale. 
	Indeed, a morphism $f \in \Grbig(\G, \G')$ is \'etale if and only if $f$ induces an isomorphism $\Cv \xrightarrow {\cong} \Cv[f(v)]$ for all $v \in V(\G)$.
	
	\end{ex}

%\begin{ex} Since 
%	In \cref{morphism ex1}, $f_a$ is not \'etale since the unique vertex $v$ of $\C_\two$ has valency $2$ whereas $ f_a(v) = v_1 $ has valency 3. The morphism $f_b$ is \'etale. 
%	
%	The canonical maps $\CX\rightarrow \mathcal M^{X,Y}_{x_0,y_0}\leftarrow \CX[Y]$ and $\CX \to \mathcal N^X_{x_0,y_0}$ in \cref{ex: locally injective MN} are not \'etale, whereas the canonical maps $\CX[X \amalg \{x_0\}] \rightarrow \mathcal M^{X,Y}_{x_0,y_0}\leftarrow \CX[Y \amalg \{y_0\}]$ and $\CX[X \amalg \{x_0, y_0\}]\to N^{X}$ are \'etale. \end{ex}

\begin{ex}\label{wheels and lines} As discussed in \cref{ex: flattening}, there are no \'etale morphism $\Wm \to \Lk$, for any $k \geq 0, m \geq 1$. %Though the graphs $\Lk$ and $\Wm$ are bivalent for all $k \geq 0$ and $m \geq 1$, 
	%For example, the flattening $q \colon \Wm[2k] \to \Lk[k+1]$ in \cref{ex: flattening} is neither injective nor surjective at the vertices $w_1,w_k \in V(\Wm[2k])$. %in $V(\Wm[2k])$. %fails to be locally injective (or surjective) at $w_1$ and $w_k$. %, and is hence not \'etale. 
%	Recall the line and wheel graphs $\Lk$ and $\Wl$, introduced in Examples \ref{def Lk}, and \ref{wheels}.
	
%Morphism in $\Gret(\Lk, \Lk[n])$ (with $k, n \in \N$) are given by a choice of subgraph may be thought of as an oriented inclusion of $\Lk $ in $\Lk[n]$:
All \'etale morphisms between line graphs are pointwise injective, and %$\Lk \to \Lk[n]$ are injective: 
for $k, n \in \N$, 
	\[\Gret(\Lk, \Lk[n]) \cong \left \{
	\begin{array}{ll}
	2(\mathbf{ n-k + 1})& n \geq k\\
\emptyset, & n <k.
	\end{array}
	 \right.\]
	
		For $m \geq 1$, a morphism $f \in \Gret(\Lk, \Wl[m])$ is pointwise injective precisely when $k < m$. For all $k \geq 0$, $f$ is fixed by $f(1_{\Lk}) \in E(\Wl)$. Hence, $\Gret(\Lk, \Wl[m]) \cong E(\Wl) \cong 2(\mm)$.
	
%	For $k >m$, $\Gr(\Lk, \Lk[k']) = \emptyset$ and, if $k \leq k'$, then $\Gr(\Lk, \Lk[k'])\cong 2(\mathbf{ k'-k})$. 

\'Etale morphisms between wheel graphs are surjective and for $l,m \geq 1$ 
	\[\Gret(\Wl[l], \Wl[m]) \cong \left \{
\begin{array}{ll}
2(\mathbf{m})& \text{ if }\frac{l}{m} \in \N\\
\emptyset, & \text{ otherwise.}
\end{array}
\right.\]% If $m$ divides $l$, a morphism in $\Gret(\Wl[l], \Wl[m])$ is fixed by $w_1 = f(v_1) \in V(\Wl[m])$, and a choice $f(a_1) \in \vE[w_1]$ of `winding direction'. %$v_1 \in V(\Wl)$ and a choice of winding direction. So,
\end{ex}
%   %\srnote{1803 - check these changes}

\subsection{Pullbacks and embeddings in $\Grbig$.}\label{ssec embeddings}

As local isomorphisms, \'etale morphisms of graphs have similar properties to local homeomorphisms of topological spaces. 

\begin{lem}\label{lem: Grbig admits pullbacks}\label{lem: Gret admits pullbacks}
	The graph categories $\Grbig$ and $\Gret$ admit pullbacks. Moreover, \'etale morphisms are preserved under pullbacks in $\Grbig$.
\end{lem}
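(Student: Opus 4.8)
The plan is to construct every pullback objectwise in $\fin$ and then check that the defining properties of a graph survive. Recall that $\GrShape$ has all finite limits, computed pointwise in $\fin$, and that since $\Grbig$ is full in $\GrShape$, a limit cone that happens to land in $\Grbig$ is automatically a limit there. So, given a cospan $\G \xrightarrow{f} \G' \xleftarrow{g} \H$ of graphs, with edge/half-edge/vertex sets $(E_\G,H_\G,V_\G)$, $(E_{\G'},H_{\G'},V_{\G'})$, $(E_\H,H_\H,V_\H)$, I would form the pointwise pullback $P$ in $\GrShape$, with $P_E = E_\G \times_{E_{\G'}} E_\H$, $P_H = H_\G \times_{H_{\G'}} H_\H$, $P_V = V_\G \times_{V_{\G'}} V_\H$ and induced structure maps, and verify that $P$ is a graph. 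The involution $\tau_P$ is an involution because morphisms of graphs commute with $\tau$; it is fixed-point free because a fixed point $(e,e'')$ would force $\tau_\G e = e$, contradicting fixed-point freeness of $\tau_\G$. The map $s_P$ sends $(h,h'')$ to $(s_\G h, s_\H h'')$, so it is injective because $s_\G$ and $s_\H$ are. Hence $P \in \Grbig$, and $\Grbig$ admits pullbacks.

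For the preservation statement I would use \cref{etale condition}: a morphism is étale exactly when its right-hand ($t$-)square in (\ref{morphism}) is a pullback of finite sets. Take a pullback square in $\Grbig$ with legs $p\colon P \to \G$ and $q\colon P \to \H$ over $\G \xrightarrow{f}\G' \xleftarrow{g}\H$, and suppose $g$ is étale; I must show that the $t$-square of $p$, namely $P_H \cong H_\G \times_{V_\G} P_V$, is a pullback. The hypothesis on $g$ gives $H_\H \cong H_{\G'} \times_{V_{\G'}} V_\H$. I would then apply the pasting lemma twice. First, pasting the defining square $P_H = H_\G \times_{H_{\G'}} H_\H$ (along $f_H$ and $g_H$) onto the $t$-square of $g$ shows that the outer rectangle
\[ P_H \;\cong\; H_\G \times_{V_{\G'}} V_\H, \]
with legs $g_V$ and $t_{\G'}\circ f_H = f_V\circ t_\G$, is a pullback. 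Second, I would factor the bottom leg as $H_\G \xrightarrow{t_\G} V_\G \xrightarrow{f_V} V_{\G'}$; since $P_V = V_\G \times_{V_{\G'}} V_\H$ is a pullback by definition, the cancellation form of the pasting lemma identifies the remaining square $P_H \cong H_\G \times_{V_\G} P_V$ as a pullback. This is precisely the $t$-square of $p$, so $p$ is étale. By the symmetric argument, if $f$ is étale then $q$ is étale.

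Finally, for pullbacks in $\Gret$ I would take a cospan of \emph{étale} maps $\G \xrightarrow{f}\G' \xleftarrow{g}\H$, form its pullback $P$ in $\Grbig$, and observe that the preceding paragraph makes both legs $p,q$ étale, so the square lies in $\Gret$. It then remains to verify the universal property \emph{within} $\Gret$: given a graph $T$ with étale maps to $\G$ and $\H$ agreeing over $\G'$, the unique mediating morphism $u\colon T \to P$ in $\Grbig$ must itself be étale. This I would deduce from a cancellation property, namely that if $p\circ u$ and $p$ are étale then $u$ is étale. Using the reformulation of \cref{ex: esv ese etale} — that a morphism is étale iff it restricts to an isomorphism $\Cv[w] \xrightarrow{\cong} \Cv[\cdot]$ on the corolla at every vertex $w$ — the isomorphism induced by $p\circ u$ factors as the ($u$-induced) corolla map followed by the isomorphism induced by $p$, forcing the $u$-induced map to be an isomorphism at each $w$. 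Hence $u \in \Gret$, and $P$ is also the pullback in $\Gret$.

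The main obstacle I expect is the bookkeeping in the second paragraph: keeping straight which of the three objectwise squares is a pullback by definition, which is a pullback by the étale hypothesis on $g$, and which is the square to be deduced, so that the two invocations of the pasting lemma are arranged in the correct direction (ordinary pasting for the first, cancellation for the second). The graph-axiom verification in the first paragraph and the corolla-wise cancellation in the third are routine once this étale-as-pullback dictionary is in place.
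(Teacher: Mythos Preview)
Your proof is correct and follows essentially the same approach as the paper. The paper's argument is much terser: it dispatches the \'etale-preservation step with the phrase ``limits commute with limits'' (which is exactly your pasting-lemma computation, compressed), and then concludes that $\Gret$ admits pullbacks ``by symmetry''. Your treatment is more careful on this last point: you explicitly verify that the mediating morphism into the pullback is itself \'etale via the cancellation property, which the paper leaves implicit. This extra step is genuine content, since $\Gret$ is not full in $\Grbig$, so the universal property in the subcategory does not follow automatically from the pullback square merely lying in $\Gret$.
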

\begin{proof}
%	Recall that graphs are objects of the presheaf category $\GrShape$ of graph-like diagrams in $\fin$. 

	The pullback $\mathcal P = (\underline E, \underline H, \underline V, \underline s, \underline t, \underline \tau)$ of morphisms $f_1 \in \Grbig(\G_1, \G)$ and $f_2 \in \Grbig (\G_2,\G)$ exists in the presheaf category $\GrShape$. Moreover, since pullbacks in $\GrShape$ are computed pointwise, $\underline \tau$ is a fixed-point free involution, and $\underline s$ is injective. So, $\mathcal P$ is a graph, and $\Grbig$ admits pullbacks.	
	
	%	By \cref{lem: Grbig admits pullbacks}, pullbacks of graph maps are graphs. 
	\'Etale morphisms pull back to \'etale morphisms since limits commute with limits, and therefore, by symmetry, $\Gret$ admits pullbacks.
\end{proof}

\begin{defn}\label{defn: etale preimage}
	For any morphism $f \in \Grbig (\H, \G)$, not necessarily \'etale, and any morphism $w\colon\G' \xrightarrow{}\G$, % that satisfies (i)-(iii) of \cref{lem: weak mono}, 
	the \emph{preimage $f^{-1}(\G')\to \G'$ of $\G'$ under $f$} is defined by the pullback
	\[
	\xymatrix{
		f^{-1}(\G') \ar[rr] \ar[d]&& \H\ar[d]^{f}\\
		\G' \ar[rr]_-{w}&& \G.	}\]
	
\end{defn}

%   %\srnote{1803 CHECK THIS!!!}
In particular, by \cref{lem: Gret admits pullbacks}, if $f \colon \H \to \G$ is \'etale, then so is the preimage $f^{-1}(\G') \to \G$. % is \'etale. %of an \'etale subgraph is an \'etale subgraph. %\warn{check this statement}

Observe that any (possibly empty) graph $\H$ has the form $\H' \amalg \mathcal S$ where $\H'$ is a graph without stick components and $\mathcal S$ is a shrub.

\begin{defn}\label{def embedding}%\label{defn: weak subgraph} 
	A morphism $f \in \Grbig (\H, \G)$ (with $\H  =  \H'\amalg \mathcal S$ as above) is called an \emph{embedding} if the following three conditions hold:
	\begin{enumerate}[(i)]
		\item the images $f(\H')$ and $f(\mathcal S)$ are disjoint in $\G$;
		\item the restriction of $f$ to $\mathcal S$ is injective;
		\item $f$ is injective on $V(\H)$ and $H(\H)$ (but not necessarily on $E(\H))$.
	\end{enumerate}
	
\end{defn}

This terminology is due to Hackney, Robertson and Yau \cite[Section~1.3]{HRY19a}.

\begin{lem}\label{lem: mono}
	An embedding $f\colon \H {\to} \G$ %that satisfies the conditions of \cref{lem: weak mono} 
	is either pointwise injective or there exists a pair of ports $e_1, e_2 \in E_0(\H)$ such that 
	\begin{itemize}
		\item $\tauG[\H] e_1, \tau_\H e_2 \in s(H)$, and hence $ e_2 \neq \tau_\H e_1$ (where $\tauG[\H]$ is the involution on $E(\H)$),
		\item $\tauG[\G]f(e_2) = f(e_1) \in {\EI } (\G)$ so $\{f(e_1), f(e_2)\} $ forms a $\tau_\G$-orbit of inner edges of $\G$.
	\end{itemize} 

If $e_1, e_2 \in E_0(\H)$ and $\tauG[\G] f(e_2) = f(e_1) \in \EI(\G)$, then $f$ is said to \emph{glue $e_1$ and $e_2$ in $\G$}.
\end{lem}

\begin{proof}
	Let $f \in \Grbig(\G, \H)$ and assume that $e$ and $e'$ are edges of $\H$ such that $f(e) = f(e')$. If $f$ is an embedding, then either $e$ or $e'$ is a port, since otherwise $e = s(h)$ and $e' = s(h')$, so $f(h) = f(h')$. Moreover, since $f(\tau e) = f(\tau e')$, either $\tau e $ or $\tau e'$ is a port by the same argument.
	
	Assume therefore, that $e$ is a port. %Then either $\tau e$ or $\tau e'$ is a port. 
	If $ \tau e$ is a port, then $e$ and $\tau e$ define a stick component of $\H$ and so $f$ violates either condition (i) or condition (ii) of \cref{def embedding}. 
	
	So, if $f$ is an embedding, then either $e, \tau e' \in E_0$ and $\tau e, e' \in \EI$, or $e, \tau e' \in \EI$ and $\tau e, e' \in E_0$. In particular $f(e) = f(e')$ and $f(\tau e) = f(\tau e')$ are inner edges of $\G$.

\end{proof}

%\begin{ex}\cref{ex mono}

\begin{rmk}\label{rmk monos}
	Monomorphisms in $\Grbig$ are pointwise injective morphisms and hence embeddings. If $f\colon \H {\to} \G$ %that satisfies the conditions of \cref{lem: weak mono} 
is an embedding such that $e_1, e_2 \in E_0(\H)$ are ports and $f(e_1) =  f(\tau e_2) \in \EI(\G)$, then \[f \circ ch_{e_1}  = f \circ ch_{\tau e_2} \colon (\shortmid) \to \G\] and hence $f$ is not a monomorphism in $\Grbig$.

\end{rmk}
\begin{ex}\label{ex embedding}
	Let $\W$ be the wheel graph with vertex $v \in V(\W)$. The essential morphism $ \esv\colon \Cv {\to} \W$ is a pointwise surjective embedding. In fact, for all $k \geq 1$, the canonical morphism $\Lk \to \Wm[k]$ (Example 3.31) is an epimorphic embedding that is not a monomorphism. 
	
	For all finite sets $X$ and $Y$, the canonical \'etale morphisms $ \CX[X \amalg \{x_0\}] \amalg \CX[Y \amalg \{ y_0\}] \to \mathcal M^{X,Y}_{x_0,y_0}$ and $ \CX[X \amalg \{x_0, y_0\}] \to \mathcal N^X_{x_0,y_0}$ are epimorphic embeddings but not monomorphisms.%both surjective and satisfies the conditions of \cref{lem: weak mono}. In fact, for all $k \geq 1$, any \'etale morphism $\Lk \to \Wl[k]$ is surjective, but not injective on edges, and satisfies the conditions of \cref{lem: weak mono}. % satisfiesexhibits $\Lk$ as a weak subgraph of $\Wl[k]$ that is surjective and not injective on edges. % and Since $\C_{\two} \cong 
\end{ex}

\subsection{Graph neighbourhoods and the essential category $\esG$.}\label{subs. esG}
%   %\srnote{1803 - question here...}

A family of morphisms $\mathfrak U = \{f_i \in \Gret (\G_i , \G)\}_{i \in I} $ is \emph{jointly surjective} on $\G$ if $\G = \bigcup_{i \in I} im (f_i)$. %Since $\G$ is finite, we may assume that $I$ is finite. 
By \cref{lem: Gret admits pullbacks}, $\Gret$ admits pullbacks, and jointly surjective families of \'etale morphisms $\{f_i \in \Gret (\G_i , \G)\}_{i \in I} $ define the {covers at $\G$} for a canonical \emph{\'etale topology} {$J$} on $\Gret$. Sheaves for this topology are those presheaves $P\colon{\Gret}^\mathrm{op}\to \Set$ such that $P(\G) \cong \mathrm{lim}_{f_i \in \mathfrak U }P(\G_i)$ for all graphs $\G$, and all covers $\mathfrak U = \{f_i \in \Gret (\G_i , \G)\}_{i \in I}$ at $\G$. 
%\begin{equation} \label{eq. cover sheaves} P(\G) \cong \mathrm{lim}_{f_i \in \mathfrak U }P(\G_i).\end{equation}

%   %\srnote{0503 removed pronouns} 
As will be shown in \cref{prop: GS sheaves}, the category $\sh{\Gret, J}$ of sheaves for the \'etale site $(\Gret,J)$ is canonically equivalent to the category $\GS$ of graphical species (\cref{defn: graphical species}). 

As motivation for this result, let us first establish more properties of \'etale morphisms. %As usual, the analogy between \'etale morphisms of graphs and open maps of topological spaces provides some intuition.
%\warn{just call coverings etale coverings, and decide on notation ...}

\begin{defn}
	A \emph{neighbourhood} of an embedding $w\colon \G' {\to} \G$ is an \'etale embedding $u\colon \mathcal U {\to} \G$ such that $w = u \circ \tilde w\colon \G' {\to} \mathcal U{\to} \G$, for some embedding $\tilde w\colon \G' {\to} \mathcal U$.% is an embedding.

	A neighbourhood $ (\mathcal U, u)$ of $ w\colon \G' {\to} \G$ is \emph{minimal} if every other neighbourhood $ (\mathcal U', u')$ of $ w\colon \G' {\to} \G$ is also a neighbourhood of $(\mathcal U, u)$. 
	
\end{defn}

Since vertices $v$ of $\G$ correspond to subgraphs $v\colon \C_\nul \to \G$, and edges $e$ of $\G$ are in bijection with subgraphs $ch_e\colon (\shortmid) \hookrightarrow \G$, we may also refer to neighbourhoods of vertices and edges. Moreover, since $u\colon \mathcal U {\to} \G$ is a neighbourhood of $e \in E$ if and only if it is a neighbourhood of $\ese\colon (\shorte)\to \G$, there is no loss of generality in referring to neighbourhoods of $\tau$-orbits $\tilde e \in \widetilde E$.

%   %\srnote{0503 - changed stuff around}
%Recall from \cref{ex: first gluing example} that, for all finite sets $X$, the graph $\mathcal N_{x_0, y_0}^X$ is the colimit in $\Gret$ of the parallel morphisms $\ch_{x_0}, ch_{y_0} \circ \tau \colon (\shortmid) \to \CX[X \amalg \{x_0, y_0\}]$. More generally, for any graph $ \G$ with distinct ports $e_1, e_2 \in E_0$, there is a graph $\G_{e_1, e_2}$ that
%Let $\G$ be any graph with $\mathcal S (\EI) = \coprod_{\tilde e \in \widetilde{\EI}} (\shorte)$ the shrub on the inner edges of $ \G$. The poset of subsets of $\widetilde{\EI}$ is isomorphic to the poset of subgraphs of $\mathcal S(\EI)$:
%\[ \widetilde{\EI} \supset I\ \longleftrightarrow \ \I \defeq \coprod_{ \tilde e \in I } (\shorte) \subset \G.\]
%by $I \mapsto \I$ where 
%\[
%\I \defeq \coprod_{ \tilde e \in I } (\shorte) \subset \G.\]

Let $\mathcal S (\EI) = \coprod_{\tilde e \in \widetilde{\EI}} (\shorte)$ be the shrub on the inner edges of a graph $ \G$. Given any subgraph $\I \hookrightarrow \mathcal S({\EI})$, there is a graph $\G_{\widehat{\I}}$ and a canonical surjective embedding $i_{\widehat{\I}}\colon\G_{\widehat{\I}}{\to} \G$ (see \cref{fig: graph covers}): % by {`breaking the edges in $\I$'}\colon
\begin{equation}
\label{eq: covering inclusion}
\xymatrix{
	\G_{\widehat{\I}}\ar[d]_{i_{\widehat{\I}}}&& {E \amalg (E(\I))^\dagger }\ar@{<->}[rr]^-{\tau_{\widehat\I}}\ar@{->>}[d]&& {E \amalg (E(\I))^\dagger }\ar@{->>}[d]&E \ar@{_{(}->}[l]&& H \ar[ll]_-s\ar@{=}[d] \ar[rr]^t&& V\ar@{=}[d]\\
	\G&& E \ar@{<->}[rr]_-{\tau}&&E&&& H \ar[lll]^{s} \ar[rr]_{t}&& V,}
\end{equation}
where 
\begin{itemize}
	\item $(E(\I))^\dagger$ is the formal involution $e \mapsto e^\dagger$ of the set $E(\I)$ of edges of $\I$ ,
	\item the involution $\tau_{\widehat\I}$ on $E \amalg (E(\I))^\dagger$ is defined by
	\[ e \mapsto \left \{ \begin{array}{ll}
	\tau e, & e \in E \setminus E(\I)\\
	e^\dagger, & e \in E(\I),
	\end{array} \right . \]
	\item the surjection $E \amalg (E(\I))^\dagger \twoheadrightarrow E$ is the identity on $E$ and $e^\dagger \mapsto \tau e$, $e \in E(\I)$.% on $(E(\I))^\dagger$.
\end{itemize} 
 So, $\G_{\widehat{\I}}$ has inner edges $\EI (\G_{\widehat{\I}}) = \EI \setminus E(\I)$, and boundary $E_0(\G_{\widehat{\I}}) = E_0 \amalg (E(\I))^\dagger$.

Informally, $	\G_{\widehat{\I}}$ is the graph obtained from $\G$ by `breaking the edges' of $\I$, as in \cref{fig: graph covers}.

\begin{figure}
	\begin{tikzpicture}[scale = .3]

\draw[->] (-15,0)--(-11,0);

\node at (-25,0){\begin{tikzpicture}[scale = .25]
	
	\draw [thick]
		(-3,2).. controls (-2.5,1.5) and (-2.5,0.5).. (-3,0)
	(-3,0).. controls (-3.5,-1).. (-5,-1)
	(-5,3)..controls (-2,4) and (1,4) .. (3,3)
	
		(1,-4)..controls (2, -4) and (6,-2).. (5,2)
	
	;
	\filldraw[white] (-3,0) circle (25 pt)
	(-1,3.56) circle (35 pt)
		(4, -2) circle (35 pt);
	
	\draw[ thick] 	
	%cluster 1
	(-6,1).. controls (-6,2).. (-5,3)
	(-5,3).. controls (-4.5,3) and (-3.5,3).. (-3,2)
%	(-3,2).. controls (-2.5,1.5) and (-2.5,0.5).. (-3,0)
%	(-3,0).. controls (-3.5,-1).. (-5,-1)
	(-5,-1).. controls (-6,-0) .. (-6,1)
	(-5,-1)-- (-3,2)
	(-3,2)--(-6,1)
	(-6,1)..controls (-7.5,1) and (-9, 0.5) ..(-11,0.5)
	%cluster 2
	(-1,-5)..controls (-0.5,-3.5) and (0.5, -3.5)..(1,-4)
	(-1,-5)..controls (-0.5,-5) and (0.5, -5)..(1,-4)
	(-1,-5) .. controls (-2.5,-5) and (-5, -4.5) ..(-6, -9)
	%cluster 3
	(4,1)--(5,2)
	(5,2)--(3,3)
	(3,3)--(4,1)
	(3,3).. controls (4,3.5) and (5,2.5)..(5,2)
	(3,3).. controls (3,4) and(4,6) ..(8,6)
	%between clusters
%	(-5,3)..controls (-2,4) and (1,4) .. (3,3)
	(-3,2).. controls (1,-1.5) and (5,-1).. (5,2)
%	(1,-4)..controls (2, -4) and (6,-2).. (5,2)
	;
	
	\filldraw [black]	
	%cluster 1
	(-6,1) circle (8pt)
	(-5,3) circle (8pt)
	(-3,2) circle (8pt)
%	(-3,0) circle (8pt)
	(-5,-1) circle (8pt)
	%cluster 2			
	(1,-4) circle (8pt)
	(-1,-5) circle (8pt)
	%cluster 3			
	(4,1) circle (8pt)
	(5,2) circle (8pt)
	(3,3) circle (8pt);
	
%	\filldraw[white] (-3,0) circle (15pt);

	%\draw [ultra thick] (,0) to [out=87,in=150] (1,1) -- (.85,.15) -- (0,0);
	%\draw [ultra thick, fill=blue] 	(-7.5,0) ..controls (-6,4) and (-2,6)..(0,6)
	(0,6) .. controls (2,6) and (7,4) ..(7,0)
	(7,0) .. controls (7,-2) and (2,-8) ..(0,-8)--(0,0)--(-7.5,0);
	%\path [fill=blue] (4,0) to [out=87,in=150] (5,1) -- (4.85,.15) -- (4,0);
	\end{tikzpicture}};

\node at (-2,0){\begin{tikzpicture}[scale = .25]
	
	\draw [thick, red]
	(-3,2).. controls (-2.5,1.5) and (-2.5,0.5).. (-3,0)
	(-3,0).. controls (-3.5,-1).. (-5,-1)
	(-5,3)..controls (-2,4) and (1,4) .. (3,3)
	
	(1,-4)..controls (2, -4) and (6,-2).. (5,2)
	
	;

	\draw[ thick] 	
	%cluster 1
	(-6,1).. controls (-6,2).. (-5,3)
	(-5,3).. controls (-4.5,3) and (-3.5,3).. (-3,2)
	%	(-3,2).. controls (-2.5,1.5) and (-2.5,0.5).. (-3,0)
	%	(-3,0).. controls (-3.5,-1).. (-5,-1)
	(-5,-1).. controls (-6,-0) .. (-6,1)
	(-5,-1)-- (-3,2)
	(-3,2)--(-6,1)
	(-6,1)..controls (-7.5,1) and (-9, 0.5) ..(-11,0.5)
	%cluster 2
	(-1,-5)..controls (-0.5,-3.5) and (0.5, -3.5)..(1,-4)
	(-1,-5)..controls (-0.5,-5) and (0.5, -5)..(1,-4)
	(-1,-5) .. controls (-2.5,-5) and (-5, -4.5) ..(-6, -9)
	%cluster 3
	(4,1)--(5,2)
	(5,2)--(3,3)
	(3,3)--(4,1)
	(3,3).. controls (4,3.5) and (5,2.5)..(5,2)
	(3,3).. controls (3,4) and(4,6) ..(8,6)
	%between clusters
	%	(-5,3)..controls (-2,4) and (1,4) .. (3,3)
	(-3,2).. controls (1,-1.5) and (5,-1).. (5,2)
	%	(1,-4)..controls (2, -4) and (6,-2).. (5,2)
	;
	
	\filldraw [black]	
	%cluster 1
	(-6,1) circle (8pt)
	(-5,3) circle (8pt)
	(-3,2) circle (8pt)
	%	(-3,0) circle (8pt)
	(-5,-1) circle (8pt)
	%cluster 2			
	(1,-4) circle (8pt)
	(-1,-5) circle (8pt)
	%cluster 3			
	(4,1) circle (8pt)
	(5,2) circle (8pt)
	(3,3) circle (8pt);
	
	%	\filldraw[white] (-3,0) circle (15pt);

	%\draw [ultra thick] (,0) to [out=87,in=150] (1,1) -- (.85,.15) -- (0,0);
	%\draw [ultra thick, fill=blue] 	(-7.5,0) ..controls (-6,4) and (-2,6)..(0,6)
	(0,6) .. controls (2,6) and (7,4) ..(7,0)
	(7,0) .. controls (7,-2) and (2,-8) ..(0,-8)--(0,0)--(-7.5,0);
	%\path [fill=blue] (4,0) to [out=87,in=150] (5,1) -- (4.85,.15) -- (4,0);
	\end{tikzpicture}};

\end{tikzpicture}
\caption{ The morphism $i_{\widehat{\I}}\colon\G_{\widehat{\I}}{\to} \G$ with bivalent subgraph $\I$ indicated in red.}
\label{fig: graph covers}
\end{figure}

For $\tilde e \in \widetilde E(\I)$, the essential morphism $\ese\colon (\shorte) \hookrightarrow \G$ (\cref{ex: stick examples}) factors in two ways through $\G_{\widehat{\I}}$:
\begin{equation}\label{eq: parallel maps}
\xymatrix{ (\shorte) \ar@<4pt>[rrr]^{(e, \tau e) \mapsto (e, e^\dagger)} \ar@<-4pt>[rrr]_{(e, \tau e) \mapsto ((\tau e)^\dagger, \tau e)} &&&\G_{\widehat{\I}} \ar%@{>->}
[rr]^{i_{\widehat{\I}}}&& \G. } \end{equation}
Hence there exist parallel morphisms $\I \rightrightarrows \G_{\widehat{\I}}$, and a coequaliser diagram in $\Grbig$: %such that the diagram
\begin{equation}\label{eq: general coequaliser}
\xymatrix{ \I \ar@<4pt>[rr]\ar@<-4pt>[rr]&&\G_{\widehat{\I}} \ar[rr]^{i_{\widehat{\I}}}&& \G . } \end{equation}
%commutes, and, moreover, describes a coequaliser diagram in $\Grbig$. 
(The choice of morphisms $\I \rightrightarrows \G_{\widehat{\I}}$ in (\ref{eq: general coequaliser}) is not unique -- there are $2^{|\widetilde E(\I)|}$ pairs -- but it is unique up to isomorphism.)

For each $\I \subset \mathcal S(\EI)$, the set of components of $\I \ \amalg \ \G_{\widehat{\I}} $, together with the canonical embeddings to $\G$, define an \'etale cover at $\G$.% defined by the set of components of $\I \ \amalg \ \G_{\widehat{\I}} $, . % defines an \'etale cover at $\G$. % $\mathfrak U_\I = \{(\mathcal U_j,u_j)\}_{j \in J}$ at $\G$, where each $u_j$ is the composition of the inclusion of the component $\mathcal U_j$ in $\I\amalg \G_{\widehat{\I}} $ with the canonical morphism to $\G$.
%   %\srnote{0503 - much shorter... does next paragraph still work?}

%By definition, 
The collection $\{(\G_{\widehat{\I}}, i_{\widehat{\I}})\}_{\I \subset \mathcal S(\EI)} \subset \Gret \ov \G$ inherits a poset structure from the poset of subgraphs of $\mathcal S(\EI)$, and % defines a poset structure on the collection $\{(\G_{\widehat{\I}}, i_{\widehat{\I}})\}_{\I \hookrightarrow \mathcal S(\EI)} \subset \Gret \ov \G$, for which the graph
%%the relation $i_{\widehat{\I}}\leq i\widehat\I'}$ if and only if $i_{\widehat{\I}}$ factors through $i\widehat\I'}$ equips 
% the collection $\{(\G_{\widehat{\I}}, i_{\widehat{\I}})\}_{\I \hookrightarrow \mathcal S(\EI)}$ of surjective weak subgraphs of $\G$ has a canonical poset structure, opposite to the poset of $\mathcal S(\EI)$ subgraphs. The graph
 the graph $\G_{\widehat{\mathcal{S}(\EI)}}$ with no inner edges is initial in this poset. Moreover, any surjective embedding $\G' {\to} \G$ factors as $\G' \xrightarrow {\cong} \G_{\widehat{\I}}\xrightarrow {i_{\widehat{\I}}}\G$ for some unique $\I \subset \mathcal S(\EI)$. Hence, % obtained by breaking all inner edges of $\G$, is initial. %of surjective 
we have proved the following:
\begin{lem}\label{lem: minimal nbd}\label{cor: minimal nbd of v e}
A neighbourhood $(\mathcal U,u)$ of an embedding $w \in \Grbig(\G', \G)$ is minimal if and only if $\EI(\mathcal U) = \EI (\G')$ and the embedding $\G' \to \mathcal U$ induces a surjection on connected components. %each component of $\mathcal U$ intersects non-trivally with $\G'$, and .

%For all edges $e \in E$, and all vertices $v \in V$, 
The essential morphisms $ \ese\colon(\shorte){\to} \G$ and $ \esv\colon \Cv {\to} \G$ describe minimal neighbourhoods of each edge $e$ and vertex $v$ of $\G$. %respectively. % $\esv\colon \Cv {\to} \G$ is the minimal neighbourhood of $v$ and $\ese\colon (\shorte) {\to} \G$ is the minimal neighbourhood of $e$. %an edge $ e \in E$.

\end{lem}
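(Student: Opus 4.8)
The plan is to fix a monomorphism $w \in \Grbig(\G', \G)$, exhibit an explicit candidate $\mathcal U_0$ for the minimal neighbourhood, show that $\mathcal U_0$ embeds over $\G$ into \emph{every} neighbourhood of $w$, and then read off the stated characterisation. Concretely, $\mathcal U_0$ is built by replacing each vertex $v'$ of $\G'$ by the full star $\Cv[w(v')] \cong \C_{v'}$ of its image in $\G$ (the images are distinct, since $w$ is injective on vertices by \cref{lem: weak mono}(iii)), gluing these stars exactly along the inner edges $\widetilde{\EI}(\G')$ of $\G'$, and adjoining the stick components of $\G'$; all remaining edges are left as ports (``broken''), so that $\EI(\mathcal U_0) = \EI(\G')$. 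This is an instance of the breaking-edges construction of \eqref{eq: covering inclusion} applied componentwise, so $u_0 \colon \mathcal U_0 \to \G$ is an \'etale monomorphism and the evident inclusion $\tilde w_0 \colon \G' \hookrightarrow \mathcal U_0$ is a monomorphism over $\G$ that is a bijection on inner edges and a surjection on components.

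First I would prove $\mathcal U_0$ is minimal. Let $(\mathcal U', u', \tilde w')$ be any neighbourhood of $w$. Since $u'$ is \'etale, \cref{ex: esv ese etale} shows it restricts to an isomorphism $\C_{x} \xrightarrow{\cong} \C_{u'(x)}$ on the star of each vertex $x$ of $\mathcal U'$; applying this to the (distinct) vertices $\tilde w'(v')$ shows $\mathcal U'$ contains the full star $\Cv[w(v')]$ over $\G$ for each $v' \in V(\G')$. Moreover monomorphisms are injective on half-edges, hence (via injectivity of $s$) on inner edges, so $\tilde w'$ embeds the gluings $\widetilde{\EI}(\G')$ into $\mathcal U'$. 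Assembling the stars along these gluings, together with the stick components, yields a monomorphism $\mathcal U_0 \to \mathcal U'$ over $\G$. Thus every neighbourhood of $w$ is a neighbourhood of $(\mathcal U_0, u_0)$, i.e.\ $(\mathcal U_0, u_0)$ is minimal.

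Next I would establish the equivalence. As minimal objects embed mutually, any two minimal neighbourhoods are related by mutually inverse monomorphisms of finite graphs over $\G$, hence are isomorphic; so it suffices to show that a neighbourhood $(\mathcal U, u, \tilde w)$ satisfies $\EI(\mathcal U) = \EI(\G')$ and is surjective on components if and only if $\mathcal U \cong \mathcal U_0$. The forward direction is immediate from the construction of $\mathcal U_0$ and this uniqueness. For the converse, suppose both conditions hold. As $u$ is \'etale, $\mathcal U$ is the union of the full stars of its vertices glued along its inner edges; the condition $\EI(\mathcal U) = \EI(\G')$ forces every inner edge of $\mathcal U$ to lie in the image of $\G'$, with both endpoints among $\tilde w(V(\G'))$. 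A vertex of $\mathcal U$ outside this image could then not be joined to it by a path (paths traverse inner edges, by \cref{cor:path connected}), so it would lie in a component missing $\tilde w(\G')$, contradicting surjectivity on components; the same argument excludes extra stick components. Hence $\mathcal U \cong \mathcal U_0$.

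Finally, the second assertion follows by checking the criterion. For $e \in E$, the map $\ese \colon (\shorte) \to \G$ is \'etale (its domain is a shrub, \cref{ex: stick examples}) and a monomorphism, and $ch_e$ factors through it via $(\shortmid) \cong (\shorte)$; both sides have empty $\EI$ and the factoring map is an isomorphism, so the criterion applies. For $v \in V$, the map $\esv \colon \Cv \to \G$ is \'etale (\cref{ex: esv ese etale}) and a monomorphism by \cref{prop: mono}, and $v \colon \C_\nul \to \G$ factors through it via the inclusion of the central vertex; again neither side has inner edges and the inclusion is surjective on components, so $\esv$ is minimal. The main obstacle I anticipate is the bookkeeping around monomorphisms that glue a pair of ports onto an inner edge of $\G$ (the non-pointwise-injective case of \cref{lem: weak mono}): one must verify throughout that ``breaking'' such edges in $\mathcal U_0$ is compatible with the \'etale factorisation and does not disturb the embedding into an arbitrary neighbourhood.
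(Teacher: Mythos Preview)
Your approach is the paper's own, made explicit: the paper also derives the lemma from the breaking-edges construction \eqref{eq: covering inclusion} and the resulting poset of \'etale monomorphisms into $\G$, though its argument is only the terse paragraph preceding the statement. Your version is more detailed and, modulo the issue below, correct.

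There is one genuine corner case your construction of $\mathcal U_0$ misses. You adjoin \emph{every} stick component of $\G'$ as a separate stick in $\mathcal U_0$, but if such a stick maps under $w$ to an edge lying in $\vE[w(v')]$ for some vertex $v'$ of $\G'$, then the resulting $u_0\colon \mathcal U_0 \to \G$ has overlapping stick and non-stick images and so fails condition~(i) of \cref{lem: weak mono}; it is therefore not a monomorphism, hence not a neighbourhood at all. (Concretely: take $\G' = (\shortmid) \amalg \C_{\one}$, map the vertex to a bivalent vertex of $\G$, and map the stick to the \emph{other} edge incident on it.) The fix is straightforward---only adjoin those stick components of $\G'$ whose image in $\G$ is not already covered by $\bigcup_{v'} \esv[w(v')]$---and the rest of your argument goes through unchanged, since a stick absorbed into a star still lands in that component, so the surjectivity-on-components criterion is unaffected. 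One smaller slip: the parenthetical $\Cv[w(v')] \cong \C_{v'}$ holds only when $w$ is \'etale at $v'$, which is not assumed; fortunately you use the full star $\Cv[w(v')]$ throughout, so this is harmless.
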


When $\G$ has no stick components, one readily checks that $\G_{\widehat{\mathcal{S}(\EI)}} = \coprod_{v \in V} \Cv.$ In particular, for any graph $\G$, there is a canonical choice of \textit{essential cover} $\mathfrak {Es}_\G $ at $\G$ by the essential morphisms $\ese$ and $\esv$. %($\tilde e \in \widetilde E$) and $\esv$ ($v \in V$). %, and $\ese$ ($\tilde e \in \widetilde E$).%\'etale subgraphs $\ese, \tilde e \in \widetilde E$, $\esv, v \in V$ of $\G$. The essential category of $\G$ is the full subcategory of the slice category $\Gret \ov \G$ on the objects of $\mathfrak {Es}_\G $.

%Note that a graph $\G$ describes a small category with object set $ \widetilde E \amalg V$ and non-identity morphisms of the form $h = (e,v) \colon \tilde e \to v$. This is isomorphic to the full subcategory of $\Gret \ov \G$ generated by $\mathfrak {Es}_\G $.
\begin{defn}
	\label{def: essential}
	Let $\G$ be a graph. The \emph{essential category $\esG$ of $\G$} is the full subcategory of $\Gret \ov \G$ on the essential embeddings $ \ese\colon(\shorte){\to} \G, \ \tilde e \in \widetilde E,$ and $ \esv\colon \Cv {\to} \G, \ v \in V$.	
\end{defn}

By definition, $\esG$ has no non-trivial isomorphisms. Hence, %non-identity morphisms in $\esG$ are of the form $\delta \colon \ese \to v$ described by morphisms
there is a canonical bijection $h = (e,v) \leftrightarrow \left(\esh \colon \ese \to \esv\right)$ between half-edges $h$ of $\G$, and non-identity morphisms $\delta$ in $\esG$. % and for if $h = (e, v) \in H$, then $ s(h) = e$ is the unique element in the intersection $ E(\shorte) \cap E(\Cv) \cap E$. So, $\esh$ is the unique morphism in $ \Gret (\shorte, \Cv)$ above $\G$ that fixes $e$. For each $h = (e, v) \in H$, the corresponding morphism in $ \esG(\shorte, \Cv)$ is denoted by $\esh$.
%   %\srnote{0503 - slight reordering to deal with notation}
%Since the objects of $\esG$ are indexed by $\widetilde E \amalg V$, there are no other non-identity morphisms in $\esG$.

\begin{lem}\label{lem: essential cover}
	Each graph $\G$ is canonically the colimit of the forgetful functor $\esG \to \Gret$. 
	
	A presheaf $P \in \pr{\Gret}$ is a sheaf for the \'etale site $(\Gret, J)$ if and only if for all $\G$,
	\begin{equation}\label{eq: sheaf condition essential}
	P (\G) \cong \mathrm{lim}_{ (\C,b) \in \esG}P (\C).
	\end{equation}
\end{lem}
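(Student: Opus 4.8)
The plan is to establish the colimit description first, and then to deduce the sheaf condition from it together with a direct analysis of the essential cover $\mathfrak{Es}_\G$. Since $\Gret$ is not cocomplete (\cref{not cocomplete}), for the first statement I would not construct an abstract colimit but instead verify that $\G$, equipped with the cocone of essential morphisms, enjoys the required universal property. Observe that the diagram $\esG \to \Gret$ contains only the sticks $(\shorte)$, the corollas $\Cv$, and the morphisms $\esh \colon (\shorte) \to \Cv$ indexed by the half-edges $h = (e,v)$; there are no morphisms between two corollas or two sticks, since $\esG$ has no non-identity isomorphisms. Given any competing cocone $\{\phi_\C \colon \C \to \G''\}$ in $\Gret$, the morphisms $\phi_{(\shorte)}$ and $\phi_{\Cv}$ select edge-orbits and vertices of $\G''$, and compatibility with the $\esh$ forces these selections to respect incidence. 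They therefore assemble into a graph morphism $\psi \colon \G \to \G''$, which is étale because each $\phi_{\Cv}$ is an isomorphism onto a corolla of $\G''$ by \cref{ex: esv ese etale}, and which is unique by \cref{all fE} (with $\phi_{\C_\nul}$ and the $\phi_{(\shorte)}$ on stick components supplying the vertex and stick-component data not visible to $\psi_E$).

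Next I would show that the sheaf condition \cref{eq. cover sheaves} for the single cover $\mathfrak{Es}_\G$ is exactly \cref{eq: sheaf condition essential}. By \cref{lem: Gret admits pullbacks} the iterated pullbacks of the essential morphisms exist, and a short computation — using that $s$ is injective, so that each edge is incident on at most one vertex — shows that every such pullback is a disjoint union of sticks $(\shorte)$ and corollas $\Cv$, and that all the induced projections are, on components, essential morphisms or identities. Hence a compatible family for the sheaf condition on $\mathfrak{Es}_\G$ is precisely a compatible family over the category $\esG$, so the sheaf limit coincides with $\mathrm{lim}_{(\C,b) \in \esG} P(\C)$. In particular \cref{eq: sheaf condition essential} is equivalent to the sheaf condition for $\mathfrak{Es}_\G$, which settles the implication that every sheaf satisfies \cref{eq: sheaf condition essential}.

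For the converse I must promote the condition for $\mathfrak{Es}_\G$ to the condition for an arbitrary cover $\mathfrak{U} = \{f_i \colon \G_i \to \G\}$. The key point is that the essential cover refines every cover: by joint surjectivity each vertex $v$ and each edge $e$ of $\G$ lies in the image of some $f_i$, and since $f_i$ is étale it restricts to an isomorphism onto the corolla (resp. stick) of the chosen lift by \cref{ex: esv ese etale}, so $\esv$ and $\ese$ factor through $f_i$. Invoking the essential condition on each $\G_i$ and on each pullback $\G_i \times_\G \G_j$ — legitimate, since it is assumed for all graphs — I would rewrite $\mathrm{lim}_{f_i \in \mathfrak{U}} P(\G_i)$ as a limit of the essential data of the $\G_i$; the factorisations above identify this data with that of $\G$ and exhibit it as covering $\G$, so the limit collapses to $\mathrm{lim}_{\esG} P \cong P(\G)$. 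Equivalently, the essential covers form a pullback-stable generating coverage for $J$, so by a standard refinement (comparison) argument a presheaf is a $J$-sheaf if and only if it satisfies the condition on essential covers.

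I expect the main obstacle to be precisely this last upgrade: making rigorous that testing the sheaf condition on essential covers alone suffices, i.e. verifying that the essential covers are stable under pullback (up to refinement by sticks and corollas) and generate $J$. A secondary point demanding care is the pullback computation above for the degenerate configurations — loop vertices, where $\esv$ fails to be injective on edges and $\Cv \times_\G \Cv$ acquires extra stick components, together with parallel inner edges, stick components, and isolated vertices — none of which should disturb the conclusion that all intersections reduce to disjoint unions of sticks $(\shorte)$ and corollas $\Cv$.
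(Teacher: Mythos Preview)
Your proposal is correct and covers the same ground as the paper, but the paper's own proof is much terser and leverages machinery already set up in the preceding paragraphs.

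For the colimit statement, the paper does not verify the universal property directly as you do. Instead it invokes the coequaliser presentation built just above: taking $\I = \mathcal S(\EI)$ in the construction (\ref{eq: covering inclusion})--(\ref{eq: general coequaliser}) gives $\G_{\widehat{\mathcal S(\EI)}} = \coprod_{v} \Cv$ (when $\G$ has no stick components), and the parallel pair (\ref{eq: parallel maps}) is exactly what the morphisms $\esh$ encode. One sentence handles the ports (where $\shorte \to \Cv$ already has colimit $\Cv$), and the rest follows. Your direct verification of the universal property is equally valid and more self-contained, but the paper's route explains why the $\G_{\widehat{\I}}$ construction was introduced at all.

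For the sheaf statement, both arguments hinge on the same fact: the essential cover $\mathfrak{Es}_\G$ refines every \'etale cover. The paper simply cites \cref{lem: minimal nbd} for this and declares the result immediate, relying on the standard fact that a basis of covers determines the sheaves. You unpack this by computing the pullbacks $\Cv \times_\G \Cv[v']$ and $(\shorte)\times_\G \Cv$ explicitly and showing they decompose into sticks and corollas, which identifies the \v Cech equaliser for $\mathfrak{Es}_\G$ with the limit over $\esG$. This is a genuine addition of detail---the paper leaves it implicit---and your worry about loop vertices is well-placed but, as you say, harmless. Your ``main obstacle'' (promoting from essential covers to all covers) is exactly what the paper dismisses in one clause; the refinement argument you sketch is the standard one and works.
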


\begin{proof}
If $e \in E_0$ is a port of $\G$, then there is at most one non-trivial morphism $\esh = \esh[(\tau e,v)]$ with domain $\shorte$ in $\esG$. In this case, %$\esh $ has the form $ \esh[(\tau e,v)]$ for some $v \in V$, and 
$\Cv$ is the colimit of the diagram $\shorte \xrightarrow{\esh} \Cv$. The first statement then follows from (\ref{eq: parallel maps}) and (\ref{eq: general coequaliser}). The second statement is immediate since, by \cref{lem: minimal nbd}, the essential cover $\mathfrak {Es}_\G $ refines every \'etale cover $\mathfrak U $ of $\G$.
\end{proof}

%he essential cover $\mathfrak {Es}_\G $ refines every \'etale cover $\mathfrak U $ of $\G$,, the essential category $\esG$ refines $\mathfrak U $. Therefore, 
%a presheaf $P \in \pr{\Gret}$ is a sheaf for the \'etale topology on $\Gret$ if and only if for all $\G$,
%\begin{equation}\label{eq: sheaf condition essential}
%P (\G) = \mathrm{lim}_{ (\C,b) \in \esG}P (\C).
%\end{equation}

%In \cref{prop: GS sheaves}, \cref{lem: essential cover} will be used to prove that the category $\sh{\Gret,J}$ of \'etale sheaves on $\Gret$ is equivalent to the category $\GS$ of graphical species. 
%
%Before doing so, it is worth collecting together a number or results on (\'etale) graph morphisms that will be useful in the rest of this work. 

%To this end, the following subsection is devoted to results about \'etale surjections and a number of useful corollaries of these. 

\subsection{Boundary-preserving \'etale morphisms}%{\'Etale covering maps}
\label{subs. covering}%   %\srnote{Some changes made to this section as a result of changes to the section on paths and cycles}
	
	In general, morphisms $ f \in \Gret(\G', \G)$ do not satisfy $f(E'_0) \subset E_0$. Those that do are componentwise surjective \textit{graphical covering morphisms} in the sense of \cref{graph cover} below. In particular, embeddings $f \in \Gret(\G', \G)$ such that $f(E'_0) = E_0$ are componentwise isomorphisms.

	\begin{prop}\label{graph cover}
		For any \'etale morphism $ f \in \Gret(\G', \G) $, $f (E'_0) \subset E_0$ if and only if there exists an \'etale cover $\mathfrak U = \{ \mathcal U_i, u_i\}_{i \in I}$ of $\G$, such that, for all $i$, $f^{-1}(\mathcal U_i)$ is isomorphic to a disjoint union of $ k(\mathcal U_i, f) \in \N$ copies of $ \mathcal U_i$. 
		
		In this case, $ k(\mathcal U_i,f) = k_f \in \N$ is constant on connected components of $\G$. 
	\end{prop}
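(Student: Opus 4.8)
The plan is to read the condition $f(E_0') \subset E_0$ as the statement that $f$ is a \emph{covering morphism} in the topological sense, and to prove the two implications separately, with the constancy of $k_f$ extracted from connectedness. First I would record the reformulation that, since graph morphisms preserve inner edges (\cref{defn: inner edges}) so that $f(\EI') \subset \EI$ always holds, the hypothesis $f(E_0')\subset E_0$ is equivalent to the two equalities $f^{-1}(E_0) = E_0'$ and $f^{-1}(\EI) = \EI'$; informally, $f$ neither creates nor destroys ports. This reformulation is the real working form of the hypothesis.

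For the forward implication, assume $f$ is boundary-preserving and take for $\mathfrak U$ the part of the essential cover of \cref{def: essential} given by the essential morphisms $\esv \colon \Cv \to \G$ ($v \in V$) together with the sticks $\ese \colon (\shorte) \to \G$ for the stick components $\tilde e$ of $\G$; this family is jointly surjective and \'etale. By \cref{lem: Gret admits pullbacks} each preimage is \'etale over the corresponding $\mathcal U_i$. Over a corolla I would show $f^{-1}(\Cv) \cong \coprod_{w \in f^{-1}(v)} \Cv[w] \cong \coprod^{|f^{-1}(v)|}\Cv$: each $w \in f^{-1}(v)$ contributes a copy of $\Cv[w]$ because $f$ induces an isomorphism $\Cv[w] \xrightarrow{\cong}\Cv$ (\cref{ex: esv ese etale}), and \textbf{no additional stick components appear} --- this is exactly where boundary-preservation enters. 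Indeed, an edge of $\G'$ lying over an edge $e$ incident on $v$ has $e \in im(s)$, hence $e \notin E_0$; were such an edge a port of $\G'$ it would violate $f(E_0')\subset E_0$, so every preimage edge over the legs of $\Cv$ is incident on some $w \in f^{-1}(v)$ and is absorbed into the corolla $\Cv[w]$. Over a stick component $(\shorte)$ the same condition forces the preimage edges to be ports of $\G'$, so $f^{-1}(\shorte)$ is vertex-free and \'etale over a stick, i.e.\ a disjoint union of sticks.

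To see that $v \mapsto |f^{-1}(v)|$ is constant on a connected component, I would show it is constant across a single inner edge: if $e$ is incident on $v$ and $\tau e$ on $v'$, then for each $w \in f^{-1}(v)$ the unique edge $e'_w$ over $e$ incident on $w$ (from the corolla isomorphism) has $f(\tau' e'_w) = \tau e \notin E_0$, so by boundary-preservation $\tau' e'_w$ is incident on a unique $w' \in f^{-1}(v')$; the assignment $w \mapsto w'$ and its evident inverse give $|f^{-1}(v)| = |f^{-1}(v')|$. Since a connected graph is path connected (\cref{cor:path connected}) and, by \cref{lem: injective path}, any two vertices are joined by a locally injective path --- a chain of vertices successively sharing genuine inner edges --- the function $v \mapsto |f^{-1}(v)|$ is constant on each component; stick components, being themselves components carrying a single cover element, present no compatibility to check.

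For the converse, suppose a trivialising cover $\{\mathcal U_i, u_i\}$ exists and, toward a contradiction, that some port $e' \in E_0'$ has $f(e') = e \in im(s)$, incident on a vertex $v$. Choose $i$ with $v = u_i(\bar v)$; since $u_i$ is \'etale it restricts to an isomorphism $\Cv[\bar v]\xrightarrow{\cong}\Cv$ (\cref{ex: esv ese etale}), producing an edge $\bar e$ of $\mathcal U_i$ over $e$ that is incident on $\bar v$, hence $\bar e \in im(s_i)$ is \emph{not} a port of $\mathcal U_i$. In the pullback $f^{-1}(\mathcal U_i)$ the edge $(e', \bar e)$ lies over $\bar e$, and because $e'$ is a port of $\G'$ it is a port of $f^{-1}(\mathcal U_i)$. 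But the hypothesised isomorphism $f^{-1}(\mathcal U_i)\cong \coprod^{k}\mathcal U_i$ commutes with the projections to $\mathcal U_i$ and so carries ports to ports lying over ports of $\mathcal U_i$; thus $\bar e$ would be a port of $\mathcal U_i$, a contradiction. Hence $f(E_0')\subset E_0$. The main obstacle I anticipate is the bookkeeping in the forward direction --- verifying that the pullback over a corolla contains no spurious sticks, including when $\G$ has a loop at $v$ (where $\esv$ fails to be injective on edges and the pullback duplicates the offending edge), which is precisely the step that the boundary-preserving hypothesis is designed to control.
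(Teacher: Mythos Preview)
Your proof is correct and follows essentially the same approach as the paper: use the essential cover for the forward direction by showing $f^{-1}(\Cv)$ is a disjoint union of corollas with no spurious stick components (exactly the paper's argument), and exhibit a port-over-non-port obstruction for the converse. The only presentational differences are that (i) for constancy you construct an explicit edge-lifting bijection $f^{-1}(v)\cong f^{-1}(v')$ across each inner edge and invoke path connectedness, whereas the paper packages the same computation as a functor $\esG\to\N$ and uses connectedness of $\esG$; and (ii) for the converse you work directly with an arbitrary cover element containing $v$ rather than first reducing to the essential cover as the paper does---both routes use (implicitly in the paper's reduction step, explicitly in yours) that the trivialisation $f^{-1}(\mathcal U_i)\cong\coprod^k\mathcal U_i$ lies over $\mathcal U_i$.
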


	\begin{proof}
	If $u\colon \mathcal U {\to} \G$ is an \'etale embedding for which there is a $k \in \N$ such that $f^{-1}(\mathcal U) \cong k (\mathcal U)$, then also $f^{-1}(\mathcal V) \cong k( \mathcal V)$ for all embeddings $\mathcal V {\to} \mathcal U$. So, we may assume, without loss of generality, that $\mathfrak U = \mathfrak{Es}_\G$ is the essential cover of $\G$. 
		
	Observe first that, if $f(E'_0) \not \subset E_0$, there exists a vertex $v $ of $\G$ and a port $e' $ of $\G'$ such that $f(e') \in \vE$. Hence $(\shorte[\tilde e']) \not \cong \Cv$ is a connected component of $ f^{-1}(\Cv)$.%\hookrightarrow \G'$.

%		let $(\mathcal U, u)$ be a neighbourhood of $\G$, and consider its preimage $f^{-1}(\mathcal U)$ in $\Gret$.
%	
%	So, 
		For the converse, let $v \in V$ be a vertex of $\G$. %$(\mathcal U, u) = (\Cv, \esv)$ for some $v \in V$. 
		Since $f$ is \'etale, $\Cv \cong \Cv[v']$ for all $v' \in V'$ such that $f(v') = v$. By the universal property of pullbacks, the canonical embedding $\coprod_{ v'\colon f(v') = v}\Cv {\to} \G'$ factors through $f^{-1}(\Cv)\to \G'$, and therefore
		\[ f^{-1}(\Cv) \cong \left (\coprod_{v'\colon f(v') = v} \Cv[v'] \right) \amalg \mathcal S, \ \text{ for some shrub } \mathcal S. \] %\cong \coprod_{i \in I} (\shortmid) 
%	\text{ is a shrub. } \]	
By construction, a connected component of $\mathcal S$ must be of the form $\ese[\tilde e']\colon (\shorte[\tilde e']) \to \G'$ for some port $e'$ of $\G'$ satisfying $f(e') = e \in \vE$. But $f(E'_0) \subset E_0$ by assumption, so there is no such port. Hence $\mathcal S$ is the empty graph, and so $f^{-1}(\Cv) \cong \coprod_{v'\colon f(v') = v} \Cv$, whereby $k(\Cv,f) = |f^{-1}(v)| \in \N$.
	
	 It is immediate that $f^{-1}(\shorte) \cong \coprod_{e' \in E', f(e') = e } (\shorte[\tilde e'])$ for all $e \in E$,. So $k(\shorte,f) = |f^{-1}(e)| \in \N$, and the first statement of the proposition is proved.
%	We prove that $ \mathcal S $ is empty. By construction, a component of $\mathcal S$ corresponds to a neighbourhood of $\G'$ of the form $(\shorte[\tilde e']) $ for some port $e' \in E_0'$ such that $f(e') = e \in \vE$. Otherwise, if $e' \in E'$ is not a port, then $(\shorte) \hookrightarrow \Cv[t'(e')]\in f^{-1}(\Cv)$. But, $f(E'_0) \subset E_0$ by assumption so there is no such $e'$ and $\mathcal S = \oslash$ is the empty graph.

By condition (3) of \cref{conn 1}, it is sufficient to verify the second part of the proposition componentwise on $\G$. Therefore, let $f \in \Gret(\G', \G)$ satisfy $f(E_0') \subset E_0$ , and assume, without loss of generality, that $\G$ is connected. 

%Let $f \in \Gret (\G' , \G)$ satisfy $f(E'_0) \subset E_0$. 

If $\G \cong (\shortmid)$ is a stick, there is nothing to prove. Otherwise, for any half-edge $h = (e,v)$ of $\G$, if $e' \in f^{-1}(e)$, then $e' = s'(h')$ for some half-edge $h' = (e',v') \in f^{-1}(h)$ of $\G'$. Hence, the following diagram commutes:
	\[
\xymatrix{
	\coprod_{e'\in f^{-1}(e)}(\shorte) \ar[rr] ^- {\coprod_{e' } \esh[(e',v')]} \ar[d] && \coprod_{v' \in f^{-1}(v)} \Cv[v'] \ar[rr] ^- {\coprod_{v'} \esv[v']} \ar[d]&& \G' \ar[d]^f
	\\
	(\shorte) \ar[rr]_-{\esh[h]} && \Cv[v] \ar[rr]_-{\esv[v]}&& \G.}\]

The first part of the proof implies that both squares are pullbacks. So, if $ f^{-1} (\Cv)$ is isomorphic to $k_v = k(\Cv, \esv)$ copies of $\Cv$, then $f^{-1}(\shorte) \cong k_v (\shorte) $ for all $e \in \vE$. Hence $\Cv \mapsto k_v$ extends to a functor $k_\G$ from $ \esG$ to the discrete category $\N$. Since $\G$ is connected, so is $\esG$, and therefore $k_\G$ is constant. 
	\end{proof}

\begin{defn}\label{pp}
	A morphism $ f \in \Gret (\G', \G)$ is called \emph{boundary-preserving} if it restricts to an isomorphism $f_{E_0}\colon E_0 \xrightarrow \cong E'_0$. 
\end{defn}
The following is an immediate corollary of \cref{graph cover}.
\begin{cor}
	\label{cor: coverings}\label{pp iso}
If $\G$ is connected, and $\G'$ is non-empty, then an \'etale morphism $f \in \Gret(\G', \G)$ such that $f(E'_0)\subset E_0$ is surjective. If $ \G'$ is also connected and its boundary $E'_0 $ is non-empty, then $f $ is boundary-preserving if and only if it is an isomorphism.

\end{cor}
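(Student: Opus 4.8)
The plan is to extract everything from the covering description of \cref{graph cover}. Since $\G$ is connected and $f(E'_0)\subset E_0$, that proposition supplies an \'etale cover of $\G$ — which we may take to be the essential cover $\mathfrak{Es}_\G$ (\cref{def: essential}) — together with a single integer $k_f\in\N$ such that for every essential piece $\C$ (a stick $(\shorte)$ with $\tilde e\in\widetilde E$, or a corolla $\Cv$ with $v\in V$) the preimage $f^{-1}(\C)$ is a disjoint union of $k_f$ copies of $\C$; the number is constant because $\G$ is connected. The entire corollary then reduces to pinning down $k_f$.

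For the first assertion I would argue that $k_f\geq 1$. Pulling the jointly surjective family $\mathfrak{Es}_\G$ back along $f$ yields a jointly surjective family on $\G'$, so if $k_f=0$ then every $f^{-1}(\C)$ is empty and hence $\G'=\oslash$, contradicting non-emptiness. Thus $k_f\geq 1$, and for each essential $u\colon\C\to\G$ a choice of one of the $k_f$ isomorphic copies in $f^{-1}(\C)$ provides a section $g\colon\C\to\G'$ with $f\circ g=u$. Hence $\mathrm{im}(u)\subseteq\mathrm{im}(f)$ for all essential $u$, and since these jointly cover $\G$, the morphism $f$ is pointwise surjective.

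For the second assertion, the implication ``isomorphism $\Rightarrow$ boundary-preserving'' is immediate, since an isomorphism and its inverse both preserve inner edges (\cref{defn: inner edges}) and so restrict to a bijection of boundaries. For the converse, suppose $f$ is boundary-preserving. Because morphisms of graphs preserve inner edges, the preimage of any port of $\G$ is a port of $\G'$; so for a port-orbit $\tilde e$ of $\G$ the $k_f$ orbits of $f^{-1}(\shorte)$ are all \emph{port}-orbits of $\G'$. On the other hand, boundary-preservation means $f$ restricts to a ($\tau$-equivariant) isomorphism $E'_0\xrightarrow{\cong}E_0$, so each port-orbit of $\G$ has exactly one port-orbit of $\G'$ lying over it. Comparing the two counts forces $k_f=1$. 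It is exactly here that the hypothesis $E'_0\neq\emptyset$ is used, since it guarantees that such a port-orbit $\tilde e$ exists. Finally, $k_f=1$ gives $f^{-1}(\C)\cong\C$ for every essential $\C$, whence $f_V$ is a bijection, $f_E$ is a bijection on $\tau$-orbits and therefore on edges, and $f_H$ — determined by $f_E$ via \cref{all fE} — is a bijection as well; a pointwise bijective morphism in $\Grbig$ is an isomorphism.

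The only substantive step is the orbit-count delivering $k_f=1$; everything else is bookkeeping with the covering degree from \cref{graph cover}. The point I would take most care over is precisely where $E'_0\neq\emptyset$ enters: on a graph with empty boundary ``boundary-preserving'' is a vacuous condition, and the statement genuinely fails there (for instance the $2$-fold cover $\Wl[2]\to\W$ is vacuously boundary-preserving but has $k_f=2$ and is not an isomorphism).
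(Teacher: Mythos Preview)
Your proof is correct and follows exactly the approach the paper intends: the paper states this as an ``immediate corollary'' of \cref{graph cover}, and your argument is precisely the unpacking of that claim via the covering degree $k_f$. The only cosmetic point is that your ``port-orbit'' phrasing is slightly loose---it is cleaner to argue directly on individual ports (preimages of ports are ports, and boundary-preservation forces $|f^{-1}(e)|=1$ for any $e\in E_0$, whence $k_f=1$)---but the content is the same.
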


	\begin{rmk}
		The condition that $ E'_0$ is non-empty is necessary in the statement of \cref{pp iso}. For example, for any $ m > 1$, each of the two \'etale morphisms $\Wl \to \W$ (\cref{wheels and lines}) is trivially boundary-preserving, but certainly not an isomorphism. 
		
	\end{rmk}

 Recall from \cref{def Lk} that the line graph $\Lk$ has totally ordered edge set $E(\Lk) = (l_j)_{j = 0}^{2k+1}$ with ports $l_0 = 1_{\Lk}$ and $l_{2k+1} = 2_{\Lk}$. For each vertex $w_i \in V(\Lk)$, $ \vE[w_i]= \{ l_{2i-1}, l_{2i}\}$. %So, $ \tau(2_{\Lk}) = \tau l_{2k+1} = l_{2k}$ (and $\tau 1_{\Lk} = \tau l_0 = l_1$) in $E(\Lk)$.
 
	\begin{prop}\label{bivalent graphs}
		Let $ \G$ be a connected graph with only bivalent vertices. Then $ \G = \Lk$ or $\G = \Wl$ for some $k \geq 0$ or $m \geq 1$.
	\end{prop}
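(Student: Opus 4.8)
The plan is to classify $\G$ by following the essentially unique walk through its bivalent vertices, and to show that this walk realises an isomorphism with $\Lk$ or $\Wl$. First I would dispose of the degenerate case: if $V(\G) = \emptyset$, then $H = \emptyset$, so $E = E_0$ consists only of ports, and connectedness forces $E$ to be a single $\tau$-orbit, giving $\G \cong (\shortmid) = \Lk[0]$. So I may assume $\G$ has at least one (necessarily bivalent) vertex. The key local observation, used throughout, is that bivalence means $|\vH| = 2$ for every $v$, and since $s$ is injective the incident-edge set $\vE = s(\vH)$ consists of exactly two distinct edges. This lets me define a deterministic \emph{walk}: entering a vertex $v$ along an edge $e \in \vE$, there is a unique \emph{other} edge $e' \in \vE \setminus \{e\}$, and the walk continues along $\tau e'$, which is either a port (the walk halts) or again lies in $im(s)$ and so is incident on a unique next vertex.

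Next I would split into two cases according to whether $E_0(\G) = \emptyset$. Suppose first $E_0 \neq \emptyset$ and fix a port $p = l_0$. Its partner $\tau p = l_1$ cannot also be a port (else $\{p, \tau p\}$ is a stick component and, by connectedness, $\G \cong (\shortmid)$, contradicting the presence of a vertex), so $l_1$ is incident on a vertex $v_1$. Iterating the walk produces edges $l_0, l_1, l_2, \dots$ with $\tau(l_{2i}) = l_{2i+1}$ and $\vE[v_i] = \{l_{2i-1}, l_{2i}\}$, exactly matching the combinatorial description of $\Lk$ in \cref{def Lk}. Because each vertex is bivalent, a vertex once traversed has both its half-edges consumed, so no vertex (and hence no edge) can recur; finiteness of $V$ then forces the walk to terminate at a second port $l_{2k+1}$. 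This yields a pointwise injective morphism $\Lk \to \G$.

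In the remaining case $E_0 = \emptyset$, every edge is inner and the walk can never halt; starting from any vertex $v_1$ with $\vE[v_1] = \{a_1, a_2\}$, I would run the same procedure, producing $a_1, a_2, a_3, \dots$ with $\tau(a_{2i}) = a_{2i+1}$ and $\vE[v_i] = \{a_{2i-1}, a_{2i}\}$. By bivalence every visited vertex after $v_1$ has both its edges consumed, so the only way the (finite) walk can close is by returning to $v_1$ along its one remaining edge, i.e.\ $\tau(a_{2m}) = a_1$ for some $m \geq 1$; this is precisely the cyclic gluing data of $\Wl$ in \cref{wheels} (the case $m = 1$, where $v_1$ carries a single loop $\{a_1,a_2\}$, recovering $\W$). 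This yields a pointwise injective morphism $\Wl \to \G$.

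Finally, in both cases I must upgrade the walk map to an \emph{isomorphism} by proving it is surjective, which is where connectedness enters and which I expect to be the main obstacle. The image $P$ of the walk is a connected subgraph that is \emph{saturated}: every edge incident on a vertex of $P$, together with its $\tau$-partner, already lies in $P$ (each traversed vertex has both incident edges in the image, and each port of $P$ has its partner in the image as well). Hence no edge or vertex of $\G \setminus P$ can be joined to $P$ by a path, so \cref{cor:path connected} forces $P = \G$. A morphism in $\Grbig$ that is pointwise bijective on $E$, $H$ and $V$ is an isomorphism, so $\G \cong \Lk$ in the first case and $\G \cong \Wl$ in the second, completing the classification. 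The delicate points to check carefully are the non-recurrence of vertices coming from bivalence, the correct matching of the $\tau$ and incidence data with the indexing conventions of \cref{def Lk} and \cref{wheels}, and the saturation-plus-connectedness argument for surjectivity.
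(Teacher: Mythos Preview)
Your proof is correct and takes a genuinely different, more elementary route than the paper. The paper does not split on whether $E_0 = \emptyset$; instead it picks a monomorphism $f\colon \Lk[M] \to \G$ of \emph{maximal} length (any such map is automatically \'etale since $\G$ is bivalent) and then invokes the structure theory of \'etale monomorphisms developed earlier. By \cref{prop: mono}, either $f$ is pointwise injective or it glues its two ports. In the first case, maximality of $M$ forces $f(1_{\Lk[M]})$ and $f(2_{\Lk[M]})$ to be ports of $\G$ (otherwise $f$ would extend to $\Lk[M+1]$), so $f$ is boundary-preserving and \cref{cor: coverings} gives $\G \cong \Lk[M]$. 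In the second case $f$ factors through the quotient $\Lk[M] \to \Wl[M]$, and the induced map $\Wl[M] \to \G$ is an \'etale monomorphism from a graph with no ports, hence pointwise injective; surjectivity again comes from \cref{cor: coverings}, yielding $\G \cong \Wl[M]$.

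Your approach bypasses the covering machinery entirely: you build the candidate isomorphism by an explicit deterministic walk, use bivalence directly for injectivity (a traversed vertex has both half-edges consumed, so no vertex recurs), and appeal only to path-connectedness (\cref{cor:path connected}) for surjectivity via your saturation argument. This is more hands-on but logically lighter, depending on nothing beyond \cref{cor:path connected}. The paper's argument, by contrast, is a natural payoff for the \'etale theory of \cref{sec: topology}: the line-versus-wheel dichotomy emerges from the monomorphism classification of \cref{prop: mono} rather than from a case split on $E_0$, and the passage from ``boundary-preserving \'etale'' to ``isomorphism'' is delegated to \cref{cor: coverings}.
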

	
	\begin{proof} %   %\srnote{0103 small changes to proof}
Since $\G$ is bivalent, every embedding $\Lk {\to} \G$ from a line graph is \'etale. %, for all $k \in \N$.
			
		The result holds trivially if $\G \cong \Lk[0]$ is a stick graph. Otherwise, if $V = V_2$ is non-empty, then, for each $v \in V$, a choice of isomorphism $\Lk[1] \xrightarrow {\cong } \Cv$ describes an embedding $\Lk[1] {\to} \G$. Since $\G$ is finite, there is a maximum $ M \geq 1$ such that there exists an embedding $f\colon \Lk[M] {\to} \G$.

				Let $f \in \Gr(\Lk[M], \G)$ be such a map. 	By \cref{lem: mono}, $f$ is either injective on edges, %in which case $f(E_0(\Lk[M])) \subset E_0$, 
				or $f(2_{\Lk[M]}) = \tau f(1_{\Lk[M]}) \subset \EI$. % so $E_0 = \emptyset$ if not.
%				
%			We show first that, if $f$ is injective on edges, then it satisfies $f(E_0(\Lk[M])\subset E_0$: %Since $f$ is injective on edges, it maps the ports $l_0, l_{2M}$
%			
		Let $e_1 = f(1_{\Lk[M]}), $ and 
			$e_2 =	f(2_{\Lk[M]})$. If $e_j $ is not a port of $\G$ for some $j = 1,2$, then $e_j \in \vE$ for some vertex $v \in V_2$. 
			
			If $f$ is injective on edges, then $v$ is not in the image of $f$. But then, since $v$ is bivalent, this means that $f$ factors through an embedding $\Lk[M] {\to} \Lk[M+1]$, contradicting maximality of $ M$. %So $e_j \in E_0$ for $j = 1, 2$. 
			Therefore, $\{e_1, e_2 \} \subset E_0$ so $f$ is surjective and boundary-preserving, whence $\Lk[M]  {\cong} \G $ by \cref{cor: coverings}.
	
			Otherwise, if $f$ is not injective on edges, then, by \cref{lem: mono}, it must be the case that $f(l_0) = f(l_{2M})$. Therefore, $f$ factors through a cycle $ f \colon \Lk[M] \to \Wl[M] \xrightarrow {\tilde f} \G$. Since $f$ is an \'etale embedding, so is $\tilde f \colon \Wl[M] \to \G$. Hence, $f$ is boundary-preserving and $\G \cong \Wl[M]$ by \cref{cor: coverings}. 
	\end{proof}

%\begin{ex}
%	\label{covering wheels}(See also Examples \ref{wheels and lines} and \ref{ex: wheel preimage}.)
%%	For $m \geq 1$, let $\Wl$ be the wheel graph with $m \geq 1$ vertices. 
%	By \cref{bivalent graphs}, if there is a weak \'etale subgraph $w\colon \G {\to} \W$, then $ \G \cong \Wl$ or $\G \cong \Lk$ for some $0 \leq k \leq m$.
%	
%\end{ex}

\'Etale morphisms of simply connected graphs are %easy to describe since they are 
either subgraph inclusions or isomorphisms. (This is why the combinatorics of cyclic operads are much simpler than those of modular operads.) %The extra complexity of the combinatorics of modular operads, compared with cyclic operads is explained by the following corollaries that imply that \'etale morphisms of simply connected graphs are either subgraph inclusions or isomorphisms. 
% from whence it follows that the combinatorics of props \cite{} and properads \cite{} are also simpler than those of modular operads. (See also \cite{Koc16}.)

\begin{cor}
	\label{cor:simp conn etale}	\label{cor:simp conn iso}%   %\srnote{0103 some changes here}
	Let $\G$ be simply connected. If $f \in \Grbig(\G', \G)$ is locally injective, then $f$ is pointwise injective on connected components of $\G'$. Hence, if $\G'$ is connected, it is simply connected.
	
It follows that any \'etale morphism of simply connected graphs is a pointwise injection.

%	If $\G$ is simply connected, and $ \Gret (\G', \G)$ is non-empty, then $\G'$ is simply connected. In particular, any embedding of simply connected graphs is a pointwise injection.
\end{cor}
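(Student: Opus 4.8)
The plan is to isolate one combinatorial fact about line and wheel graphs and feed everything into it. First observe that local injectivity is preserved under composition: if $g\in\Grbig(\G'',\G')$ and $f\in\Grbig(\G',\G)$ are locally injective then so is $f\circ g$, since at each vertex $w$ the induced map on incident edges is a composite of two injections (\cref{def: locally inj sur}). This already settles the second assertion: if $\G'$ is connected and carried a locally injective cycle $c\in\Grbig(\Wl,\G')$, then $f\circ c$ would be a locally injective cycle in $\G$, contradicting simple connectivity of $\G$; hence $\G'$ has no locally injective cycle and is simply connected. For the first assertion it suffices to treat one connected component of $\G'$ at a time, so I may assume $\G'$ is connected; if this component is an isolated vertex $\C_\nul$ or a stick $(\shortmid)$ the claim is immediate (a stick has $e\neq\tau e$, and $\C_\nul$ has no edges), so I may further assume $\G'$ has both an edge and a vertex and invoke \cref{all fE}, reducing pointwise injectivity of $f$ to injectivity of $f_E$ and $f_V$ (injectivity of $f_H$ follows from that of $f_E$).

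The crux is the following statement, which I would prove first: \emph{if $\G$ is simply connected, then every locally injective path $q\in\Grbig(\Lk,\G)$ is pointwise injective and has its two ports $q(1_{\Lk}),q(2_{\Lk})$ in distinct $\tau$-orbits of $\G$.} I would argue by taking a ``bad'' $q$ (one that is non-injective or has $\tau$-related ports) with $k$ minimal. If $q(2_{\Lk})=\tau q(1_{\Lk})$, then $q\circ ch_{1_{\Lk}}=q\circ ch_{2_{\Lk}}\circ\tau$, so $q$ coequalises the parallel pair of \cref{wheels} defining $\Wl[k]$ and factors as $\Lk\twoheadrightarrow\Wl[k]\xrightarrow{\bar q}\G$; since the quotient $\Lk\to\Wl[k]$ is étale and surjective on vertices, local injectivity of $q$ forces $\bar q$ to be a locally injective cycle, contradicting simple connectivity. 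If instead $q(1_{\Lk})=q(2_{\Lk})$, then $\tau$-equivariance gives $q(l_1)=\tau q(1_{\Lk})=\tau q(2_{\Lk})=q(l_{2k})$ (notation of \cref{def Lk}); for $k=1$ the edges $l_1,l_2$ both lie in $\vE[v_1]$, so this contradicts local injectivity at $v_1$, while for $k\ge 2$ one restricts $q$ along an injective inclusion $\Lk[k-1]\hookrightarrow\Lk$ (as in \cref{wheels and lines}) to a shorter bad path with coincident ports $q(l_1)=q(l_{2k})$, contradicting minimality. Finally, any coincidence $q(x)=q(y)$ with $x\ne y$ can be localised to a proper sub-line-graph unless it occurs between the two ports, so minimality reduces every remaining non-injective case to one of the two just treated.

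Granting the crux, I would finish as follows. Suppose $f$ fails to be pointwise injective on the connected component $\G'$. If $f_E$ is not injective there are distinct edges $e_1\ne e_2$ with $f(e_1)=f(e_2)$; since $\tau$ is fixed-point free, $e_2\ne\tau e_1$, so \cref{lem: injective path} yields a locally injective path $p\in\Grbig(\Lk,\G')$ with $p(1_{\Lk})\in\{e_1,\tau e_1\}$ and $p(2_{\Lk})\in\{e_2,\tau e_2\}$. Then $q\defeq f\circ p$ is locally injective and both of its ports lie in the single $\tau$-orbit $\{f(e_1),\tau f(e_1)\}$, contradicting the crux. Otherwise $f_E$ is injective but $f_V$ is not, say $f(v_1)=f(v_2)$ with $v_1\ne v_2$; here I would connect $v_1$ and $v_2$ by a locally injective path in $\G'$ (using \cref{cor:path connected} together with the path reduction in the proof of \cref{lem: injective path}) whose first and last vertices are $v_1,v_2$, and push it forward to a locally injective path $q$ in $\G$ identifying two distinct vertices of $\Lk$ over $f(v_1)$; this violates the pointwise-injectivity clause of the crux. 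This proves pointwise injectivity on every component. For the last claim, an étale morphism is locally bijective, hence locally injective, so applying the statement just proved to an étale map of simply connected graphs (whose domain is connected) gives the asserted pointwise injection. The main obstacle is the crux lemma: the orientation bookkeeping in the line/wheel coequaliser and the localisation of an arbitrary coincidence to a minimal sub-line-graph are where all the real work lies.
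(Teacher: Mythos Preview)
Your overall strategy is sound and close to the paper's: push locally injective paths in $\G'$ forward along $f$ and extract a locally injective cycle in $\G$ from any failure of injectivity. The crux lemma is a clean way to isolate this. Case~1 of the crux (ports $\tau$-related) is correct, as is the deduction of the main statement from the crux.

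There is, however, a genuine gap in Case~2 (ports equal). You claim that from $q(l_1)=q(l_{2k})$ one ``restricts $q$ along an injective inclusion $\Lk[k-1]\hookrightarrow\Lk$ to a shorter bad path with coincident ports $q(l_1)=q(l_{2k})$''. No such \'etale inclusion exists: the four \'etale maps $\Lk[k-1]\to\Lk$ from \cref{wheels and lines} send the ports of $\Lk[k-1]$ to one of the pairs $(l_0,l_{2k-1})$, $(l_2,l_{2k+1})$ or their reversals, never to $(l_1,l_{2k})$. Indeed $l_1$ and $l_{2k}$ are inner edges of $\Lk$ and cannot be ports of a sub-line. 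Relatedly, your localisation claim (``any coincidence can be localised to a proper sub-line unless it occurs between the two ports'') misses a third un-localisable case: a pure vertex coincidence $q(v_1)=q(v_k)$ with no accompanying edge coincidence, which can occur whenever the common image vertex has valency at least $4$. Your Case~2 in fact forces exactly this vertex coincidence (since $q(l_1)=q(l_{2k})$ lies in $\vE[q(v_1)]\cap\vE[q(v_k)]$ and $\vE[v]\cap\vE[v']=\emptyset$ for $v\ne v'$), so both gaps reduce to the same missing step.

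The paper closes this step not by restriction but by a direct construction: from a vertex coincidence $q(v_i)=q(v_j)$ with $i<j$ chosen so that either $j=i+1$ or $q(v_{i+1})\ne q(v_{j-1})$, it writes down an explicit cycle $c\colon\Wm[j-i]\to\G$ using the images of the edges $l_{2i},\dots,l_{2j-1}$ and checks local injectivity at each vertex of the wheel. Inserting this construction in place of your Case~2 restriction (and noting that minimality reduces every edge coincidence except Case~1 to such a vertex coincidence) completes your argument.
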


\begin{proof} We may assume, without loss of generality, that $\G'$ and $\G$ are connected. Since the result holds trivially when either $\G'$ or $\G$ is an isolated vertex, assume further that both graphs have non-empty edge sets. %t is sufficient to prove the corollary for connected graphs $\G$ and $\G'$, and the result holds trivially if $\G'$, or $\G$ is an isolated vertex. Assume therefore, that $\G$ and $\G'$ are connected graphs with non-trivial edge sets. 
	
%	So, assume that $\G$ has no isolated vertices, and l
Let $f\colon\G'\to \G$ be a local injection. %Then so is $ f \circ g$ for any local injection $g$ with codomain $\G'$. Hence %Then so is $f \circ c'$ for any locally injective cycle $c' $ in $\G'$. Hence, if $\G'$ is not simply connected, neither is $\G$.
For any locally injective path $p \colon \Lk \to \G'$, the path $f \circ p \colon \Lk \to \G$ is locally injective in $\G$. If $f \circ p$ is not pointwise injective, then either %$f \circ p (1_{\Lk}) = \tau f \circ p (2_{\Lk}) \in \EI$, in which case 
$f \circ p$ factors through a locally injective cycle in $\G$ -- and hence $\G$ is not simply connected -- or there are $1 \leq i <j \leq k$ such that $f \circ p (v_i) = f\circ p (v_j) \in V(\G)$. 

So, let $1 \leq i <j \leq k$ be such that $f \circ p (v_i) = f\circ p (v_j) \in V(\G)$. We may assume, moreover, that if $i \leq i' < j' \leq j$ also satisfy $f \circ p(v_{i'})  = f\circ p (v_{j'})$, then $(i', j') = (i,j)$. %that either $j = i+1$ or $f\circ p (v_{i+1})\neq f\circ p (v_{j-1})$: If not, keep replacing $ i$ and $j$ with $i+1$ and $j-1$ until this holds. This is possible since otherwise there exists $i \leq m \leq j$ such that $ f \circ p (v_{m-1}) = f \circ p (v_{m +1})$ in which case $f \circ p$ is not injective at $v_m$. 

Let $L = j-i $. Then there is a cycle $c \colon \Wm[L] \to \G$ described by $c(a_{2L-1}) = f \circ p (l_{2j-1})$, $c(a_{2L}) = f \circ p (l_{2i})$, and %on vertices by $ c(w_s) = f \circ p(v_{i +s}) $ (and therefore $ c(w_L) = f \circ p (v_{i})= f \circ p (v_{j})$), and, on edges $c$ is given by 
$c(a_{2s}) = f \circ p(l_{2(s+i)})$ (and hence $c(a_{2s -1}) = f \circ p(l_{2(s+i)-1})$) for $1 \leq s <L$. In particular, for $1 \leq s <L$, $c$ is injective at the vertex $w_s$ since $ f \circ p$ is locally injective. And %$c$ is injective at $w_L$ since, by assumption % $c$ is injective at $ w_L$ since, by assumption $j = 1+1$ or $ f \circ p (v_{i +1}) \neq f \circ p (v_{j-1})$ and hence 
$c(a_{2L}) = f \circ p (l_{2i}) \neq f \circ p (l_{2j-1}) = c (a_{2L-1})$ since $f \circ p$ is locally injective, so $c$ is injective at $w_L$. Therefore $c$ is locally injective, and $\G$ is not simply connected.%so $c$ is injective at $w_L$, and hence locally injective. 

Hence, if $f \colon \G' \to \G$ is a local injection from a connected graph $\G'$ to a simply connected graph $\G$, then $f$ is pointwise injective, so $\G'$ is also simply connected. %and, if such a map exists, then $\G'$ is also simply connected.
 
The final statement is immediate since \'etale morphisms are locally injective by definition. % and the result is proved. 
\end{proof}%   %\srnote{Anything about locally injective cycle -> injective cycle...? in previous section}

\cref{prop: dag prop} gives analogous results for directed acyclic graphs (\cref{def: DAG}).

\subsection{\'Etale sheaves on $\Gret$}\label{subs. sheaves}
%The category $\GS$ of graphical species (\cref{defn: graphical species}) is equivalent to the category of sheaves for the canonical \'etale topology $J$ on $\Gret$ (and its restriction to $\Gr$) induced by jointly surjective collections \{$u_i\colon \mathcal U_i \to \G\}_{i \in I}$ of \'etale morphisms. % that are jointly surjective on their codomain $\G$. 

Recall that graphical species are presheaves on the category $\fisinv$, and that there is a full inclusion $\Phi \colon \fisinv \hookrightarrow \Gret$. %exist a full subcategory of $\Gret$ under the maps $\S \mapsto (\shortmid)$ and $X \mapsto \CX$. %$\Phi\colon \fisinv \hookrightarrow \Gret$ is a full subcategory under the maps $\S \to (\shortmid)$ and $X \to \CX$. 
%It is easy to check that any connected graph without inner edges is isomorphic to $\S$ or $\CX$ for some finite set $X$, and therefore: %some with two or more vertices has inner edges. Therefore:
%
%\begin{lem}\label{lem: essential image elementary}
%	The essential image of $im^{es}(\Phi)$ of $\fisinv$ in $\Gret$ is the full subcategory of connected graphs with no inner edges.
%\end{lem}
To prove that $\Phi$ induces an equivalence $\GS \simeq \sh{\Gret,J}$ between graphical species and sheaves for the \'etale topology on $\Gr$, first observe:

\begin{lem}
	\label{lem: fisinv dense}\label{equiv}
	The inclusion $\Phi\colon \fisinv \hookrightarrow \Gret$ is dense. %Hence $\G = \mathrm{colim}_{(\C,b)\in \elG}\C$ canonically
%	$\G \longmapsto (X \mapsto \Gret(\CX, \G))$ 
%is fully faithful.
\end{lem}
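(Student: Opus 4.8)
The plan is to verify density through the colimit criterion of \cite[Proposition~5.1]{Lam66}: the inclusion $\Phi$ is dense precisely when every graph $\G$ is canonically the colimit of the forgetful functor $\Phi \ov \G = \fisinv \ov \G \to \Gret$, $(d,f)\mapsto \Phi(d)$. The starting point is \cref{lem: essential cover}, which already exhibits $\G$ as the canonical colimit of the forgetful functor $\esG \to \Gret$. Since every object of the essential category $\esG$ is the image under $\Phi$ of an object of $\fisinv$ — the sticks $(\shorte)\cong\Phi(\S)$ and the corollas $\Cv = \Phi(\vE)$ — there is a comparison functor $u\colon \esG \to \fisinv \ov \G$ sending $\esv$ to $(\vE,\esv)$ and each $\ese$ to $(\S, ch_e)$, and whose composite with the forgetful functor to $\Gret$ is the essential forgetful functor. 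My strategy is to show that $u$ is \emph{final} (cofinal); then the two forgetful functors to $\Gret$ have the same colimit, and \cref{lem: essential cover} yields the claim.

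First I would unwind the slice $\fisinv \ov \G$. Its objects fall into two types: pairs $(\S, ch_e)$ indexed by edges $e\in E$ (every morphism out of a stick is \'etale, so all of these occur), and pairs $(X, f)$ with $f\in \Gret(\CX, \G)$. For the second type I would invoke \cref{ex: esv ese etale}: an \'etale morphism out of $\CX$ restricts to an isomorphism $\CX \xrightarrow{\cong} \C_{f(*)}$ onto the corolla of its image vertex $w = f(*)$, so $f$ factors uniquely as $\esv[w]\circ \Phi(\alpha)$ for a bijection $\alpha\colon X\xrightarrow{\cong}\vE[w]$. Since $\esv[w]$ is a monomorphism (\cref{lem: minimal nbd}) and $\fisinv(X,\S)=\emptyset$ by construction of $\fisinv$, the only object of $\esG$ lying under $(X,f)$ is the vertex object $(\Cv[w],\esv[w])$, reached by the unique arrow $\alpha$; hence the relevant comma category is a single point, so it is nonempty and connected.

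The hard (though still elementary) part is the edge case: for each $(\S, ch_e)$ I must show the comma category over $u$ is nonempty and connected. Nonemptiness is immediate, since $\ese\colon(\shorte)\to\G$ receives $ch_e$ and furnishes the stick object $(\S, ch_e)$ itself. Connectedness is where the morphisms of $\esG$ do the work: the essential category has exactly the arrows $\esh\colon \ese \to \esv$ indexed by half-edges $h=(e',v)$, and those with $e'\in\{e,\tau e\}$ provide, inside the comma category, morphisms from the single stick object $(\shorte,\ese)$ to every vertex object $(\Cv,\esv)$ on which $e$ or $\tau e$ is incident. As these are precisely the vertex objects that can lie under $(\S, ch_e)$, every object of the comma category is linked through the stick object, so the category is connected. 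This establishes finality; combined with \cref{lem: essential cover} and Lambek's criterion, it shows that $\G$ is the canonical colimit of $\fisinv \ov \G\to\Gret$, i.e.\ that $\Phi$ is dense. The one bookkeeping subtlety to treat with care is the choice of orbit representative $e$ used to define $u$ on sticks: a different choice only swaps the two edges of each stick via the involution $\tau$ and does not affect the colimit, so it is harmless.
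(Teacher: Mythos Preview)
Your proof is correct and follows the same overall strategy as the paper: both reduce to \cref{lem: essential cover} by relating $\esG$ to $\fisinv \ov \G$. The difference is that you prove the comparison functor $u\colon \esG \to \fisinv \ov \G$ is \emph{final}, whereas the paper observes the stronger fact that it is an \emph{equivalence}. The paper notes that the essential image $im^{es}(\Phi)$ is the full subcategory of connected graphs without inner edges, and that the inclusion $\esG \hookrightarrow im^{es}(\Phi)\ov \G$ is full (both are full subcategories of $\Gret\ov\G$) and essentially surjective (by exactly the factorisation $f = \esv[w]\circ\Phi(\alpha)$ you use). This yields $\esG \simeq \fisinv\ov\G$ in one line.

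Your finality argument is really a by-hand verification of the weaker consequence of this equivalence, and it works, but it costs you the bookkeeping with orbit representatives and the separate analysis of multiple arrows to vertex objects in the loop case. Recognising the equivalence up front avoids all of that: no representative needs to be chosen (the stick objects $(\shorte)$ already live in $im^{es}(\Phi)\ov\G$), and the comma-category analysis collapses. The upshot is the same, but the paper's route is shorter and sidesteps the one subtlety you flagged at the end.
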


\begin{proof}
	\label{essentially surjective}
	
	It is easy to check that any connected graph without inner edges is isomorphic to $\S$ or $\CX$ for some finite set $X$, and therefore the essential image $im^{es}(\Phi)$ of $\fisinv$ in $\Gret$ is the full subcategory of connected graphs with no inner edges. Moreover, it follows immediately from the definition of $\esG$ (\cref{def: essential}) that the canonical inclusion $\esG \hookrightarrow im^{es}(\Phi)\ov \G$ is full and essentially surjective on objects, and hence an equivalence of categories. 
	
	Therefore $\esG \simeq \fisinv \ov \G$, and the lemma follows from \cref{lem: essential cover}.
\end{proof}

In particular, $\Gret$ is a full subcategory of $\GS$ under the induced nerve functor $\yet \defeq N_{\Gret}\colon \Gret \to \GS$, and I will write $\G$, rather than $\yet \G$, where there is no risk of confusion. The category %$\fisinv \ov \yet \G = 
$\elG[\yet \G] = \fisinv \ov \G $, whose objects are \textit{elements of $\G$}, will be denoted by $\elG$. %By definition, $\elG[\yet \G] By an application of the Yoneda lemma, the category $\ElS[\yet\G]$ of elements of $\yet \G$ is canonically isomorphic to the slice category $ \Phi \ov \G$. I will write $\elG$ for both and refer to objects of $\elG$ as \textit{elements of $\G$}. %Since $\Gret$ is a full subcategory of $\GS$ under $\yet$, I will write $\G$ rather than $\yet \G $

	 %Since $\Gret$ is a full subcategory of $\GS$ under $\yet$, I will henceforth write $\G$ rather than $\yet \G $ where there is no risk of confusion.
%The category of elements $\ElS[S]$ of a graphical species $S$ was defined in \cref{elS}.

%\begin{rmk}
%	\label{el}
%	%By an application of the Yoneda lemma, the category $\ElS[\yet\G]$ of $\el$-elements is canonically isomorphic to the slice category $\elG \defeq \fisinv \ov \G$, and I will write $\elG$ for both. 
%	Since $\Gret$ is a full subcategory of $\GS$ under $\yet$, I will henceforth write $\G$ rather than $\yet \G $ where there is no risk of confusion.
%\end{rmk}

Let $J_{\mathsf C}$ be the restriction to $\Gr$ of the topology $J$ on $\Gret$. 

\begin{prop}\label{prop: GS sheaves}
	There is a canonical equivalence of categories
	$\sh{\Gret,J} \simeq \GS$, and hence also an equivalence  $\sh{\Gr,J_{\mathsf C}} \simeq \GS$. %%   %\srnote{1803 - check isos/equiv throughout...}
	\end{prop}

\begin{proof}
	This is straightforward from the definitions and \cref{equiv}. Namely, the inclusion $\Phi\colon \fisinv \to \Gret$ induces an \emph{essential geometric morphism} between the presheaf categories $\pr{\fisinv} = \GS$ and $\pr{\Gret}$. The right adjoint %$\Phi_*$ 
	to the pullback $\Phi^* \colon \pr{\Gret} \to \GS$ is given by
	\begin{equation}\label{right adjoint}\Phi_*\colon \GS = \pr{\fisinv} \to \pr{\Gret}, \quad S \longmapsto (\G \mapsto \mathrm{lim}_{(\C,b) \in \elG} S(\C)). \end{equation} 
	Since $\Phi$ is fully faithful, so is $\Phi_*$ (e.g.\ by \cite[Section~VII.2]{MM94}).
	
By Lemmas \ref{lem: essential cover} and \ref{equiv}, a presheaf $P$ on $\Gret$ is a sheaf for the canonical \'etale topology $J$ on $\Gret$ if and only if, for all graphs $\G$, 
	\begin{equation}\label{eq. sheaf ef} P(\G) \cong \mathrm{lim}_{(\C,b) \in \elG}P(\C).\end{equation}
	Hence $\sh{\Gret,J} \simeq \GS$. Moreover, for all $J$-sheaves $P$, and all graphs $\G$, $P(\G)$ is computed componentwise on $\G$, whence $\sh{\Gr,J_{\mathsf C}} \simeq \GS$ and the proposition is proved.	%and therefore $\Phi_*$ factors through the inclusion $\sh{\Gret, J} \to \pr{\Gret}$. 
		%uivalence	$\GS \simeq \sh{\Gret,J} $. 
 	\end{proof}%   %\srnote{1803 -check}
	 	
%	 	 satisfies $\Phi_* \Phi^* (P) \cong P$ if and only if $P(\G) \cong \mathrm{lim}_{(\C,b) \in \elG} S(\C)) $ for all graphs $\G$, in which case $P$ is a sheaf, by Lemmas \ref{lem: essential cover} and \ref{equiv}.
%	 	Then,
%	 	\[P(\G) \cong \pr{\Gret}(y \G, P) \cong \pr {\Gret}(y \G, \Phi_* \Phi^* (P)) \cong \GS(\yet \G, P), \] where $y: \Gret \to \pr{\Gret}$ denotes the Yonedan embedding, and so $ Y = \Phi^* y$.
%	 	\end{proof}%%   %\srnote{remove?}
%%by \cref{equiv}, 
I will use the same notation to denote a graphical species $S$ and the corresponding sheaf on $(\Gret, J)$. So, for any graph $\G$, $S(\G) \defeq \mathrm{lim}_{{(\C,b) \in \elG}}S(\C).$

\begin{defn}\label{S-graph} 
	
An \emph{$S$-structured graph} $(\G, \alpha)$ %(or simply $\alpha$)
 is a graph $\G$ together with an element $\alpha \in S(\G)$ (or $\alpha \in \GS (\G, S)$). The category of $S$-structured graphs is denoted by $\ovP{S}{\Gret}$, and $\ovP{S}{\Gr}$ is the subcategory of connected $S$-structured graphs.% = \ElP{S}{\Gret}$.

%is a morphism $ \alpha \in \GS(\G, S)$, and objects of the slice category $\Grets$ are \emph{$S$-structured graphs}. % is defined as $\Grets \defeq \ElP{S}{\Gret}$.
%
%%element category $\Grets \defeq (\Comm_{\Gret}\ov S_{\Gret})^\mathrm{op}.$ %\int_{\Gret}S$. 
%	Given a graph $\G $, $S(\G)$ is called the set of \emph{$S$-structures on $\G$} .
%	
%	%The category $\Grs \defeq (\Comm_{\Gr}\ov S_{\Gr})^\mathrm{op}$ of {connected} $S$-structured graphs is 
%	The full subcategory of $\Grets$ with objects $(\G, \alpha)$ such that $\G$ is connected is denoted by $\Grs$.%\defeq \ElP{S}{\Gr}$.
\end{defn}

\subsection{Directed graphs}\label{subs ex: Directed graphs} 
By way of example, and to provide extra context, this section ends with a discussion of directed graphs.

Let $Di$ be the terminal directed graphical species from Examples \ref{Comm c} and \ref{ex:direction}. %then $\Grets[Di]$ defines a category of directed Feynman graphs and \'etale morphisms. %, that appears in \cite{Koc16}, where Kock proves a nerve theorem for properads in the style of \cite{BMW12}. 
%Recall from \cref{ex:direction} that $\fisinvdi\simeq \ElS[Di]$ by $(X , Y) \mapsto \phi \in Di_{X \amalg Y}$ such that $Di(ch_x)(\phi) = (\In)$ and $Di(ch_y)(\phi) = (\Out)$ for all $ x \in X$, and $y \in Y$. 
For any graph $\G$, a $Di$-structure $ \xi\in Di(\G)$ is precisely a partition $E = E_{\In} \amalg E_{\Out}$, where $e \in E_{\In}$ if and only if $\tau e \in E_{\Out}$. So, $\tau$ induces bijections $E_{\In} \cong \widetilde E \cong E_{\Out}$, and an object $ (\G,\xi)$ of $\Grets[Di]$ -- called an \textit{orientation on $\G$} -- is given by a diagram of finite sets
 \begin{equation}\label{Directed graph}
 \xymatrix{
 	\widetilde E&& H_{\In} \ar[ll]_-{\widetilde {s_{\In}}} \ar[rr]^-{t_\In} && V && H_{\Out} \ar[ll]_-{t_\Out}\ar[rr]^-{\widetilde {s_{\Out}}}&& \widetilde E},\end{equation}
%\[
%\xymatrix{
%	E_{\In} \ar@/^2.0pc/[rrrrrrrr]^-{\tau}&& H_{\In} \ar[ll]_-{s_\In} \ar[rr]^-{t_\In} && V && H_{\Out} \ar[ll]_-{t_\Out}\ar[rr]^-{s_\Out}&& E_{\Out}\ar@/_2.0pc/[llllllll]},\]
	where the maps $\widetilde {s_\In}, \widetilde {s_\Out}$, and $t_\In, t_\Out$ denote the appropriate (quotients of) restrictions of $s\colon H \to E$, respectively $t\colon H \to V$.
%	Since $ E_{\In} \cong E_{\Out} \cong \widetilde E$ canonically, this is just a diagram
%	\begin{equation}\label{Directed graph}
%	\xymatrix{
%		\widetilde E&& H_{\In} \ar[ll]_-{s_\In} \ar[rr]^-{t_\In} && V && H_{\Out} \ar[ll]_-{t_\Out}\ar[rr]^-{s_\Out}&& \widetilde E},\end{equation}
%	where $s$ is the obvious composition $ H \xrightarrow {s} E \twoheadrightarrow \widetilde E$ of $s$ with the quotient $q\colonE \twoheadrightarrow \widetilde E$.
	Then morphisms in $ \Grets[Di]$ are quadruples of finite set maps making the obvious diagrams commute, and such that the outer left and right squares are pullbacks. In particular, $ \Grets[Di]$ is the % $\Grets[Di]$ describes the % This precisely % $\Grets[Di]$ is the %full subcategory of \emph{connected} directed graphs in the 
	\textit{category of directed graphs and \'etale morphisms} used in \cite[Section~1.5]{Koc16} to prove a nerve theorem for properads in the style of \cite{BMW12}.
	
\begin{ex}\label{ex: directed lines and wheels}
	The line graphs $\Lk$ with $E(\Lk) = \{l_i\}_{i = 0}^{2k+1}$ admit a distinguished choice of orientation $\theta_{\Lk} \in Di(\Lk)$ given by
	%Let $\xi_{\Lk}$ be the orientation on the line graph $\Lk$ given by
	\[\theta_{\Lk}\colon E(\Lk) \to \{ \In, \Out\}, \ l_{2i } \mapsto (\In) \text{ and } l_{2i+1} \mapsto (\Out) \text{ for } 0 \leq i \leq k.\] For $m \geq 1$, the canonical morphism $\Lk[m] \to \Wm$ induces an orientation $\theta_{\Wm}$ (with $a_{2j}\mapsto (\In)$) on the wheel graph $\Wm$. 
%	\[\theta_{\Lk}\colon E(\Lk) \to \{ \In, \Out\}, \ a_{2j-1} \mapsto \In \text{ and } e_{2j} \mapsto \Out \text{ for } 1\leq j\leq m,\] 
	
	%Directed paths and cycles are induced by locally injective morphisms of undirected graphs. So, if a directed graph $(\G, \xi)$ admits a directed cycle, then the underlying graph $\G$ admits a non-trivial cycle (\cref{cycle}).
%The line graphs $\Lk$ with $E(\Lk) = \{l_i\}_{i = 0}^{2k+1}$ and wheel graphs $\Wl$ with $E(\Wl) = \{a_j\}_{j = 0}^{2m-1}$ admit a distinguished choice of orientation $\theta_{\Lk} \in Di(\Lk)$, $\theta_{\Wl} \in Di(\Wl)$, given by
%%Let $\xi_{\Lk}$ be the orientation on the line graph $\Lk$ given by
%	\[\theta_{\Lk}\colon E(\Lk) \to \{ \In, \Out\}, \ l_{2i +1} \mapsto \In \text{ and } l_{2i} \mapsto \Out \text{ for } 0 \leq i \leq k,\] 
%		\[\theta_{\Lk}\colon E(\Lk) \to \{ \In, \Out\}, \ a_{2j-1} \mapsto \In \text{ and } e_{2j} \mapsto \Out \text{ for } 1\leq j\leq m,\] 
%	A morphism $\gamma\colon (\Lk, \theta_{\Lk}) \to (\G,\xi)$ in $\Grets[Di]$ is a \emph{directed path of length $k$ in $ (\G, \xi)$}, and a \emph{directed cycle of length $m$ in $(\G, \xi)$} is a morphism $ \rho\colon (\Wl,\theta_{\Wl})\to (\G, \xi)$ in $\Grets[Di]$. 
%	
%	Directed paths and cycles are induced by locally injective morphisms of undirected graphs. So, if a directed graph $(\G, \xi)$ admits a directed cycle, then the underlying graph $\G$ admits a non-trivial cycle (\cref{cycle}).
\end{ex}

\begin{defn}\label{def: DAG}
	%A morphism $\gamma\colon (\Lk, \theta_{\Lk}) \to (\G,\xi)$ in $\Grets[Di]$ is 
	A \emph{directed path of length $k$ in $ (\G, \xi)$} is a path $p \colon \Lk \to \G$ in $\G$ such that, for all $l \in E(\Lk)$, \[Di(ch_l) (\theta_{\Lk}) = Di(ch_{p(l)}) (\xi) \in \{\In, \Out\}.\]
	A \emph{directed cycle of length $m$ in $(\G, \xi)$} is a cycle $c \colon \Wm \to \G$ in $\G$ such that the induced morphism $ \Lk[m] \to \Wm \to \G$ is a directed path. %that respects $\theta_{\Wm}$.
%	
%	morphism $ \rho\colon (\Wl,\theta_{\Wl})\to (\G, \xi)$ in $\Grets[Di]$. 
%	

	A \emph{directed acyclic graph (DAG)} is a directed graph $(\G, \xi)$ without directed cycles. 
	%In other words, $\Grets[Di]\left((\Wl, \theta_{\Wl}), (\G, \xi)\right) = \emptyset$ for all $m \geq 1$.
\end{defn}

It follows immediately from the definitions that %is straightforward to verify that 
any directed path or cycle in a directed graph $(\G, \xi)$ is locally injective. Hence, if $(\G, \xi)$ admits a directed cycle, $\G$ is not simply connected. The converse is not true.

%Since \cref{prop: dag prop} is not necessary for what follows, I leave its proof as an exercise to the interested reader. 
The following directed version of \cref{cor:simp conn etale} is not necessary for the constructions of this paper, so I leave its proof as an exercise for the interested reader: % and \ref{cor:simp conn iso}:

\begin{prop}\label{prop: dag prop} 
	
%	 $c \colon (\Wm, \theta_{\Wm}) \to (\G, \xi)$
%	If $ \rho \in \Grets[Di](\Wl,\theta_{\Wl}), (\G, \xi)$ is a directed cycle, then the underlying cycle $c_{\rho} \in (\Wl , \G)$ is a local embedding and hence non-trivial. 

For all \'etale morphisms $ f \colon (\G', \xi') \to (\G, \xi)$ % \in \Grs[Di]\left((\G', \xi'), (\G, \xi)\right)$ 
between connected DAGs, the underlying morphism $f \colon \G' \to \G$ is an \'etale embedding. 

Moreover, if $(\G, \xi)$ is a DAG and the set of morphisms $(\G', \xi') \to(\G, \xi) $ in $ \Grets[Di]$ is non-empty, then $(\G',\xi')$ is a DAG. Hence, any morphism to a DAG in $\Grets[Di]$ is pointwise injective on connected components.
\end{prop}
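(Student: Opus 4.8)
The plan is to follow the blueprint of \cref{cor:simp conn etale}, systematically replacing ``simply connected'' by ``acyclic'' and ``locally injective cycle'' by ``directed cycle''. Two facts drive the argument, both already available: a directed path or cycle is automatically locally injective, and --- since a morphism $f$ in $\Grets[Di]$ preserves orientation in the sense that $Di(ch_{e'})(\xi') = Di(ch_{f(e')})(\xi)$ for every edge $e'$ of $\G'$ --- the image under $f$ of a directed path (resp.\ cycle) is again a directed path (resp.\ cycle). Since a pointwise injection is a monomorphism, it suffices to prove the final, stronger clause (pointwise injectivity on components); the middle statement is proved en route.

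I would establish the closure statement first, as it is the easier half and is needed afterwards. Suppose $(\G, \xi)$ is a DAG and $f\colon(\G',\xi')\to(\G,\xi)$ is a morphism in $\Grets[Di]$. If $(\G',\xi')$ admitted a directed cycle $c\colon \Wl\to\G'$, then the induced path $\Lk[m]\to\Wl\xrightarrow{c}\G'$ is directed, and composing with $f$ yields a directed path $\Lk[m]\to\G$ by orientation-preservation; hence $f\circ c$ is a directed cycle in $(\G,\xi)$, contradicting the DAG hypothesis. So $(\G',\xi')$ is a DAG, and the final statement is reduced to the case where both graphs are connected DAGs.

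For that case I would argue by contradiction, assuming $f$ is étale but not pointwise injective. Local bijectivity of $f$ around each vertex (the isomorphism $\Cv[v']\xrightarrow{\cong}\Cv[f(v')]$) lets one reduce any failure of injectivity to a pair of distinct edges $e_1\neq e_2$ of $\G'$ with $f(e_1)=f(e_2)$, and $e_2\neq\tau e_1$ since $\tau$ is fixed-point free. I would connect $e_1$ and $e_2$ by a path $p$ in $\G'$ (using \cref{cor:path connected} and \cref{lem: injective path}) and push it forward: as $f$ is étale, $f\circ p$ is locally injective, yet because $f(e_1)=f(e_2)$ it is not pointwise injective, so the cycle-extraction in the proof of \cref{cor:simp conn etale} produces a locally injective cycle $c\colon\Wl[L]\to\G$. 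The point is then that, \emph{provided $f\circ p$ is directed}, the orientations of the edges along $c$ match the canonical pattern $\theta_{\Wl[L]}$ (even-indexed edges In, odd-indexed Out), so that $c$ is a \emph{directed} cycle in $(\G,\xi)$ --- the desired contradiction.

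The hard part is exactly the italicised hypothesis. \cref{lem: injective path} supplies only a locally injective connecting path, and a merely locally injective path yields a locally injective cycle, which contradicts simple connectedness but not acyclicity. The crux is therefore to replace the naive appeal to \cref{lem: injective path} by a \emph{directed}-continuation argument: using the local bijectivity $\Cv[v']\cong\Cv[f(v')]$, directed half-edges lift uniquely, so starting from $e_1$ and from $e_2$ one may try to follow the lifts of a common directed path in $\G$ simultaneously, invoking finiteness of $\G'$ and acyclicity to force termination and a coincidence of the two branches. Controlling this simultaneous lifting --- in particular handling the possibility that a lift reaches the boundary $E_0(\G')$ before the two branches meet --- is where the genuine difficulty lies, and is the step I expect to require the most care; everything else is a direct transcription of \cref{cor:simp conn etale}. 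Once pointwise injectivity of $f$ on each connected component of $\G'$ is obtained, the final clause, and hence the monomorphism assertion, follow immediately.
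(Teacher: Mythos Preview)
The paper explicitly leaves this proposition as an exercise and provides no proof, so there is nothing to compare your argument against on the paper's side.

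Your treatment of the middle clause (pushing a directed cycle forward along an orientation--preserving map) is correct and complete. Your plan for the first and third clauses rightly mirrors \cref{cor:simp conn etale}, and you have honestly located the crux: one needs a \emph{directed} cycle in the target, not merely a locally injective one, and your ``simultaneous lifting'' idea is the natural attempt to manufacture one.

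The difficulty you flag is not a mere technicality, however: the first and third clauses appear to be \emph{false} as stated. Take $(\G,\xi)$ to be the ``directed square'': vertices $s,a,b,t$, inner edge orbits $s\!\to\! a$, $s\!\to\! b$, $a\!\to\! t$, $b\!\to\! t$, no ports. This is a connected DAG with $\pi_1(|\G|)\cong\Z$, so it admits a connected $2$--fold \'etale cover $f\colon(\tilde\G,\tilde\xi)\to(\G,\xi)$. Concretely $\tilde\G$ is the $8$--cycle with vertices $s_1,a_1,t_1,b_1,s_2,a_2,t_2,b_2$ (in cyclic order) and inherited orientations
\[
s_1\to a_1,\quad a_1\to t_1,\quad b_1\to t_1,\quad s_2\to b_1,\quad s_2\to a_2,\quad a_2\to t_2,\quad b_2\to t_2,\quad s_1\to b_2.
\]
Every directed path has length at most $2$, so $(\tilde\G,\tilde\xi)$ is again a connected DAG; the covering map is \'etale and orientation--preserving; but it is neither pointwise injective nor a monomorphism (the nontrivial deck transformation $x_i\leftrightarrow x_{3-i}$ gives two distinct parallel maps $\tilde\G\rightrightarrows\tilde\G$ equalised by $f$). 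Running your lifting argument here, the two lifts $s_1\to a_1\to t_1$ and $s_2\to a_2\to t_2$ of the maximal directed path $s\to a\to t$ simply never meet, and the undirected connecting path you would get from \cref{lem: injective path} pushes forward to the non--directed $4$--cycle $s\to a\to t\leftarrow b\leftarrow s$.

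So your instinct that the italicised hypothesis is the hard part was exactly right; it is the point where the argument must fail. The proposition would hold under an extra hypothesis ruling out such covers (for instance if one restricts to connected DAGs with $E_0\neq\emptyset$ and boundary--preserving maps, where \cref{cor: coverings} forces degree~$1$), which is presumably what is implicitly used in the properad setting the author has in mind.
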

%The proof of \cref{prop: dag prop} follows from (\ref{Directed graph}) and \cref{cor:simp conn etale}. % and \ref{cor:simp conn iso}), 
%and (\ref{Directed graph}). 

A consequence of \cref{prop: dag prop} is that the combinatorics of properads, which are governed by DAGs, are much simpler than those of wheeled properads or modular operads.

%\warn{\begin{rmk}
%	Free MO on DAG..???
%\end{rmk}
%}

	 \section{Non-unital modular operads}\label{a free monad}\label{sec: non-unital}\label{nonunital csm section}
 
 %   %\srnote{0503 pronouns removed}
The goal of the current section is to construct a monad $\TT = (T, \mu^\TT , \eta^\TT)$ on $\GS$ whose EM category of algebras $ \GS^{\TT}$ is isomorphic to the category $\nuCSM$ of non-unital modular operads (\cref{rmk:non-unital}). 

%The endofunctor $T\colon \GS \to \GS$ %for this monad is closely related to Joyal and Kock's modular operad endofunctor \cite[Section~5]{JK11} (see also \cref{degenerate}). Having defined $T$ 
%and its unit $\eta^\TT$ are relatively straightforward to describe. By contrast, the construction of the multiplication $\mu^\TT$ for $T$ is fairly technical. But it is central to understanding the construction of unital modular operads in \cref{s. Unital}. %The section concludes with a proof that $\GS^\TT \simeq \nuCSM$. 
%   %\srnote{2803 statement deleted}

To provide context for this section, consider the following example:
\begin{ex}\label{ex: operad endo} 
	Recall, from \cref{ex: operad}, the category $\Bifiso$, whose objects are finite sets $X$, viewed as rooted corollas $t_X$, and the directed exceptional edge $(\downarrow)$. %, or rooted corollas $t_X$, with $X$ a finite set. %, and corollas may be grafted (or `composed') root-to-leaf to form rooted trees % is a finite set, and the dendroidal category $\Omega$ that governs the combinatorics of operads. \cite{MW07} 

	The operad endofunctor $\Mop$ on $\pr{\Bifiso}$ from \cref{ex: dendroidal} is described in detail in \cite[Section~3]{BM07}. It takes a presheaf $O\colon {{\Bifiso}^{\mathrm{op}}}\to \Set$ to the presheaf $\Mop O$ on $\Bifiso$ with $\Mop O(\downarrow) =O(\downarrow)$, and such that elements of each $\Mop O(t_X)$ are {formal operadic compositions (i.e.~root-to-leaf graftings of decorated corollas as in \cref{fig:grafting}(b)) of elements of $O$}. %Elements of $\Mop P(t_X)$ are 
	In other words, they are represented by rooted trees $\Tr \in \Omega$, whose leaves are bijectively labelled by $X$, together with a decoration of the vertices of $\Tr$ by elements of $O$ (according to valency), that also %vertices $v$ by elements of $P(t_{\partial v})$ -- where $t_{\partial v}$ denotes the %rooted boundary of the 
	%minimal neighbourhood of $v$ in $\Tr$ -- that 
	determines a colouring of edges of $\Tr$ by $O(\downarrow)$. 
	
	%The unit and multiplication for $\MMop$ are illustrated in \cref{fig: operad monad}. 
	The monadic unit $\eta^{\MMop}$ is induced by the inclusion of rooted corollas, or trees with one vertex, in $\Omega$. So, $\eta^{\MMop}(\phi) = (t_X, \phi)$ for all $\phi \in O(t_X)$ (\cref{fig: operad monad}, left side). Applying the monad twice describes a nesting of $O$-decorated trees, and the multiplication %: replacing the neighbourhoods of vertices $v$ with trees $\Tr_v$ according to $\partial v \cong \partial (\Tr_v)$, as in \cref{fig: operad monad}. 
	%This, and the multiplication 
	$\mu^{\MMop}$ for $\Mop$ is induced by erasing the inner nesting (the blue circles in the right hand side of \cref{fig: operad monad}). %and it is clear that this always results in a well-defined tree. % are illustrated below.
%Recall, from \cref{ex: operad}, the category $\ElS[RC]$ of rooted corollas whose objects are either the directed exceptional edge $(\downarrow)$, or rooted $X$-corollas $t_X$ for finite sets $X$. % is a finite set, and the dendroidal category $\Omega$ that governs the combinatorics of operads. \cite{MW07} 
%
% The operad endofunctor $\Mop$ on $\pr{\ElS[RC]}$ from \cref{ex: dendroidal} is described in detail in \cite[Section~3]{BM07}. It takes a presheaf $P\colon {\ElS[RC]^\mathrm{op}}\to \Set$ to the presheaf $\Mop P$ on $\ElS[RC]$ with the same palette $\DDD$, and where $\Mop P(t_X)$ is the set of {formal operadic compositions -- with $X$ inputs -- of elements of $P$}. So elements of $\Mop P(t_X)$ are indexed by rooted trees $\Tr \in \Omega$, whose leaves are bijectively labelled by $X$, whose vertices are coloured by $\DDD$, and whose vertices $v$ are decorated by elements of $P(t_{\partial v})$ (where $\partial v$ here denotes the rooted boundary of the minimal neighbourhood of $v$ in $\Tr$). 
% 
%The monadic unit $\eta^{\MMop}$ is induced by the inclusion of rooted corollas in $\Omega$, $\eta^{\MMop}(\phi) = (t_X, \phi)$ for all $\phi \in P(t_X)$. Applying the monad twice describes a nesting of trees. %: replacing the neighbourhoods of vertices $v$ with trees $\Tr_v$ according to $\partial v \cong \partial (\Tr_v)$, as in \cref{fig: operad monad}. 
%This, and the multiplication $\mu^{\MMop}$ for $\Mop$, are illustrated in \cref{fig: operad monad}. %and it is clear that this always results in a well-defined tree. % are illustrated below.

\begin{figure}[htb!]
	\begin{tikzpicture}
\node at (0,0)
%{
%	\begin{tikzpicture}
%\node at (0,0){$\phi$}; 
%\node at (2.5,0)
{\begin{tikzpicture}[scale = .6]
	
\node at (-4,2){\small{$O(X)\ni \phi$}}; 
	\draw[thick]%(0,0)--(0,2)
	%(0,0)--(-1.5,-2)
	(0,0) --(0,2)
	%(0,0)--(1.5,-2)
	(0,2)--(-1,4)
	(0,2)--(-.5,4)
	(0,2)--(1,4);
	
	\draw[|->,]
	(-2.5,2)--(-1,2);
	\node at (-1.5,2.6){\scriptsize{$\eta^{\MMop}$}};
	
	\draw[dashed]
	(-.3,4)--(.8,4);
	
	\draw[decoration={brace, raise=5pt},decorate]
	(-1,4) -- node[above=6pt] {\small{$X$}} (1,4);
	
	\draw [ draw=red, fill=white]
	(0,2)circle (15pt);
	%(0,-0) circle (15pt);
	
	%\node at(0,0) {\small{$\iota $}};
		\node at(0,2) {\small{$\phi $}};
		\node [anchor = west]at (.7,2){\small{$\in \Mop O(X) $}};
	
	\end{tikzpicture}};
%	\end{tikzpicture}
%

	\node at (6,0){	\begin{tikzpicture}[scale = 0.25]

	{	\draw[blue, fill = blue!20]
		(0, 0) circle (50pt)
		(0,5) circle(50pt)
		(-3.5,4) circle(50pt)
		(4.5, 4) circle(50pt);

	{	\draw[thick, gray!80]
		(0,0)--(0,-3)
		(0,0)--(-3.5,3.5)
		(0,0)--(0,5)
		(0,0)--	(4.5,3.5)
		(-3.5,3.5)--(-7,7)
		(-3.5,3.5)--(-3,4.2)
		(-3,4.2)--(-5,7)
		(-3,4.2)--(-3,7)
		(4.5,3.5)--(1,7)
		(4,4)--(4,6)
		(4,6)--(4,7)
		(4.5,3.5)--(5,4.2)
		(5,4.2)--(5,7)
		(5,4.2)--(6,7)
		(5,4.2)--(7.5,7);}
	
		\clip (0, 0) circle (40pt)
		(0,5) circle(40pt)
		(-3.5,4) circle(40pt)
		(4.5, 4) circle(40pt);
		
		\draw[ thick, blue]
		(0,0)--(0,-3)
		(0,0)--(-3.5,3.5)
		(0,0)--(0,5)
		(0,0)--	(4.5,3.5)
		(-3.5,3.5)--(-7,7)
		(-3.5,3.5)--(-3,4.2)
		(-3,4.2)--(-5,7)
		(-3,4.2)--(-3,7)
		(4.5,3.5)--(1,7)
		(4,4)--(4,6)
		(4,6)--(4,7)
		(4.5,3.5)--(5,4.2)
		(5,4.2)--(5,7)
		(5,4.2)--(6,7)
		(5,4.2)--(7.5,7);

		\filldraw[ red]
		(0,0) circle (6pt)
		(-3.5,3.5) circle (6pt)
		(-3,4.2) circle (6pt)
		(0,5) circle (6pt)
		(0,4.2) circle (6pt)
		(4.5,3.5) circle (6pt)
		(4,4) circle (6pt)
		(4,4.7) circle (6pt)
		(5,4.2) circle (6pt);

		%	\draw[gray, fill = gray!20] (0,-0.5) circle (2.5cm);

	}
		\end{tikzpicture}};
	
	\node at (8.8,.3){\scriptsize{$\mu^{\MMop}$}};
	\draw[->]
	(8.2,0)--(9.5,0);
	
	\node at (11, 0){	\begin{tikzpicture}[scale = 0.25]
		
		\draw[ thick]
	(0,0)--(0,-3)
	(0,0)--(-3.5,3.5)
	(0,0)--(0,5)
	(0,0)--	(4.5,3.5)
	(-3.5,3.5)--(-7,7)
	(-3.5,3.5)--(-3,4.2)
	(-3,4.2)--(-5,7)
	(-3,4.2)--(-3,7)
	(4.5,3.5)--(1,7)
	(4,4)--(4,6)
	(4,6)--(4,7)
	(4.5,3.5)--(5,4.2)
	(5,4.2)--(5,7)
	(5,4.2)--(6,7)
	(5,4.2)--(7.5,7);

	\filldraw[ red]
	(0,0) circle (8pt)
	(-3.5,3.5) circle (8pt)
	(-3,4.2) circle (8pt)
	(0,5) circle (8pt)
	(0,4.2) circle (8pt)
	(4.5,3.5) circle (8pt)
	(4,4) circle (8pt)
	(4,4.7) circle (8pt)
	(5,4.2) circle (8pt);

		\end{tikzpicture}};

%};
\end{tikzpicture}
\caption{Visualising the unit and multiplication for the operad monad on rooted corollas.}
\label{fig: operad monad}
\end{figure}

If $(O,h)$ is an algebra for $\MMop$, then $h$ describes a rule for collapsing the inner edges of each $O$-decorated tree, according to the axioms of operadic composition. % to obtain a unique element of $P$, and the axioms for monad algebras are equivalent to the statement that $h$ defines an operadic multiplication on $P$. 

\end{ex}

%   %\srnote{0503} 
Just as the operad endofunctor $\Mop$ takes a $\Bifiso$-presheaf $O$ to trees decorated by $O$, the non-unital modular operad endofunctor $T$ on $\GS$ 
%follows broadly similar lines to \cref{ex: operad endo} above, and $\TT$ 
takes a graphical species $S$ to the graphical species $TS$ whose elements are formal multiplications and contractions in $S$, represented by $S$-structured connected graphs.

\subsection{$X$-graphs and an endofunctor for non-unital modular operads}
The first step in defining the endofunctor $T\colon \GS \to \GS$ is to bijectively label graph boundaries by finite sets.

\begin{defn}
	\label{X graph}
	
	Let $X $ be a finite set. An \emph{(admissible) $X$-graph} is a pair $\X = (\G, \rho)$, where $\G $ is a connected graph such that $ V \neq \emptyset$ and $\rho\colon E_0\xrightarrow{\cong} X$
	is a bijection, called an \emph{$X$-labelling} for $\G$.
	
An \emph{$X$-isomorphism} $\X \to \X'$ of $X$-graphs $\X = (\G, \rho)$ and $ \X' = (\G', \rho')$ is an isomorphism $ g \in \Gr(\G, \G')$ that preserves the $X$-labelling: $\rho' \circ g_{E_0} = \rho\colon E_0 \to X.$
	
	The groupoid of $X$-graphs and $X$-isomorphisms is denoted by $X{\Griso}$.%   %\srnote{0503 - check..}

\end{defn}

\begin{rmk}
	It is sometimes convenient to use the same notation for labelled and unlabelled graphs. In particular, an $X$-graph $\X = (\G, \rho)$ is denoted simply by $\G$ when the labelling $\rho$ is trivial or canonical. For example, for any finite set $X$, the corolla $\CX$ canonically defines an $X$-graph $\CX = (\CX,id)$.

%	Though the notation $\X$ always deno tes a labelled graph $(\G, \rho)$, where there is no risk of confusion, $\X$ may sometimes be used in place of $\G$, even for constructions defined in terms of unlabelled graphs. For example, if $S$ is a graphical species, $S(\X) \defeq S(\G) \times \{ \rho\}$, and $\elG[\X] $ may be used in place of $\elG $. 
\end{rmk}

\begin{ex}\label{line labelling} %Recall (\cref{def Lk}) t
	For $k \geq 0$, the line graph $\Lk$, with $E_0(\Lk) =\{1_{\Lk}, 2_{\Lk}\}$, $k \geq 0$ is labelled by $1_{\Lk} \mapsto 1 \in \two$ and therefore has the structure of a $\two$-graph when $k \geq 1$. However, $\Lk[0] = (\shortmid)$ has empty vertex set and is therefore not an (admissible) $\two$-graph. %(This will be discussed in detail in Sections \ref{degenerate} and \ref{s. Unital}.)
%	\warn{check notation, chose something consistent with first definition...}
\end{ex}

For all finite sets $X$, there is a canonical functor $X\Griso \to \Gr \hookrightarrow \GS$. %, and 
A graphical species $S$ defines a presheaf on $X\Griso$ with $S(\X) = S(\G)$ for $\X = (\G, \rho)$. 
Objects of the corresponding element category $\ovP{S}{X\Griso}$ are called \textit{$S$-structured $X$-graphs}.

%\begin{defn}%   %\srnote{0503 - leave as is or write as category of elements? RG?}
%	Let $S$ be a graphical species and $X$ a finite set. The groupoid $\ovP{S}{X\Griso}$ of \emph{$S$-structured $X$-graphs} is defined as the groupoid $\ovP{S}{X \Griso}$ of elements of $S$ on $X\Griso$. % is are given by $S$-structures $ \alpha \in S(\X)$, for $X$-graphs $\X$. Morphisms in $ \ovP{S}{X\Griso}(\alpha, \alpha')$ are isomorphisms $g \in X{\Griso}(\X,\X')$ that preserve the structure: \[S(g)(\alpha') = \alpha \in S(\G).\]
%\end{defn}
%   %\srnote{1903 - change of definition}

We can now define the non-unital modular operad endofunctor $T$ on $\GS$, that takes a graphical species $S$ to equivalence classes of $S$-structured graphs. %{decorated by $S$}.

For all graphical species $S$, let $TS$ be defined on objects by
\begin{equation}\label{free}
\begin{array}{llll}
TS_\S &= & S_\S, &\\
TS_X &= & \mathrm{colim}_{\X\in X{\Griso}} S(\X) & \text{ for all finite sets } X.
\end{array}
\end{equation}

Let $Aut_X(\X)\defeq X{\Griso}(\X,\X)$ be the automorphism group of an $X$-graph $\X$. If $g, g' \in X{\Griso}(\X,\X')$ are parallel $X$-isomorphisms, then there are $\sigma \in Aut_X (\X)$ and $ \sigma' \in Aut_X(\X')$ such that $g' = \sigma' g \sigma$.
Therefore, there is a completely canonical (independent of $g \in X{\Griso}(\X,\X')$) choice of natural (in $\X$) isomorphism \begin{equation} %\label{S / Aut}
\label{S Aut}
\frac{S(\X)}{Aut_X(\X)} \xrightarrow{\cong} \frac{S(\X')}{Aut_X(\X')}, \ \ [\alpha] \mapsto [g(\alpha)], \text{ for } \alpha \in S(\X). \end{equation}

%The action of $TS$ on isomorphisms $f \colon X \xrightarrow{\cong}Y$ in $\fisinv$ is the obvious one
%%   %\srnote{added in lemma}
%\begin{lem}
%	\label{lem: T well-defined} 
%	The assignment $S \mapsto TS$, $S \in \GS$, in (\ref{free}) extends canonically to an endofunctor $T$ on $\GS$.
%\end{lem}
%\begin{proof}
%	\label{independent of representative}

%insofar as 
%$ [\alpha] \mapsto [g(\alpha)], \text{ for } \alpha \in S(\X).$
%To describe the action of $TS$ on morphisms in $\fisinv$, observe first that 
It follows from (\ref{S Aut}), that %For a finite set $X$ and graphical species $S$, it follows from a direct but lengthy verification (see \cite[Section~5] {JK11}, and details in \cite{Ray18}) \warn{Shall I write some of this down? Can I do this quickly?} that
\begin{equation}\label{eq: component identity}\begin{array}{lll}TS_X& = & \coprod_{ [ \X] \in \pi_0(X{\Griso})} \frac{ S(\X)}{Aut_X (\X)}\\
{}&{}&{}\\
{}&=& \pi_0(\ovP{S}{X\Griso})\end{array} \end{equation}
where $ [ \X] \in \pi_0(X{\Griso})$ is the connected component of $ \X$ in $X{\Griso}$. %The second expression is well-defined by \cref{S Au}}. 

Hence, elements of $TS_X$ may be viewed as isomorphism classes of $S$-structured $X$-graphs, and two $S$-structured $X$-graphs $(\X, \alpha)$ and $(\X', \alpha') $ represent the same class $[\X, \alpha] \in TS_X$ precisely when there is an isomorphism $g \in X{\Griso}(\X,\X')$ such that $S(g)(\alpha') = \alpha$. 

Since bijections $f \colon X \xrightarrow{\cong}Y$ of finite sets induce isomorphisms $\ovP{S}{X\Griso} \xrightarrow{\cong}\ovP{S}{Y\Griso}$, the action of $TS$ on isomorphisms in $\fisinv$ is the obvious one. %It remains to describe 

The projections $ TS(ch_x)\colon TS_X \to TS_\S = S_\S$ are induced by the projections $\ovP{S}{X \Griso} \to S_\S $ given by $ \ (\X, \alpha) \mapsto S({ch^{\X}_x})(\alpha),$ % for $X$-graph $\X = (\G, \rho) $ such that 
where $ch_x^{\X} \in \Gr(\shortmid, \G)$ is the map $ch_{\rho^{-1}(x)}$ defined by $ 1 \mapsto \rho^{-1}(x) \in E_0 (\G).$ 

This is well-defined since, if $(\X, \alpha)$ and $(\X', \alpha')$ represent the same element
of $ TS_X$, then there is an $X$-isomorphism $g \colon \X \to \X'$ such that $S(g) (\alpha')= \alpha \in S_X$ %. Therefore $ {ch^{\X'}_x}=g \circ {ch^{\X}_x} \in \Gr(\shortmid, \G')$, 
and hence
\[S({ch^{\X'}_x}) (\alpha') = S({ch^{\X}_x}) \circ S(g) (\alpha') = S({ch^{\X}_x})(\alpha).\] 

So $TS$ describes a graphical species. Moreover, it is clear from the definition that the assignment $S \mapsto TS$ extends to an endofunctor $T$ on $\GS$, with unit $ \eta^\TT \colon id_{\GS} \Rightarrow T$ given by the canonical maps $S_X \xrightarrow \cong S(\CX) \to TS_X$ for all $X$. %, induced by the inclusion $\Phi \colon \fisinv \to \Gr$. %: $ \eta^{\TT}$ is given by $S_X \xrightarrow \cong S(\CX) \to TS_X$.

\subsection{Gluing constructions}\label{subs: gluing}

A monadic multiplication $\mu^\TT$ for the pointed endofunctor $(T, \eta^\TT)$ will be defined in terms of colimits of 
%The definition of a monadic multiplication for $T$ is slightly more complicated, since it relies on the existence of colimits (with nice properties described in \cref{graph of graphs edges}) of 
a certain class of diagrams in $\Gr$. However, since $\Gr$ does not admit general colimits (see Examples \ref{deg loop} and \ref{not cocomplete}), a small amount of preparation is necessary.

%   %\srnote{1703 - moved loop}

%\begin{ex}\label{deg loop} 
%	Another example of a diagram in $\Gr$ that does not admit a coequaliser is given by the pair of parallel morphisms $id, \tau\colon (\shortmid) \rightrightarrows (\shortmid)$.
%The coequaliser of these morphisms in the category $\GrShape$ of graph-like diagrams in $\fin$ is the \emph{exceptional loop} $\bigcirc$:
%	\[ 
%	\bigcirc\defeq \Fgraphvar{\mathbf 1}{\mathbf 0}{\mathbf 0}{}{}{}.\]
%	Clearly $\bigcirc $ is not a graph since a singleton set does not admit a non-trivial involution. This example is the subject of \cref{degenerate}.%\footnote{A construction enlarging $\Grbig$ to include $\bigcirc$ is discussed in \cref{degenerate}.}
%	
%\end{ex}

Let $S$ be a graphical species and $Y$ a finite set. Since, elements of $TS_Y$ are represented by $S$-structured $Y$-graphs, it follows that, for all finite sets $X$, elements of $T^2S_X$ are represented by $X$-graphs $\X$ that are decorated by $S$-structured graphs. In other words, each $[\X, \beta] \in T^2S_X$ %, $\beta \in TS(\X)$, $\X \in X\Griso$, 
is represented by a functor 
\[ el (\beta)\colon \elG[\X] \to \ElS[TS] ,\ \left \{ \begin{array}{lll} (\CX[X_b],b)& \mapsto (\CX[X_b], S(b)(\beta)), & \text{ where } S(b) (\beta) \in TS_{X_b}\\ (\shortmid, ch_e) & \mapsto (\shortmid, c) & c \in S_\S \end{array} \right . \] such that 
\[ el(\beta) (ch_{x_b})(S(b)(\beta)) = el(\beta)(ch_e) \in S(\shortmid)\]
 for all morphisms in $\elG[\X]$ of the form \[ \xymatrix{(\shortmid) \ar[rr]^-{ch_{x_b}} \ar[rd]_-{ch_e} && \CX[X_b] \ar[ld]^-b\\ &\X.&}\] 

%   %\srnote{0503 - rewritten since I found it really awkward.}

%for all $h = (e, v) \in H(\X)$, \[el(\beta)(\esh)(el (\beta) (\esv)) = el (\beta)(\ese).\]

Then, as in the operad case (\cref{ex: operad endo}, \cref{fig: operad monad}), we would like to think of the monad multiplication as forgetting the vertices of the original graph $\X$, to obtain an element of $TS_X$ (\cref{fig: graph nesting}). 

Graphs of graphs are functors that encode this idea:%, we'd) This is made precise using the notion of graphs of graphs:

\begin{defn} \label{graph of graphs}
	Let $\G$ be a graph. A \emph{$\G$-shaped graph of graphs} is a functor $ \Gg\colon \elG \to \Gret$ such that
	\[\begin{array}{ll}
\Gg(ch_e) = (\shortmid) & \text{ for all } (\shortmid, ch_e) \in \elG, \\
E_0(\Gg(b)) = X_b & \text{ for all } (\CX[X_b], b) \in \elG,
	\end{array}\]
	and, for all $(\C_{X_b},b) \in \elG $ and all $ x_b \in X_b$,
	\[ \Gg(ch_{x_b}) = ch^{\Gg(b)}_{x_b} \in \Gret(\shortmid, \Gg(b)).\]
	\label{defn degenerate}
	A $\G$-shaped graph of graphs $ \Gg\colon \elG \to \Gret$ is \emph{non-degenerate} if, for all $ v \in V$, $\Gg (\esv)$ has no stick components. Otherwise, $\Gg$ is \emph{degenerate}.

\end{defn}
\begin{figure}[!htb]
	
	\includestandalone[width = .4\textwidth]{standalones/substitutionstandalone}
	\caption{A $\G$-shaped graph of graphs $\Gg$ describes \textit{graph substitution}: a graph $\G_v$ and bijection $ E_0(\G_v) \xrightarrow{\cong} (\vE)^\dagger$ is assigned to each vertex $v$ of $\G$. % is replaced by a graph $\G_v$ according to a bijection $E_0(\G_v) \xrightarrow{\cong} (\vE)^\dagger$. 
		When $\Gg$ is non-degenerate, taking its colimit in $\Gret$ corresponds to erasing the inner (blue) nesting. } \label{fig: graph nesting}
\end{figure}

	Informally, a non-degenerate $\G$-shaped graph of graphs is a rule for substituting graphs into vertices of $\G$ as in \cref{fig: graph nesting}. However, this intuitive description of a graph of graphs in terms of graph insertion does not always apply in the degenerate case (see Sections \ref{degenerate} and \ref{s. Unital}). % which also includes constructions that indicate how the proposition can be generalised to all graphs of graphs.}
%\warn{7.6 Where do I add graphics? I think earlier, but refer to them here...}
%Given a graphical species $S$, the canonical functor $ \Grs \to \Grs[\Comm] \cong \Gr

%As usual, let $\Comm$ denote the terminal graphical species. 

By \cref{lem: essential cover}, every graph $\G$ is the colimit of the (non-degenerate) \textit{identity $\G$-shaped graph of graphs} $\Gid$ given by the forgetful functor $\elG \to \Gret$, $(\C, b) \mapsto \C$. It follows from \cref{subs. esG} %the discussion around (\ref{eq: general coequaliser}) 
that, if $\G$ has no stick components, this is equivalent to the statement that $\G$ is the coequaliser of the canonical diagram
\begin{equation}\label{eq: graph data}
\xymatrix{ \mathcal S(\EI) \ar@<4pt>[rr]\ar@<-4pt>[rr]&&\coprod_{v \in V} \Cv \ar[rr]^-{\coprod (\esv)}&& \G.} \end{equation}
%(where $\mathcal S(\EI)$ is the shrub on the internal edges of $\G$ and, for all vertices $v$, $ \esv \colon \Cv \to \G$ is the essential morphism).

To prove that all non-degenerate graphs of graphs admit a colimit in $\Gret$, we generalise this observation using a modification of \cite[Section~1.5.1]{Koc16}, where \textit{gluing data for directed graphs} were described. A directed graph version of \cref{glue} appears as \cite[Proposition~1.5.2]{Koc16}.

\begin{defn}\label{def: gluing}
Let $\mathcal S = \coprod_{ i \in I} (\shortmid_i) $ be a shrub, and let $\G$ be a (not necessarily connected) graph without stick components. 
 A pair of parallel morphisms $\delta_{1}, \delta_{2}\colon\mathcal S \rightrightarrows \G$ such that
\begin{itemize}
	\item $\delta_{1}, \delta_{2}$ are injective and have disjoint images in $\G$; and
	\item for all $i \in I$, $\delta_{1}(1_i) $ and $\delta_2(2_i) $ are ports of $\G$,
\end{itemize}
 is called a \emph{gluing datum} in $\Gret$.
\end{defn}

\begin{lem}\label{glue}% (corresponds to prop 1.5.2 Kock)
	Gluing data admit coequalisers in $\Gret$.

\end{lem}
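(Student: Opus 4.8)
The plan is to obtain the coequaliser by first computing it in the ambient presheaf category $\GrShape$, where all finite colimits exist and are computed pointwise, and then verifying that the resulting graph-like diagram is genuinely a Feynman graph and that the canonical quotient is \'etale. Abbreviate $a_i \defeq \delta_1(1_i)$ and $c_i \defeq \delta_2(1_i)$; since $\delta_1,\delta_2$ are morphisms out of sticks they commute with the involution, so $\delta_1(2_i)=\tau a_i$ and $\delta_2(2_i)=\tau c_i$, and the datum conditions say exactly that each $a_i$ and each $\tau c_i$ is a port of $\G$. As $\mathcal S=\coprod_{i\in I}(\shortmid_i)$ has no vertices or half-edges, the pointwise coequaliser $\mathcal C$ has $V(\mathcal C)=V(\G)$ and $H(\mathcal C)=H(\G)$ (with the same $t$), while $E(\mathcal C)=E(\G)/{\sim}$ identifies $a_i\sim c_i$ and $\tau a_i\sim\tau c_i$ for all $i$, with $s_{\mathcal C}$ induced from $s$. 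This generalises the coequalisers producing $\mathcal M^{X,Y}_{x_0,y_0}$ and $\mathcal N^{X}_{x_0,y_0}$ in \cref{ex: first gluing example}.

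First I would record that the generating relations come in the $\tau$-matched pairs $a_i\sim c_i$ and $\tau a_i\sim\tau c_i$, so $\tau$ descends to an involution $\tau_{\mathcal C}$ on $E(\mathcal C)$; and that, because $\delta_1,\delta_2$ are injective with disjoint images and $\tau$ is fixed-point free on $\G$, the edges $a_i,\tau a_i,c_i,\tau c_i$ are pairwise distinct, so the only non-singleton $\sim$-classes are exactly these pairs. The two properties needed to make $\mathcal C$ a graph then get checked directly. A fixed point $[e]=[\tau e]$ of $\tau_{\mathcal C}$ would force one of the identifications $\tau a_i=c_i$ or $a_i=\tau c_i$, each contradicting the disjointness of $\mathrm{im}(\delta_1)$ and $\mathrm{im}(\delta_2)$. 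And if $s_{\mathcal C}(h)=s_{\mathcal C}(h')$ with $s(h)\neq s(h')$, then $\{s(h),s(h')\}$ is one of the gluing pairs $\{a_i,c_i\}$ or $\{\tau a_i,\tau c_i\}$, forcing $a_i$ or $\tau c_i$ into $\mathrm{im}(s)$ and contradicting that these are ports; hence $s_{\mathcal C}$ is injective. Thus the three defining conditions of a gluing datum are precisely what keeps $\mathcal C$ a Feynman graph.

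Next I would check that the quotient $q\colon\G\to\mathcal C$ is \'etale. Since $q$ is the identity on vertices and half-edges, for each $v$ the comparison map $\vE\to\sfrac{E(\mathcal C)}{v}$ is the restriction of the edge quotient; it is surjective by construction and injective by the argument just used for $s_{\mathcal C}$, so $q$ is locally bijective, hence \'etale (\cref{def: locally inj sur}). The parallel maps $\delta_1,\delta_2$ are \'etale automatically as morphisms out of a shrub (\cref{ex: stick examples}), so the whole diagram lives in $\Gret$.

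Finally, the step I expect to require the most care is upgrading the coequaliser from $\GrShape$ (equivalently $\Grbig$, since $\mathcal C$ is a graph and $\Grbig$ is full) to $\Gret$, as a colimit in a larger category need not be one in a subcategory. Given any \'etale $f\colon\G\to Z$ with $f\delta_1=f\delta_2$, the universal property in $\Grbig$ yields a unique $u\colon\mathcal C\to Z$ with $uq=f$, and it remains to see $u$ is \'etale. Here the key facts are that $q$ is the identity on vertices and that $q$ and $f$ are \'etale: by \cref{ex: esv ese etale} each of $q$ and $f$ induces isomorphisms on the corolla at every vertex, so writing $u|_{\Cv}=f|_{\Cv}\circ(q|_{\Cv})^{-1}$ exhibits an isomorphism $\Cv\xrightarrow{\cong}\Cv[u(v)]$ for every $v$, whence $u$ is \'etale (\cref{ex: esv ese etale} again). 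This gives existence and uniqueness of the mediating \'etale morphism, establishing that $\mathcal C$ is the coequaliser of the gluing datum in $\Gret$.
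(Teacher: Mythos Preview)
Your proof is correct and follows essentially the same strategy as the paper: compute the pointwise coequaliser in $\GrShape$, verify it is a Feynman graph using the gluing-datum conditions, and observe that the quotient map is \'etale. You are more explicit than the paper in the final step---showing that the mediating morphism supplied by the universal property in $\Grbig$ is automatically \'etale (since $q$ is the identity on vertices), so that the coequaliser genuinely lives in $\Gret$---which the paper leaves implicit in its closing sentence.
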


\begin{proof}%[Proof of \cref{glue}]
	
	Let $\G$ be a graph without stick components and let $\delta_{1}, \delta_{2}\colon\mathcal S = \coprod_{i \in I}(\shortmid_i)\rightrightarrows \G$ be a gluing datum with coequaliser $\overline p\colon \G \to \overline\G = (\overline E, \overline H,\overline V, \overline s,\overline t, \overline \tau)$ in the category $\GrShape$ of graph-like diagrams. %So, for $X = E, H, V$, elements of $\overline X$ are equivalence classes of elements of $X$. 
	%Hence, if $\overline \G$ is a graph, then it is the coequaliser of $\delta_1, \delta_2$ in $\Grbig$. %Moreover, $\overline p$ is \'etale y \cref{2 out of 3} This is a coequaliser of \'etale graphs by \cref{2 out of 3}. 
	
	Since $\delta_1$ and $\delta_2$ are injective and have disjoint images, the induced map $\overline \tau\colon \overline E \to \overline E$ is a fixed-point free involution. Moreover $H = \overline H$ since, if half-edges $h$ and $h'$ of $\G$ are identified in $\overline H$, then there is an edge $l \in E(\mathcal S) $ such that $\delta_1(l) = s(h)$ and $ \delta_2(l) = s(h')$. This contradicts the conditions of \cref{def: gluing} since $\G$ has no stick components. Likewise, edges $ e, e' \in E(\G)$ are identified in $\overline E$ if and only if there is an $ l \in E(\mathcal S)$ such that $\delta_1(l) = e$ and $\delta_2 (l) = e'$ (or vice versa). Since $ \G$ has no stick components, and $\delta_1, \delta_2$ have disjoint images, we may assume that $ e $ and $\tau e'$ are ports and $e', \tau e  \in s(H)$. Therefore,$\overline s \colon \overline H \to \overline E$ is injective, and $\overline \G$ is a graph.
%Half-edges $h$ and $ h' $ of $\G$ are identified in $\overline H$ if and only if there is an edge $l \in E(\mathcal S) $ such that $\delta_1(l) = s(h)$ and $ \delta_2(l) = s(h')$. This contradicts the conditions of \cref{def: gluing}, whereby $\overline H = H$. Moreover $ e, e' \in E(\G)$ are identified in $\overline E$ if and only if there is an $ l \in E(\mathcal S)$ such that $\delta_1(l) = e$ and $\delta_2 (l) = e'$ (or vice versa). Since $ \G$ has no stick components, and $\delta_1, \delta_2$ have disjoint images, we may assume that $ e $ is a port and $e' \in s(H)$. Therefore $\overline s \colon \overline H \to \overline E$ is injective, and $\overline \G$ is a graph. %and since $\delta_1$ and $\delta_2$ have disjoint images, $\overline \tau\colon \overline E \to \overline E$ is a fixed-point free involution. Hence $ \overline \G$ is a graph.

In particular, $\overline V = V$ since $\overline H = H$ and $\mathcal S$ is a shrub. %, and we have shown that $\overline H = H$. 
It follows that $\overline p \colon \G \to \overline \G$ is an \'etale embedding, and the lemma is proved. \end{proof}

\begin{ex}\label{ex: still more M N}The graphs $\mathcal M^{X,Y}_{x,y}$ and $\mathcal N^X_{x,y}$ (Examples \ref{ex: M graph}, \ref{ex: N graph}, \ref{ex: first gluing example} ) are coequalisers of gluing data (\ref{eq: M coeq}), (\ref{eq: N coeq}): 
	% For finite sets $X$ and $Y$, $\mathcal M^{X,Y}_{x,y}$ is the coequaliser of the gluing datum
\[ \left(ch_x, \ ch_y\circ \tau\colon (\shortmid)\ \rightrightarrows \ (\CX[X \amalg \{x\}] \amalg \CX[Y \amalg \{y\}])\right) \longrightarrow \mathcal M^{X,Y}_{x,y}, \qquad
\left(ch_x,\ ch_y\circ \tau\colon (\shortmid)\ \rightrightarrows \CX[X \amalg \{x,y\}] \right)\longrightarrow \mathcal N^{X}_{x,y}. \]

This is visualised in \cref{fig: glue MN}. 

\begin{figure}[htb!] \label{precontraction}
	\begin{tikzpicture}
		\node at (0, 0){\begin{tikzpicture}[scale = .85]
				%\begin{tikzpicture}
			%		
					\node at (0,0){\begin{tikzpicture}[scale = 0.5]
						\filldraw(0,0) circle(3pt);
						\foreach \angle in {-0,90,180,270} 
						{
							\draw(\angle:0cm) -- (\angle:1.5cm);

						}
						\draw [ ultra thick] (0,0) -- (1.5,0);
						
						\node at (-1, -0.3) {};
						\node at (0.3, 1) {};
						\node at (1, -0.3) {\tiny x};
						\node at (0.3,-1) {};
						\end{tikzpicture}};
					
					\node at (0,-1.5) {$\CX[{X \amalg \{x\}}]$};

					\node at (2,0){\begin{tikzpicture}[scale = 0.5]
						\filldraw(0,0) circle(3pt);
						\foreach \angle in {60,180, 300} 
						{
							\draw(\angle:0cm) -- (\angle:1.5cm);

						}
						\draw [ ultra thick] (0,0) -- (-1.5,0);
						\node at (-1, -0.3) {\tiny y};
						\node at (1, 0.8) {};
						\node at (1, -0.8) {};

						\end{tikzpicture}};
					
					\node at (2,-1.5){$\CX[{Y \amalg \{y\}}]$};
					
					\node at (4,0){\large $\longrightarrow$};
					
					\node at (6,0) 
					{\begin{tikzpicture}[scale = 0.5]
						\filldraw(0,0) circle(3pt);
						\foreach \angle in {-0,90,180,270} 
						{
							\draw(\angle:0cm) -- (\angle:1.5cm);

						}
						\draw [ ultra thick] (0,0) -- (1.5,0);
							\node at (.3, -0.3) {\tiny y};
						% \node at (0.3, 1) {};
						\node at (2.2, -0.3) {\tiny x};
						
						% \node at (-1, -0.3) {\tiny 1};
						%\node at (0.3, 1) {\tiny 2};
						% \node at (1, -0.3) {\tiny 3};
						% \node at (0.3,-1) {\tiny 3};
						\end{tikzpicture}};
					
					\node at (7,0){\begin{tikzpicture}[scale = 0.5]
						\filldraw(0,0) circle(3pt);
						\foreach \angle in {60,180, 300} 
						{
							\draw(\angle:0cm) -- (\angle:1.5cm);

						}
						\draw [ ultra thick] (0,0) -- (-1.5,0);
						% \node at (-1, -0.3) {\tiny 2};
						% \node at (1, 0.8) {\tiny 5};
						%\node at (1, -0.8) {\tiny 4};

						\end{tikzpicture}};

					\node at (6.5,-1.5){ $\mathcal M^{X,Y}_{x,y}$};
			%		
			%	\end{tikzpicture}
			
			\end{tikzpicture}};
	
	\node at (10, 0){\begin{tikzpicture}[scale = .85]
	\node at (0,0){\begin{tikzpicture}[scale = 0.5]
		\filldraw(0,0) circle(3pt);
		\foreach \angle in {-0,90,180,270} 
		{
			\draw(\angle:0cm) -- (\angle:1.5cm);

		}
		\draw [ ultra thick] (0,0) -- (1.5,0);
		\draw [ ultra thick] (0,0) -- (-1.5,0);
		\node at (-1, -0.3) {\tiny x};
		% \node at (0.3, 1) {};
		\node at (1, -0.3) {\tiny y};
		%\node at (0.3,-1) {\tiny 4};
		\end{tikzpicture}};
	
	\node at (0,-1.5) {$\CX$};

	\node at (2.5,0){\large $\longrightarrow$};
	
	\node at (5,0) 
	{\begin{tikzpicture}[scale = 0.5]
		\filldraw(0,0) circle(3pt);
		\draw (0,-1.5)--(0,1.5);
		\draw[ultra thick] 
		(0,0)..controls (3,3) and (-3,3)..(0,0);
		\node at (-1,.6) {\tiny y};
		% \node at (0.3, 1) {\tiny 2};
		\node at (1, .6) {\tiny x};
		%\node at (0.3,-1) {\tiny 2};
		\end{tikzpicture}};

	\node at (5,-2){ $\mathcal N^{X}_{x,y}$};
	
\end{tikzpicture}};
\end{tikzpicture}

	\caption{ Construction of $\mathcal M^{X,Y}_{x,y}$ and $\mathcal N^X_{x,y}$ as coequalisers of gluing data. }
\label{fig: glue MN}
\end{figure}
\end{ex}

\begin{prop}\label{colimit exists}
	
	A non-degenerate $\G$-shaped graph of graphs $\Gg\colon \elG \to \Gret$ admits a colimit $\Gg(\G)$ in $\Gret$.
	
\end{prop}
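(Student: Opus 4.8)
The plan is to realise the colimit $\coGg$ as the coequaliser of an explicit gluing datum in the sense of \cref{def: gluing}, and then to apply \cref{glue}, which guarantees that such coequalisers exist in $\Gret$. The construction lifts, vertexwise, the coequaliser presentation (\ref{eq: graph data}) of $\G$ as the colimit of the identity graph of graphs. Before starting I would dispose of the degenerate features of $\G$ itself: a stick component of $\G$ is an edge element $(\shortmid, ch_e)$ carried by $\Gg$ to a stick $(\shortmid)$, so it merely adjoins a disjoint stick to the colimit; isolated vertices $v$ contribute the summand $\Gg(\esv)$ with no gluing. I may therefore assume $\G$ has no stick components, so that by (\ref{eq: graph data}) it is the coequaliser in $\Gret$ of a gluing datum $\delta_1,\delta_2\colon \mathcal S(\EI)\rightrightarrows \coprod_{v\in V}\Cv$, where for each inner orbit $\tilde e=\{e,\tau e\}$ the maps $\delta_1,\delta_2$ select the boundary ports of the corollas incident to $e$ and $\tau e$.

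Next I would lift this datum along $\Gg$. For each vertex $v$ the element is $(\Cv,\esv)$ with $E_0(\Gg(\esv))=\vE$ by the boundary condition of \cref{graph of graphs}, and the compatibility $\Gg(ch_{x_b})=ch^{\Gg(b)}_{x_b}$ means that $\Gg$ carries each choosing map $ch_e\colon(\shortmid)\to\Cv$ to the choosing map onto the corresponding boundary edge of $\Gg(\esv)$. Hence $\delta_1,\delta_2$ determine parallel morphisms
\[\widetilde\delta_1,\widetilde\delta_2\colon \mathcal S(\EI)\rightrightarrows \coprod_{v\in V}\Gg(\esv),\]
which send, for each inner orbit $\tilde e$, the stick $(\shorte)$ to the boundary edges of $\Gg(\esv)$ and $\Gg(\esv[w])$ picked out by $e$ and $\tau e$. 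This is exactly where non-degeneracy is used: since each $\Gg(\esv)$ has no stick components, neither does $\coprod_{v}\Gg(\esv)$. It remains to check that $\widetilde\delta_1,\widetilde\delta_2$ are injective with disjoint images and that $\widetilde\delta_1(1_{\tilde e}),\widetilde\delta_2(2_{\tilde e})$ are ports; each of these transfers from the corresponding property of $\delta_1,\delta_2$, using that $\Gg$ preserves choosing maps and that distinct inner orbits select distinct boundary edges. Thus $(\widetilde\delta_1,\widetilde\delta_2)$ is a gluing datum, and \cref{glue} yields its coequaliser $\coGg$ in $\Gret$, together with an \'etale monomorphism $\coprod_{v}\Gg(\esv)\to\coGg$.

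Finally I would verify that $\coGg$ carries the universal property of $\mathrm{colim}_{\elG}\Gg$. By the proof of \cref{equiv}, $\elG\simeq \fisinv\ov\G$ is equivalent to the essential category $\esG$, whose only non-identity morphisms $\esh\colon\ese\to\esv$ run from edge objects to vertex objects. Expressing the colimit over this bipartite category as a coequaliser of coproducts, the port edges are absorbed --- each contributes only a stick identified with an already-present boundary edge of some $\Gg(\esv)$ and so imposes no new relation --- while each inner orbit forces the identification of a pair of ports; the resulting coequaliser is precisely that of $(\widetilde\delta_1,\widetilde\delta_2)$. Since the coproducts exist in $\Gret$ and this one nontrivial coequaliser exists by \cref{glue}, the universal property follows and $\coGg=\mathrm{colim}_{\elG}\Gg$.

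The main obstacle is this last step. Because $\Gret$ is not cocomplete (Examples \ref{deg loop}, \ref{not cocomplete}), I cannot simply invoke the general expression of a colimit as a coequaliser of coproducts; instead I must argue by hand that the gluing coequaliser satisfies the universal property over the whole of $\elG$, checking carefully that every morphism $\esh$ and every boundary edge is correctly accounted for, and that no stick component is accidentally created in the process --- precisely the failure that non-degeneracy rules out.
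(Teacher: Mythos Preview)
Your approach is essentially the same as the paper's: reduce to the case where $\G$ has no stick components, lift the gluing datum (\ref{eq: graph data}) along $\Gg$ to obtain a gluing datum on $\coprod_{v\in V}\Gg(\esv)$, invoke non-degeneracy to verify the hypotheses of \cref{def: gluing}, and apply \cref{glue}.

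The one place you and the paper diverge is exactly the step you flag as the main obstacle. Rather than verifying the universal property over $\elG$ by hand, the paper exploits that $\GrShape$ is a presheaf category and hence cocomplete: the colimit $\Gg(\G)$ of $\Gg$ exists in $\GrShape$, and its colimit cocone factors through the lifted gluing diagram; conversely, the inclusions (\ref{eq. inc a})--(\ref{eq. inc b}) describe a functor from $\esG$ to that diagram, so the coequaliser $\overline\G$ receives a cocone from $\Gg$. The two universal properties then force $\Gg(\G)=\overline\G$, which is already known to be a graph. This sidesteps your worry about $\Gret$ not being cocomplete: one computes in $\GrShape$, where colimits always exist, and only at the end observes that the answer happens to lie in $\Gret$. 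Your direct bipartite-coequaliser argument would also work, but the detour through $\GrShape$ is cleaner and avoids the careful bookkeeping you anticipate.
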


\begin{proof} %[Proof of \cref{colimit exists}]
	
	For all graphs $\G$, $\elG$ is a connected category if and only if $\G$ is a connected graph. So, the colimit $\Gg(\G)$ of a $\G$-shaped graph of graphs $\Gg\colon \elG \to \Gret$, if it exists, may be constructed componentwise. In particular, we may assume that $\G$ is connected.
	
A non-degenerate $(\shortmid)$-shaped graph of graphs is just an isomorphism $ (\shortmid) \xrightarrow {\cong} (\shortmid) $. 
	
Assume therefore, that $\G \not \cong (\shortmid)$ is a connected graph. % without stick components. % and let $\mathcal S(\EI) \defeq \coprod_{\tilde e \in \widetilde{\EI}} (\shorte)$ be the shrub on the inner edges of $\G$.
%$\G$ is the coequaliser of a gluing datum
%\begin{equation}\label{eq: G trivial gluing}
%\xymatrix{ \coprod_{\tilde e \in \widetilde{\EI}} (\shorte)\ar@<2pt>[r]%^-{\Gg(\esH{1})} 
%	\ar@<-2pt>[r]
%& \coprod_{v \in V} \Cv,}\end{equation} and 
Since $\Gg$ preserves graph boundaries objectwise on $\elG$, we may apply $\Gg$ to each component of (\ref{eq: graph data}) to obtain a diagram in $\Gret$:
\begin{equation}\label{eq: Gg gluing} \xymatrix{ \coprod_{\tilde e \in \widetilde{\EI}} (\shorte)\ar@<4pt>[rr]%^-{\Gg(\esH{1})} 
	\ar@<-4pt>[rr]%_-{\Gg(\esH{2})}
	&& \coprod_{v \in V} \Gg(\Cv, \esv).}\end{equation}
This is a gluing datum since $\Gg$ is non-degenerate. Therefore (\ref{eq: Gg gluing}) has a colimit $\overline \G$ in $\Gret$ by \cref{glue}. 

For vertices $v' \in V$, and inner edges $e' \in \EI$, there are canonical inclusions 
\begin{equation} \label{eq. inc a} \xymatrix{\Gg(\esv[v']) \ar@{^{(}->} [r]& \coprod_{v \in V} \Gg(\esv) ,}\ {\text{ and }} \ \xymatrix{\Gg(\ese[ \tilde e']) \ar@{^{(}->} [r]&\coprod_{\tilde e \in \widetilde{\EI}} \Gg(\ese) \ar [r]^{\cong}& \coprod_{\tilde e \in \widetilde{\EI}}(\shorte). }\end{equation}

%By assumption, $\G$ has no stick components. Therefore,
Moreover, since $\G \not \cong (\shortmid)$ is connected, for any port $e \in E_0(\G)$, there is a unique half-edge $(\tau e, w) \in H(\G)$ and the morphism $\Gg(\esh[(\tau e, w)])\colon \Gg(\ese) \hookrightarrow \Gg(\esv[w])$ induces an inclusion
\begin{equation} \label{eq. inc b}\xymatrix{\Gg(\ese) \ar@{^{(}->} [r]&\Gg(\esv[w]) \ar@{^{(}->} [r]& \coprod_{v \in V} \Gg(\esv).} \end{equation} 

The inclusions (\ref{eq. inc a}) and (\ref{eq. inc b}) describe a functor from $\esG $ to the diagram (\ref{eq: Gg gluing}), and hence a cocone of $ \Gg$ above $\overline \G$. %on ports of $\G$. 

%Moreover, the diagram (\ref{eq: Gg gluing}) forms a cocone in $\GrShape$ over the colimit $\coGg$ of $\Gg$ in $\GrShape$. 

%%   %\srnote{0503 - tiny changes. Also check $\widetilde{\EI}$ and $\widetilde {\EI}$ everywhere}

Conversely, $\Gg$ has a colimit $\Gg(\G)$ in the category $\GrShape$ of graph-shaped diagrams and the cocone of $\Gg$ above $\Gg(\G)$ factors through (\ref{eq: Gg gluing}). % forms a cocone in $\GrShape$ over this colimit. 
Hence, by the universal property of colimits, $\coGg = \overline \G$ is a graph. It is the colimit of $\Gg$ in $\Gret$ since $\overline G$ is the coequaliser of (\ref{eq: Gg gluing}) in $\Gret$. %Moreover, since $\overline \G$ defines a coequaliser in $\Gret$ of (\ref{eq: Gg gluing}), it follows immediately that $\Gg$ has a colimit in $\Gret$. %if there is a functor from $\esG$ to the diagram (\ref{eq: Gg gluing}), and hence a cocone of $\Gg$ over $\overline \G$. %\mathrm{colim}_{\elG}\Gg$ exists in $ \Gret$ and is equal to $\overline \G$.
%
%
%So, $ \coGg = \overline \G$ is the colimit of $\Gg$ in $\GrShape $, and hence, since $\overline \G$ is a graph, $\Gg$ has a colimit in $\Gret$.
\end{proof}

\begin{rmk} In fact, as will follow from \cref{Gnov construction}, all graphs of graphs admit a colimit in $\Gr$. However, the non-degeneracy condition simplifies the proof of \cref{colimit exists}, and is all that is needed for now. 
\end{rmk}

%There is a canonical inclusion of the edge set $E$ of a graph $\G$ into the set of edges of the colimit $\coGg$ of a non-degenerate $\G$-shaped graph of graphs $\Gg$. The construction of the multiplication $\mu^\TT$ for $T$ relies on the fact that this inclusion restricts to an identity between their boundaries. % is canonically identified with the boundary $E_0$ of $\G$, and t
\begin{cor}\label{graph of graphs edges}
	If $\G$ is a graph, and $\Gg$ is a non-degenerate $\G$-shaped graph of graphs with colimit $\coGg$, then the induced map $E(\G) \to E(\Gg)$ on edges is injective, and restricts to the identity
	$ E_0(\G) \xrightarrow{=} E_0(\coGg)$ on ports. 
	For each $(\C,b) \in \elG$, the universal map $ \Gg(b) \to \coGg$ is an \'etale embedding. In particular, \[E(\coGg) \cong E(\G) \amalg \coprod_{v \in V} \EI (\Gg(\esv)).\]
\end{cor}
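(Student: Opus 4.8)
The plan is to read off the structure of $\coGg$ directly from its construction in the proof of \cref{colimit exists}, where it is obtained as the coequaliser in $\Gret$ of the gluing datum (\ref{eq: Gg gluing}),
\[ \xymatrix{ \coprod_{\tilde e \in \widetilde{\EI}}(\shorte) \ar@<4pt>[rr]^-{\delta_1}\ar@<-4pt>[rr]_-{\delta_2} && \coprod_{v \in V}\Gg(\esv) \ar[rr]^-{q} && \coGg.}\]
By the proof of \cref{glue}, the coequaliser map $q$ is an \'etale monomorphism with $H(\coGg) = \coprod_{v} H(\Gg(\esv))$ and $V(\coGg) = \coprod_v V(\Gg(\esv))$; the only edges of $\coprod_v \Gg(\esv)$ identified by $q$ are the port-pairs picked out by $\delta_1,\delta_2$, and each such pair is merged into a single $\tau$-orbit of inner edges of $\coGg$. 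First I would record these facts together with the boundary labelling $E_0(\Gg(\esv)) = E_0(\Cv) = (\vE)^\dagger$ of \cref{graph of graphs} and \cref{ex: vertex examples}, so that the ports of the substituted graphs that get glued are exactly those indexed by the inner edges $\EI$ of $\G$.

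For the claim that each universal map $\Gg(b) \to \coGg$ is an \'etale monomorphism, I would argue by cases on $(\C, b) \in \elG$. For $b = \esv$ the universal map is the composite of the coproduct inclusion $\Gg(\esv)\hookrightarrow \coprod_v \Gg(\esv)$ (an \'etale monomorphism) with $q$, hence \'etale monic, since \'etale morphisms and monomorphisms are each closed under composition. A general corolla-object $b$ is, by \cref{ex: esv ese etale}, an isomorphism onto some $\Cv$, so this case reduces to the previous one; and for a stick-object $b = ch_e$ the map $\Gg(ch_e) = (\shortmid) \to \coGg$ is the choice $ch_{\bar e}$ of an edge of $\coGg$, which is trivially \'etale and, being pointwise injective, a monomorphism by \cref{prop: mono}.

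The heart of the proof is the edge count. The edge-objects $(\shortmid, ch_e) \in \elG$ together with the universal cocone assemble into a $\tau$-equivariant map $\kappa \colon E(\G) \to E(\coGg)$, $e \mapsto \bar e$, which I would analyse piece by piece using the factorisation of $ch_e$ through the essential morphism $\esv$ at the unique vertex $v$ incident to $\tau e$ (the half-edge morphisms $\esh$ of \cref{def: essential}). The bookkeeping shows that $\kappa$ carries $E_0(\G)$ bijectively onto the unglued ports of $\coprod_v\Gg(\esv)$ --- which are precisely $E_0(\coGg)$, yielding the identity $E_0(\G) \xrightarrow{=} E_0(\coGg)$ --- carries the inner edges $\EI(\G)$ bijectively onto the glued $\tau$-orbits, and carries $\tau(E_0(\G))$ onto their $\tau$-partners. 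Since these three images are disjoint, $\kappa$ is injective, and its complement in $E(\coGg)$ is exactly the set of inner edges of the substituted graphs left untouched by $q$, namely $\coprod_v \EI(\Gg(\esv))$. This gives the asserted decomposition $E(\coGg) \cong E(\G) \amalg \coprod_v\EI(\Gg(\esv))$.

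The main obstacle is precisely this last bookkeeping: one must track the fixed-point-free involution $\tau$ and the boundary labelling $E_0(\Gg(\esv)) = (\vE)^\dagger$ carefully enough to determine which ports of the substituted graphs are glued, and to confirm that a boundary edge of $\G$ maps under $\kappa$ to a boundary edge of $\coGg$ (rather than to its $\tau$-partner). A useful check on the conventions is the identity graph of graphs $\Gid$ of \cref{lem: essential cover}: there $\coGg = \G$ and $\EI(\Cv) = \emptyset$ for every corolla, so the formula correctly reduces to $E(\G) = E(\G)$.
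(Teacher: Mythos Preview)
Your proposal is correct and follows essentially the same route as the paper: both arguments read off the structure of $\coGg$ from its construction in \cref{colimit exists} as the coequaliser of the gluing datum, invoking the proof of \cref{glue} to see that the coequaliser map is an \'etale monomorphism. The paper's proof is considerably terser---it simply notes that only the inner edges of $\G$ and the $\tau$-orbits of ports of each $\Gg(b)$ are involved in forming the colimit, and declares the edge decomposition a consequence of the first two claims---whereas you spell out the case analysis for the universal maps and carry out the edge bookkeeping explicitly. One small point: your factorisation of $ch_e$ through $\esv$ at the vertex incident to $\tau e$ tacitly assumes $\G$ has no stick components; the stick case is trivial (a non-degenerate $(\shortmid)$-shaped graph of graphs is an isomorphism) but strictly speaking should be mentioned, as in the proof of \cref{colimit exists}.
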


\begin{proof}\label{iCf}
	The final statement follows directly from the first two. 
	
	By the proof of \cref{colimit exists}, only the inner edges of $\G$, and, for all $(\C,b) \in \elG$, the $\tau$-orbits of ports of $\Gg(b)$ are involved in forming the colimit %set $(E\setminus \EI)(\Gg(b)) \cong E(\C)$ of edges of $\Gg(b)$ that are not inner , are involved in forming the colimit 
	$\coGg$ of $\Gg$. Hence $\Gg$ induces a strict inclusion %the induced map
	 \[ \xymatrix{\coprod_{\tilde e \in \widetilde E} (\shorte)\ar[r]^{\cong} &\coprod_{\tilde e \in \widetilde E} \Gg(\ese) \ar@{^{(}->} [r] &\coGg}\] %is a strict inclusion 
	 that restricts to an identity $ E_0(\G) = E_0(\coGg)$ of ports.
	The second part is immediate. 
\end{proof}

The following corollary was proved, for directed graphs, in \cite[Lemma~1.5.12]{Koc16}.

\begin{cor}\label{connected colimit}%(Corresponds to \cite[Lemma~1.5.12]{Koc16}.) 
Let $\Gg$ be a non-degenerate $\G$-shaped graph of graphs with colimit $\coGg$ in $\Gret$. If $\Gg (\C, b)$ is connected for each $(\C, b) \in \elG$, then $\coGg$ is a connected graph %the graph $\Gg (\C, b)$ is connected, then $\coGg = colim(\Gg) $ is a connected graph 
if and only if $\G$ is. 
\end{cor}

\begin{proof}
	
A $(\shortmid)$-shaped graph of graphs is isomorphic to the identity functor $(\shortmid) \mapsto (\shortmid)$ with colimit $(\shortmid)$. 

So, assume that $\G$ has no stick components and let $\Gg\colon \elG \to \Gr$ be a non-degenerate $\G$-shaped graph of graphs with colimit $\coGg$. 
	
		A morphism $\gamma \in \GrShape(\coGg, \bigstar \amalg \bigstar)$ 
	is equivalently described by a commuting diagram in $\GrShape$: \begin{equation}\label{connected cocone} \xymatrix{
		\mathcal S(\EI)\ar@<4pt>[rr]%^-{\Gg(\esH{1})} 
		\ar@<-4pt>[rr]%_-{\Gg(\esH{2})}
		&& \coprod_{v \in V(\G)} \Gg(\esv) \ar[rr]&& \bigstar \amalg \bigstar.}\end{equation}
	%where the maps $\Gg(\esH{j}) $ are defined as in \cref{colimit exists} with respect to any choice of orientation $\xi$ on $\mathcal S(\EI)$.
	
	Let $\Gg(\esv)$ be connected for each $v \in V$. Then each map $\Gg(\esv) \to \bigstar \amalg \bigstar$ is 
	constant, and morphisms $\coprod_{v \in V} \Gg(\esv) \to \bigstar \amalg \bigstar$ are in bijection with morphisms $ \coprod_{v \in V} \Cv \to \bigstar \amalg\bigstar $.
	
	So, $\GrShape(\G, \bigstar \amalg \bigstar) \cong \GrShape(\coGg, \bigstar \amalg \bigstar)$, and it %there is a bijection between morphisms $\coGg \to \bigstar \amalg \bigstar $ in $\GrShape$ and morphisms $\G \to \bigstar \amalg \bigstar$ in $\GrShape$. It 
	follows from \cref{conn 1} that $\coGg$ is connected if and only if $\G$ is.
\end{proof}

	Let $\G$ be a graph and $S$ any graphical species. As usual, let $\Comm$ be the terminal graphical species. 
\begin{defn} \label{graph of S graphs}
 A \emph{(non-degenerate) $\G$-shaped graph of $S$-structured graphs} is a functor $\Gg_S\colon \elG\to \Grets $ such that the functor $\Gg\colon \elG \to \Gret$ induced by the unique morphism $S \to \Comm$
	\begin{equation}\label{eq. Gg forget} \Gg\colon \xymatrix{\elG \ar[rr]^-{\Gg_S}&& \Grets \ar[rr] &&\Grets[\Comm] \ar@{=}[r] &\Gret} \end{equation}
	is a (non-degenerate) $\G$-shaped graph of graphs.
%\end{defn}
%
%
%\begin{defn}

	\label{defn: graphs of graphs cat}
	For a connected graph $\G$, objects of the category $\GrSG$ are {non-degenerate $\G$-shaped graphs of $S$-structured graphs $\Gg_S\colon \elG \to \Grs $,} and morphisms are natural transformations. %is the category %of subcategory of the functor category $\Fun[\elG,\Grs]$ 
	%whose objects are non-degenerate $\G$-shaped graphs of $S$-structured connected graphs $\Gg_S\colon \elG \to \Grs $ and whose morphisms are natural transformations. 
\end{defn}

\begin{lem}\label{GrSG components}
	For $\G$ connected, $ \G \not \cong \C_\nul$, two $\G$-shaped graphs of (connected) $S$-structured graphs $\Gg_S ^1, \Gg_S ^2$ are in the same connected component of $\GrSG$ if and only if, for all $(\CX[X_b], b) \in \elG$, $\Gg_S ^1(b)$ and $ \Gg_S ^2(b)$ are in the same connected component of $\ovP{S}{X_b\Griso}$.
	
	In particular, if $\Gg_S ^1 $ and $ \Gg_S ^2$ are in the same connected component of $\GrSG$, then $\Gg_S ^1$ and $ \Gg_S ^2$ have isomorphic colimits in $\Grs $.
\end{lem}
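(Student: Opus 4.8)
The plan is to study the restriction functor
$R\colon \GrSG \to \prod_{v}\ovP{S}{X_{b_v}\Griso}$
that sends $\Gg_S$ to the family of its values $\Gg_S(b_v)$ on the corolla objects $(\CX[X_{b_v}],b_v)\in\elG$ indexed by the vertices $v$ of $\G$, and to show that $R$ induces a bijection on connected components; the two halves of the stated equivalence are exactly injectivity and the well-definedness of this bijection. Two facts are used throughout. Since $\G$ is connected and $\G\not\cong\C_\nul$ it has no isolated vertex, so each $X_b=(\vE)^\dagger$ is non-empty; hence every $\Gg_S(b)$ is a connected $S$-structured graph with non-empty boundary, and by non-degeneracy (\cref{graph of graphs}) none of them is a stick. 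Moreover $X_b\Griso$ is a groupoid, so $\ovP{S}{X_b\Griso}$ is a groupoid as well, and ``same connected component'' there means ``related by a single $X_b$-isomorphism preserving the $S$-structure''.

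For the ``if'' direction I would assemble such per-vertex isomorphisms into one natural isomorphism. Suppose that for each vertex $b$ we are given an $X_b$-isomorphism $g_b\colon\Gg_S^1(b)\xrightarrow{\cong}\Gg_S^2(b)$ with $S(g_b)(\Gg_S^2(b))=\Gg_S^1(b)$. Define $\alpha\colon\Gg_S^1\Rightarrow\Gg_S^2$ by $\alpha_{\esv}=g_b$ on corollas and $\alpha_{\ese}=id_{(\shortmid)}$ on sticks. Since $\elG\simeq\esG$ and the only non-identity morphisms of $\esG$ are the $\esh\colon\ese\to\esv$ (one per half-edge $h=(e,v)$), naturality need only be checked on these; there $\Gg_S^i(\esh)$ is the port inclusion $ch^{\Gg^i(b)}_{x}$ of the port labelled by the $x\in X_b$ corresponding to $h$, and the required identity $g_b\circ ch^{\Gg^1(b)}_{x}=ch^{\Gg^2(b)}_{x}$ is precisely the statement that $g_b$ preserves the $X_b$-labelling. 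Thus $\alpha$ is a natural isomorphism, placing $\Gg_S^1$ and $\Gg_S^2$ in the same component of $\GrSG$.

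The ``only if'' direction is where the real work lies, and the crux is to rule out nontrivial flips. Given a single natural transformation $\alpha\colon\Gg_S^a\Rightarrow\Gg_S^b$ in a connecting zigzag, its corolla components $\alpha_{\esv}\colon\Gg^a(b)\to\Gg^b(b)$ are étale morphisms of connected graphs, and naturality on $\esh$ forces $\alpha_{\esv}\circ ch^{\Gg^a(b)}_{x}=ch^{\Gg^b(b)}_{x}\circ\alpha_{\ese}$ with $\alpha_{\ese}\in\{id,\tau\}$. I would first show every $\alpha_{\ese}$ is the identity: if $\alpha_{\ese}=\tau$ for some $h=(e,v)$, then $\alpha_{\esv}$ sends the port $p$ of $\Gg^a(b)$ labelled by $x$ to $\tau p'$, where $p'$ is the corresponding port of $\Gg^b(b)$; since $\Gg^b(b)$ is connected and not a stick, $\tau p'$ is an inner edge, so $p'$ is attached to no vertex, whereas $\tau p$ is attached to some vertex $u$ of $\Gg^a(b)$ and étaleness forces $\alpha_{\esv}(\tau p)=\tau\alpha_{\esv}(p)=p'$ to be attached to $\alpha_{\esv}(u)$ --- a contradiction. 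With all flips trivial, each $\alpha_{\esv}$ is a boundary-preserving (\cref{pp}) étale morphism of connected graphs with non-empty boundary, hence an isomorphism by \cref{pp iso}, and so an $X_b$-isomorphism respecting the $S$-structure. Composing along the zigzag shows $\Gg_S^1(b)$ and $\Gg_S^2(b)$ agree in $\pi_0\big(\ovP{S}{X_b\Griso}\big)$.

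Finally, for the ``in particular'' clause I would invoke functoriality of the colimit. By \cref{colimit exists} each non-degenerate $\Gg_S^i$ has a colimit $\mathrm{colim}\,\Gg_S^i$ in $\Grs$, and a natural transformation of $\elG$-shaped diagrams induces a morphism of colimits. The flip-triviality argument above shows that every transformation in a connecting zigzag is a natural isomorphism, so each induced map of colimits is an isomorphism; chaining these gives $\mathrm{colim}\,\Gg_S^1\cong\mathrm{colim}\,\Gg_S^2$ in $\Grs$. I expect the flip-triviality step --- the incompatibility of a $\tau$-flip with étaleness and connectedness, which is exactly where the non-degeneracy hypothesis is indispensable --- to be the main obstacle; the remainder is bookkeeping with the identification $\elG\simeq\esG$.
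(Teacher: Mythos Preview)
Your proposal is correct and follows the paper's approach: show that the corolla components of any natural transformation in $\GrSG$ are boundary-preserving, then apply \cref{pp iso}. The paper simply asserts that $\phi_{(b)}$ is ``by definition a boundary-preserving morphism'' and moves on, whereas you make explicit the flip-triviality argument ruling out $\alpha_{\ese}=\tau$ (via the fact that graph morphisms carry $im(s)$ into $im(s')$, so a non-port cannot map to a port); this is indeed the only non-formal step, and your argument for it is sound---though note it uses only that $\alpha_{\esv}$ is a graph morphism, not \'etaleness.
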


\begin{proof}
	Let $\G \not \cong \C_{\nul}$ be connected. So, if $(\CX[X_b], b) \in \elG$, then $X_b\neq \emptyset$. Given a morphism $\phi\colon\Gg_S ^1\Rightarrow \Gg_S ^2$ in $\GrSG$, its %For each element $ (\CX[X_b],b)$ of $\G$, the 
	component $ \phi_{(b)}$ at $b$ is, by definition, a boundary-preserving morphism in $ \Grs $, and hence % between $X_b$-graphs with non-empty vertex sets. Since $ \G \not \cong \C_\nul$, if $(\CX[X_b], b) \in \elG$, then $X_b$ is non-empty. So, 
	by \cref{pp iso}, an $X_b$-isomorphism in $\ovP{S}{X_b\Griso}$. % $ \phi_{b} $ is an isomorphism in $\Grs $ of $ S$-structured $X_b$-graphs. 
	
	The converse is immediate, as is the final statement.	
\end{proof}

\subsection{Multiplication for the monad $\TT$}\label{subs. unpointed multiplication}
The aim of this section is to describe the multiplication $ \mu^\TT \colon T^2 \Rightarrow T$ in terms of colimits of graphs of graphs.

From now on, all graphs will be connected, unless explicitly stated otherwise.

%\begin{ex}
%	\label{Xgraph of graphs}	
	Let $X$ be a finite set and $\X = (\G, \rho)$ an $X$-graph.
	If $\Gg \colon \elG[\X] \to \Gr$ is a non-degenerate $\X$-shaped graph of graphs, then its colimit $\coGg[\X] = \mathrm{colim}_{ \elG[\X]} \Gg$ exists by \cref{colimit exists} and, by \cref{graph of graphs edges}, it inherits the $X$-labelling $\rho$ of $\X$.
%\end{ex}

Given a graphical species $S$ and finite set $X$, elements $[\X, \beta]$ of $T^2S_X$ are represented by pairs $(\X, \Gg_S)$ where $\X$ is an $X$-graph, and $\Gg_S$ is an $\X$-shaped graphs of connected $S$-structured graphs. The colimit of $\Gg_S$ in $\Grs $ is given by a pair $(\Gg(\X), \alpha)$, where $\Gg(\X)$ is the colimit of the underlying $\X$-shaped graph of graphs $\Gg$ defined as in (\ref{eq. Gg forget}) and $\alpha \in S(\Gg(\X))$. 

%The assignment
%\[ [\X, \beta] \mapsto [\Gg(\X), \alpha]\] is independent of the choice of representative of $[\X, \beta] \in T^2S_X$: 

For $j = 1,2$,  let $(\X^j, \Gg_S^j)\colon \elG[\X^j] \to \Grs $ represent the same element $[\X, \beta] $ of $ T^2 S_X$. Then, by definition of $T$, $\X^1 \cong \X^2$ in $X{\Griso}$ and, by \cref{GrSG components},
\begin{equation}\label{same colimit} \mathrm{colim}_{ \elG[\X^1]} \Gg_S^1 \cong \mathrm{colim}_{ \elG[\X^2]} \Gg_S^2 \in \ovP{S}{X\Griso}.\end{equation}% Denote this colimit by $\GSg(\X) = (\Gg(\X), \alpha)$. Then the assignment 
%\[ \mu^\TT S \colon T^2 S \to TS , \ \ [\X,\beta] \mapsto [\Gg(\X), \alpha]\] is well-defined.

A multiplication $\mu^\TT \colon T^2 \Rightarrow T$ for $(T, \eta^\TT)$ will be induced by the (by (\ref{same colimit}) well-defined) assignments:
\begin{equation} \label{eq. T mult} [\X, \beta] \mapsto [\Gg(\X), \alpha]\end{equation}% This is well-defined by (\ref{same colimit}). %is independent of the choice of representative of $[\X, \beta] \in T^2S_X$: 

To see that (\ref{eq. T mult}) extends to a morphism $ \mu^\TT S \colon T^2S \to TS$ of graphical species, let $[\X, \beta] \in T^2S_X$ be represented by an $\X$-shaped graph of $S$-structured graphs $\GSg\colon \elG[\X] \to \Grs$ with colimit $\GSg(\X) = (\Gg(\X), \alpha)$ in $ \ovP{S}{X\Griso}$. 

By \cref{graph of graphs edges}, there is a canonical inclusion $E(\X) \hookrightarrow E(\coGg[\X])$ of edge sets, and for each $e \in E(\X)$, 
\[ S(ch^{\coGg[\X]}_e) (\alpha) = S(ch^\X_e) (\beta) \in S(\shortmid).\]
Hence, for all $x \in X$, there is a commuting diagram of sets \[
\xymatrix{ T^2S_X \ar[rr]^-{ \mu^\TT S_X}\ar[dr]_{T^2S(ch_x)}&& TS_X \ar[dl]^{TS(ch_x)}\\& T^2S(\shortmid) = TS(\shortmid). }\]
%Whereby $ \mu^\TT S$ defines a morphism in $\GS(T^2S, TS)$.

Naturality of $ \mu^\TT S$ in $S$ is immediate from the definition and, by a straightforward modification of \cite[Section~2.2]{Koc16}, it may be shown that $\TT = (T, \mu^\TT , \eta^\TT)$ satisfies the two axioms % \cref{mon 1} and \cref{mon 2} 
for a monad.
%%   %\srnote{7.6 Give a clearer description}

\begin{rmk}
For all graphical species $S$, $\mu^\TT S$ and $ \eta^\TT S$ are palette-preserving morphisms in $\GS$. So $\TT$ restricts to a monad $\TT^{(\CCC, \omega)}$ on $\CGS$, for all $(\CCC, \omega)$. If $A$ is a $(\CCC, \omega)$-coloured graphical species and $h \in \GS(TA, A)$, then $(A,h)$ is a $\TT$-algebra if and only if it is a $\TT^{(\CCC, \omega)}$-algebra.
\end{rmk} 

\begin{ex}\label{terminal MO}\label{terminal C MO}
If $\Comm$ is the terminal graphical species, then $ \Grs[\Comm] \cong \Gr$ and hence elements of $T\Comm$ are boundary-preserving isomorphism classes of graphs in $\Gr$. The unique morphism $!\in \GS(T \Comm, \Comm)$ makes $\Comm$ into an algebra for $\TT$. %by assigning to an $X$-graph $\X$, the corolla $\CX$.	
%	Recall Examples \ref{C tau species}, \ref{Di} and \ref{directed graphs}. 
	Likewise, for any palette $(\CCC, \omega)$, the terminal $(\CCC, \omega)$-coloured graphical species $\CComm{}$ is a $\TT$-algebra together with the unique palette-preserving morphism $!^{(\CCC, \omega)} \colon T\CComm{}\to \CComm{}$.% makes the terminal $(\CCC, \omega)$-coloured graphical species $\CComm{}$ into an algebra for $\TT$. 
\end{ex}

\subsection{$\TT$-algebras are non-unital modular operads.}%   %\srnote{0503 first sentence}
Having constructed the monad $\TT$, it remains to prove that $\TT$-algebras are non-unital modular operads. 

\begin{lem}\label{lem: algebra operations}
A $\TT$-algebra $(A,h)$ admits a multiplication ${{}_h \diamond}$ and contraction ${{}_h \zeta}$, that are natural with respect to morphisms in $\GS^\TT$.
\end{lem}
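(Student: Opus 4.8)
The plan is to extract the multiplication and contraction operations on a $\TT$-algebra $(A,h)$ from the algebra structure map $h \in \GS(TA, A)$ by precomposing with the canonical inclusions $\eta^\TT$-type maps coming from the graphs $\mathcal M^{X,Y}_{x_0,y_0}$ and $\mathcal N^X_{x_0,y_0}$ (Examples \ref{ex: M graph}, \ref{ex: N graph}), which are precisely the graphs that encode formal multiplication and contraction. Since each element of $TA_X$ is represented by an $A$-structured $X$-graph $(\X,\alpha)$, and the graphs $\mathcal M$ and $\mathcal N$ are the minimal nontrivial such shapes, the idea is to feed an $A$-structured $\mathcal M$- or $\mathcal N$-graph into $h$.

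**First I would** make the following definitions explicit. Given $\phi \in A_{X \amalg \{x\}}$ and $\psi \in A_{Y \amalg \{y\}}$ with $A(ch_x)(\phi) = A(ch_y \circ \tau)(\psi)$, the two decorated corollas $(\CX[X\amalg\{x\}],\phi)$ and $(\CX[Y\amalg\{y\}],\psi)$ glue, along the colour-matching condition, to a well-defined $A$-structured $\mathcal M^{X,Y}_{x,y}$-graph; this is an element of $TA_{X \amalg Y}$ via the construction \eqref{eq: component identity}. I define
\[
\phi \,{}_h\!\diamond^{X,Y}_{x,y}\, \psi \defeq h_{X \amalg Y}\bigl([\mathcal M^{X,Y}_{x,y}, \alpha]\bigr) \in A_{X \amalg Y},
\]
where $\alpha \in A(\mathcal M^{X,Y}_{x,y})$ is the structure determined by $\phi$ and $\psi$ (such $\alpha$ exists and is unique by \cref{prop: GS sheaves}, since $A(\mathcal M^{X,Y}_{x,y}) \cong \mathrm{lim}_{(\C,b) \in \esG[\mathcal M^{X,Y}_{x,y}]} A(\C)$ and $\mathcal M^{X,Y}_{x,y}$ has exactly the two vertices carrying $\phi,\psi$). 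Similarly, for $\phi \in A_{X \amalg \{x,y\}}$ with $A(ch_x)(\phi) = A(ch_y \circ \tau)(\phi)$, the decorated corolla glues to an $A$-structured $\mathcal N^X_{x,y}$-graph, and I set
\[
{}_h\zeta^X_{x,y}(\phi) \defeq h_X\bigl([\mathcal N^X_{x,y}, \alpha]\bigr) \in A_X.
\]

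**Next I would** verify that these are well-defined partial maps in the sense of \cref{defn: multiplication} and \cref{defn: contraction}. The colour-matching hypotheses are exactly the conditions under which the gluings of \cref{ex: still more M N} produce genuine $A$-structured graphs, so the domains match. The $\fiso$-equivariance axioms (the equivariance axiom of \cref{defn: multiplication} and the equivariance clause of \cref{defn: contraction}) follow because a bijection of labelling sets induces an isomorphism of the corresponding $\mathcal M$- or $\mathcal N$-graphs in $X\Griso$, and $h$ is natural in $\fisinv$; likewise commutativity $\phi\,{}_h\!\diamond_{x,y}\,\psi = \psi\,{}_h\!\diamond_{y,x}\,\phi$ reflects the canonical isomorphism $\mathcal M^{X,Y}_{x,y} \cong \mathcal M^{Y,X}_{y,x}$, and the symmetry $\zeta_{x,y} = \zeta_{y,x}$ reflects $\mathcal N^X_{x,y} \cong \mathcal N^X_{y,x}$. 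Naturality of ${}_h\diamond$ and ${}_h\zeta$ with respect to $\GS^\TT$-morphisms $f\colon(A,h) \to (A',h')$ is then immediate from the algebra-morphism square $f \circ h = h' \circ Tf$, since $Tf$ sends the representing $A$-structured graph to the corresponding $A'$-structured graph with the same shape.

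**The hard part will be** confirming that the value $h_{X\amalg Y}([\mathcal M,\alpha])$ is genuinely independent of the choices involved and that I have correctly identified $\alpha$ from the sheaf limit. Concretely, I must check that the inclusion of the two corollas into $\mathcal M^{X,Y}_{x,y}$ as its essential cover (\cref{lem: essential cover}) makes the pair $(\phi,\psi)$ glue to a \emph{unique} $\alpha \in A(\mathcal M^{X,Y}_{x,y})$ under the limit description, using that the single inner edge of $\mathcal M^{X,Y}_{x,y}$ carries the common colour $A(ch_x)(\phi) = A(ch_y \circ \tau)(\psi)$; the analogous point for $\mathcal N^X_{x,y}$ uses its single vertex and single inner $\tau$-orbit. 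These are exactly where the involution $\tau$ does the bookkeeping flagged in \cref{rmk: involution}. I expect the coherence axioms (M1)--(M4), associativity of ${}_h\diamond$, and the unital structure to be deferred to later results (they will follow from compatibility of $h$ with $\mu^\TT$, i.e.\ from iterated gluings of $\mathcal M$ and $\mathcal N$ having colimits independent of gluing order, by \cref{colimit exists}), so for this lemma I only establish the existence, well-definedness, and naturality of the two operations themselves.
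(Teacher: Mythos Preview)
Your proposal is correct and follows essentially the same route as the paper: both define ${}_h\diamond$ and ${}_h\zeta$ by forming the $A$-structured graphs $\mathcal M^{X,Y}_{x,y}$ and $\mathcal N^X_{x,y}$ from the given data via the sheaf/limit description, passing to the class in $TA$, and applying $h$, with equivariance and commutativity read off from the evident graph isomorphisms and naturality from the algebra-morphism square. The paper's argument is slightly terser (it phrases things in the coloured-arity notation and records the injectivity of $S(\mathcal M) \to TS$ except in the $X=Y=\emptyset$ case), but the content is the same.
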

\begin{proof}
	Let $X$ and $Y$ be finite sets and
let $\mathcal M^{X,Y}_{x,y}$ be the $X \amalg Y $-graph (described in Examples \ref{ex: M graph} and \ref{ex: still more M N}) obtained by gluing the corollas $\CX[X \amalg \{x\}]$ and $\CX[Y \amalg \{y\}]$ along ports $ x$ and $y$. 

Let $S$ be a $(\CCC, \omega)$-coloured graphical species. For $\underline c \in \CCC^X$, $\underline d \in \CCC^Y$ and $c \in \CCC$, let $\mathcal M_c(\phi, \psi)$ be the element of $ S(\mathcal M^{X,Y}_{x,y})$ determined by an ordered pair $(\phi, \psi) \in S_{(\underline c,c)}\times S_{(\underline d, \omega c)}$. %, determines an element $\mathcal M_c(\phi, \psi)$ of $ S(\mathcal M^{X,Y}_{x,y})$. 
The canonical map $ S(\mathcal M^{X,Y}_{x,y}) \to TS_{X\amalg Y}$ is injective unless $X = Y = \emptyset$, in which case $[\mathcal M_c(\phi_1, \psi_1)] = [\mathcal M_c(\phi_2, \psi_2)]$ when $(\phi_2, \psi_2) = (\psi_1, \phi_1) \in S_{\{x\}}\times S_{\{y\}}$. 

If $(A, h)$ is a $(\CCC, \omega)$-coloured $\TT$-algebra, then the maps given by the compositions
\[ {{}_h \diamond}\colon \xymatrix{S_{(\underline c,c)}\times S_{(\underline d, \omega c)} \ar[rr]^-{[\mathcal M(\cdot,\cdot)]} && 
%S(\mathcal M ^{X,Y}_{x,y}) \twoheadrightarrow 
TS_{\underline c\underline d}\ar[rr]^-{h}&& S_{\underline c \underline d}}\] are $\fisinv$-equivariant by construction and hence induce a multiplication on $A$ (see \cref{fig: h operations}). %This multiplication is commutative and equivariant by construction.

Recall similarly that, for any finite set $X$, $\mathcal N^X_{x,y}$ is the $X$-graph (described in Examples \ref{ex: N graph} and \ref{ex: still more M N}) obtained by gluing the ports $x$ and $y$ of $\CX[X \amalg \{x,y\}]$. For $\underline c \in \CCC^X$ and $c \in \CCC$, let $\mathcal N^S_c (\phi) \in S(\mathcal N^{X}_{x,y})$ be the element determined by $ \phi \in S_{(\underline c,c, \omega c)} \subset S_{X \amalg \{x,y\}}$. 
% determines an element $\mathcal N^S_c (\phi) \in S(\mathcal N^{X}_{x,y})$. 

The only non-trivial boundary-preserving automorphism of $ \mathcal N^X_{x,y}$ is the permutation $\sigma_{x,y} \in Aut (X\amalg \{x,y\})$ that fixes $X$ and switches $x$ and $y$. So, $[\mathcal N^S_c (\phi)] = [\mathcal N^S_c (\psi)]$ in $TS_X$ if and only if $ \phi = \psi$ or $S(\sigma_{x,y})(\phi) = \psi$.

If $(A, h)$ is a $(\CCC, \omega)$-coloured algebra for $T$, then the maps given by the compositions
\[{{{}_h \zeta}}\colon \xymatrix{ A_{(\underline c,c, \omega c)} \ar[rr]^-{[\mathcal N^A(\cdot)]} &&TA_{\underline c }\ar[rr]^- {h}&& A_{\underline c}}\]% \ (\text{ where } X \text{ is a finite set and } \underline c \in C^X, \ c \in C), \]
are $\fisinv$-equivariant and induce a contraction ${{}_h \zeta}$ on $A$ (see \cref{fig: h operations}). Naturality of ${{}_h \diamond}$ and ${{}_h \zeta}$ is immediate from the construction. 
\begin{figure}[htb!] 

			\begin{tikzpicture}
			
			\node at (-2.5,0){
			\begin{tikzpicture}
		\node at (6,0) 
		{\begin{tikzpicture}[scale = 0.5]
			%\filldraw(0,0) circle(3pt);
			\foreach \angle in {-0,90,180,270} 
			{
				\draw(\angle:0cm) -- (\angle:1.5cm);
				\draw[thick]
				(0,0)--(2.1,0);
				\draw [ draw=red, fill=white]
				(0,0) circle (15pt);
				
				\node at(0,0) {\small{$\phi $}};

			}
			
			\end{tikzpicture}};
		
		\node at (7,0){\begin{tikzpicture}[scale = 0.5]
			%\filldraw(0,0) circle(3pt);
			\foreach \angle in {60,180, 300} 
			{
				\draw(\angle:0cm) -- (\angle:1.5cm);

			}
			\draw [ thick] (0,0) -- (-1.5,0);
			\draw [ draw=red, fill=white]
			(0,0) circle (15pt);
			
			\node at(0,0) {\small{$ \psi $}};
			
			\end{tikzpicture}};
		
%		\draw [ draw=blue]
%		(6.5,0) ellipse (30pt and 14 pt);

	\end{tikzpicture}	};
	\draw[|->] (-1,0)--(0,0);
	\node at (-.5,.3){$h$};
		\node at (1,0){ $\phi \ \diamond_{x,y} \ \psi$};
			\node at (5,0){
		\begin{tikzpicture}
		
%		\node at (0,0){\begin{tikzpicture}[scale = 0.5]
%			%\filldraw(0,0) circle(3pt);
%			\foreach \angle in {-0,90,180,270} 
%			{
%				\draw(\angle:0cm) -- (\angle:1.5cm);
%				
%				
%			}
%			\draw [ thick] (0,0) -- (1.5,0);
%			\draw [ thick] (0,0) -- (-1.5,0);
%			\node at (-1, 0.3) {\tiny c};
%			\node at (-1, -0.3) {\tiny (x)};
%			% \node at (0.3, 1) {};
%			\node at (1, -0.3) {\tiny (y)};
%			\node at (1, 0.3) {\tiny {$\omega c$}};
%			
%			\draw [ draw=red, fill=white]
%			(0,0) circle (15pt);
%			
%			\node at(0,0) {\small{$ \phi $}};
%			%\node at (0.3,-1) {\tiny 4};
%			\end{tikzpicture}};
%		
		%\node at (0,-1.5) {$\CX$};

%		\node at (2.5,0){\large $\longrightarrow$};
		
		\node at (5,.5) 
		{\begin{tikzpicture}[scale = 0.5]
			%\filldraw(0,0) circle(3pt);
			\draw (0,-1.5)--(0,3.5);
			\draw[ thick] 
			(0,0)..controls (3,3) and (-3,3)..(0,0);
			\node at (-1,.5) {\tiny {$y$}};
			% \node at (0.3, 1) {\tiny 2};
			\node at (1, .5) {\tiny {$x$}};
			%\node at (0.3,-1) {\tiny 2};

			\draw [ draw=red, fill=white]
			(0,0) circle (15pt);
			
			\node at(0,0) {\small{$\phi $}};
%			\draw [ draw=blue]
%			(0,1) ellipse (30pt and 60 pt); 
			\end{tikzpicture}};
	
			\end{tikzpicture}};
			\draw[|->] (6,0)--(7,0);
		\node at (6.5,.3){$h$};
		%\node at (6.5,0){\large $\longrightarrow$};
		
		\node at (8,0){ $ \zeta_{x,y} \ (\phi)$};
		
	\end{tikzpicture}
	\caption{If $(A,h)$ is a $\TT$-algebra, $h$ induces a multiplication and contraction on $A$.}	\label{fig: h operations}

\end{figure}
\end{proof}

We are now able to show that algebras for the monad $\TT$ on $\GS$ are precisely non-unital modular operads.

\begin{prop}\label{unpointed modular operad} 
There is a canonical isomorphism of categories $\GS^\TT \cong \nuCSM$.
\end{prop}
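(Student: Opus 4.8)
The statement $\GS^\TT \cong \nuCSM$ asserts an isomorphism (not merely an equivalence) of categories, so the plan is to exhibit functors in both directions on objects and morphisms, and check they are mutually inverse. The main work has already been done in \cref{lem: algebra operations}, which supplies, for each $\TT$-algebra $(A,h)$, a multiplication ${{}_h\diamond}$ and contraction ${{}_h\zeta}$ on the underlying graphical species $A$. First I would verify that $(A, {{}_h\diamond}, {{}_h\zeta})$ is a \emph{non-unital} modular operad, i.e.\ that the four coherence axioms (M1)--(M4) of \cref{defn: Modular operad} hold. Each axiom is a commuting square of multiplications and contractions, and the two composite maps around each square are both computed by applying $h$ to a single $S$-structured graph --- namely the graph obtained by gluing three corollas (for M1), by double self-gluing of one corolla (for M2), or by the appropriate mixed gluing (for M3, M4). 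The crucial point is that the relevant gluing data produce the \emph{same} colimit graph regardless of the order of gluing (this is the content of \cref{glue} and \cref{colimit exists}: colimits of gluing data are independent of presentation), so the two legs of each square factor through the same element of $T^2A$ and hence agree after applying the associativity/unitality of the $\TT$-algebra structure (the monad axioms $h \circ \mu^\TT A = h \circ Th$ and $h \circ \eta^\TT A = \id$).

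Next I would define the functor $\nuCSM \to \GS^\TT$. Given a non-unital modular operad $(A, \diamond, \zeta)$, the structure map $h_A \colon TA \to A$ must send a class $[\X, \alpha] \in TA_X$ --- represented by an $S$-structured connected $X$-graph --- to the element of $A_X$ obtained by performing all the formal multiplications and contractions encoded by $\X$ on the decoration $\alpha$, in any order. I would show this is well defined: by \cref{cor:path connected} and \cref{bivalent graphs} every connected graph is built from its corollas by a sequence of edge-gluings (multiplications) and self-gluings (contractions), and axioms (M1)--(M4) guarantee that the resulting element of $A_X$ is independent of the order in which inner edges are collapsed, and invariant under $X$-isomorphisms of $\X$ (hence descends to the colimit/coinvariants defining $TA_X$ in (\ref{free}) and (\ref{eq: component identity})). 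That $h_A$ satisfies the unit axiom $h_A \circ \eta^\TT A = \id$ is immediate since $\eta^\TT$ picks out single corollas, which carry no inner edges to collapse; the associativity axiom $h_A \circ \mu^\TT A = h_A \circ T h_A$ is exactly the statement that collapsing a nested graph-of-graphs all at once agrees with collapsing the inner graphs first --- again a consequence of order-independence of edge-collapsing (M1)--(M4), together with \cref{graph of graphs edges} which identifies the edge sets of iterated colimits.

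Finally I would check that the two assignments are mutually inverse and functorial. Starting from $(A,h)$, forming $(A, {{}_h\diamond}, {{}_h\zeta})$ and then reconstructing the structure map returns $h$, because $h$ is already determined by its values on the multiplication graphs $\mathcal M^{X,Y}_{x,y}$ and contraction graphs $\mathcal N^X_{x,y}$ (which generate all connected graphs under gluing) via the monad axioms --- so any two $\TT$-algebra structures inducing the same $\diamond, \zeta$ coincide. Conversely, the operations induced by $h_A$ recover $\diamond$ and $\zeta$ by construction, since ${{}_{h_A}\diamond}$ and ${{}_{h_A}\zeta}$ are computed on the single graphs $\mathcal M$ and $\mathcal N$. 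On morphisms the correspondence is the identity on underlying maps in $\GS$: a $\GS$-morphism commutes with the structure maps $h$ if and only if it preserves $\diamond$ and $\zeta$, by \cref{lem: algebra operations} (naturality) in one direction and by the generation argument in the other. I expect the main obstacle to be the careful bookkeeping in verifying well-definedness of $h_A$ --- specifically, proving rigorously that \emph{every} order of collapsing the inner edges of an arbitrary connected $X$-graph yields the same element, reducing the general case to repeated application of the four square axioms (M1)--(M4), which requires an induction on the number of inner edges using the decomposition of graphs via \cref{bivalent graphs} and the factorisation of any graph as an iterated gluing of corollas.
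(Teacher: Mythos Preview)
Your proposal is correct and follows essentially the same approach as the paper: derive $\diamond,\zeta$ from $h$ via \cref{lem: algebra operations} and verify (M1)--(M4) using the monad algebra axiom $h\circ \mu^\TT = h\circ Th$ together with the fact that the two orders of gluing yield the same colimit graph; conversely, define $h$ by collapsing inner edges one at a time (distinguishing the case $t(e)\neq t(\tau e)$, which uses $\diamond$, from $t(e)=t(\tau e)$, which uses $\zeta$) and invoke (M1)--(M4) to show order-independence. One small correction: the references to \cref{cor:path connected} and \cref{bivalent graphs} are not the right tools here --- the paper works directly with the explicit graph $\X_{/\tilde e}$ obtained by deleting one inner edge orbit and merging or keeping the incident vertex, and the induction is simply on $|\widetilde{\EI}|$, with (M1)--(M4) covering exactly the four shapes a connected graph with two inner edge orbits can have.
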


\begin{proof}

A $\TT$-algebra structure $h\colon TA \to A$ equips a graphical species $A$ with a multiplication $\diamond = {{}_h \diamond}$, and contraction $\zeta = {{}_h \zeta}$ as in \cref{lem: algebra operations}. We must show that $(A, \diamond, \zeta)$ satisfies conditions (M1)-(M4) of \cref{defn: Modular operad}. %It is helpful to refer to Figures \ref{condition 1}--\ref{condition 4} while reading this proof.

The proof is based on the observation that (up to its boundary $E_0$) any connected graph with two inner edge orbits has one of the forms illustrated in Figures \ref{condition 1}-\ref{condition 4}, and each of these relates to one of the conditions (M1)-(M4). 

Condition (M1) is illustrated in \cref{condition 1}. % -- associativity of multiplication -- let $X_1, X_2$ and $X_3$ be finite sets, $\underline b \in C^{X_1} , \underline c \in C^{X_2}, \ \underline d \in C^{X_3}$ and $c, d \in C$. By \cref{lem: algebra operations}, and 
Let $ \phi _1 \in A_{(\underline b,c)}, \phi_2 \in A_{(\underline c, \omega c, d)}$ and $\phi_3 \in A_{(\underline d, \omega d)}$. By \cref{lem: algebra operations} and the monad algebra axioms, %\ref{alg1}, \ref{alg2},
\[\begin{array}{lll}
(\phi_1\diamond_{c} \phi_2)\diamond_{d} \phi_3 & = &h \left [ \mathcal M^A_d \left ((\phi_1\diamond_{c} \phi_2) , \phi_3 \right)\right]\\
&= & h \left [ \mathcal M^A_d\left (h[\mathcal M_c(\phi_1, \phi_2)], h \eta^\TT A (\phi_3) \right)\right]\\
& = & h \mu^\TT \left [ \mathcal M^{TA}_d\left ([\mathcal M_c(\phi_1, \phi_2)], \eta^\TT A (\phi_3) \right)\right],
\end{array} \]
{ and, likewise } 
\[\phi_1\diamond_{c} (\phi_2\diamond_{d} \phi_3) = h \mu^\TT \left[ \mathcal M^{TA}_c \left(\eta^\TT A (\phi_1) , [\mathcal M^A_d(\phi_2 , \phi_3)] \right)\right]. \]
Hence, to prove (M1), it suffices to show that, for all $\phi_1, \phi_2, \phi_3$ as above,
\[ \mu^\TT \left [ \mathcal M^{TA}_d\left ([\mathcal M_c(\phi_1, \phi_2)], \eta^\TT A (\phi_3) \right)\right]= \mu^\TT \left[ \mathcal M^{TA}_c \left(\eta^\TT A (\phi_1) , [\mathcal M^A_d(\phi_2 , \phi_3)] \right)\right].\]
%This follows from immediately from the observation that 
By \cref{ex: still more M N} and since colimits commute,  this follows from: % $ $X_1, X_2, X_3$, 
\[ \mathrm{coeq}_{\Gr} \left (ch_{y}, ch_z \circ \tau\colon (\shortmid) \rightrightarrows \mathcal M^{X_1, (X_2 \amalg \{ y\})}_{w,x} \amalg \CX[X_3 \amalg \{z\}] \right) = \mathrm{coeq}_{\Gr} \left (ch_{w}, ch_x \circ \tau\colon (\shortmid) \rightrightarrows \CX[X_1 \amalg \{w\}] \amalg \mathcal M^{(X_2 \amalg \{x\}), X_3}_{y,z} \right). \]

%This is illustrated in \cref{condition 1}. 
The coherence conditions (M2)-(M4) all follow in the same way from the defining axioms %\ref{alg1} and \ref{alg2} 
of monad algebras. Figures \ref{condition 2}-\ref{condition 4} illustrate each condition. 

\begin{figure}[h]
	\includestandalone[width = .75\textwidth]{standalones/Coherence1}
	\caption{ Coherence condition (M1) Applying $\mu^\TT A\colon T^2 A \to A$ amounts to \emph{erasing inner nesting}.}\label{condition 1}
\end{figure}

%The coherence conditions (M2)-(M4) all follow in the same way from the defining properties \cref{alg1} and \cref{alg2} of monad algebras.
%Figures \cref{condition 2}-\cref{condition 4} illustrate each condition. 

\begin{figure}[h]
	\includestandalone[width = .75\textwidth]{standalones/Coherence_2}
	\caption{Coherence condition (M2)}\label{condition 2}
\end{figure}

\begin{figure}[h]
	\includestandalone[width =.75 \textwidth]{standalones/Coherence_3}
	\caption{Coherence condition (M3).}\label{condition 3}
\end{figure}

\begin{figure}[h]
	\includestandalone[width = .75\textwidth]{standalones/Coherence_4}
	\caption{Coherence condition (M4).}\label{condition 4}
\end{figure}
%By the method of \cref{lem: algebra operations}, 
The induced assignment $(A, h) \mapsto (A, \diamond, \zeta)$ clearly extends to a functor $\GS^\TT \to \nuCSM$.% a morphism $\gamma: (A, h) \to (A', h')$ of $\TT$-algebras naturally induces a morphism of the corresponding non-unital modular operads

The proof of the converse closely resembles that of \cite[Theorem~3.7]{GK98}. Namely, let $(S, \diamond, \zeta)$ be a non-unital modular operads. We construct a morphism $h\in \GS(TS ,S)$ by successively using $\diamond$ and $\zeta$ to \textit{collapse} %%   %\srnote{careful with italics here..})
 inner edge orbits of $ S$-structured $X$-graphs $(\X, \alpha)$, resulting in a finite sequence of $ S$-structured $X$-graphs that terminates in an $S$-structured corolla $(\CX,\phi)$.

As usual, let $X$ be a finite set and let $(\X, \alpha)$ be a representative of $[\X, \alpha]\in TS_{X}$. 

If $\X$ has no inner edges, 
then $\X = \CX$, and so $[\X, \alpha] = \eta^\TT S(\phi)$ for some $\phi \in S_{X}$. In this case, define \begin{equation}\label{h alg 1} h [\X, \alpha] \defeq \phi \in S_{X}.\end{equation}

Otherwise, let $\X$ have vertex set $V$, edge set $E$, and let $\tilde e \in \widetilde{\EI}$ be the orbit of a pair $e, \tau e $ of inner edges of $\X$. Write $t(e) \defeq ts^{-1}(e)$ for the vertex $v$ with $e \in \vE$. 

%\in \EI$. 
There are two possibilities: either $t(e) = t(\tau e)$ or $t(e) \neq t(\tau e)$.

% Let $v = t (e)$ and $v' = t(\tau e)$, and let 

{\sc{Case 1:} }$t(e) = v_1$ and $t(\tau e) = v_2$ are distinct vertices of $\X$.\\
%Let $t(e) = v_1$ and $t(\tau e) = v_2$ be distinct vertices in $V$, and 
Let $\X_{/\tilde e}$ be the graph obtained from $\X$ by removing the $\tau$-orbit $\{e, \tau e\}$ and identifying $v_1$ and $v_2$ to a vertex $\overline v \in \faktor{V}{(v_1 \sim v_2)}$: \[\X_{/\tilde e}\defeq \ \ \Fgraphvar{(E\setminus \{e,\tau e\})}{(H \setminus s^{-1}\{e, \tau e\})}{\small{\faktor{V}{(v_1 \sim v_2)}}}{s}{\overline t}{\tau}.\]
(Here $\overline t$ is the composition of $ t\colon H \to V$ with the quotient $V \twoheadrightarrow \faktor{V}{(v_1 \sim v_2)}$.) % be the graph obtained from $\X$ by removing the $\tau$-orbit $\{e, \tau e\}$ and identifying $v_1$ and $v_2$ to a vertex $\overline v \in V/(v_1 \sim v_2)$. % is given by $\overline v$.
So, $\X$ is the colimit of the non-degenerate $\X_{/\tilde e}$-shaped graph of graphs $ \elG[\X_{/\tilde e}] \to \Gr$ given by 
 \[ (\C, b) \mapsto \left \{ \begin{array}{ll} %S(\overline b) (\alpha_{/\tilde e}) &= 
	\mathcal M^{X_1, X_2}_{x_1, x_2}& \text{ if } (\C, b)  = (\C_{X_1 \amalg X_2}, \overline b) \text{ is a neighbourhood of } \overline v \in V (\X_{/\tilde e}), \\%  \text{ }\\ S(b) (\alpha_{/\tilde e}) &= 
	\C &\text{ if } (\C,b)   \text { is not a neighbourhood of } \overline v. \end{array} \right .\]

In particular, if $(\C, b) \in \elG[\X_{/\tilde e}]$ is not a neighbourhood of $\overline v$, then it describes an element $(\C, b^\X)\in \elG[\X]$.

%for each vertex $v \in V \setminus \{v_1, v_2\}$ and each minimal neighbourhood $(\CX[X_b], b) \in \elG[\X]$ of a vertex $v $ in $\X$, $(\CX[X_b], b) $ also describes a minimal neighbourhood of $v$ in $\X_{/\tilde e}$.

For $i = 1,2$, let $(\C_{X_i \amalg \{x_i\}}, b'_i) \in \elG[\X]$ be minimal neighbourhoods of $v_i$ in $\X$ such that $b'_1(x_1) = \tau e$ and %, and let $(\C_{X_2 \amalg \{x_2\}}, b_2) \in \elG[\X]$ be a neighbourhood of $v_2$ such that 
$b'_2(x_2) = e$. And let $\phi_i \defeq S(b'_i)(\alpha)\in S(\CX[X_i \amalg \{x_i\}])$. Then there is an $S$-structure $\alpha_{/\tilde e}$ on $\X_{/\tilde e}$ defined, for all $(\C,b) \in \elG[\X_{/\tilde e}]$, by
\[ S(b)(\alpha_{/\tilde e})  = \left \{ \begin{array}{ll} %S(\overline b) (\alpha_{/\tilde e}) &= 
	\phi_1 \diamond_{x_1, x_2} \phi_2 & \text{ if } (\C, b)  = (\C_{X_1 \amalg X_2}, \overline b),\\
	S(b^\X) (\alpha) &\text{ if } (\C,b)   \text { is not a neighbourhood of } \overline v.\end{array} \right .\]

% 
% and $\esv[\overline v]^{\X_{/\tilde e}}$ is the corresponding essential element of $\esG[\X_{/\tilde e}]$. Then, the edge collapse induces an $S$-decoration $\alpha_{/\tilde e}$ on $\X_{/\tilde e}$ by
%\[
%\begin{array}{lll}
%S(\esv[\overline v]^{\X_{/\tilde e}})(\alpha_{/\tilde e}) &= \phi \diamond_{e, \tau e} \psi ,& \text{and}\\
%S(\esv[w]^{\X_{/\tilde e}})(\alpha_{/\tilde e}) &= S(\esv[w]^{\X})(\alpha) & w \in V \setminus \{v,v'\}.\end{array}.\]

{\sc{Case 2:} $t(e) = t(\tau e) = v \in V$.}\\
%Let $t(e) = t(\tau e) = v \in V$.
 In this case, the graph $\X_{/\tilde e}$ obtained from $\X$ by collapsing $\{e, \tau e\}$ has the form
\[\X_{/\tilde e}\defeq \ \ \Fgraphvar{(E\setminus \{e,\tau e\})}{(H \setminus s^{-1}\{e, \tau e\})}{V}{s}{\overline t}{\tau},\]  and $\X$ is the colimit of the non-degenerate $\X_{/\tilde e}$-shaped graph of graphs $ \elG[\X_{/\tilde e}] \to \Gr$:
\[ (\C, b) \mapsto \left \{ \begin{array}{ll} %S(\overline b) (\alpha_{/\tilde e}) &= 
	\mathcal N^{X_v}_{x,y}& \text{ if } (\C, b)  = (\C_{X_v}, \overline b) \text{ is a neighbourhood of } v, \\%  \text{ }\\ S(b) (\alpha_{/\tilde e}) &= 
	\C &\text{ if } (\C,b)   \text { is not a neighbourhood of } v. \end{array} \right .\]

As before, if $(\C, b) \in \elG[\X_{/\tilde e}]$ is not a neighbourhood of $v$, then it describes an element $(\C, b^\X)\in \elG[\X]$.
%In particular $V(\X_{/\tilde e}) = V(\X)$.

 Now, let $(\C_{X_v \amalg \{x, y\}}, b') \in \elG[\X]$ be the neighbourhood of $v$ in $\X$ such that $b'(x) = \tau e$ and $b'(y) = e$. 
 Let $\phi \defeq S(b') (\alpha) \in S_{ X_v \amalg \{x,y\}}$. Then there is an $S$-structure $\alpha_{/\tilde e}$ on $\X_{/\tilde e}$ defined, for all $(\C, b) \in \elG[\X_{/\tilde e}]$, by 
 %There is an $S$-structure $\alpha_{/\tilde e}$ on $\X_{/\tilde e}$ such that, for all $(\CX[Z_{b'}],b') \in \elG[\X_{/\tilde e}]$,
 \[ S(b)(\alpha_{/\tilde e})  = \left \{ \begin{array}{ll} %S(\overline b) (\alpha_{/\tilde e}) &= 
 	\zeta_{x,y}(\phi) & \text{ if } (\C, b)  = (\C_{X_v}, \overline b), \\
 	S(b^\X) (\alpha) &\text{ if } (\C,b)   \text { is not a neighbourhood of } v. \end{array} \right .\]

%
%\[\begin{array}{lll}
%S(\overline b) (\alpha_{/\tilde e}) & =\zeta_{x,y}(\phi), & \text{ }\\
%S(b) (\alpha_{/\tilde e}) & = S(b)(\alpha) & \text{ whenever } (\CX[Y],b) \text { is a neighbourhood of } w \in V \setminus \{v\}.
%\end{array}
%\]

\medspace

It follows that an ordering $(\tilde e_1, \dots, \tilde e_N)$ of the set $\widetilde{\EI}$ of inner $\tau$ -orbits of $\X$ defines a terminating sequence of $S$-structured $X$-graphs:
\[ (\X, \alpha) \mapsto (\X_{/\tilde e_1}, \alpha_{/\tilde e_1}) \mapsto ((\X_{/\tilde e_1})_{/\tilde e_2}, (\alpha_{/\tilde e_1})_{/\tilde e_2})\mapsto \dots\mapsto (((\X_{/\tilde e_1}) \dots)_{/\tilde e_{N}},(\alpha_{/\tilde e_1})\dots)_{/\tilde e_N}).\]

Since $((\X_{/\tilde e_1}) \dots)_{/\tilde e_{N}} = \CX$ has no inner edges, there is a $\phi_{(\X, \alpha)} \in S_{X}$ such that 
\[(\alpha_{/\tilde e_1} \dots)_{/\tilde e_N}= \eta^\TT S(\phi_{(\X, \alpha)}) \in TS_{X}.\] 

The coherence conditions (M1)-(M4) are equivalent to the statement that $\phi_{(\X, \alpha)} \in S_{X}$ so obtained is independent of the choice of ordering of $\widetilde{\EI}$. Moreover, by construction, the assignment $(\X, \alpha) \mapsto  \phi_{(\X, \alpha)} $  is equivariant with respect to morphisms in $\ovP{S}{X\Griso}$, and so also independent of the choice of representative of $[\X, \alpha] \in TS_{X}$. Hence it extends to a morphism $h \colon TS \to S$, $[\X, \alpha]\mapsto  \phi_{(\X, \alpha)} $ in $\GS$. 

To complete the proof of the proposition, it remains to establish that $ h$ satisfies the monad algebra axioms %\ref{alg1}, \ref{alg2} 
for $\TT$. Compatibility of $h$ with $\eta^\TT $ %(\cref{alg2}) 
 is immediate from equation (\ref{h alg 1}). Compatibility of $h$ with $\mu^\TT $ 
% (\cref{alg1}), 
 follows since the coherence conditions (M1)-(M4) ensure that $ h[\X, \alpha]$ is independent of the order of collapse of the inner edges of $\X$. %Let $\Gg\colon \elG[\X] \to \Grs$ represent $[\X, \beta] \in T^2S_X$ where $(\X, \beta) $ has colimit $(\coGg, \alpha) \in \Grs$, with $\alpha \in S(\coGg)$. By \cref{graph of graphs edges} $E(\coGg) = E(\X) \amalg \coprod_{v \in V(\X)} \EI(dom \Gg(\esv))$ and, by the coherence conditions (M1)-- (M4), it does not matter if we first collapse the inner edges of each $\Gg(b)$, and then the inner edges of $\X$, or if we first take the colimit $\coGg$ and collapse the edges in any order.

So $(S, \diamond, \zeta)$ defines a $\TT$-algebra $(S, h)$, and this assignment extends in the obvious way to a functor $ \nuCSM \to \GS^\TT$ that, by \cref{lem: algebra operations} and (M1)-(M4), is inverse to the functor $\GS^\TT \to \nuCSM$ defined above.
\end{proof}

		\section{The problem of loops}\label{degenerate}

 Before constructing the (unital) modular operad monad in \cref{s. Unital}, % monad $ \DD$ that encodes the combinatorics of units in \cref{pointed graphical species},
  let us first pause to discuss the obstruction to obtaining a monadic multiplication for the modular operad endofunctor in the construction outlined in \cite{JK11}. %%   %\srnote{0903 little change}

\begin{ex} 	\label{ex: operad units} In \cref{ex: operad endo}, I sketched the idea behind the construction of the symmetric operad monad $\MMop$ on $\pr{\Bifiso}$, whose underlying endofunctor takes a $\DDD$-coloured presheaf $O$ to the $\DDD$-coloured presheaf of formal operadic compositions in $O$, encoded as $O$-decorated rooted trees. However, I did not describe how the units for the operadic composition are obtained. 
	
Unlike $T$ on $\GS$, the definition of $\Mop$ on $\pr{\Bifiso}$ allows \textit{degenerate substitution} of the exceptional directed tree $(\downarrow)$ into the vertex of the rooted corolla $t_\one$ with one leaf (\cref{fig: deg tree}). 
%%   %\srnote{0903 small reordering}
Grafting an exceptional directed edge $({\downarrow})$ onto the leaf or root of any tree $\Tr$ leaves $\Tr$ unchanged (see \cref{fig:grafting}(b)). So, if $(O, \theta)$ is a $\DDD$-coloured algebra for $\MMop$, and hence a $\DDD$ coloured operad, then the elements $\theta (\downarrow, d) \in O(t_\one)$ provide the $\DDD$-coloured units for the operadic composition. % are of the form
%\[.\]

	\begin{figure}[htb!]
		\begin{tikzpicture}
		
		\node at (2,0){	\begin{tikzpicture}[scale = 0.25]

			{	
				%	\filldraw[ blue]
				
				%			(0,4)circle(10pt);
				%%			;
				%			
				%	\newcommand{\treebase}
				{	\draw[thick, -<- = .5]
					(0,.5)--(0,6.5)
					;}

			}
			\end{tikzpicture}};
		
		\draw[->]
		(.7,0)--(1.5,0);
		
		\node at (6,0){	\begin{tikzpicture}[scale = 0.25]

			{	\draw[blue, fill = blue!20]
				(0, 0) circle (50pt)
				(0,5) circle(50pt)
		%	(0,7) circle(40pt)
		(-2.7,2.7)circle(50pt)
				(-5.5,5.5) circle(50pt)
				(4.5, 4) circle(50pt);

				{	\draw[thick, gray!80]
					(0,0)--(0,-3)
					(0,0)--(-3.5,3.5)
					(0,0)--(0,7)
					(0,0)--	(4.5,3.5)
					(-3.5,3.5)--(-7,7)
				%	(-3.5,3.5)--(-3,4.2)
				%	(-3,4.2)--(-5,7)
				%	(-3,4.2)--(-3,7)
					(4.5,3.5)--(1,7)
					(4,4)--(4,6)
					(4,6)--(4,7)
					(4.5,3.5)--(5,4.2)
					(5,4.2)--(5,7)
					(5,4.2)--(6,7)
					(5,4.2)--(7.5,7);}
				
				\clip (0, 0) circle (40pt)
				(0,5) circle(40pt)
				%(-3.5,4) circle(40pt)
					(-2.7,2.7)circle(40pt)
					(-5.5,5.5) circle(40pt)
				(4.5, 4) circle(40pt);
				
				\draw[ thick, blue]
				(0,0)--(0,-3)
				(0,0)--(-3.5,3.5)
				(0,0)--(0,7)
				(0,0)--	(4.5,3.5)
				(-3.5,3.5)--(-7,7)
%				(-3.5,3.5)--(-3,4.2)
%				(-3,4.2)--(-5,7)
%				(-3,4.2)--(-3,7)
				(4.5,3.5)--(1,7)
				(4,4)--(4,6)
				(4,6)--(4,7)
				(4.5,3.5)--(5,4.2)
				(5,4.2)--(5,7)
				(5,4.2)--(6,7)
				(5,4.2)--(7.5,7);

				\filldraw[ red]
				(0,0) circle (6pt)
%				(-3.5,3.5) circle (6pt)
%				(-3,4.2) circle (6pt)
%				(0,5) circle (6pt)
%				(0,4.2) circle (6pt)
				(4.5,3.5) circle (6pt)
				(4,4) circle (6pt)
				(4,4.7) circle (6pt)
				(5,4.2) circle (6pt);

				%	\draw[gray, fill = gray!20] (0,-0.5) circle (2.5cm);

			}
			\end{tikzpicture}};
		
		\node at (8.8,.3){\scriptsize{$\mu^{\MMop}$}};
		\draw[->]
		(8.2,0)--(9.5,0);
		
		\node at (11, 0){	\begin{tikzpicture}[scale = 0.25]
			
			\draw[ thick]
%			(0,0)--(0,-3)
%			(0,0)--(-3.5,3.5)
%			(0,0)--(0,5)
%			(0,0)--	(4.5,3.5)
%			(-3.5,3.5)--(-7,7)
%			(-3.5,3.5)--(-3,4.2)
%			(-3,4.2)--(-5,7)
%			(-3,4.2)--(-3,7)
%			(4.5,3.5)--(1,7)
%			(4,4)--(4,6)
%			(4,6)--(4,7)
%			(4.5,3.5)--(5,4.2)
%			(5,4.2)--(5,7)
%			(5,4.2)--(6,7)
%			(5,4.2)--(7.5,7);

	(0,0)--(0,-3)
(0,0)--(-3.5,3.5)
(0,0)--(0,5)
(0,0)--	(4.5,3.5)
(-3.5,3.5)--(-5,5)
%				(-3.5,3.5)--(-3,4.2)
%				(-3,4.2)--(-5,7)
%				(-3,4.2)--(-3,7)
(4.5,3.5)--(1,7)
(4,4)--(4,6)
(4,6)--(4,7)
(4.5,3.5)--(5,4.2)
(5,4.2)--(5,7)
(5,4.2)--(6,7)
(5,4.2)--(7.5,7);

			\filldraw[ red]
				(0,0) circle (6pt)
			%				(-3.5,3.5) circle (6pt)
			%				(-3,4.2) circle (6pt)
			%				(0,5) circle (6pt)
			%				(0,4.2) circle (6pt)
			(4.5,3.5) circle (6pt)
			(4,4) circle (6pt)
			(4,4.7) circle (6pt)
			(5,4.2) circle (6pt);
%			(0,0) circle (8pt)
%			(-3.5,3.5) circle (8pt)
%			(-3,4.2) circle (8pt)
%			(0,5) circle (8pt)
%			(0,4.2) circle (8pt)
%			(4.5,3.5) circle (8pt)
%			(4,4) circle (8pt)
%			(4,4.7) circle (8pt)
%			(5,4.2) circle (8pt);

			\end{tikzpicture}};
		\node at (0,0){	\begin{tikzpicture}[scale = 0.25]

			{	\draw[ blue, fill = blue!20]
				
				(0,4)circle(40pt)
				;

				{	\draw[thick, gray!80]
					(0,0)--(0,7)
					;}
				
				\clip 
				(0,4) circle(35pt);
				
				\draw[ ultra thick, blue]
				(0,0)--(0,7)
				;

			}
			\end{tikzpicture}};
%		
%		\node at (6,0){	\begin{tikzpicture}[scale = 0.25]
%			
%			
%			%	\newcommand{\maintree}
%			{	\draw[blue, fill = blue!20]
%				(0, 0) circle (45pt)
%				(-3,3) circle(45pt)
%				(0,5)circle(45pt)
%				%	(-3.5,4) circle(50pt)
%				(3,3) circle(45pt);
%				
%				%	\newcommand{\treebase}
%				{	\draw[thick, gray!80]
%					(0,0)--(0,-3)
%					(0,0)--(-5,5)
%					(0,0)--(0,7)
%					(0,0)--	(3,3)
%					
%					(3,3)-- (3,6)
%					(3,3)--(5,6)
%					
%					;}
%				
%				\clip (0, 0) circle (40pt)
%				(0,5) circle(40pt)
%				(-3,3) circle(40pt)
%				(3,3) circle(40pt);
%				
%				\draw[ thick, blue]
%				(0,0)--(0,-3)
%				(0,0)--(-5,5)
%				(0,0)--(0,7)
%				
%				(0,0)--	(3,3)
%				(3,3)-- (3,6)
%				(3,3)--(5,6);
%				
%				
%				
%				\filldraw[ red]
%				(0,0) circle (6pt)
%				%	(-3,3) circle (6pt)
%				%	(0,5) circle (6pt)
%				(3,3) circle (6pt);
%				%		
%				
%				
%				%	\draw[gray, fill = gray!20] (0,-0.5) circle (2.5cm);
%				
%				
%				
%				
%			}
%			\end{tikzpicture}};
		
		%	\node at (8.8,.3){\scriptsize{$\mu^{\MMop}$}};
%		\draw[->]
%		(8.2,0)--(9.5,0);
%		
%		\node at (11, 0){	\begin{tikzpicture}[scale = 0.2]
%			
%			
%			%	\newcommand{\maintree}
%			{	
%				
%				{	\draw[thick]
%					(0,0)--(0,-3)
%					(0,0)--(-5,5)
%					(0,0)--(0,7)
%					(0,0)--	(3,3)
%					
%					(3,3)-- (3,6)
%					(3,3)--(5,6)
%					
%					;}
%				
%				
%				
%				
%				\filldraw[ red]
%				(0,0) circle (10pt)
%				%	(-3,3) circle (6pt)
%				%	(0,5) circle (6pt)
%				(3,3) circle (10pt);
%				%		
%				
%				
%				
%			}
%			\end{tikzpicture}};
		
		%};
	\end{tikzpicture}
	\caption{The combinatorics of the operadic unit are represented graphically by the degenerate substitution of the exceptional tree into $t_{\mathbf 1}$. Applying the monad multiplication $\mu^{\MMop} O$ to nested trees in ${\Mop}^2 O$ \textit{deletes} vertices decorated by elements of $ O(\downarrow)$. (See also \cref{fig: operad monad}.) }\label{fig: deg tree}
\end{figure}
%In \cref{ex: operad endo}, I gave an outline on the operad monad ?? on ?? but did not mention units for the operadic composition. Blah Picture.... \warn{can clearly be viewed as vertex deletion...}
\end{ex}

The endofunctor $\TJK\colon \GS \to \GS$ defined in \cite[Section~5]{JK11}, whose algebras are modular operads, is obtained by a slight modification of the non-unital modular operad endofunctor $T$, to allow degenerate substitutions analogous to those in \cref{ex: operad units}.

%%%   %\srnotee, the method for constructing multiplicative units in \cite{JK11} %will be familiar to anyone who has seen other constructions of operad families as EM categories for monads (\cite{HRY18}, \cite{Mar08}, \cite{MW07}), and 
%relies on a small adjustment to the \cref{X graph} of $X$-graphs to allow degenerate substitutions.%

For a finite set $X$, let $\XGrJK$ be the groupoid obtained from $X{\Griso}$ by dropping the condition that $X$-graphs must have non-empty vertex set. So,
\[ \XGrJK = X{\Griso} \text { for } X \not \cong \two \ \text{ and } \ \XGrJK[\two] \cong \two {\Griso} \amalg \{ (\shortmid, id), (\shortmid, \tau)\}.\] %(where the boundary of $(\shortmid)$ has the identity $\two$-labelling).

The endofunctor $ {\TJK}\colon \GS \to \GS$ is defined pointwise by \[\begin{array} {lll}
\TJK S_\S &\defeq S_\S, &\\ \TJK S_{X} &\defeq \mathrm{colim}_{\X \in \XGrJK} S(\X) & \text { for all finite sets } X, \end{array} \] together with the obvious extension of $T$ on morphisms in $\fisinv$.

Since $TS \subset \TJK S$ (for all $S$), $\eta^\TT$ induces a unit $\etaJK$ for the endofunctor $\TJK$ (see \cref{defn: pointed endo}). %, together with the unit $\etaJK$ induced by $\eta^\TT$ is a pointed endofunctor (\cref{defn: pointed endo}) on $\GS$.% into a unit $\etaJK$ for $\TJK$.

\begin{prop}\label{prop: TJK endo}
Algebras for the pointed endofunctor $(\TJK, \etaJK)$ on $ \GS$ are modular operads.
\end{prop}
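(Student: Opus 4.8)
The plan is to follow the proof of \cref{unpointed modular operad}, now reading the extra degenerate (stick) elements of $\TJK$ as the data of the multiplicative units, in direct analogy with the way degenerate substitution of the exceptional edge encodes operadic units in \cref{ex: operad units}. Let $(A,h)$ be an algebra for $(\TJK, \etaJK)$. Since $TA \hookrightarrow \TJK A$ is the identity away from arity $\two$ and $\etaJK$ restricts to $\eta^\TT$, the map $g \defeq h|_{TA} \colon TA \to A$ satisfies $g \circ \eta^\TT = \mathrm{id}_A$. I would first produce a multiplication ${}_{h}\diamond$ and a contraction ${}_{h}\zeta$ on $A$ exactly as in \cref{lem: algebra operations}, since that construction only evaluates $h$ on the $S$-structures carried by the single-inner-edge graphs $\mathcal M^{X,Y}_{x,y}$ and $\mathcal N^X_{x,y}$. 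The candidate unit is $\epsilon \colon A_\S \to A_\two$, $c \mapsto \epsilon_c \defeq h[(\shortmid, c)]$, built from the stick components $(\shortmid,c) \in \TJK A_\two$; that $\epsilon_c$ has colour-arity $(c, \omega c)$ follows from the $\GS$-naturality of $h$ together with $h_\S = \mathrm{id}_{A_\S}$, and \cref{lem: unique and equivariant} then yields uniqueness and involution-compatibility of $\epsilon$ for free.

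For the converse I would start from a modular operad $(A, \diamond, \zeta, \epsilon)$ and define $h \colon \TJK A \to A$ by the edge-collapse procedure of \cref{unpointed modular operad}: on a non-degenerate $S$-structured $X$-graph one collapses the inner $\tau$-orbits one at a time, using $\diamond$ where an edge joins two distinct vertices and $\zeta$ where it forms a self-loop, the outcome being independent of the order of collapse by the coherence axioms (M1)--(M4) of \cref{defn: Modular operad}; on the purely degenerate stick elements one sets $h[(\shortmid,c)] \defeq \epsilon_c$. Compatibility with $\etaJK$ is immediate on corollas, and the case where a stick abuts a corolla is exactly the unit axiom. Verifying that the two assignments $(A,h) \mapsto (A, \diamond, \zeta, \epsilon)$ and $(A, \diamond, \zeta, \epsilon) \mapsto (A,h)$ are mutually inverse and functorial then gives the isomorphism between the category of $(\TJK, \etaJK)$-algebras and $\CSM$.

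The main obstacle lies in the forward direction, namely in checking that $(A, {}_{h}\diamond, {}_{h}\zeta, \epsilon)$ satisfies (M1)--(M4) and the unit axiom. Unlike \cref{unpointed modular operad}, there is no monad multiplication $\mu^\TT$ available for $\TJK$ --- this section exists precisely to explain why $\TJK$ fails to be a monad --- so the step there that replaced $h \circ Th$ by $h \circ \mu^\TT$ cannot be copied. I would therefore try to show that the stick (unit) elements supply exactly the rigidity needed to bridge an iterated binary composite $h[\mathcal M_d(h[\mathcal M_c(\phi_1,\phi_2)], \phi_3)]$ with the value of $h$ on a single two-inner-edge graph, reducing each coherence square to an equality of colimits of graphs as in \cref{ex: still more M N}; realising the missing monadic multiplication through degenerate substitutions of the stick is the crux. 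I expect the genuinely delicate case to be the unit axiom together with its interaction with contraction, since degenerately substituting a stick at a port that is subsequently contracted is the configuration that, on iteration, produces the exceptional loop $\bigcirc$ of \cref{deg loop} (cf.\ \cref{fig: deg tree}); keeping this phenomenon inside the graphical calculus of connected graphs is the obstruction that the remainder of the paper is designed to overcome.
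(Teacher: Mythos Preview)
The paper's proof is a two-sentence sketch: since $T \subset \TJK$, invoke \cref{unpointed modular operad} to equip any $(\TJK,\etaJK)$-algebra $(A,h)$ with a non-unital modular operad structure, and then observe that the stick elements $h[(\shortmid,c)]\in A_\two$ serve as multiplicative units. It argues only the direction that a $(\TJK,\etaJK)$-algebra \emph{is} a modular operad, and makes no attempt at a converse or at an isomorphism of categories. Your proposal is doing far more than the paper intends here.

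You are right to flag the coherence axioms (M1)--(M4) as the subtle point, but your diagnosis is slightly misaimed. The obstruction is not that $\TJK$ lacks a multiplication --- that is true, and is the theme of the whole section --- but rather that the restriction $g = h|_{TA}$ is only known to satisfy $g \circ \eta^\TT_A = \mathrm{id}_A$, \emph{not} the monad-algebra axiom $g \circ \mu^\TT_A = g \circ Tg$. The monad multiplication $\mu^\TT$ exists perfectly well; what is missing is compatibility of $h$ with it. The paper's citation of \cref{unpointed modular operad} is therefore, strictly speaking, a forward reference in disguise: that proposition requires a $\TT$-\emph{monad}-algebra, whereas a pointed-endofunctor algebra for $(\TJK,\etaJK)$ yields only a pointed-endofunctor algebra for $(T,\eta^\TT)$. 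The honest justification is \cref{rmk. alg free}: $\DD\TT$ is the \emph{algebraically free} monad on $(\TJK,\etaJK)$, which by definition of that notion means the two algebra categories coincide. The proposition sits in a motivational intermezzo (\cref{degenerate}) and should be read as a claim whose rigorous proof is completed in \cref{s. Unital}.

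Your attempt to patch the gap directly --- using degenerate stick substitutions to ``supply the rigidity'' needed to equate $h[\mathcal M_d(h[\mathcal M_c(\phi_1,\phi_2)],\phi_3)]$ with $h$ evaluated on a single two-inner-edge graph --- cannot succeed at the level of a pointed endofunctor. The only axiom available is $h \circ \etaJK_A = \mathrm{id}_A$, and that identity says nothing about iterated applications of $h$; neither associativity (M1) nor the unit law for $\diamond$ can be squeezed out of it. Your instinct that loop contraction is the delicate configuration is sound, but for the purposes of this proposition the right move is to accept the paper's one-line gloss and defer the actual content to \cref{CSM monad DT} and \cref{rmk. alg free}.
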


\begin{proof}
Since $T \subset \TJK$, algebras for $\TJK$ have the structure of non-unital modular operads by \cref{unpointed modular operad}. If $(A, h)$ is an algebra for $( \TJK, \etaJK)$, then each $c \in A_\S$ defines an element $(\shortmid, c) \in \TJK A_\two $, and $h(\shortmid, c) \in A_\two$ provides a $c$-coloured unit for the induced multiplication.
\end{proof}

However $(\TJK, \etaJK)$  cannot be extended to a monad on $\GS$:

For all graphical species $S$, an element of $ {\TJK}^2 S_X$ is represented by an $X$-graph $\X$ and a (possibly degenerate) $\X$-shaped graph of $S$-structured connected graphs $\Gg_S\colon \elG[\X] \to \Grs $. In particular, if $\TJK$ admits a monad multiplication $\muJK\colon {\TJK}^2 \Rightarrow \TJK$, then $\muJK S $ restricts to $ \mu^\TT S$ on $T^2 S$.
But this cannot be well-defined, as the following example shows: 

\begin{ex}\label{ex: deg wheel} As usual, let $\W$ be the wheel graph with one vertex $v$ and edges $ \{a, \tau a \}$. 
	Its category of elements $\elG[\W]$ has skeletal subcategory 
	\begin{equation}
	\label{eq: el W}
		\xymatrix{
		(\shortmid) \ar[dr]_{ch_a} \ar@<2pt>[rr]^{ch_1^{\C_\two}} \ar@<-2pt>[rr]_{ ch^{\C_\two}_{1} \circ \tau} && \C_\two \ar[dl]^{ (1_{\C_\two} \mapsto a)} \\
		&\W& }	
	\end{equation}
	
So, if $S$ is a $(\CCC, \omega)$-coloured graphical species and $c \in \CCC$, then there is a (degenerate) $\W$-shaped graph of $S$-structured graphs $\Gdg_{S,c}$ given by
%\[ ch_a \mapsto (\shortmid, c) \ \text {and } (1_{\C_\two} \mapsto a) \longmapsto (\shortmid, c). \] 
	\begin{equation}
\label{eq: deg graph of graphs}
\xymatrix{
\elG[\W]\ar[dd]_{\Gdg_{S,c}} &&	(\shortmid) \ar@{|->}[dd]_{\Gdg_{S,c}(ch_a)} \ar@<4pt>[rrr]^{ch_1^{\C_\two}} \ar@<-4pt>[rrr]_{ ch^{\C_\two}_{1} \circ \tau} &&& \C_\two \ar@{|->}[dd]^{\Gdg_{S,c}(1_{\C_\two} \mapsto a) } \\\\
\Grs &&(\shortmid, c) \ar@<4pt>[rrr]^{\Gdg_{S,c}(ch_1^{\C_\two})} \ar@<-4pt>[rrr]_{ \Gdg_{S,c}(ch^{\C_\two}_{1} \circ \tau)} &&& (\shortmid, c).}
\end{equation}
In particular, $ \Gdg_{S,c}(ch_1^{\C_\two}) = id_{(\shortmid,c)} = \Gdg_{S,c}(ch^{\C_\two}_{1} \circ \tau)$ and hence 
$\Gdg_{S,c}$ has colimit %$\Gdg_{S,c}(\W)$ given by
 $id_c\colon (\shortmid,c) \to (\shortmid,c)$ in $\Grs $. 	

We observe immediately that $E_0 (\shortmid) \neq E_0(\W)$ so \cref{graph of graphs edges} does not hold for ${\Gdg_{S,c}}$.

Moreover, if %$\W$ admits a unique non-trivial -- but trivially boundary fixing -- automorphism $\tauG[\mathcal W]\colon \W \to \W$. So, if 
$\Gdg_{S,\omega c}$ is the $\W$-shaped graph of $S$-structured graphs given by 
\[ \left ( \shortmid, ch_a \mapsto (\shortmid, \omega c) \right ) \ \text {and } \left ( \C_\two, (1_{\C_\two} \mapsto a) \right ) \longmapsto (\shortmid, \omega c), \] and if $\tauG[\mathcal W]\colon \W \to \W$ is the unique non-trivial (but trivially boundary-preserving) automorphism, then ${\Gdg_{S,\omega c}}( \C,b) = {\Gdg_{S, c}}( \C,\tauG[\mathcal W]\circ b )$ for all $(\C,b)\in \elG[\W]$. 

Hence ${\Gdg_{S,c}}$ and ${\Gdg_{S, \omega c}}$
represent the same element of ${\TJK}^2S_{\nul}$. But ${\Gdg_{S,c}}$ has colimit $(\shortmid, c)$ in $\Grs $ while ${\Gdg_{S, \omega c}}$ has colimit $(\shortmid, \omega c) \in \Grs $, and these are distinct if $c \neq \omega c$.
 
%with colimit $(\shortmid, \omega c)$ in $\Grs $. But $\W$ admits a unique non-trivial -- but trivially boundary fixing -- automorphism $\tauG[\mathcal W]\colon \W \to \W$, such that ${\Gdg_{S,c}}$ and ${\Gdg_{S, \omega c}}$ are related by
%\begin{equation}\label{eq: isos don't respect colimt}
% {\Gdg_{S, \omega c}}(\C,b) = {\Gdg_{S,c}}(\C, \tauG[\W] \circ b) \text { for all } (\C, b) \in \ElS[W]. 
%\end{equation} In particular, 
%${\Gdg_{S,c}}$ and ${\Gdg_{S, \omega c}}$
%represent the same element of ${\TJK}^2S_{\nul}$. 
%So $X$-isomorphisms do not respect colimits of graphs of graphs on the nose.
\end{ex}

As \cref{ex: deg wheel} shows, taking colimits in $\XGrJK$ of degenerate graphs of $S$-structured graphs does not always lead to a well-defined class of $S$-structured graphs, let alone one in the correct arity. The issue arises because the coequaliser in $\GrShape$ of the parallel morphisms $id_{(\shortmid)}, \tau\colon (\shortmid) \rightrightarrows (\shortmid)$ is the exceptional loop $\bigcirc$, which is not a graph (\cref{deg loop}). %do not admit a coequaliser in $\Gret$.In $\GrShape$ their coequaliser is the exceptional loop $\bigcirc$ (\cref{deg loop}).%In the category $\GrShape$ of graph-like diagrams, their coequaliser is the {exceptional loop} ${\bigcirc}$ 
 %described in \cref{deg loop}. 
 
An obvious first attempt at resolving the problem outlined in \cref{ex: deg wheel}, in order to extend $\mu^\TT S$ to a well-defined multiplication $ \muJK S\colon {\TJK}^2 S \rightarrow \TJK S$, is to enlarge $\Gr$ to include the exceptional loop $\bigcirc$: %, and a morphism $(\shortmid) \to \bigcirc$. (See \cref{fig. lim and colim}.)%their coequaliser in $\GrShape$: the exceptional loop $\bigcirc$ (\cref{deg loop}).

%Hence, if we wish to extend $\mu^\TT S$ to a well-defined natural transformation $ \muJK S\colon {\TJK}^2 S \Rightarrow \TJK S$, we must include an extra element of $\TJK S_\nul$ that identifies $(\shortmid, c)$ and $(\shortmid, \omega c)$.
%
%we must identify $[\shortmid, c]$ and $[\shortmid, \omega c]$ it is necessary to identify $[\shortmid, c]$ and $ [\tau (\shortmid), c] = [\shortmid, \omega c]$ in $\TJK S$. 
%But, $ \tau \in \Gr(\shortmid, \shortmid)$ is emphatically not boundary preserving, and so $ [\shortmid, c]$ and $ [\tau (\shortmid), c]$ are in disjoint components of $\TJK S_\two$. 
%
%To obtain a multiplication, we must therefore include an extra element in $\TJK S_\nul$, that is represented by $(\shortmid, c)$ and $(\shortmid, \omega c)$ for all $c \in \CCC$.

%Since the goal of this work is to unpick and understand this issue of loops, let us dig a little deeper.

Let $\Gr^\bigcirc$ be the category of \emph{fully generalised Feynman graphs} and \'etale morphisms, obtained from $\Gr$ by adding the object $\bigcirc$ and a unique morphism $(\shortmid) \to \bigcirc.$ 

%In other words, objects of $\Gr^\bigcirc$ are $\fin$-diagrams 
%$ \xymatrix{
%	E \ar@(lu,ld)[]_\tau & H \ar@{>->}[l]_s \ar[r]^t& V,}$
%such that $s\colon H \to E$ is injective, and the involution $\tau$ on $ E $ is allowed to have fixed points in $E_0 = E\setminus im(s)$ but not in $im(s)$. %All other definitions from $\Gr$ remain unchanged. 
% 

By definition, $\bigcirc$ is the formal coequaliser of the diagram $id, \tau\colon (\shortmid) \rightrightarrows (\shortmid)$ in $\fisinv \subset \Gr$. So, we may define its category of elements 
$\elG[\bigcirc] \defeq \fisinv \ov \bigcirc$, and thereby extend any graphical species $S\colon {\fisinv}^\mathrm{op} \to \Set$ to a presheaf on $\Gr^\bigcirc$ according to $ \G \longmapsto \mathrm{lim}_{\elG} S $. But this would imply that $S(\bigcirc) \cong S(\shortmid)$ for all graphical species $S$.

%So the inclusion $\fisinv \hookrightarrow \Gr^\bigcirc $ is not dense and 
 It follows that $\Gr^\bigcirc $ does not embed densely or fully in $ \GS$. %(See also \cite{HRY15, HRY18} for a discussion of some of these issues.)
  In particular, there is no monad $\MM$ on $\GS$ with arities $\Gr^\bigcirc$ (see \cref{sec: Weber}).
  
In particular, let $X\mathdash\mathsf{Gr}_{\mathsf{iso}}^\bigcirc$ be the groupoid of \textit{fully generalised connected $X$-graphs} defined by 
\[ X\mathdash\mathsf{Gr}_{\mathsf{iso}}^\bigcirc \defeq  \XGrJK \text { for } X \not \cong \nul \ \text{ and } \ \nul\mathdash\mathsf{Gr}_{\mathsf{iso}}^\bigcirc \defeq  \XGrJK[\nul]  \amalg \{ \bigcirc\},\] %(where the boundary of $(\shortmid)$ has the identity $\two$-labelling).
and let the endofunctor $T^\bigcirc\colon \GS \to \GS$, such that $T^\bigcirc S_X = \TJK S_X$ for $X \not \cong \nul$, be given by \[\label{Tfullmoon defn}\begin{array}{ll} T^\bigcirc S_\S&\defeq S_\S,\\
T^\bigcirc S_{X} &\defeq \mathrm{colim}_{(\G, \rho) \in X\mathdash\mathsf{Gr}_{\mathsf{iso}}^\bigcirc} S(\G).\end{array}\]
%and $(\C_\two, b) \in ob (\elG[\W])$ as before, then 

Then, the $\W$-shaped graphs of graphs $\Gdg_{S,c}$ and $\Gdg_{S, \omega c}$ described in \cref{ex: deg wheel}
represent the same element $[\W, \beta] \in {T^\bigcirc}^2S_{\nul}$.
\label{no degenerate multiplication}
But, $S(\bigcirc) \cong S(\shortmid) = S_\S$ and so \[ [\bigcirc, c]\neq [\bigcirc, \omega c] \in T^\bigcirc S_{\nul} \text { whenever } c \neq \omega c \in S_\S\] 
It follows that $\mu^\TT $ cannot be extended to a multiplication $\mu^\bigcirc\colon {T^\bigcirc}^2 \Rightarrow T^\bigcirc $.

%\begin{rmk}\label{rmk: wrong direction}
	Indeed, this is not surprising: For all $c \in S_\S$, the contraction $\zeta_c \colon S_{c,\omega c} \to S_\nul$ factors through the quotient $\widetilde {S_\two}$ of $ S_\two $ under the action of $Aut (\two)$. Hence, $\zeta (\phi)$ loses data relative to $\phi \in S_{c, \omega c}$, and the morphism $ (\shortmid) \to \bigcirc$ in $\Gr^\bigcirc$ -- that collapses a two-element set to a point -- would seem to be \textit{in the wrong direction}! 
	%   %\srnote{1103 - a better way of saying this?}
	
	\begin{rmk}\label{rmk: standard solution}
		%	The description of Feynman graphs $\G$ as formal colimits of their element categories $\elG$, and therefore also the definition of graph substitution in terms of colimits of graphs of graphs, relies heavily on the involution $\tau$ on $(\shortmid)$. 

		In the graphical formalism of \cite{GK98,BM08} described in \cref{ex: BM graphs} (as well as in, for example \cite{HRY15, MMS09, JY15}), where graph ports are defined to be the fixed points of edge involution, the graph substitution is not defined in terms of a functorial construction, but by `removing neighbourhoods of vertices and gluing in graphs'. Therefore, the exceptional loop arises from substitution as in \cref{construct fullmoon}.
		%\warn{add more references for this construction}
		\begin{figure}[htb!] 
			\begin{tikzpicture}[scale = 0.3]
				
				\draw (0,0) circle (2 cm);
				\draw [dashed] (2,0) circle (1 cm);
				\filldraw (2,0) circle (3pt);
				
				%\node at (0,-3) {$\W$};
				%\node at (5,-5) {$(\shortmid)$};
				
				\node at (5,-2.5) { 
					\begin{tikzpicture}[scale = 0.5]
						\draw [dashed] (0,0) circle (1 cm);
						\draw [ultra thick] (0,-1)--(0,1);\end{tikzpicture}
				};
				\draw [thick, ->]
				(4,-1.5) --(3,-0.8);
				
				\node at (8,0) {\Large $\longrightarrow$};
				
				\node at (14,0) {\begin{tikzpicture}[scale = 0.5]
						\draw (0,0) circle (2 cm);
						\draw [dashed] (2,0) circle (1 cm);
						
						\draw[ultra thick] ([shift=(-30:2cm)](0,0)arc (-30:30:2cm); \end{tikzpicture}};
				
			\end{tikzpicture}
			\caption{Constructing an exceptional loop by removing a vertex and substituting the stick graph.}
			\label{construct fullmoon}
		\end{figure}
	\end{rmk}

%	 Formally, the graph $(\shortmid)$ is just a two element involutive set in $\fin$, and, as such, the coequaliser $\bigcirc$ of $id$ and $ \tau$ looks much more like the isolated vertex $\C_\nul = \nul \leftarrow \nul \rightarrow \{*\}$ than the geometric loop we imaging by identifying the ends of an interval.
	
	 \begin{rmk}\label{rmk: HRY solution}%%   %\srnote{1103 changed some stuff}
		Hackney, Robertson and Yau \cite[Definition~1.1]{HRY19b} are able to construct the modular operad monad on $\GS$, within the framework of Feynman graphs, by including extra \textit{boundary} data in their definition of graphs. For them, a graph is a pair {$(\G, \eth(\G))$} of a Feynman graph $\G$, and subset $\eth(\G) \subset E_0$ of ports, that satisfies certain conditions. In their formalism, $(\shortmid, \two) $ and $ (\shortmid, \nul)$ are different graphs:  $(\shortmid, \two) $ is the stick, and $ (\shortmid, \nul)$ plays the the role of the exceptional loop $\bigcirc$.% In particular, the involution on $(\shortmid, \nul)$ induces a non-trivial involution 
		
		However, this approach does not result in a dense functor from the graph category to $\GS$. And, though they construct a fully faithful nerve for modular operads in terms of a dense subcategory $U\hookrightarrow \CSM$ of graphs, the inclusion is not fully faithful, and so $U$ does not fully describe the graphical combinatorics of modular operads. %resolve the issue of embedding $\Gr^\bigcirc$ in $\GS$, and 
		
	\end{rmk}
 
 The combinatorics of contracted units are examined more closely in \cref{s. Unital} where the \textit{problem of loops} discussed in this section will be resolved by adjoining a map that acts as a formal \textit{equaliser}, rather than a coequaliser, of $id, \tau\colon (\shortmid) \rightrightarrows (\shortmid)$ (see also Figures \ref{fig. lim and colim} and \ref{fig: contracting units}).

 \begin{rmk}\label{rmk: construction unique}
 	To my knowledge, the construction that I present in \cref{s. Unital} is unique among graphical descriptions of unital modular operads (or wheeled prop(erad)s), in that all others include some version of the exceptional loop as a graph. (See e.g.\ \cite{MMS09,Mer10, HRY15, JY15, BB17}.) 
 \end{rmk}
	
%\end{rmk}

	\section{Modular operads with unit}\label{s. Unital}

\cref{unpointed modular operad} identifies the category of non-unital modular operads with the EM category of algebras for the monad $\TT$ on $\GS$. The goal of this section is to extend this in order to obtain (unital) modular operads. Some potential obstacles have been discussed in \cref{degenerate}, where it was also explained why the `obvious' modification of the operad monad (Examples \ref{ex: operad units} and \ref{ex: operad endo}) does not work for unital modular operads.

This section begins by returning to the definition of modular operads in \cref{defn: Modular operad} and looking in more detail at the combinatorics of (contracted) units. This combinatorial information can be encoded in a monad $\DD = (D, \mu^\DD, \eta^\DD)$ on $\GS$. % that encodes this information. 

Once $\DD$ is defined, it is a small step to obtaining the distributive law $\lambda\colon TD \Rightarrow DT$, whose construction provides us 
%From here, it is a small step to defining the distributive law $\lambda\colon TD \Rightarrow DT$, and observing that. 
%
%The construction of the distributive law $\lambda$ provides us 
with an explicit description of the modular operad monad $\DD\TT$ in terms of equivalence classes of graphs structured by graphical species. Moreover, as discussed in \cref{dist for CSM}, the construction of $\DD\TT$ is such that it is always possible to work with nice (non-degenerate) representatives of these classes, thereby avoiding the problem of loops described in \cref{degenerate}.
%
%A central feature of the construction of the modular operad monad in \cref{dist for CSM} is that it allows us to perform constructions on nice representatives, and then take equivalence classes, 
%thereby avoiding the issues of \cref{degenerate}. %   %\srnote{1103}

\subsection{{Pointed} graphical species} \label{pointed graphical species}
By definition, if $(S, \diamond, \zeta, \epsilon)$ is a %$(\CCC, \omega)$-coloured 
modular operad, then the unit $ \epsilon\colon S_\S \to S_\two$ is an injective map such that 
\begin{equation} \label{eq. unit 1} \epsilon \circ \ S_\tau \ = \ S(\sigma_\two) \circ \epsilon. \end{equation} %: S_\S \rightarrow S_{\two}. \end{equation}

%   %\srnote{1103 - do I need to say the extra stuff?}
The key observation is that the combination of a unit and a contraction implies that, as well as the unit elements in arity $\two$ provided by $\epsilon \colon S_\S \to S_\two$, modular operads also have distinguished elements %a map $S_\S \to \S_\nul$, providing 
in arity $\nul$. % as well as as well as in arity $\two$:
Namely, as in (\ref{eq: unit contraction}), there is a contracted unit map $o = \zeta \circ \epsilon \colon S_\S \to S_\nul$ that satisfies 
\begin{equation} \label{eq. unit 2} o = o \circ S_\tau \colon S_\S \to S_\nul. \end{equation} 

\begin{defn}\label{GSp}
	Objects of the category $\GSp$ of \emph{pointed graphical species} are triples $S_* = (S, \epsilon, o)$ (or $ (S, \epsilon^S, o^S)$) where $S$ is a graphical species and % the \textit{unit map} 
	$\epsilon\colon S_\S \to S_\two$, and 
	%\textit{contracted unit map }
	$o\colon S_\S \to S_\nul$ are maps satisfying conditions (\ref{eq. unit 1}) and (\ref{eq. unit 2}) above. Morphisms in $\GSp$ are morphisms in $\GS$ that preserve the additional structure.
	
\end{defn}

\begin{ex}\label{terminal species pointed}
For any palette $(\CCC, \omega)$, the terminal $(\CCC, \omega)$-coloured graphical species $\CComm{}$ is trivially pointed and hence terminal in the category %$\CGSp\subset \GSp$ the category 
	of \emph{$(\CCC, \omega)$-coloured pointed graphical species} and palette-preserving morphisms. %For all $(\CCC, \omega)$, 
	%The terminal $(\CCC, \omega)$-coloured graphical species $\CComm{}$ is trivially pointed and is terminal in $\CGSp$. 
\end{ex}

The category $\GSp$ is also a presheaf category:
%Pointed graphical species are equivalently presheaves on %In fact, $\GSp$ is a presheaf category. 
%
%%\begin{defn}\label{elstar}
Let $\fisinvp$ be the category obtained from $\fisinv$ by formally adjoining morphisms
$u \colon \two \to \S$ and $ z\colon \nul \to \S$, subject to the relations
\begin{itemize}
	\item 
	$u \circ ch_1 = id \in\fisinv (\S, \S)$ and $ u \circ ch_2= \tau \in \fisinv (\S, \S)$,
	\item $\tau \circ u = u \circ \sigma_{\mathbf 2}\in \fisinv (\two, \S),$ 
	\item 
	$z = \tau \circ z \in \fisinv (\nul, \S)$.
	\label{relations}
\end{itemize}

\begin{lem}\label{lem: elp presheaves}%   %\srnote{1103 how to express this better..}
	The following are equivalent:
	\begin{enumerate}
		\item $S_*$ is a presheaf on $ \fisinvp$ that restricts to a graphical species $S$ on $ \fisinv$; 
		\item $(S, \epsilon, o)$, with $\epsilon = S_*(u)$ and $o = S_*(z)$, is a pointed graphical species.
	\end{enumerate}
%	The category $\GSp$ of \emph{pointed graphical species} is the category $\pr{\elp}$ of presheaves on $\elp$.
	\label{CGSp}
\end{lem}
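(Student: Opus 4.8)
The plan is to establish the equivalence of the two descriptions by carefully matching the data. The statement asserts that a presheaf $S_*$ on $\fisinvp$ restricting to a graphical species $S$ on $\fisinv$ is the same thing as a pointed graphical species $(S, \epsilon, o)$, where $\epsilon = S_*(u)$ and $o = S_*(z)$. Since $\fisinvp$ is obtained from $\fisinv$ by freely adjoining the two morphisms $u\colon \two \to \S$ and $z\colon \nul \to \S$ subject to the listed relations, a presheaf on $\fisinvp$ is precisely a presheaf on $\fisinv$ together with a choice of the maps $S_*(u)\colon S_\two \to S_\S$ — wait, one must be careful about variance.

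First I would fix the variance bookkeeping, which is where the whole argument turns. A presheaf $S_* \colon \fisinvp^{\mathrm{op}} \to \Set$ sends the morphism $u\colon \two \to \S$ in $\fisinvp$ to a \emph{set map} $S_*(u)\colon S_*(\S) \to S_*(\two)$, i.e.\ $\epsilon = S_*(u)\colon S_\S \to S_\two$, and similarly $o = S_*(z)\colon S_\S \to S_\nul$; so the directions $\epsilon\colon S_\S \to S_\two$ and $o\colon S_\S \to S_\nul$ demanded in \cref{GSp} come out correctly. The core of the proof is then the observation that giving a presheaf on $\fisinvp$ is equivalent to giving a presheaf $S$ on $\fisinv$ together with two maps $\epsilon, o$ of the indicated sources and targets, \emph{such that} the contravariant images of the three defining relations of $\fisinvp$ hold. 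I would therefore translate each relation into a condition on $\epsilon$ and $o$ and check it coincides with \cref{eq. unit 1} and \cref{eq. unit 2}.

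The key step is this dictionary. Applying $S_*$ (contravariantly) to $u \circ ch_1 = id$ and $u \circ ch_2 = \tau$ gives $S_*(ch_1)\circ \epsilon = id_{S_\S}$ and $S_*(ch_2)\circ \epsilon = S_\tau$, which exhibit $\epsilon$ as a section and encode that $\epsilon$ lands in the correct colour fibres — equivalently, for $c \in S_\S$, the element $\epsilon_c$ lies in $S_{(c,\omega c)}$, matching the remark after \cref{defn: multiplication}. Applying $S_*$ to $\tau \circ u = u \circ \sigma_\two$ yields $\epsilon \circ S_\tau = S(\sigma_\two)\circ \epsilon$, which is exactly \cref{eq. unit 1}. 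Applying $S_*$ to $z = \tau \circ z$ gives $o = o \circ S_\tau$, i.e.\ $o$ is $S_\tau$-invariant; together with the freeness of the construction this is precisely the content of \cref{eq. unit 2} (here $o$ is simply an additional $\tau$-invariant map and is not required to factor as $\zeta\epsilon$, since at the level of pointed graphical species no contraction is present — the notation in \cref{GSp} merely records the target arity). Conversely, given a pointed graphical species, defining $S_*(u) = \epsilon$, $S_*(z) = o$ and extending by functoriality using these relations produces a well-defined $\fisinvp$-presheaf, since every morphism of $\fisinvp$ is a composite of morphisms of $\fisinv$ with $u$ and $z$, and the relations guarantee that the assignment respects all composites.

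I expect the main obstacle to be purely bookkeeping rather than conceptual: verifying that the three imposed relations form a \emph{complete} set, so that there are no further hidden composites in $\fisinvp$ whose images impose extra constraints on $(\epsilon, o)$ beyond \cref{eq. unit 1} and \cref{eq. unit 2}. Concretely, one must check that composites such as $u \circ ch_x$ for a general $x$, or $z$ post-composed with endomorphisms of $\S$, all reduce via the given relations to expressions already accounted for, and that no composite of $u$ or $z$ with the equivariant maps $ch_x \in \fisinv(\S,X)$ produces a new independent relation. This amounts to a finite, if slightly tedious, normal-form analysis of the arrows out of $\two$ and $\nul$ into the colour object $\S$ and back; once it is confirmed that every such arrow is generated as claimed, the equivalence of \textbf{(1)} and \textbf{(2)} follows immediately, and functoriality of the correspondence (hence equivalence of the two categories, not merely objects) is routine.
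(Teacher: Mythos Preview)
Your proposal is correct and follows essentially the same approach as the paper: the paper's proof carries out precisely the ``normal-form analysis'' you anticipate, by explicitly enumerating the hom-sets $\fisinvp(\nul,-)$ and $\fisinvp(\two,-)$ to confirm that no hidden composites arise beyond those generated by $u$, $z$, and morphisms of $\fisinv$. Your relation-by-relation dictionary (including the observation that $u\circ ch_1 = id$ and $u\circ ch_2 = \tau$ force $\epsilon_c \in S_{(c,\omega c)}$) is exactly the content that makes ``the lemma follows immediately'' once those hom-sets are listed.
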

\begin{proof}
It is easy to check directly that $\fisinvp$ is completely described by
\begin{itemize}
	\item $\fisinvp(\S, \S) = \fisinv (\S, \S)$ and $\fisinvp (Y,X) = \fisinv(Y,X)$ whenever $Y \not \cong \nul$ and $ Y \not \cong \two$,
	\item $\fisinvp (\nul, \S) = \{z\}$, and $\fisinvp (\nul, X) = \fisinv (\nul, X) \amalg \{ ch_x\circ z\}_{x \in X}$,
	\item $\fisinvp (\two, \S) = \{u, \tau \circ u\}$, and $\fisinvp (\two, X) = \fisinv (\nul, X) \amalg \{ ch_x\circ u, ch_x \circ \tau \circ u\}_{x \in X}$ for all finite sets $X$,
\end{itemize}
and the lemma follows immediately.
\end{proof}

By \cref{lem: elp presheaves}, a pointed graphical species $(S, \epsilon, o)$ may also be denoted by $ S_*$, and these forms will be used interchangeably. The category of elements of a pointed graphical species $S_*$ will be denoted by $\elpG[S_*] \defeq \ElP{S_*}{\fisinvp}$.

%The forgetful functor $ \GSp \to \GS$ has a left adjoint $(\cdot)^+$ that takes a graphical species $S$ to its left Kan extension $S^+ $ along the inclusion $({\fisinv})^\mathrm{op} \hookrightarrow({\fisinvp})^\mathrm{op}$. This does nothing more than formally adjoin elements $\{\epsilon^+_c \} _{c \in S_\S}$ to $S_ \two$ and $\{o_{\tilde c}\}_{\tilde c \in \widetilde {S_\S}}$ to $S_\nul$ according to the combinatorics of contracted units: %$\epsilon^+ \defeq S^+(u)\colon S_\S \to S_\two$, $o^+ = S^+(z)$. Hence:%More precisely:%= (DS, \epsilon ^+, o^+)

\begin{lem}\label{forget monadic} The forgetful functor $ \GSp \to \GS$ is strictly monadic: it has a left adjoint $\GS \to \GSp$, and $\GSp$ is the EM category of algebras for the induced monad $\DD = (D ,\mu^\DD,\eta^\DD)$ on $\GS$.
\end{lem}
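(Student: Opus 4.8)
The plan is to show that the forgetful functor $U\colon \GSp \to \GS$ is strictly monadic by verifying the hypotheses of the \emph{crude monadicity theorem} (see \cite[Chapter~VI]{Mac98}): namely that $U$ has a left adjoint, and that $U$ creates coequalisers of $U$-split pairs (in fact, being a forgetful functor between presheaf categories, it creates \emph{all} limits and colimits that are preserved by the relevant restriction, and reflects isomorphisms). The cleanest route, however, is to exploit \cref{lem: elp presheaves}, which identifies $\GSp$ with the presheaf category $\pr{\fisinvp}$. Since the inclusion $\fisinv \hookrightarrow \fisinvp$ is bijective on objects and identity-on-objects, restriction along it is precisely the functor $U$, and monadicity of such restriction functors is standard.

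First I would construct the left adjoint $F\colon \GS \to \GSp$ explicitly. By \cref{lem: elp presheaves}, $\fisinvp$ is obtained from $\fisinv$ by adjoining $u\colon \two \to \S$ and $z\colon \nul \to \S$ subject to the stated relations, and the description of hom-sets given there shows that $\fisinvp(Y,X) = \fisinv(Y,X)$ for $Y \not\cong \nul, \two$, while the hom-sets out of $\nul$ and $\two$ are enlarged only by formal composites with $z$ and $u$. Because the inclusion $\iota\colon \fisinv \hookrightarrow \fisinvp$ is the identity on objects, $U = \iota^*$ is restriction of presheaves along $\iota$, which always admits a left adjoint given by left Kan extension $\mathrm{Lan}_\iota$ (left adjoints to restriction exist for any functor between small categories, since presheaf categories are cocomplete). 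Concretely, for a graphical species $S$, the value $(FS)_\S = \mathrm{colim}_{(X,x)} S(X)$ is computed over the comma category, but the key structural point is simply that $F$ exists; I would record its action on the new arities $\nul$ and $\two$ only insofar as needed.

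Next I would invoke the standard fact that for a bijective-on-objects functor $\iota\colon \C \to \D$, the restriction functor $\iota^*\colon \pr{\D}\to\pr{\C}$ is strictly monadic. The verification via Beck's theorem is routine: $U = \iota^*$ creates all limits and all colimits (these are computed pointwise in both presheaf categories, and $\iota$ is the identity on objects, so $U$ preserves and reflects them), and in particular $U$ creates coequalisers of $U$-split pairs; moreover $U$ reflects isomorphisms, again because isomorphisms of presheaves are detected pointwise and $\iota$ hits every object. By Beck's monadicity theorem, $U$ is therefore strictly monadic, and $\GSp$ is canonically isomorphic to the Eilenberg--Moore category $\GS^\DD$ for the induced monad $\DD = (D, \mu^\DD, \eta^\DD)$, with $D = U \circ F = \iota^* \mathrm{Lan}_\iota$.

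The main obstacle is not the abstract monadicity argument, which is formal, but rather confirming that the concretely-defined category $\GSp$ of \cref{GSp} -- with its \emph{ad hoc} axioms (\ref{eq. unit 1}) and (\ref{eq. unit 2}) relating $\epsilon$, $o$, and the involutions -- genuinely coincides with $\pr{\fisinvp}$ rather than merely mapping to it. This is precisely the content of \cref{lem: elp presheaves}, so I would lean on it directly: a $\DD$-algebra is a pointed graphical species, the algebra structure map encoding exactly the data $\epsilon = S_*(u)$ and $o = S_*(z)$, and the relations imposed on $u$ and $z$ in $\fisinvp$ are term-by-term the equations (\ref{eq. unit 1}) and (\ref{eq. unit 2}). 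Thus the isomorphism $\GS^\DD \cong \GSp$ is an identification of the two presentations, and the proof amounts to assembling the existence of $F$, the pointwise (co)completeness of presheaf categories, and \cref{lem: elp presheaves} into an application of Beck's theorem.
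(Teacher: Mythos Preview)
Your approach is correct and aligns with the paper's: both identify $\GSp$ with $\pr{\fisinvp}$ via \cref{lem: elp presheaves}, and both construct the left adjoint as left Kan extension along $\fisinv \hookrightarrow \fisinvp$. The difference is one of emphasis. You invoke Beck's theorem and the general fact that restriction along a bijective-on-objects functor between presheaf categories is strictly monadic, whereas the paper bypasses the abstract monadicity argument entirely and instead \emph{computes} the Kan extension explicitly: $DS_X = S_X$ for $X \not\cong \two, \nul$, while $DS_\two = S_\two \amalg \{\epsilon^+_c\}_{c \in S_\S}$ and $DS_\nul = S_\nul \amalg \{o^+_{\tilde c}\}_{\tilde c \in \widetilde{S_\S}}$, with $\eta^\DD$ the evident inclusion and $\mu^\DD$ the evident projection. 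From this description it is immediate by inspection that a $\DD$-algebra structure on $S$ is exactly the data $(\epsilon, o)$ of a pointed species. Your route is cleaner as a pure existence argument; the paper's explicit formula for $DS$ is what is actually used downstream (e.g.\ in \cref{ex. DS} and in constructing the distributive law $\lambda$), so it is worth recording. One small remark: your aside ``$(FS)_\S = \mathrm{colim}_{(X,x)} S(X)$'' is misleading---since no new morphisms \emph{out of} $\S$ are added in $\fisinvp$, the relevant comma category is unchanged and $(FS)_\S = S_\S$; the Kan extension only alters the values at $\two$ and $\nul$.
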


\begin{proof}
The left adjoint $(\cdot)^+$ to the forgetful functor $ \GSp \to \GS$ takes a graphical species $S$ to its left Kan extension $S^+ $ along the inclusion $({\fisinv})^\mathrm{op} \hookrightarrow({\fisinvp})^\mathrm{op}$. 
This does nothing more than formally adjoin elements $\{\epsilon^{+}_c \} _{c \in S_\S}$ to $S_ \two$ and $\{o^{+}_{\tilde c}\}_{\tilde c \in \widetilde {S_\S}}$ to $S_\nul$ according to the combinatorics of contracted units (\ref{eq. unit 1}), (\ref{eq. unit 2}). Hence, $S^+$ is described by $(DS, \epsilon^+, o^+) = (DS \epsilon^{DS}, o^{DS}) $ where $DS_\two = S_\two \amalg \{\epsilon^{+}_c \} _{c \in S_\S}$, $DS_\nul = S_\nul \amalg \{o^{+}_{\tilde c}\}_{\tilde c \in \widetilde {S_\S}}$, and $DS_X  = S_X$ for $X \not \cong \two, X \not \cong \nul$.  %By definition, the canonical pointed structure on $DS$ is given by $ (DS, \epsilon^{DS}, o^{DS}) = (DS, \epsilon^+, o^+)$.

The monadic unit $\eta^\DD$ is provided by the inclusion $S \hookrightarrow DS$, and the multiplication $\mu^\DD$ is induced by the canonical projections $D^2S_\two \to DS_\two$. \label{+ functor}
\end{proof}

%   %\srnote{1103 bit awkward...}

%To reduce clutter, the category of elements of a pointed graphical species $S_*$ will be denoted by $\elpG[S_*] \defeq \ElP{S_*}{\fisinvp}$.
%\begin{rmk}
%	Some care must be taken with the notation $\epsilon^+$, $o^+$. In case it is necessary to specify the precise pointed graphical species in which the (contracted) units live, I will always do so, by using $\epsilon^{DS}, o^{DS}$ instead. 
%	
%	For example, for each graphical species $S$ and each $c \in S_\S$, $\epsilon^{DS}_c \in DS_\two$ is an element of $ D^2 S_\two$ (since $DS_\two \subset D^2S_\two$). But $\epsilon^{D^2S} = (DS)^+(u)$, -- and not $\epsilon^{DS}$ -- provides units for the free pointed graphical species $ (DS)^+$.
%	
%\end{rmk}
%

\subsection{Pointed graphs}\label{grp cat}

Let $ \Grp$ be the category obtained in the bo-ff factorisation of $(\yet -)^+\colon \Gr \hookrightarrow \GS \to \GSp$, so that the following diagram commutes:
 
\begin{equation} \label{defining Grp}
\xymatrix{ 
	\fisinvp \ar[rr]^-{\text{ dense}}_-{\text{ f.f.} }	\ar@/^2.0pc/[rrrr]_-{\text{ f.f.} }			&& \Grp \ar [rr]_-{\text{ f.f.} }^-{\yetp}				&& \GSp \ar@<2pt>[d]^-{\text{forget}}\\ %\ar@{^{(}->} [rr]^-{\text{ inc sheaves }}	&& \pr{\Grp}\ar[d] \\
	\fisinv \ar@{^{(}->} [rr]\ar[u]^{\text{b.o.}}	&& \Gr \ar@{^{(}->} [rr]_-{\yet} \ar[u]^{\text{b.o.}}	&& \GS. \ar@<2pt>[u]^-{(\cdot)^+	}}\end{equation}

The inclusion $\fisinvp \to \Grp$ is fully faithful (by uniqueness of bo-ff factorisation), and also dense, since the induced nerve $\yetp\colon \Grp \to \GSp$ is fully faithful by construction.% $\fisinvp$ is also dense in $ \Grp$. 

Let $\G\in \Gr$ be a graph. By \cref{lem: elp presheaves}, for each edge $e \in E$, $ \epsilon^\G_e  =  ch_e \circ u \in \Grp (\C_\two, \G)$ is the $ch_e$-coloured unit for $ \yetp \G$, and the corresponding contracted unit is given by $o^\G_{\tilde e} = ch_e \circ z \in \Grp (\C_\nul, \G)$. 

%\begin{defn} \label{defn. pointed el} Let $S_*$ be a pointed graphical species. The category of \emph{pointed elements of $S_*$} is given by $\elpG[S] \defeq \ElP{\fisinvp}{S_*}$. (See \cref{defn: general element}.) 
%%	A \emph{pointed element of a pointed graphical species $S_* $} is an object $(\C, \phi)$ -- where $\C \in \fisinvp$ and $\phi \in S(\C)$ -- of the element category $ \elpG[S] \defeq (\Comm_{\fisinvp} \ov {S_*}_{\fisinvp})^\mathrm{op}$. %\int_{\fisinvp} S_*$.
%\end{defn}

Since the functor $\yetp$ embeds $\Grp$ as a full subcategory of $\GSp$, I will denote $\yetp \G \in \GSp$ simply by $\G$ where there is no risk of confusion. In particular, the element category $\elpG[\yetp \G]$ is denoted by $\elpG$ and called the 
%By an application of the Yoneda lemma $\elpG[\yetp \G] \cong \fisinvp \ov \G$ canonically for all graphs $\G$. Therefore, we usually identify these and write simply $\elpG $ for the 
\emph{category of pointed elements of a graph $\G$}.%%   %\srnote{Always hate these things.}

For all pointed graphical species $S_*$, the forgetful functor $\GSp \to \GS$ induces injective-on-objects inclusions $ \ElS[S]\to \elpG[S_*]$.   % = (S, \epsilon, o)$.
%In particular $ \elG \hookrightarrow \elpG$. 

Recall \cite[Section~IX.3]{Mac98} that a functor $\Psi\colon \CCat \to \DCat$ is \emph{final} if the slice category %$d \ov \Psi$ (where 
$d \ov \Psi\defeq \Psi^{\mathrm{op}}\ov d$ is non-empty and connected for all $d \in \DCat$, and that this is the case if and only if, for any functor $\Phi\colon \DCat \to \DCat[E]$ such that $\mathrm{colim}_{\CCat}(\Phi \circ\Psi)$ exists in $\DCat[E]$, $\mathrm{colim}_{\DCat}\Phi $ also exists in $\DCat[E]$ and the two colimits agree. % In this case the universal morphism \[\mathrm{colim}_{\CCat}\Phi \circ \Psi \to \mathrm{colim}_{\DCat} \Phi\] induced by the inclusion $im(\Psi) \subset \DCat$ is an isomorphism. In particular, by the lemma, for any category $\CCat$ the colimit of any functor $F\colon \elG \to \CCat$, if it exists, may be computed as the colimit of $ F \circ I_\G^\bullet\colon \elG[\G^\bullet] \to \CCat$.%\[FINALITY CONDITION\]

\begin{lem}\label{lem: final}
	For all graphs $\G$, the inclusion $\elG \hookrightarrow \elpG$ is final. %In other words, a functor $ \Phi\colon \elpG \to \CCat$ has a colimit if and only if its restriction to $\elG$ has a colimit, in which case these colimits agree. 
Therefore, $\fisinv$ is dense in $\Grp$ and, %$\G$ is the colimit of the forgetful functor $ \elpG \to \Grp$, $(\C, b) \to \C$. And, 
for all pointed graphical species $S_* = (S, \epsilon, o)$,
\[ S(\G) = \mathrm{lim}_{(\C, b) \in \elG}S(\C) = \mathrm{lim}_{(\C',b') \in \elpG}S_*(\C').\]
	
\end{lem}
\begin{proof} By definition, $\elpG$ is obtained from $ \elG$ by adjoining, for each $e \in E$, the objects $(\two, ch_e \circ u)$ and $ (\nul, ch_e \circ z) = (\nul, ch_{\tau e} \circ z)$ and the morphisms 
	\[ \xymatrix{(\two, ch_e \circ u) \ar[rr]^-{u}&& (\S, ch_e)&& \ar[ll]_-z (\nul, ch_e \circ z).
	} \]
Hence, for all $(\C, b) \in \elpG$, the slice category $ b \ov \elG$ is connected and non-empty. %The second part of the lemma follows immediately. %, and the lemma follows by e.g.\\cite[Section~3, Chapter~IX]{Mac98}.
\end{proof}

By \cref{lem: final}, % and the definition of $\Grp$ that 
a morphism $f \in \Grp (\G, \G')$ is described by a functor $\elG \to \elpG[\G']$ such that, for each $(\C,b) \in \elG$, $(\C,b)\mapsto (\C, f (b))$, and there is a commuting diagram
\[ \xymatrix{ \C \ar[rr]^-{g_{f(b)}} \ar[dr]_{f (b)} &&\C'\ar[dl]^{b'} \\
	&\G'&}\] where $g_{f(b)} \in \fisinvp(\C,\C')$ and $(\C', b') \in \elG[\G']$ is an (unpointed) element of $\G'$.

\begin{ex}\label{loop collapse} (Compare \cref{ex: deg wheel}.) A surprising consequence of the definitions is that the morphism set $\Grp(\W, \shortmid)$ is non-empty. 
	There are two morphisms $\kappa, \tau \circ \kappa \in \Grp (\W, \shortmid)$: %corresponding to the diagrams
	
	\begin{minipage}[t]{0.5\textwidth}
		\begin{equation} \label{epsilon}
		\xymatrix{
			& \W & \\
			(\shortmid) \ar[ur]^{ch_a} \ar@<-2pt>[rr]_{ch_2\circ \tau} \ar@<2pt>[rr]^{ ch_{1} } \ar@{=}[dr] && \C_\two \ar[ul]_{ 1_{\C_\two} \mapsto a} \ar[dl]^{u}\\
			& (\shortmid)& }\end{equation}
		
	\end{minipage}
	\begin{minipage}[t]{0.5\textwidth}
		\begin{equation} \label{tau epsilon}
		\xymatrix{
			& \W & \\
			(\shortmid) \ar[ur]^{ch_a} \ar@<-2pt>[rr]_{ch_2\circ \tau} \ar@<2pt>[rr]^{ ch_1 } \ar@{=}[d] && \C_\two \ar[ul]_{ 1_{\C_\two} \mapsto a} \ar[d]^{\sigma_\two}\\
			(\shortmid) \ar@<-2pt>[rr]_{ch_1\circ \tau} \ar@<2pt>[rr]^{ ch_2} \ar@{=}[dr] && \C_\two \ar[dl]^{u}\\
			& (\shortmid).& }\end{equation}
		
	\end{minipage}
	
	%Recall the degenerate $\W$-indexed graph of graphs $\Lambda\colon \elG[\W] \to \Gr$, of \cref{ex\colon deg wheel}. On components, this describes precisely the constant $\fisinvp$-cocone of $\elG[\W]$ over $(\shortmid)$ illustrated in Diagram \ref{epsilon}.
	
	Hence, $\Grp(\W, \shortmid) \cong \Grp(\shortmid, \shortmid) \cong \Grp(\W, \W)$. In particular, for all graphs $\G \not \cong \W$, %\begin{equation}\label{collapsed wheels}
	\[\Grp (\W, \G) \cong E(\G) \ \text{ by } \ ch_e \circ \kappa \mapsto e.\]%\end{equation}
	
	These morphisms play a crucial role in the proof of the nerve theorem, \cref{nerve theorem}.
	%So there is only no morphism $\W \to \C_\nul$.
\end{ex}

%Let $\elp \hookrightarrow \Grp$ denote the full subcategory of pointed morphisms and elementary graphs. 

Now, let $W \subset V_2$ be a subset of bivalent vertices of a connected graph $\G$. %Recall REFREF of graphs of graphs.

\begin{defn}\label{def: vertex deletion}
	A \emph{vertex deletion functor (for $W$)} is a $ \G$-shaped graph of graphs ${\Gdg}^\G_{\setminus W}\colon \elG \to \Grp $ such that for $(\CX, b) \in \elG$, 
	\[{\Gdg}^\G_{\setminus W} (b) = \left \{ \begin{array}{ll} 
	(\shortmid)& \text{ if } (\CX, b) \text{ is a neighbourhood of } v \in W,\\
	\CX &\text{ otherwise. } 
	\end{array} \right. \] 
	
	\label{vertex deletion morphism}
	If ${\Gdg}^\G_{\setminus W}$ admits a colimit $\Gnov[W]$ in $\Grp$, then the induced morphism 
	$\delW \in \Grp(\G, \Gnov[W])$ is called the \emph{vertex deletion morphism corresponding to $W$.}
\end{defn}
Note, in particular, that a vertex deletion functor ${\Gdg}^\G_{\setminus W}$ is non-degenerate if and only if $ W = \emptyset$ in which case ${\Gdg}^\G_{\setminus W}$ is the identity graph of graphs $\Gid \colon (\C,b) \mapsto \C$ (\cref{subs: gluing}).

Moreover, if $W = W_1 \amalg W_2$ and $ \delW =\delW^\G  \colon \G \to \Gnov$ exists in $\Grp$, then so do 
\[\delW[W_1]^\G\colon \G\to \Gnov[W_1] \ \text{ and } \ \delW[W_2]^{\Gnov[W_1]}\colon \Gnov[W_1]^\G \to (\Gnov[W_1])_{\setminus W_2} = \Gnov[W]^\G\]
and $\delW = \delW[W_2]^{\Gnov[W_1]}\circ \delW[W_1]^\G$.

%\begin{prop}\label{Gnov construction}
%	For all graphs $\G$ and all $W \subset V_2$, the colimit $\Gnov$ of ${\Gdg}^\G_{\setminus W}$ exists in $ \Grp$.
%	
%	Moreover, $E_0(\G) =E_0(\Gnov)$ unless $\G = \Wl$ and $W = V$ for some $m \geq 1$.
%\end{prop}

\begin{ex}\label{line deletion} For $\G = \C_\two$ and $ W = V = \{*\}$, ${\Gdg}^\G_{\setminus W}$ is the constant functor induced by the cocone of $ \elG[\C_\two]$ over $(\shortmid)$ in $\Grp$: 
	\begin{equation}
	\qquad \xymatrix{ \elpG[\C_\two] \ar[d]& \simeq &
		(\shortmid) \ar[rr]^-{ch_1}\ar[drr]_-{id_{(\shortmid)}} && \C_\two \ar[d]^{u} && (\shortmid) \ar[ll]_-{ch_2 \circ \tau} \ar[dll]^{id_{(\shortmid)}} \\
		\elG[\shortmid] && &&(\shortmid)&&.}
	\end{equation}
	
	So, ${\Gdg}^\G_{\setminus W}$ has colimit $(\shortmid) $ in $\Grp$ and $\delW = u \in \Grp (\C_\two, \shortmid)$.
	
	In fact, for all $k \geq 0$, if $\G = {\Lk}$ and $ W = V$, then ${\Gdg}^\G_{\setminus W}$ is also the constant functor to $\Gnov = (\shortmid)$, and $u^k \defeq \delW\colon \Lk \to \Grp$ is induced by the $\Grp$-cocone under $\elpG[\Lk]$:
	\[
	\xymatrix{ \elpG[\Lk] \ar[d] & \simeq &
		(\shortmid) \ar[rr]^-{ch_1}\ar[drr]_-{id_{(\shortmid)}} && \C_\two \ar[d]^{u} &&{ \text{ \dots }} \ar[ll]_-{ch_2 \circ \tau} \ar[rr]^-{ch_1}% \ar[dll]^{id_{(\shortmid)}} \\
		&& \C_\two \ar[d]^-{u}&& \ar[ll]_-{ch_2 \circ \tau}\ar[dll]^{id_{(\shortmid)}} (\shortmid)\\
		%(\shortmid)\ar@{=}[rr]
		\elG[\shortmid] && &&(\shortmid)\ar@{=}[r]&&{ \text{ \dots }} &&\ar@{=}[l](\shortmid).&&}\]
	%Hence $\Lk_{\setminus V} = \mathrm{colim}_{ \elG[\Lk]} {\Gdg}^{\Lk}_{\setminus V} $ exists in $\Grp$ and is isomorphic to $(\shortmid)$.
	(So $u^1 = u \colon \C_\two \to (\shortmid)$ and $u^0$ is just the identity on $(\shortmid)$.)

\end{ex}

For any graph $\G$, a pointwise \'etale injection $\iota \in \Gr (\Lk, \G)$ describes a subset $V(\Lk) = W \subset V_2(\G)$ of bivalent vertices of $\G$. Hence, $ \delW \in \Grp(\G, \Gnov)$ exists in $\Grp$ and there is an edge $ e_{\setminus W} = \delW(\iota(1_{\Lk})) \in E(\Gnov)$ so that the following diagram commutes:
\begin{equation}\label{eq: delete lines} \xymatrix{ \Lk \ar[rr]^-{\iota}\ar[d]_{u^k} && \G \ar[d]^-{\delW }\\
	(\shortmid) \ar[rr]_-{ch_{e_{\setminus W}}} && \Gnov,
}\end{equation}

\begin{ex}\label{wheel deletion}
	Let $*$ be the unique vertex of the wheel graph $\W$. By \cref{loop collapse}, $\W_{\setminus \{*\}} = \mathrm{colim}_{ \elG[\W]} {\Gdg}^\W_{\setminus \{*\}} $ exists and is isomorphic to $(\shortmid)$ in $ \Gr$. (See also \cref{degenerate}.) The induced morphism $\delW[\{*\}]$ is precisely $\kappa\colon \W \to (\shortmid)$.

For $m \geq 1$, let $W \subset V(\Wm)$ be the image of $V(\Lk[m-1])$ under an \'etale morphism $\iota \in \Gr(\Lk[m-1], \Wl)$. So $V(\Wm) = W \amalg \{*\}$, and by (\ref{eq: delete lines}), $\iota$ induces a vertex deletion morphism $\delW \in \Grp (\Wm, \W)$. Therefore $ \kappa^m \defeq \delW[V(\Wl)]$ is given by the composite $\kappa ^m =  \kappa \circ \delW \colon \Wl \to \W \to (\shortmid)$.

	In particular, for all $m \geq 1$, there are precisely two distinct morphisms, $\kappa^m $ and $\tau \circ \kappa^m$, in $\Grp (\Wm, \shortmid)$. Hence, for all graphs $\G$, 
	\[\Grp (\Wl, \G) = \Gr(\Wl, \G)\amalg \{ch_{e} \circ \kappa^m\}_{ e\in E(\G)} .\] \end{ex}

\begin{prop}\label{Gnov construction}
	For all graphs $\G$ and all $W \subset V_2$, the colimit $\Gnov$ of ${\Gdg}^\G_{\setminus W}$ exists in $ \Grp$.
	
	Moreover, $E_0(\G) =E_0(\Gnov)$ unless $\G = \Wl$ and $W = V$ for some $m \geq 1$.
\end{prop}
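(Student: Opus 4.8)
The plan is to reduce the existence of the colimit to the line- and wheel-graph collapses already computed in Examples~\ref{line deletion} and~\ref{wheel deletion}, by decomposing $W$ into maximal chains of bivalent vertices. Since $\G$ is connected and every $v\in W$ is bivalent, I would first examine the subdiagram of $\G$ spanned by $W$ together with those inner edges both of whose incident vertices lie in $W$. Each connected component of this subdiagram is a connected bivalent graph, so by \cref{bivalent graphs} it is either a line graph $\Lk[k]$ or a wheel $\Wl$. A wheel component can occur only if its vertices absorb all of their incident edges, which, $\G$ being connected, forces $\G=\Wl$ and $W=V$; this is the exceptional case. Otherwise every component is a line, giving a pointwise \'etale injection $\iota_j\in\Gr(\Lk[k_j],\G)$ (\'etale since the essential morphisms $\esv$ are, by \cref{ex: esv ese etale}) with $V(\Lk[k_j])\cong P_j\subseteq W$ and whose two ports land on the two \emph{outer} edges of the chain, exactly as in the set-up of diagram (\ref{eq: delete lines}).

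In the generic case I would construct $\Gnov$ explicitly as the graph obtained from $\G$ by \emph{smoothing} each chain $P_j$: delete its vertices and half-edges and splice its $k_j+1$ edge-orbits into a single orbit joining the two outer ends. Distinct chains meet only at surviving vertices (those not in $W$), so the splicings are independent and the result is a genuine connected graph, hence an object of $\Grp$. The cocone exhibiting $\Gnov$ as a colimit consists of the essential morphisms $\esv[v]$ and $\ese[\tilde e]$ for the surviving vertices and edges, together with, along each chain, the collapse $u^{k_j}\colon\Lk[k_j]\to(\shortmid)$ of \cref{line deletion} fitted into the square (\ref{eq: delete lines}). For universality I would delete the chains one at a time, using the compositionality of vertex deletion recorded after \cref{def: vertex deletion}: each single-chain deletion is the line collapse of \cref{line deletion} performed inside $\G$ and exists in $\Grp$ by the discussion preceding (\ref{eq: delete lines}); since the chains are pairwise separated by surviving vertices, these deletions commute, and matching an arbitrary $\Grp$-cocone against the essential-cover presentation (\ref{eq: graph data}) of $\Gnov$ yields the required factorisation. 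Composing over all $j$ produces the colimit $\Gnov$ and the morphism $\delW$.

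For the boundary statement I would observe that a chain smoothing leaves $E_0$ untouched: the two outer edges of each chain keep their port/inner status under the splice, just as the identity $E_0(\G)=E_0(\coGg)$ of \cref{graph of graphs edges} persists through the non-degenerate part of the construction and is visible for $u^k$ in (\ref{eq: delete lines}). Hence $E_0(\Gnov)=E_0(\G)$ in the generic case. In the exceptional case $\G=\Wl$, $W=V$, \cref{wheel deletion} gives $\Gnov\cong(\shortmid)$ with $\delW=\kappa^m$; here $E_0(\Wl)=\emptyset$ while $E_0(\shortmid)=\two$, which accounts for the single stated exception.

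The main obstacle is that this universal property must be verified in $\Grp$ rather than in $\Gr$ or $\Gret$: because the substituted sticks $(\shortmid)_v$ make ${\Gdg}^\G_{\setminus W}$ degenerate, \cref{colimit exists} does not apply, and the relevant coequalisers exist only once the extra morphisms $u$ (and, for the wheel, $\kappa$) of $\Grp$ are available. The delicate point is to show that these morphisms resolve every degenerate gluing into an honest graph. In the generic case each open chain collapses to a genuine edge, so the naive coequaliser never produces the non-graph exceptional loop $\bigcirc$ of \cref{deg loop}; in the wheel case the $\GrShape$-coequaliser \emph{is} $\bigcirc$, but the colimit in $\Grp$ is the stick $(\shortmid)$ via $\kappa^m$ (cf. \cref{fig. lim and colim}). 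Establishing that the smoothed graph is genuinely initial among $\Grp$-valued cocones in both regimes is the crux of the argument.
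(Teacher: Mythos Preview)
Your proposal is correct and follows essentially the same approach as the paper. The paper likewise forms the bivalent subgraph $\G^W$ on $W$ (taking the $\tau$-closure of $\coprod_{v\in W}\vE[v]$, which is what you really need rather than only the edges with both ends in $W$), invokes \cref{bivalent graphs} to write it as a disjoint union of line graphs in the case $W\subsetneq V$, applies the collapses $u^{k_i}$ componentwise via diagram~(\ref{eq: delete lines}), and gives the explicit description of $\Gnov$; the cases $W=\emptyset$ and $W=V$ are handled separately exactly as you outline, with the latter reducing to Examples~\ref{line deletion} and~\ref{wheel deletion}.
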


\begin{proof}%[Proof of \cref{Gnov construction}]
	
	If $W$ is empty, then $ \Gnov = \G$ and $\delW $ is the identity on $\G$. On the other hand, if $W = V$ then, by \cref{bivalent graphs}, $ \G = {\Lk} $ or $\G = \Wl$ for some $k \geq 0$ or $m \geq 1$, and so $\Gnov = (\shortmid) $ by  Examples \ref{line deletion} and \ref{wheel deletion}. For $\G = \Lk$, the vertex deletion morphism $u^k\colon \Lk \to (\shortmid)$ induces a bijection on boundaries, so the proposition is proved when $W = V$ or $W = \emptyset$.
	
	Assume therefore, that $\emptyset \neq W \subsetneq V$ is a proper, non-empty subset of (bivalent) vertices of $\G$.
	
	Let $ \G^W \subset \G$ be the subgraph with vertices $ V(\G^W) = W$, half-edges $H(\G^W) = \coprod_{v \in W}\vH[v]$ and whose edge set $E(\G^W) $ is the $\tau$-closure of $\coprod_{v \in W}\vE[v]$. (See \cref{fig: vertex deletion}.)
	
	By \cref{bivalent graphs}, $\G^W\cong \coprod_{i = 1}^m \Lk[k_i]$ is a disjoint union of line graphs, with $k_i \geq 1$ for all $i$: %Hence $W = \coprod _{i = 1}^m W_i$ with $W_i = V(\Lk[k_i])$, and 

	\begin{equation}\label{eq: GW in G} 
	E_0(\G^W) = \coprod_{i = 1}^m \{1_{\Lk[k_i]}, 2_{\Lk[k_i]}\}, \quad \text{ and }\quad \left(\coprod_{i = 1}^m \{1_{\Lk[k_i]}\}\right)\cap \left(\coprod_{i = 1}^m \{2_{\Lk[k_i]}\}\right) = \emptyset \ \text{ in } E(\G).
	\end{equation} 
	% In particular the boundary $E_0(\G^W) = \coprod_{i = 1}^m \{1_{\Lk[k_i]}, 2_{\Lk[k_i]}\} \cong m(\two)$ and the sets $\coprod_{i = 1}^m \{1_{\Lk[k_i]}\} $ and $ \coprod_{i = 1}^m \{2_{\Lk[k_i]}\}$ are disjoint in $E$. 
	%For $1 \leq i \leq m$, let $e_i \in E(\G) $ be the edge corresponding to $1_{\Lk[k_i]} $.
	
	The graph $\Gnov$ is obtained by applying $u^{k_i}\colon \Lk[k_i] \to (\shortmid)$ on each component $\Lk[k_i] \hookrightarrow \G$ in turn. Since $W \neq V$ and components of $\G^W$ are disjoint in $\G$, this is independent of the order of $\{ \Lk[k_i] \}_{i = 1}^m$, and hence $\delW \colon \G \to \Gnov$ exists in $\Grp$. The construction is summarised in the diagram
	\[ \xymatrix{\G^W \ar@{<->}[r]^-{\cong} &\coprod_{i = 1}^m \Lk[k_i] \ar[rr]^-{\iota}\ar[d]_{\coprod_{i = 1}^m u^{k_i} }&& \G \ar[d]^{\delW}\\
	&	\coprod_{i = 1}^m (\shortmid) \ar[rr]_-{ \coprod_i ch_{e_{\setminus W_i}} }&& \Gnov,}\]
	where, for $1 \leq i \leq m$, $e_i  = \delW (\iota (1_{\Lk[k_i]}))$ in $ E(\Gnov) $. % be the edge corresponding to $1_{\Lk[k_i]} $. and therefore $\delW\colon \G \to \Gnov$ exists in $\Grp$. 
%	
%	The graph $\Gnov$ is obtained by applying $u^{k_i}\colon \Lk[k_i] \to (\shortmid)$ on each component $\Lk[k_i]$ of $\G^W \subset \G$. Since these are disjoint in $\G$ and $W \neq V$, there are, for $1 \leq i\leq m$, commuting diagrams
%	\[ \xymatrix{\Lk[k_i] \ar[rr]\ar[d]_{ u^{k_i} }&& \Gnov[\coprod_{j = 1}^{i-1} W_j] \ar[d]^{\delW[W_i]}\\
%		 (\shortmid) \ar[rr]_-{ ch_{e_i} }&& \Gnov[\coprod_{j = 1}^{i-1} W_j]],}\]
%	and the coproduct $\delW\colon \G \to \Gnov$ exists in $\Grp$. 
	
When $W \neq V$, the graph $\Gnov$ (see \cref{fig: vertex deletion}) is described explicitly by:
	\[\ \Gnov = \ \xymatrix{
		*[r] { \Enov}\ar@(ul,dl)[]_{\taunov} && { \Hnov} \ar[ll]_-{\snov} \ar[rr]^-{\tnov}&& \Vnov,}\] where
	\[ \begin{array}{lll}\Vnov&=&V \setminus W,\\
	\Hnov &= &H \setminus H(\G^W) = H \setminus \left (\coprod_{v \in W} \vH \right),\\
	\Enov &%= &E \setminus \left (E(\G^W) \setminus E_0(\G^W)\right)& 
= & E\setminus \left (\coprod_{ v \in W} \vE \right).	\end{array}\]
The maps $\snov, \tnov$ are just the restrictions of $s$ and $t$ and the involution $\taunov\colon \Enov \to \Enov$ is given by 
	\[ \begin{array}{llll} \taunov (e) & = &\tau e & \text{ for } e \in E \setminus E(\G^W), \\
	\taunov (1_{\Lk[k_i]}) & = &2_{\Lk[k_i]} & \text{ for } 1 \leq i \leq m.\end{array}\] 
By (\ref{eq: GW in G}), this is fixed point free and induces an identity $E_0 (\G) = E_0 (\Gnov)$ on boundaries.
%
%% By \cref{eq: GW in G}, 
%% Since $\coprod_{i = 1}^m \{1_{\Lk[k_i]}\}$ and $\coprod_{i = 1}^m \{2_{\Lk[k_i]}\}$ are disjoint in $E$ when $W \neq V$,
%The final statement, that $E_0(\G) = E_0(\Gnov)$ except when $\G = \Wl$ and $W = V$ for some $m \geq 1$, follows since $\delW \in \Grp(\G, \Gnov)$ restricts to an identity on boundaries when $W \neq V$ by (\ref{eq: GW in G}).
%%When $W \neq V$, $\delW \in \Grp(\G, \Gnov)$ restricts to an identity on boundaries by (\ref{eq: GW in G}). Hence immediately.
\end{proof}
	%$\snov, \tnov$ are just the restrictions of $s$ and $t$. 
	
	%Let	\[ \begin{array}{lllll}\Vnov& \defeq &V \setminus W,&&\\
	% \Hnov &\defeq &H \setminus H(\G^W) &= & H \setminus \coprod_{v \in W} \vH,\\
	% \Enov &\defeq &E \setminus \left (E(\G^W) - E_0(\G^W)\right)& = & E\setminus \coprod_{ v \in W} \vE,\end{array}\]
	%As $V \neq W$, the involution $\taunov\colon \Enov \to \Enov$ given by 
	% \[ \begin{array}{llll} \taunov (e) & = &\tau e & \text{ for } e \in E \setminus E(\G^W), \\
	% \taunov (1_{\Lk[k_i]}) & = &2_{\Lk[k_i]} & \text{ for } 1 \leq i \leq m\end{array}\] is fixed point free. Then $\Gnov$ has the explicit description
	% \[\ \Gnov = \ \xymatrix{
	% 	*[r] { \Enov}\ar@(ul,dl)[]_{\taunov} && { \Hnov} \ar[ll]_-{\snov} \ar[rr]^-{\tnov}&& \Vnov,}\] where $\snov, \tnov$ are just the restrictions of $s$ and $t$. (\cref{fig\colon vertex deletion}.)
	% 
	
	\begin{figure}[htb!]
		\begin{tikzpicture}[scale = .3]

		\draw[->] (-15,0)--(-11,0);
		\node at (-13,.5){\scriptsize{$\delW$}};

		\node at (-25,0){\begin{tikzpicture}[scale = .25]
			
			\draw [thick, cyan]
			(-3,2).. controls (-2.5,1.5) and (-2.5,0.5).. (-3,0)
			(-3,0).. controls (-3.5,-1).. (-5,-1)
			(-5,3) --(-2,4)--(1,4)--(3,3)
			%	(-5,3)..controls (-2,4) and (1,4) .. (3,3)
			
			%	(1,-4)..controls (2, -4) and (6,-2).. (5,2)
			(1,-4)--(4, -3.5)--(5,-1)--(5,2)
			%	(-1,-5) .. controls (-2.5,-5) and (-5, -4.5) ..(-6, -9)
			(-1,-5) -- (-3,-5)--(-5, -6)--(-6, -9)
			;
			
			\draw[blue, fill = cyan]
			(-3,0) circle (8pt)
			(4, -3.5) circle (8pt)
			(5,-1)circle (8pt)
			(5,2)circle (8pt)
			(-2,4)circle (8pt)
			(1,4)circle (8pt)
			(-3,-5)circle (8pt)(-5, -6)circle (8pt)(-5.5,-7.5)circle (8pt)
			;

			\draw[ thick] 	
			%cluster 1
			(-6,1).. controls (-6,2).. (-5,3)
			(-5,3).. controls (-4.5,3) and (-3.5,3).. (-3,2)
			%	(-3,2).. controls (-2.5,1.5) and (-2.5,0.5).. (-3,0)
			%	(-3,0).. controls (-3.5,-1).. (-5,-1)
			(-5,-1).. controls (-6,-0) .. (-6,1)
			(-5,-1)-- (-3,2)
			(-3,2)--(-6,1)
			(-6,1)..controls (-7.5,1) and (-9, 0.5) ..(-11,0.5)
			%cluster 2
			(-1,-5)..controls (-0.5,-3.5) and (0.5, -3.5)..(1,-4)
			(-1,-5)..controls (-0.5,-5) and (0.5, -5)..(1,-4)
			%	(-1,-5) .. controls (-2.5,-5) and (-5, -4.5) ..(-6, -9)
			%cluster 3
			(4,1)--(5,2)
			(5,2)--(3,3)
			(3,3)--(4,1)
			(3,3).. controls (4,3.5) and (5,2.5)..(5,2)
			(3,3).. controls (3,4) and(4,6) ..(8,6)
			%between clusters
			%	(-5,3)..controls (-2,4) and (1,4) .. (3,3)
			(-3,2).. controls (1,-1.5) and (5,-1).. (5,2)
			%	(1,-4)..controls (2, -4) and (6,-2).. (5,2)
			;
			
			\filldraw [black]	
			%cluster 1
			(-6,1) circle (8pt)
			(-5,3) circle (8pt)
			(-3,2) circle (8pt)
			%	(-3,0) circle (8pt)
			(-5,-1) circle (8pt)
			%cluster 2			
			(1,-4) circle (8pt)
			(-1,-5) circle (8pt)
			%cluster 3			
			(4,1) circle (8pt)
			(5,2) circle (8pt)
			(3,3) circle (8pt);
			
			\draw[black, fill = white]
			(-4,.5)circle (8pt)
			(4,1) circle (8pt);
			
			%	\filldraw[white] (-3,0) circle (15pt);

			%\draw [ultra thick] (,0) to [out=87,in=150] (1,1) -- (.85,.15) -- (0,0);
			%\draw [ultra thick, fill=blue] 	(-7.5,0) ..controls (-6,4) and (-2,6)..(0,6)
			(0,6) .. controls (2,6) and (7,4) ..(7,0)
			(7,0) .. controls (7,-2) and (2,-8) ..(0,-8)--(0,0)--(-7.5,0);
			%\path [fill=blue] (4,0) to [out=87,in=150] (5,1) -- (4.85,.15) -- (4,0);
			\end{tikzpicture}};
		
		\node at (-2,0){\begin{tikzpicture}[scale = .25]
			
			\draw [thick, red]
			(-3,2).. controls (-2.5,1.5) and (-2.5,0.5).. (-3,0)
			(-3,0).. controls (-3.5,-1).. (-5,-1)
			%	(-5,3) --(-2,4)--(1,4)--(3,3)
			(-5,3)..controls (-2,4) and (1,4) .. (3,3)
			
			(1,-4)..controls (2, -4) and (6,-2).. (5,2)
			%	(1,-4)--(4, -3.5)--(5,-1)--(5,2)
			(-1,-5) .. controls (-2.5,-5) and (-5, -4.5) ..(-6, -9)
			%	(-1,-5) -- (-3,-5)--(-5, -6)--(-6, -9)
			;
			
			%	\filldraw[cyan]
			%	(-3,0) circle (8pt)
			%	(4, -3.5) circle (8pt)
			%	(5,-1)circle (8pt)
			%	(5,2)circle (8pt)
			%	(-2,4)circle (8pt)
			%	(1,4)circle (8pt)
			%	(-3,-5)circle (8pt)(-5, -6)circle (8pt)(-5.5,-7.5)circle (8pt)
			%	;

			\draw[ thick] 	
			%cluster 1
			(-6,1).. controls (-6,2).. (-5,3)
			(-5,3).. controls (-4.5,3) and (-3.5,3).. (-3,2)
			%	(-3,2).. controls (-2.5,1.5) and (-2.5,0.5).. (-3,0)
			%	(-3,0).. controls (-3.5,-1).. (-5,-1)
			(-5,-1).. controls (-6,-0) .. (-6,1)
			(-5,-1)-- (-3,2)
			(-3,2)--(-6,1)
			(-6,1)..controls (-7.5,1) and (-9, 0.5) ..(-11,0.5)
			%cluster 2
			(-1,-5)..controls (-0.5,-3.5) and (0.5, -3.5)..(1,-4)
			(-1,-5)..controls (-0.5,-5) and (0.5, -5)..(1,-4)
			%	(-1,-5) .. controls (-2.5,-5) and (-5, -4.5) ..(-6, -9)
			%cluster 3
			(4,1)--(5,2)
			(5,2)--(3,3)
			(3,3)--(4,1)
			(3,3).. controls (4,3.5) and (5,2.5)..(5,2)
			(3,3).. controls (3,4) and(4,6) ..(8,6)
			%between clusters
			%	(-5,3)..controls (-2,4) and (1,4) .. (3,3)
			(-3,2).. controls (1,-1.5) and (5,-1).. (5,2)
			%	(1,-4)..controls (2, -4) and (6,-2).. (5,2)
			;
			
			\filldraw [black]	
			%cluster 1
			(-6,1) circle (8pt)
			(-5,3) circle (8pt)
			(-3,2) circle (8pt)
			%	(-3,0) circle (8pt)
			(-5,-1) circle (8pt)
			%cluster 2			
			(1,-4) circle (8pt)
			(-1,-5) circle (8pt)
			%cluster 3			
			(4,1) circle (8pt)
			(5,2) circle (8pt)
			(3,3) circle (8pt);
			
			\draw[black, fill = white]
			(-4,.5)circle (8pt)
			(4,1) circle (8pt);
			
			%	\filldraw[white] (-3,0) circle (15pt);

			%\draw [ultra thick] (,0) to [out=87,in=150] (1,1) -- (.85,.15) -- (0,0);
			%\draw [ultra thick, fill=blue] 	(-7.5,0) ..controls (-6,4) and (-2,6)..(0,6)
			(0,6) .. controls (2,6) and (7,4) ..(7,0)
			(7,0) .. controls (7,-2) and (2,-8) ..(0,-8)--(0,0)--(-7.5,0);
			%\path [fill=blue] (4,0) to [out=87,in=150] (5,1) -- (4.85,.15) -- (4,0);
			\end{tikzpicture}};

	\end{tikzpicture}
	\caption{Vertex deletion $\delW\colon \G \to \Gnov$, with colours indicating $\G^W \subset  \G$ and $W \subset V_2$, and $\coprod_{i = 1}^3 u^{k_i} (\G^W) \subset  \Gnov$. }
	\label{fig: vertex deletion}
\end{figure}

%
%%\begin{rmk}
%%(See also \cref{rmk: HRY solution}.) The extra `boundary data' approach of \cite{HRY19b}, provides a way to ensure that morphisms of the form $\delW$ always induce isomorphisms on specified boundaries, even when $W = V$ and $\G = \W$.
%%\end{rmk}
%
%
% Now, let $\G \not \cong \C_\nul$ and $ \G'$ be connected graphs and let $f \in \Grp(\G, \G')$ be any morphism. If $W_f \subset V_2$ is the set of bivalent vertices such that $ f(v) = \tilde e' \in \widetilde {E'}$, then $v \in W_f$ if and only if each minimal neighbourhood $(\C_\two, b)$ of $v$ in $\G$ is described by
%\[
%f \circ b = ch_{e'} \circ u , \text{ or } f \circ b = ch_{\tau' e'} \circ u.\]
%

%   %\srnote{2803 - big reordering}

\begin{defn}
	\label{def: similar}
	The \emph{similarity category} $\Grsimp\hookrightarrow \Grp$ is the identity on objects subcategory of $\Grp$ whose morphisms are generated under composition by $z\colon \C_\nul \to (\shortmid)$, the vertex deletion morphisms, and graph isomorphisms. Morphisms in $\Grsimp$ are called \emph{similarity morphisms}, and connected components of $\Grsimp$ are \emph{similarity classes}. Graphs in the same connected component of $\Grsimp$ are \emph{similar}. 
\end{defn}

\begin{ex}
	\label{ex: morphisms to stick}\label{morphism sets *}
	% for all all graphs $ \G$, a morphism $f \in \Grp(\C_\nul, \G)$
	Up to isomorphism, the only morphisms in $\Grp$ with codomain $(\shortmid)$ are similarity morphisms of the form $z \colon \C_\nul \to (\shortmid)$, %$
	%In $\Grp$, any morphism with codomain $(\shortmid)$ is, up to isomorphism, of the form $z \in \Grp (\C_\nul, \shortmid)$, 
	$\kappa^m  \colon \Wm \to ( \shortmid)$ ($m \geq 1$), and $ u^k \colon \Lk \to ( \shortmid)$ ($k \geq 0$). %and hence a similarity morphism. 
\end{ex}

%Moreover, the only non-identity morphisms with domain $\C_\nul$ in $\Grp$ are of the form $ ch_e \circ z \colon \C_\nul \to \G$ for some graph $\G$ and edge $e $ of $\G$. % for all all graphs $ \G$, a morphism $f \in \Grp(\C_\nul, \G)$ 
%In other words, if $\Grsimp\hookrightarrow \Grp$ is the identity on objects subcategory of $\Grp$ whose morphisms are given by $z\colon \C_\nul \to (\shortmid)$, the vertex deletion morphisms, and graph isomorphisms, then $(\Grsimp, \Gr)$ define an orthogonal \textit{generic--free} (see \cite{Web07}) 
%factorisation system on $\Grp$. 

%e such that morphisms in the right class consists of morphisms in $\Gr$, and morp left c of morphisms consists of $z\colon \C_\nul \to (\shortmid)$, the vertex deletion morphisms, together with graph isomorphisms, and the . 
%

%is a similarity morphism by \cref{pp* sim} 
\begin{cor}[Corollary to \cref{Gnov construction}]\label{pp* sim}
The pair $(\Grsimp, \Gr)$ of subcategories of $\Grp$ defines a weak 
	factorisation system on $\Grp$. 
	
%\footnote{In this thesis, I do not use the fact that each $f \in \Grp(\G, \G')$ also has a factorisation as a morphism in $\Gr$ followed by a vertex deletion morphism.}

In particular, if $E_0(\G) \neq \emptyset$ and $f \in \Grp (\G, \G')$ is boundary-preserving, then $f =  f_{\setminus {W_f}} \circ \delW[W_f]$ where $f_{\setminus {W_f}} \in \Grp ( \G_{\setminus {W_f}}, \G')$ is an isomorphism.

\end{cor}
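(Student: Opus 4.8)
The plan is to read the final statement off the orthogonal factorisation system established in the first part of the corollary, using only the boundary bookkeeping of \cref{Gnov construction} and the rigidity result \cref{pp iso}. So I take as given the factorisation $f = f_{\setminus W_f}\circ\delW[W_f]$, with $\delW[W_f]$ the vertex deletion morphism associated to some $W_f\subseteq V_2$ and $f_{\setminus W_f}\in\Gr(\Gnov[W_f],\G')$ étale. Note that $\G\not\cong\C_\nul$ here, since $E_0(\G)\neq\emptyset$, so no copy of $z\colon\C_\nul\to(\shortmid)$ can occur in the left factor, and $\delW[W_f]$ is genuinely a vertex deletion.

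The first step is to verify that $\delW[W_f]$ is boundary-preserving. By the last sentence of \cref{Gnov construction}, $\delW[W_f]$ restricts to the identity on ports (in particular $E_0(\G)=E_0(\Gnov[W_f])$) whenever $W_f\neq V$. The case $W_f=V$ forces $\G$ to be connected and bivalent, hence $\G=\Lk$ or $\G=\Wl$ by \cref{bivalent graphs}: if $\G=\Wl$ then $E_0(\G)=\emptyset$, which is excluded by hypothesis, while if $\G=\Lk$ then $\delW[W_f]=u^k\colon\Lk\to(\shortmid)$ is a bijection on boundaries by \cref{line deletion}. Either way $\delW[W_f]$ restricts to a bijection $E_0(\G)\xrightarrow{\cong}E_0(\Gnov[W_f])$, and in particular $E_0(\Gnov[W_f])=E_0(\G)\neq\emptyset$.

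The second step is a cancellation argument on boundaries. From $f_E=(f_{\setminus W_f})_E\circ(\delW[W_f])_E$, together with the fact that both $f$ and $\delW[W_f]$ restrict to boundary bijections, I would deduce that $f_{\setminus W_f}$ carries ports to ports and restricts to a bijection $E_0(\Gnov[W_f])\to E_0(\G')$: every port of $\Gnov[W_f]$ equals $\delW[W_f](e)$ for a unique $e\in E_0(\G)$, and $f_{\setminus W_f}(\delW[W_f](e))=f(e)\in E_0(\G')$, so the induced boundary map is well defined and bijective because $f_{E_0}$ and $(\delW[W_f])_{E_0}$ both are. Thus $f_{\setminus W_f}$ is a boundary-preserving étale morphism between connected graphs (objects of $\Grp$ being connected), whose domain has non-empty boundary, and $\G'$ is non-empty since it receives the ports of $\Gnov[W_f]$. \cref{pp iso} then applies directly and gives that $f_{\setminus W_f}$ is an isomorphism.

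The only real obstacle I anticipate is bookkeeping rather than conceptual. I must confirm that the single case in which vertex deletion fails to preserve boundaries --- the totally contracted wheel $\Wl\to(\shortmid)$ --- is precisely the case excluded by $E_0(\G)\neq\emptyset$, and I must \emph{justify}, rather than assume, that $f_{\setminus W_f}$ sends ports to ports, since an étale morphism can in general carry a port to an inner edge. Both points are settled by the boundary bijection of $\delW[W_f]$ established in the first step, combined with the cancellation in the second.
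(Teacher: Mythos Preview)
Your argument for the second statement is correct and matches the paper's approach: both reduce to \cref{pp iso} after checking that the \'etale factor $f_{\setminus W_f}$ is boundary-preserving. You supply more explicit boundary bookkeeping than the paper, which simply writes that the second statement ``follows immediately from \cref{pp iso}'', but the content is identical.

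However, you do not prove the first statement --- that $(\Grsimp, \Gr)$ forms an orthogonal factorisation system on $\Grp$. You write that you ``take as given the factorisation $f = f_{\setminus W_f}\circ\delW[W_f]$'', but the existence of this factorisation for an arbitrary $f \in \Grp(\G,\G')$ is exactly what the first statement asserts, and it is not established anywhere prior to this corollary. \cref{Gnov construction} shows only that vertex deletion morphisms $\delW$ exist for any chosen $W \subset V_2$; it does not show that every morphism in $\Grp$ factors through one. The paper closes this gap by constructing $W_f$ explicitly: for $\G \not\cong \C_\nul$, it takes $W_f$ to be the set of bivalent vertices $w$ such that, for a minimal neighbourhood $(\C_\two, b)$ of $w$, one has $f \circ b = ch_{e'} \circ u$ for some edge $e'$ of $\G'$; the case $\G \cong \C_\nul$ is handled separately using the observation that the only morphisms out of $\C_\nul$ in $\Grp$ are $id_{\C_\nul}$ and those of the form $ch_e \circ z$. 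You would need to supply this construction (and verify that the resulting $f_{\setminus W_f}$ is indeed \'etale) to complete the proof.
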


\begin{proof}
	The only non-identity morphisms in $\Grp$ with (co)domain $\C_\nul$ are of the form $ ch_e \circ z   = ch_{\tau e} \circ z \colon \C_\nul \to \G$ for some graph $\G$ with edge $e $, and $z$ has the left lifting property with respect to morphisms in $\Grp$. Moreover, any morphism $f \in \Grp (\G, \G')$ between connected graphs $\G \not \cong \C_\nul$ and $\G' \not \cong \C_\nul$ factors uniquely as $f =  f_{\setminus {W_f}} \circ \delW[W_f]$, where $W_f$ is the set of bivalent vertices $w$ of $\G$ such that, if $(\C_\two, b)$ is a minimal neighbourhood of $w$, then $f \circ b = ch_{e'} \circ u\colon \C_\two \to \G'$ for some (necessarily unique) edge $e'$ of $\G'$. Hence $(\Grsimp, \Gr)$ describes a weak 
	factorisation system on $\Grp$.

The second statement follows immediately from \cref{pp iso}.
\end{proof}

%For any graph $\G$ without isolated vertices, let $\mathcal P(V_2)$ be poset of subsets $W \subset V_2$ of bivalent vertices of $\G$, ordered by inclusion. %Any pointed graphical species $S_* = (S, \epsilon, o)$ in $\E$ defines a functor $ \mathcal P (V_2) \to \E$, with colimit $S(\G)$, by $ W \mapsto S(\Gnov)$ and $\left(W_1 \subset W_2\right) \mapsto \left(S(\mathsf{del}_{\setminus(W_2 \setminus W_1)}^{\Gnov[W_1]}) \colon S(\Gnov[W_2]) \to S(\Gnov [W_1])\right)$. %This and $S(\G)$ is the limit of this diagram in $\E$.

\begin{ex}\label{ex. DS}
	For all graphical species $S$ and all graphs $\G$ with no isolated vertices, by \cref{pp* sim}, 
	%\warn{should say something about what happens when there are isolated vertices} 
	\begin{equation} \label{eq. D on graphs}
		DS(\G) = \coprod_{W \subset V_2} S(\Gnov).
	\end{equation} %where $\mathcal P(V_2)$ is the poset of subsets $W \subset V_2$ of bivalent vertices of $\G$, ordered by inclusion.
\end{ex}

\begin{ex}
	%For $\G \not \cong \C_\nul$, let $\langle \G\rangle ^-$ be the set of graphs in $ob (\Gr)$ obtained from $\G \in ob (\Gr)$ by deleting a (possibly empty) subset of $V_2$. Then, for all $\G'$,%since morphisms in $\Grp(\G,\G')$ factor uniquely as vertex deletion morphisms followed by morphisms in $\Gr$, 
	%\begin{equation} \label{Gr*Gr} \Gr^*(\G, \G') = \coprod_{ \G^- \in\langle\G\rangle^-} \Gr(\G^-, \G').\end{equation}
	\label{Lk morphisms}
	In particular, for all graphs $\G $ and all $k \in \N$, (\ref{eq. D on graphs}) gives
	\[\Gr_*({\Lk}, \G)\cong \coprod_{j =0}^k \left (k \atop j \right)\Gr(\Lk[j], \G).\]
\end{ex}

%The factorisation system described by $(\Grsimp, \Grp)$ is called the \textit{generic--free} factorisation system on $\Grp$ (see \cite{Web07}).

%More generally, since $DS_\nul = DS (\C_\nul) = S_\nul \amalg \widetilde{S_\S}$ where $ \widetilde{S_\S}$ is the coequaliser of the identity and $S_\tau$ on $S_\S$, \begin{equation}\label{eq. D on graphs general} DS(\G) \cong colim_{(\H,f) \in \G \ov \simGret} S(\H) \end{equation} %generalises (\ref{eq. D on graphs}) to 
%for all graphs $\G \in \Gret$. 

%\begin{ex}
%	Let $\E = \Set$ and, as usual, let $\yet$ be the full embedding $\Gret \hookrightarrow \GS$, $\H \mapsto \Gret (-, \H)$. Let $\G$ be any graph and $\H$ a graph without isolated vertices. Then the factorisation of morphisms in $\Gretp(\G, \H)$ also follows from the identity
%	\[\begin{array}{llll}
%	\Gretp(\H, \G) & = & DY(\H) (\G) & \text{ by } (\ref{defining Grp})\\
%	&=& \coprod_{ W \in \mathcal P(V_2)}\Gret (\Gnov, \H) & \text { by } (\ref{eq. D on graphs}).
%	\end{array} \] 
%\end{ex}

By %\cref{Gnov construction} and
 \cref{pp* sim}, a morphism %in $\Grp$ may be described by commuting diagrams in $\fin$:
%
%\begin{cor}
%Morphisms 
$f \in \Grp(\G, \G')$ is uniquely characterised by a commuting diagram of the form (\ref{starmorphism}), and such that $ {\mathfrak f}_V^{-1}(\widetilde{E'}) \subset V_0\amalg V_2$ is either a single isolated vertex or a (possibly empty) subset of bivalent vertices, and the induced square \ref{starpullback} is a pullback.

\begin{minipage}{.6\textwidth}
	
	\begin{equation}
	\label{starmorphism}
	\xymatrix{
		%\G\ar[d]_{\mathfrak f}&& 
		E \ar@{<->}[r]^-\tau\ar[d]_{{\mathfrak f}_E}&E \ar[d]_{{\mathfrak f}_E}& H \ar[l]_-s\ar[d]_{{\mathfrak f}_H} \ar[r]^-t& V\ar[d]^{{\mathfrak f}_V}\\
		%\G'&& 
		E' \ar@{<->}[r]_-{\tau'}&E'& H' \amalg {E'} \ar[l]^-{s'\amalg id' } \ar[r]_-{t' \amalg {q'}}& V' \amalg \widetilde{E'} }\end{equation}
\end{minipage}
\begin{minipage}{.4\textwidth}
	
	\begin{equation}
	\label{starpullback}
	\xymatrix{
		H \ar[d]_{{\mathfrak f}_H} \ar[r]^-t& (V \setminus V_0)\ar[d]^{{\mathfrak f}_V}\\
		H' \amalg {E'} \ar[r]_-{t' \amalg {q'}}& (V \setminus V_0) \amalg \widetilde{E'} }\end{equation} \end{minipage}

If $\G \not \cong \C_\nul$, and $f = f_{\setminus W_f}\circ \delW[W_f]$ where $f_{\setminus W_f}$ is a morphism in $\Gr$, then $ W_f = \mathfrak f_V^{-1}(\widetilde {E'}) \subset V_2$.
%\end{cor} 
\begin{ex}\label{ex: starmorphism} The morphisms $z \in \fisinvp(\nul, \S) = \Grp(\C_\nul, \shortmid)$ and $u \in \fisinvp(\two, \S) = \Grp(\C_\two, \shortmid)$ are described by commuting diagrams (\ref{z diagram}) and (\ref{u diagram}):
	
%satisfy the axioms for $ z\colon \nul \to \S$ and $u \colon \two \to \S$ in $\fisinvp \subset \Grp$:

\begin{equation} \label{z diagram}
\xymatrix{ \C_\nul \ar[d]_{z} &&
		*[r] {\emptyset \ }
		\ar[d]&
		{\emptyset }\ar[l]\ar[d] \ar[r] &\{*\}\ar[d]\\
		(\shortmid)	&& 
	*[r] { \{1,2\} \ }
		\ar@(lu,ld)[]_{\tau} &
		{\{1,2\} }\ar[l]^-{id} \ar[r]_-{q_\shortmid}&{\{\tilde 1\}}}\end{equation}

\begin{equation} \label{u diagram}
	\xymatrix {C_\two \ar[d]_u &&	*[r]{ \tiny{ 
						\left \{ 
						\begin{array}{ll}
							1_{\C_\two}, & 2_{\C_\two},\\
							1^\dagger_{\C_\two}, & 2^\dagger_{\C_\two} \end{array}\right \} }  }
		\ar[d]&&
	\{	{1^\dagger_{\C_\two },2^\dagger_{\C_\two } }\}\ar[ll]\ar[d] \ar[rr] &&\{*\}\ar[d]\\
			(\shortmid)	&& 
		*[r] { \{1,2\} \ }
		\ar@(lu,ld)[]_{\tau} &&
		{\{1,2\} }\ar[ll]^-{id} \ar[rr]_-{q_\shortmid}&&{\{\tilde 1\}}	
}
\end{equation}

%\hspace{-0.6\textwidth} 
%\begin{minipage}{.4\textwidth}
%	\small{ \begin{equation} \label{z diagram}
% \small{\xymatrix@C-10pt{ 
%				*[r] {\emptyset \ }
%				\ar[d]&& 
%				{\emptyset }\ar[ll]\ar[d] \ar[rr]& &\{v\}\ar[d]\\
%				*[r] { \{1,2\} \ }
%				\ar@(lu,ld)[]_{\tau} &&
%				{\{1,2\} }\ar[ll]^-{id} \ar[rr]_-{q_\shortmid}&&{\{\tilde 1\}}}}\end{equation} }
%\end{minipage}
%\begin{minipage}{.6\textwidth}
%	
%	\small{\[\small{\xymatrix@C-8pt{ 
%				*[r] {\left \{ \begin{array}{ll}
%					e_1, & e_2,\\
%					\tauX[\two] e_1,& \tauX[\two] e_2 \end{array}\right \} }
%				\ar@(lu,ld)[]_{\tau_{2}}\ar[d]&& 
%				{\left \{ (e_1, v),\ (e_2,v)\right \} }\ar[ll]\ar[d] \ar[r]& \{v\}\ar[d]\\
%				*[r] { \{1,2\} }
%				\ar@(lu,ld)[]_{\tau} &&
%				{\{1,2\} }\ar[ll]_-{id} \ar[r]_-{q_\shortmid}&{\{\tilde 1\}}}}\] }
%\end{minipage}

\end{ex}

The following extension of \cref{all fE} says that most morphisms in $\Grp$ are completely determined by their action on edges: %A slightly weakened version of \cref{all fE} holds in $\Grp$.

\begin{lem}
\label{allEV}
If $\G \not \cong \C_\nul$ and $\G' \not \cong \W$, then $\mathfrak f_E$ is sufficient to define $f \in \Grp(\G, \G')$.
\end{lem}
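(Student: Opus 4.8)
The plan is to use the explicit description of morphisms in $\Grp$ recorded just above the statement: a morphism $f \in \Grp(\G, \G')$ is a triple $(\mathfrak f_E, \mathfrak f_H, \mathfrak f_V)$ fitting into the commuting diagram (\ref{starmorphism}) for which the square (\ref{starpullback}) is a pullback, and for which $W_f \defeq \mathfrak f_V^{-1}(\widetilde{E'})$ is either a single isolated vertex or a subset of bivalent vertices. Since $\mathfrak f_E$ is part of this data, it suffices to show that $\mathfrak f_E$ determines $\mathfrak f_V$ and $\mathfrak f_H$. I would first dispose of the case $\G \cong \W$ directly: by \cref{loop collapse} the assignment sending $f$ to the image $\mathfrak f_E(a)$ of a loop edge $a$ (with $\{a, \tau a\} = E(\W)$) is a bijection $\Grp(\W, \G') \cong E(\G')$ when $\G' \not\cong \W$, so there $\mathfrak f_E$ determines $f$. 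For the remaining case $\G \not\cong \W, \C_\nul$, connectedness gives that $\G$ has no isolated vertices, and the absence of a bivalent loop-vertex (which would force $\G \cong \W$) gives that every bivalent $v$ has two distinct incident edges $\vE[v] = \{e_1, e_2\}$ with $e_2 \neq \tau e_1$; in particular $W_f \subset \nV[2]$.

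The heart of the argument is to recover $W_f$ — equivalently, to decide for each bivalent vertex whether it is contracted — from $\mathfrak f_E$ alone. I would prove the criterion that a bivalent vertex $v$, with incident edges $e_1, e_2$ and half-edges $h_1, h_2$ (so $s(h_i) = e_i$), lies in $W_f$ if and only if $\mathfrak f_E(e_2) = \tau' \mathfrak f_E(e_1)$. For the forward direction, if $v \in W_f$ then $\mathfrak f_V(v) = \tilde e' \in \widetilde{E'}$, and since (\ref{starpullback}) is a pullback, $\mathfrak f_H$ carries $\{h_1, h_2\} = t^{-1}(v)$ bijectively onto the fibre of $t' \amalg q'$ over $\tilde e'$, namely the $\tau'$-orbit $\{e', \tau' e'\}$; combined with the left square of (\ref{starmorphism}), which gives $\mathfrak f_E(e_i) = \mathfrak f_H(h_i)$ in this case, this yields $\mathfrak f_E(e_2) = \tau'\mathfrak f_E(e_1)$. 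For the converse, if $v \notin W_f$ then $\mathfrak f_V(v) = v' \in V'$ and the pullback makes $\mathfrak f_H$ restrict to a bijection $t^{-1}(v) \cong (t')^{-1}(v')$, so $v'$ is bivalent with incident edges $\mathfrak f_E(e_1), \mathfrak f_E(e_2)$; were $\mathfrak f_E(e_2) = \tau'\mathfrak f_E(e_1)$, then $v'$ would carry a loop and its connected component — hence $\G'$ — would be $\W$, contradicting the hypothesis. This last step is where I expect the only real subtlety to lie, and it is exactly the point at which the assumption $\G' \not\cong \W$ is needed: without it, a genuine loop-vertex in the target is indistinguishable, on the level of edges, from a contracted bivalent vertex.

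With $W_f$ now visibly determined by $\mathfrak f_E$, I would finish by writing down $\mathfrak f_H$ and $\mathfrak f_V$ explicitly. Every vertex $v$ has a half-edge $h$ with $s(h) = e \in \vE[v]$, as $\G$ has no isolated vertices. If $v \notin W_f$, then $\mathfrak f_H(h) \in H'$ and the left square gives $\mathfrak f_E(e) = s'(\mathfrak f_H(h)) \in im(s')$, so by injectivity of $s'$ one has $\mathfrak f_H(h) = (s')^{-1}\mathfrak f_E(e)$ and $\mathfrak f_V(v) = t'(s')^{-1}\mathfrak f_E(e)$; if $v \in W_f$, then $\mathfrak f_H(h) \in E'$ and the left square gives $\mathfrak f_H(h) = \mathfrak f_E(e)$ and $\mathfrak f_V(v) = q'(\mathfrak f_E(e))$. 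In both cases $\mathfrak f_H(h)$ and $\mathfrak f_V(v)$ are expressed through $\mathfrak f_E$, and the choice between the two cases is governed by the edge criterion of the previous paragraph; hence $\mathfrak f_E$ determines $\mathfrak f_H$ and $\mathfrak f_V$, and therefore all of $f$.
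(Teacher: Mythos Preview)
Your proof is correct and follows essentially the same approach as the paper: the key criterion that a bivalent vertex $v$ with $\vE = \{e_1,e_2\}$ lies in $W_f$ if and only if $\mathfrak f_E(e_2) = \tau'\mathfrak f_E(e_1)$ (equivalently $\mathfrak f_E(e_1) = \mathfrak f_E(\tau e_2)$), with the hypothesis $\G' \not\cong \W$ ruling out the ambiguous loop case. Your argument is more detailed than the paper's---you explicitly recover $\mathfrak f_H$ and treat non-bivalent vertices, and you handle $\G \cong \W$ separately via \cref{loop collapse}---but this last separation is unnecessary, since the paper's criterion applies uniformly (for $\G = \W$ the condition $\mathfrak f_E(a) = \mathfrak f_E(\tau\tau a)$ holds trivially, correctly forcing $v \in W_f$).
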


\begin{proof} 
	Let $v \in V_2$ be a bivalent vertex of $\G$ with incident edges $\vE = \{e_1, e_2\} \subset E_2$. If $ \mathfrak f_E(e_1) \neq \mathfrak f_E(\tau e_2)$, then ${\mathfrak f}_V(v) = t' {s'}^{-1}({\mathfrak f}_E (e_1))\in V'.$ Otherwise, $ \mathfrak f_E(e_1) = \mathfrak f_E(\tau e_2)$. Then, either 
	\[\mathfrak f_V(v) = q'(\mathfrak f_E(e_1)) = q'(\mathfrak f_E(\tau e_2)) \in \widetilde{E'},\] or, there is a vertex $v'$ of $\G'$ with $\vE[v'] = \{ {\mathfrak f}_E (e_1), {\mathfrak f}_E (e_2)\}, $ in which case $\G' = \W$. 
\end{proof}

\begin{ex} \cref{allEV} does not hold if $\G' = \W$. For example, there are only two maps of edges $ E(\Wl[2]) \to E(\W) $ that are compatible with the involution, and these correspond to the two morphisms in $\Gr(\Wl[2], \W)$. However, there are six distinct morphisms in $\Grp(\Wl[2], \W)$.
\end{ex}

\subsection{$S_*$-structured graphs}

The \'etale topology on $\Gr$ extends to a topology on $\Grp$ whose covers at $\G$ are jointly surjective collections $ \mathfrak U \subset \Grp \ov \G$. By \cref{lem: final}, a presheaf $P \colon \Grp^{\mathrm{op}} \to \Set$ is a sheaf for this topology if and only if, for all graphs $\G$,
$P(\G) \cong \mathrm{lim}_{ (\C,b) \in \elG}P(\C)$. %In , \text{ where } P\colon \Gr^\mathrm{op} \to \G \text{ is the restriction to } \Gr.\]
In particular, there is a canonical equivalence $\sh{\Grp, J_*} \simeq \GSp$ (compare \cref{subs. sheaves}). %For any pointed species $S_* = (S, \epsilon, o)$, the corresponding $J_*$ sheaf on $\Grp$ is described by
%\begin{equation}
%\label{eq: pointed sheaves} S_* (\G) \defeq \mathrm{lim}_{ (\C, b) \in \elpG}S_*(\C) = \mathrm{lim}_{ (\C, b) \in \elG}S(\C) \cong S(\G) \text { for all graphs } \G. \end{equation}
%
	
	Let $S_* $ be a pointed graphical species. 

%There is a canonical isomorphism $\ElP{S_*}{\Grp} \cong \ovP{\Grp}{S_*}$ for all pointed graphical species $S_*$, and these categories will be identified in what follows.

\begin{defn}\label{S*-graph} (Compare \cref{S-graph}.) 
An \emph{$S_*$ structure on a connected graph $\G$} is an element $\alpha\in S_*(\G) \cong \GSp (\G, S_*)$. The category of (connected) $S_*$-structured graphs is denoted by $\ovP{S_*}{\Grp}$. % is called the \textit{category of $S_*$-structured graphs}. % and denoted by $\ovP{S_*}{\Grp}$.
%An \emph{$S_*$-structured ({pointed}) graph} is an object $(\G, \alpha)$, $\G \in \Grp$, $\alpha \in S(\G)$ of the element category $\ovP{S_*}{\Grp} \defeq (\Comm_{\Grp}\ov {S_*}_{\Grp})^\mathrm{op}.$ %\int_{\Grp}S_*$.

An $S_*$-structured graph $(\G, \alpha)$ is called \emph{admissible} if $\G \not \cong (\shortmid)$ is not a stick graph.
\end{defn} 

\begin{ex}\label{identity graphs}
\label{wheel and line structure notation} For all pointed graphical species $S_* = (S, \epsilon, o)$, all $k \geq 0$, $ m \geq 1$, the vertex deletion morphisms $u^k \in \Grp ( \Lk ,\shortmid)$ and $\kappa^m \in \Grp( \Wm ,\shortmid)$ induce injective maps %in $\ovP{S_*}{\Grp}$: 
%\begin{equation}
%\label{eq. unit graph definitions}
%\epsilon^k \defeq 
\[S_*(u^k)\colon S_\S = S(\shortmid) \to S(\Lk), \ \text{ and } \ %o^m \defeq 
S_*(\kappa ^m)\colon S_\S\to S(\Wm).\]
%\end{equation}
%where $\epsilon^1 = \epsilon$, and $\epsilon^k, o^m$ factor through $\epsilon\colon S_\S \to S_\two$ for all $k \geq 0, m \geq 1$.

For each $c \in S_\S$, there are \textit{$c$-coloured unit structures} on $\Lk$ and $\Wm$ (as pictured in \cref{fig: unit graphs}):
\[ \Lk(\epsilon_c)\defeq S_*(u^k)(c) \in S_*(\Lk) \ \text{ and } \ \Wm(\epsilon_c)\defeq S_*(\kappa ^m)(c) \in S_*(\Wm).\] %be the \textit{$c$-coloured unit structures} on $\Lk$ and $\Wm$, as pictured in \cref{fig: unit graphs}.

\end{ex}

\begin{figure}[htb!] 
\begin{tikzpicture}
\node at (0,0){
	\begin{tikzpicture}[scale = 0.6]
	\draw (0,0) -- (10,0);
	
	\filldraw[white] (2,0) circle (9pt);
	\filldraw [white] (4,0) circle (9pt);
	\filldraw	[white]	(6,0) circle (9pt);
	\filldraw	[white]	(8,0) circle (9pt);
	\draw[red] (2,0) circle (9pt);
	\draw[red] (4,0) circle (9pt);
	\draw[red]	(6,0) circle (9pt);
	\draw[red]	(8,0) circle (9pt);
	\node at (2,0) {\scriptsize{ $\epsilon_c$}};
	%	\node at (2,0) {\scriptsize{ $\epsilon_c$}};
	\node at (4,0) {\scriptsize{ $\epsilon_c$}};
	\node at (6,0) {\scriptsize{ $\epsilon_c$}};
	\node at (8,0) {\scriptsize{ $\epsilon_c$}};
	
	\node at (1.6,-.4) {\tiny {$\omega c$}};
	\node at (3.6,-.4) {\tiny {$\omega c$}};
	\node at (5.6,-.4) {\tiny {$\omega c$}};
	\node at (7.6,-.4) {\tiny {$\omega c$}};
	\node at (9.6,-.4) {\tiny {$\omega c$}};
	\node at (2.4,-.4) {\tiny $c$};
	\node at (4.4,-.4) {\tiny $c$};
	\node at (6.4,-.4) {\tiny $c$};
	\node at (8.4,-.4) {\tiny $c$};
	\node at (.4,-.4) {\tiny $c$};
	\node at (.1,.4){\tiny{$(1_{\Lk[4]})$}};
	\node at (9.9,.4){\tiny{$(2_{\Lk[4]})$}};
	
	\end{tikzpicture}};

\node at (8,0){
	\ \begin{tikzpicture}[scale = 0.6]
	\draw (0,0) circle (2cm);
	\filldraw[white] (2,0) circle (9pt);
	\filldraw [white] (0,2) circle (9pt);
	\filldraw	[white]	(-2,0) circle (9pt);
	\filldraw	[white]	(0,-2) circle (9pt);
	\draw[red] (2,0) circle (9pt);
	\draw[red] (0,2) circle (9pt);
	\draw[red]	(-2,0) circle (9pt);
	\draw[red]		(0,-2) circle (9pt);
	
	\node at (2,-.1){\rotatebox{90}{\scriptsize{ $\epsilon_c$}}};
	\node at (-2,.1){\rotatebox{-90}{\scriptsize{ $\epsilon_c$}}};
	\node at (0,2){\rotatebox{180}{\scriptsize{ $\epsilon_c$}}};
	\node at (0,-2){\scriptsize{ $\epsilon_c$}};
	\node at (1.6,.4) {\tiny {$\omega c$}};
	\node at (1.6,-.4) {\tiny {$ c$}};
	\node at (-1.6,.4) {\tiny {$c$}};
	\node at (-1.6,-.4) {\tiny {$\omega c$}};
	\node at (-.6, 1.6) {\tiny {$\omega c$}};
	\node at (.6, 1.6) {\tiny {$c$}};
	\node at (-.6, -1.6) {\tiny {$ c$}};
	\node at (.6, -1.6) {\tiny {$\omega c$}};
	
	\end{tikzpicture}};
\end{tikzpicture}
\caption
{The $c$-coloured unit structures $\Lk[4](\epsilon_c)$ and $\Wl[4](\epsilon_c)$.} \label{Lk fig}
\label{fig: unit graphs} 
\end{figure}
 For any subset $W $ of bivalent vertices of a graph $\G$, and any %So $ \delW \colon \G \to \Gnov$ exists in $\Grp$, 
$S_*$-structure $\alpha_{\setminus W} \in S(\Gnov)$, there is a unique $S_*$-structure $\alpha \in S (\G)$ such that $\delW \in \ovP{S_*}{\Grp}(\alpha, \alpha_{\setminus W})$: If $(\C_\two, b) \in \elG$ is a neighbourhood of $ v \in W$, then $\delW \circ b = ch_e \circ u$ for some $ e \in E(\Gnov)$. Hence $ \alpha \in S(\G)$ is determined by 
\[ S_*(b)(\alpha) = S_*(\delW \circ b)(\alpha_{\setminus W}) = S_* (u) S(ch_e)(\alpha_{\setminus W}) = \epsilon\left(S(ch_e)(\alpha_{\setminus W})\right).\] %is in the image of $\epsilon$.

\begin{defn}\label{unit vertices}\label{iota vertex deletion}
Let $ (\G, \alpha)$ be an $S_*$-structured graph. Then
\[ W_\alpha \defeq \{ v \ | \text{ there is a neighbourhood } (\C,b) \text { of } v \text{ such that } S(b)(\alpha) \in im(\epsilon) \cup im(o)\} \subset V_0 \amalg V_2,\] 
%\text{ for each neighbourhood } (\C_\two, b) \text{ of } v\} \]
% \{ v \ | \ S(b)(\alpha) \in im(\epsilon) \text{ for each neighbourhood } (\C_\two, b) \text{ of } v\} \]
%\in ob (\elG) \text{ such that } \res{b} = \esv \text { and } S (b)(\alpha) \in im (\epsilon)\} \subset V_2\] 
%is the maximal subset of bivalent vertices for which $\delW[W_\alpha] \in \Grp(\G, \Gnov[W_\alpha)	
is the set of \emph{vertices $\alpha$-decorated by (contracted) units}. 

An $S_*$-structure $(\G, \alpha)$ is called \emph{reduced} if $W_\alpha = \emptyset$. 
%If $\G \cong \C_\nul$ is an isolated vertex, then the set $W _\alpha$ of vertices \emph{$\alpha$-decorated by contracted units} is given by 
%\[ W_\alpha = \left \{ \begin{array}{ll}
%V(\G) = \{*\} & \alpha \in im(o)\\
%\emptyset & \text{ otherwise.}
%\end{array} \right .\] 
%$W_\alpha \subset V(\G) = \{*\}$ is the set of vertices \emph{$\alpha$-decorated by contracted units}, and defined by $W_\alpha = \{*\}$ if and only if $\alpha = o_{\tilde c} \in S_\nul$ for some $c \in S_\S$. %Other non-empty (and hence $W_\alpha = V(\G)$) if and only if there is a $c \in S_\S$ such that $S_*(z)(c) = \alpha$. % $z \in \ovP{S_*}{\Grp}((\X, \alpha), (\shortmid, c))$ for some $c \in S_\S$.% then $ W_\alpha = V(\G) = \{*\}$. Otherwise $W_\alpha$ is empty. 

%For all $\G$, an $S_*$-structure $(\G, \alpha)$ is called \emph{reduced} if $W_\alpha = \emptyset$. %, then $ (\G, \alpha)$ is called \emph{reduced} in $\ovP{S_*}{\Grp}$. %A minimal structure $(\G, \alpha)$ is further said to be \emph{admissible} if $V(\G)$ is non-empty.
\end{defn}

%In particular, $W \subset V_0 \amalg V_2$. 

Let $\G \not \cong \C_\nul$ and $W \subset V_2(\G)$. There exists an $S_*$-structure $\alpha_W \in S(\Gnov)$ such that $\delW \in \Grp(\G, \Gnov)$ describes a morphism in $ \ovP{S_*}{\Grp}((\G, \alpha), (\Gnov, \alpha_{\setminus W}))$ if and only if $W \subset W_\alpha$. By definition, $(\Gnov, \alpha_{\setminus W})$ is reduced if and only if $W = W_\alpha$.
 %Informally, $ \delW \in \ovP{S_*}{\Grp}((\G, \alpha), (\Gnov, \alpha_{\setminus W}))$ (and $z \in \Grp((\C_\nul, \alpha), (\shortmid, c))$) is described as `deleting' a subset $W$ of vertices $\alpha$-decorated by (contracted) units.

%For all $(\G, \alpha) \in \ovP{S_*}{\Grp}$ we can define the \textit{reduced similar $S_*$-structure $(\G^\bot_\alpha, \alpha^\bot)$ corresponding to $(\G, \alpha)$}: If $(\G, \alpha) \cong (\C_\nul, o_{\tilde c})$ -- and hence $\G \cong \C_\nul$ and $W_\alpha = \{*\}$ -- then $(\G^\bot_\alpha, \alpha^\bot)$ is unique up to unique isomorphism. It is induced by $z \colon \C_\nul \to (\shortmid)$ and has the form $(\shortmid, c)$ or $(\shortmid, \omega c)$. Otherwise, $(\G^\bot_\alpha, \alpha^\bot)$ is induced by by $\delW[W_\alpha]$ and $(\G^\bot_\alpha, \alpha^\bot) = (\Gnov[W_\alpha], \alpha_{\setminus W_\alpha})$ is unique on the nose. 

%and is unique on the nose, otherwise, $(\G^\bot_\alpha, \alpha^\bot) $ has the form $(\shortmid, c)$ or $(\shortmid, \omega c)$ and is unique up to unique isomorphism. For all $\G$, the reduced structure $(\G^\bot_\alpha, \alpha^\bot)$ is admissible if and only if $W_\alpha \neq V $.
%

\subsection{Similar structures} \label{subs. similar}
\label{rmk: good representatives}

The issues that can arise from trying to build degenerate substitution by the stick graph into the definition of the modular operad monad have been outlined in \cref{degenerate}. Degenerate substitutions, and therefore exceptional loops, can be avoided if there is a suitable notion of equivalence of $S_*$-structured graphs, for which all constructions may be obtained in terms of admissible representatives.%%   %\srnote{this is a really impotant point.}

This principle informs the construction of the distributive law $\lambda\colon TD \Rightarrow DT$. 

%Recall from \cref{X graph} that an $X$-graph $\X$ is given by a pair $(\G, \rho)$ where $ \G$ is admissible (has non-empty vertex set) and $\rho\colon E_0(\G) \xrightarrow{\cong} X$ is a labelling bijection. 

By \cref{Gnov construction}, any similarity morphism that is not of the form $z \colon \C_\nul \to (\shortmid)$ or $\kappa^m \colon \Wm \to (\shortmid)$ preserves boundaries. Let $\G$ be a connected graph with non-empty vertex set. A boundary-preserving similarity morphism $f \in \Grsimp (\G, \G')$, together with an $X$-labelling $\rho \colon E_0 \to X$ of $\G$, induces an $X$-labelling on $\G'$. % unless $\G \cong \Wm$ and $\G' \cong (\shortmid)$.
%For any finite set $X$, any admissible $X$-graph$\X$, and any morphism $f \in \Grsimp$ -- with domain $\X$ -- of the form $\delW$ ($W \subset V_2(\X)$) or $z $ (for $\X \cong \C_\nul$), the codomain of $f$ inherits an $X$-labelling $\rho$ from $\X$. 

Recall that $X\Griso$ is the category of (admissible) $X$-graphs and boundary-preserving isomorphisms. The category $X\XGrsimp$ is obtained from $X \Griso$ by adjoining all similarity morphisms from objects of $X\Griso$ and, where necessary, their $X$-labelled codomains: %admissible $X$-graphs, %including the vertex deletion morphisms $\delW$ and $z \colon \C_\nul \to (\shortmid)$, 
%and their codomains, equipped with the induced labelling. %, by all similarity morphisms from admissible $X$-graphs, %including the vertex deletion morphisms $\delW$ and $z \colon \C_\nul \to (\shortmid)$, 
%and their codomains, equipped with the induced labelling: %whenever the domain is an (admissible) $X$-graph. In this way, for any morphism $f$ in $X \XGrsimp$, whose domain is an $X$-graph $\X$, the codomain $f(\X)$ inherits a $X$-labelling from $\X$:% $X$-labelling of the codomain whenever to form a category that includes . together with any labelling of the ports from the domain:
\begin{itemize}
	\item If $X \not \cong \nul$, $X \not \cong \two$, then $ X \Griso$ is a bijective on objects subcategory of $X \XGrsimp$ whose morphisms are similarity morphisms %in $\Grsim$ 
that preserve the labelling of the ports. %%   %\srnote{CHeck all the grsim notation}

\item For $X = \two $, $\two\XGrsimp$ is obtained from $\two \Griso$ by adjoining the morphisms $ \delW[V] \colon \Lk \to (\shortmid)$, and hence also the labelled stick graphs $(\shortmid, id)$ and $(\shortmid, \tau)$. An admissible $\two$-graph $\X \in \two \Griso \subset \two \XGrsimp$ is in the same connected component as $(\shortmid, id)$ if and only if $\X = (\Lk, id_{\Lk})$ for some $k \geq 1$. In particular, $\tau \colon (\shortmid) \to (\shortmid)$ does not induce a morphism in $\two \XGrsimp$.

 \item When $X = \nul$, the morphisms $\delW[V] \colon \Wm \to (\shortmid)$, and $z \colon \C_\nul \to (\shortmid)$ are not boundary-preserving, and do not induce any labelling on the ports of $(\shortmid)$. So, the objects of $\nul\XGrsimp$ are the admissible $\nul$-graphs in $\nul \Griso$, together with $(\shortmid)$. In particular, $\Wm, \C_\nul$ and $(\shortmid)$ are in the same connected component of $\nul\XGrsimp$, and $(\shortmid)$ is terminal in this component. % Since $(\shortmid)$ is not admissible, there are no non-trivial morphisms in $\nul \XGrsimp$ with $(\shortmid)$ as domain. 

\end{itemize}
To simplify notation in what follows, let ${\C_{\nul}}_{\setminus V} \defeq (\shortmid)$ and $ \delW[V] = z \colon \C_\nul \to (\shortmid)$ in $\Grp$.%\warn{Something here about being a little careful.} or -- in case $X$
%There is a canonical functor $X\XGrsimp \to \Grp \hookrightarrow \GSp$, and hence, for all pointed graphical species $S$, and finite sets $X$, there is an induced slice category $\ovP{S_*}{X\XGrsimp}$, that is canonically isomorphic to $\ElP{S_*}{X\XGrsimp}$. 
\begin{defn} \label{Xgrsim} For all $S_* \in \GSp$, the slice category $\ovP{S_*}{X\XGrsimp}$ induced by the canonical functor $X\XGrsimp \to \Grp \hookrightarrow \GSp$ is the category of \emph{similar $S_*$-structured $X$-graphs}. 
%	is obtained from $\ovP{S}{X\Griso}$ (\cref{X graph}) by adjoining the $X$-similarity morphisms (\cref{def: similar}) in $\ovP{S_*}{\Grp}$, and, in arities $\two$ and $\nul$, objects of the form $(\shortmid, c)$, $c \in S_\S$.
%	
%	If $S_* = \Comm$ is the terminal graphical species then $ X \XGrsimp \defeq X \XGrsimp(\Comm)$.% is the category of \emph{similar $X$-graphs}.
	
	Admissible $S_*$-structured $X$-graphs $(\X^1, \alpha^1) , (\X^2, \alpha^2)$ are called \emph{similar}, written $(\X^1, \alpha^1) \sim (\X^2, \alpha^2) $ (or $\alpha^1 \sim \alpha^2$), if they are in the same connected component of $ \ovP{S_*}{X\XGrsimp}$. %The component of $(\X, \alpha)$ in $\ovP{S_*}{X\XGrsimp}$ is denoted by $\langle\X, \alpha\rangle_*$. 
\end{defn}%%\srno

Let $(\X, \alpha)$ be an admissible $S_*$-structured $X$-graph such that $(\X, \alpha)\neq (\C_\nul, o_{\tilde c})$. Then 
\[ (\X^\bot_\alpha, \alpha^\bot) \defeq  (\X_{\setminus W_\alpha}, \alpha_{\setminus W_\alpha})\] is reduced and terminal in the connected component of $(\X, \alpha)$ in $\ovP{S_*}{X\XGrsimp}$. It is admissible unless $(\X, \alpha) =  \Lk(\epsilon_c)$, or $ (\X, \alpha )\Wm(\epsilon_c)$ for some $ c \in S_\S$ and $k, m \geq 1$. 

If $(\X^\bot_\alpha, \alpha^\bot)$ is not admissible, then it is represented by $(\shortmid, c)$ for some $c \in S_\S$. If $ c \neq \omega c \in S_\S$, then $(\shortmid, c) $ and $(\shortmid, \omega c)$  are in disjoint components of $\ovP{S_*}{ \two\XGrsimp}$. By contrast, %and hence There is unique reduced structure similar to $\Lk (\epsilon_c)$
 $z \colon \C_\nul \to (\shortmid)$ induces morphisms $ (\C_\nul, o_{\tilde c}) \to (\shortmid, c)$ {and} $(\C_\nul, o_{\tilde c}) \to (\shortmid, \omega c)$ in $\ovP{S_*}{\Grp}$. So, for all $c \in S_\S$, $\tilde c$ defines a unique element $[\shortmid, c] = [\shortmid, \omega c]$ in $\ovP{S_*}{\nul\XGrsimp}$. % athe are two reduced structures similar to $(\C_\nul, o_{\tilde c})$ in $\nul \XGrsimp$ whenever $c \neq \omega c$.

Moreover, the similarity maps $\kappa\colon \W \rightarrow (\shortmid) \leftarrow \C_\nul\colon z$ in $\Grp$ induce morphisms of $S_*$-structured graphs: 
\begin{equation}\label{eq. cone } \xymatrix{\W(\epsilon_c) \ar[rr] &&(\shortmid, c) && \ar[ll] (\C_\nul, o_{\tilde c}) \ar[rr]&& (\shortmid, \omega c)&& \ar[ll] \W(\epsilon_{\omega c})}. \end{equation}
So, $\W(\epsilon_c) \sim (\C_\nul, o_{\tilde c})$ and there is a double-cone shaped diagram in $\ovP{S_*}{\nul\XGrsimp}$ (\cref{fig: contracting units}).
			\begin{figure}
	[htb!]
	
	\begin{tikzpicture}[scale = .45]
	\draw[ thin, dashed , red] (-12,4)--(12,-4)
	(-12,-4)--(12,4);
	
	\draw [thick](3,1) arc [start angle=90, 
	end angle=450,
	y radius=1cm, 
	x radius=.5cm]
	%node \node[draw,fill=blue,circle,text=white] at ($(2,0)+(75:2 and 1)$) {A};
	node [pos=.25] {\begin{tikzpicture}\filldraw (0,0) circle (2pt);\end{tikzpicture}} ;;
	% node [pos=.5] {2.10} 
	% node [pos=.75] {PGF};;
	\draw [thin,gray, ->] (.2,3.1) arc [start angle=-70, 
	end angle= 250,
	y radius=.5cm, 
	x radius=.5cm];
	\draw[ thin,gray, ->]
%	(0,.3) -- (0,1.5)
	(-2.8,1.2) -- (-1, 2.4);
		\node at (-1.6,1.3){\tiny{$\kappa$}};
	\draw[thin,gray, ->]
	(-5.8,2.2) -- (-1,2.8);
	\draw[thin,gray, ->]
	(-8.8,3.2) -- (-1,3.2);

	\draw[thick, gray, ->]
	(0,.3) -- (0,2);
	\draw[thin,gray, ->]
	(2.8,1.2) -- (1, 2.4);
		\node at (1.8,1.3){\tiny{$\tau\kappa$}};
	\draw[thin,gray, ->]
	(5.8,2.2) -- (1,2.8);
	\draw[thin,gray, ->]
	(8.8,3.2) -- (1,3.2);
	
	\draw[ thin, gray, ->]
	(-11,0) -- (-9.4,0);
	\draw[ thin,gray, ->]
	(-8.7,0) -- (-6.3,0);
	\draw[ thin, gray, ->]
	(-5.5,0) -- (-3.1,0);
	\draw[thick, gray, dotted]
	(-12.5,0)--(-11,0);
	\draw[ thin, gray, ->]
	(11,0) -- (9.4,0);
	\draw[ thin,gray, ->]
	(8.7,0) -- (6.3,0);
	\draw[ thin, gray, ->]
	(5.5,0) -- (3.1,0);
	\draw[thick, gray, dotted]
	(12.5,0)--(11,0);
	
	%	\node at (0,-2){\scriptsize{$\cong$}};
	\node at (0.2, 1){\tiny{$z$}};
	%(-2.8,1.2) -- (-1, 2.4);
	%\draw[gray, ->]
	%(-5.8,2.2) -- (-1,2.8);
	%\draw[gray, ->]
	%(-8.8,3.2) -- (-1,3.2);
	
	%\draw[gray, ->]
	%(-1.2,3) -- (1.2,3);
	%\draw[gray, ->]
	%(-1.2,3.5) -- (1.2,3.5);
	
	\draw[thin,gray, <->]
	(-2.6,-1.2) -- (2.6, -1.2);
	\node at (0,-.9){\tiny{$\tauG[\W]$}};
	\draw[thin, gray, <->]
	(-5.6,-2.2) -- (5.6, -2.2);
	\draw[thin,gray, <->]
	(-8.6,-3.2) -- (8.6, -3.2);
	\draw [ thick]
	(0,2.5)--(0,3.5);
	
	%(-1.5,2.5)--(-1.5,4)
	%(1.5,2.5)-- (1.5,4);
	\filldraw (0,0) circle (4pt);
	\draw [thick](-3,1) arc [start angle=90, 
	end angle=450,
	y radius=1cm, 
	x radius=.5cm]
	node [pos=.75] {\begin{tikzpicture}\filldraw (0,0) circle (2pt);\end{tikzpicture}} ;;
	\draw [thick](6,2) arc [start angle=90, 
	end angle=450,
	y radius=2cm, 
	x radius=.75cm]
	node [pos=0] {\begin{tikzpicture}\filldraw (0,0) circle (2pt);\end{tikzpicture}}
	node [pos=.5] {\begin{tikzpicture}\filldraw (0,0) circle (2pt);\end{tikzpicture}} ;;
	\draw[thick] (-6,2) arc [start angle=90, 
	end angle=450,
	y radius=2cm, 
	x radius=.75cm]
	node [pos=0] {\begin{tikzpicture}\filldraw (0,0) circle (2pt);\end{tikzpicture}}
	node [pos=.5] {\begin{tikzpicture}\filldraw (0,0) circle (2pt);\end{tikzpicture}} ;;
	\draw [thick](9,3) arc [start angle=90, 
	end angle=450,
	y radius=3cm, 
	x radius=1.2cm]
	node [pos=.25] {\begin{tikzpicture}\filldraw (0,0) circle (2pt);\end{tikzpicture}}
	node [pos=.6] {\begin{tikzpicture}\filldraw (0,0) circle (2pt);\end{tikzpicture}} 
	node [pos=-.1] {\begin{tikzpicture}\filldraw (0,0) circle (2pt);\end{tikzpicture}} ;;
	\draw [thick] (-9,3) arc [start angle=90, 
	end angle=450,
	y radius=3cm, 
	x radius=1.2 cm]
	node [pos=-.25] {\begin{tikzpicture}\filldraw (0,0) circle (2pt);\end{tikzpicture}}
	node [pos=-.6] {\begin{tikzpicture}\filldraw (0,0) circle (2pt);\end{tikzpicture}} 
	node [pos=.1] {\begin{tikzpicture}\filldraw (0,0) circle (2pt);\end{tikzpicture}} ;;
	% circle
	% % node \node[draw,fill=blue,circle,text=white] at ($(2,0)+(75:2 and 1)$) {A};
	% node [pos=.25, draw,fill=blue,circle,text=white] {CVS !!} 
	% node [pos=.5] {2.10} 
	% node [pos=.75] {PGF};
	%raw(8,0)--(8,-2);
	% 
	
\end{tikzpicture}
%			
%			};
%				\end{tikzpicture}
\caption{%(a) Joining the ends of an edge forms an exceptional loop. (b) 
The contraction of units is described in terms of a commuting diagram in $\nul\XGrsimp$ that is strongly suggestive of a conical singularity. }% (\cref{subs. similar}).} 
\label{fig: contracting units}%The map in (a) is a formal coequaliser, while, in (b) $z$ is a formal equaliser of the diagram 
\end{figure}

%(This was illustrated in the Introduction, \cref{fig: contracting units}.)

%In particular, for all $c \in S_\S$, $(\shortmid,c) $ and $ (\shortmid, \omega c)$ are in the same connected component of $\ovP{S_*}{\nul\XGrsimp}$ but are in disjoint components of $\ovP{S_*}{\two\XGrsimp}$ whenever $c \neq \omega c$.

%\begin{rmk}
%	For all $X$ and all $(\X, \alpha)\in \ovP{S_*}{X\XGrsimp}$, there is a unique up to isomorphism minimal 
%\end{rmk}

\begin{lem}\label{lem: pi0}
For all pointed graphical species $S_*$ and all finite sets $X$, there is a canonical bijection 
\begin{equation}\label{eq: sim colim} \mathrm{colim}_{\X \in X\XGrsimp} S_*(\X) \cong \pi_0 (\ovP{S_*}{X\XGrsimp}).\end{equation}\end{lem}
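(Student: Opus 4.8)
\textbf{Proof plan for \cref{lem: pi0}.}

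The plan is to establish the bijection by recognizing that the colimit on the left-hand side is a colimit of a diagram of sets indexed by the category $X\XGrsimp$, and that such a colimit is always computed as the set of connected components of the associated category of elements. First I would unfold the definitions: the functor $S_* \colon X\XGrsimp \to \Set$ sends each similar $X$-graph $(\G, \rho)$ to the set $S_*(\G)$, and the category of elements $\ovP{S_*}{X\XGrsimp}$ is exactly the Grothendieck construction of this functor. The general fact I would invoke — analogous to the identity $TS_X = \pi_0(\ovP{S}{X\Griso})$ established in equation (\ref{eq: component identity}) for the unpointed case — is that for any small category $\mathsf C$ and any functor $F \colon \mathsf C \to \Set$, the colimit $\mathrm{colim}_{\mathsf C} F$ is canonically isomorphic to $\pi_0$ of the category of elements $\ElP{F}{\mathsf C}$.

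The key steps, in order, would be the following. First, recall that a colimit of a $\Set$-valued functor $F$ over $\mathsf C$ is the quotient of $\coprod_{c \in \mathsf C} F(c)$ by the equivalence relation generated by $x \sim F(g)(x)$ for morphisms $g$ of $\mathsf C$; this quotient is precisely the set of connected components of the element category, since objects of $\ElP{F}{\mathsf C}$ are pairs $(c, x)$ with $x \in F(c)$ and a morphism $(c,x) \to (c', x')$ is a $g \in \mathsf{C}(c,c')$ with $F(g)(x) = x'$. Second, I would specialize this to $\mathsf C = X\XGrsimp$ and $F = S_*$, which immediately yields the stated bijection once we identify $\ovP{S_*}{X\XGrsimp}$ with $\ElP{S_*}{X\XGrsimp}$ — but this identification is already built into \cref{Xgrsim}, where $\ovP{S_*}{X\XGrsimp}$ is \emph{defined} as the slice (element) category induced by the canonical functor $X\XGrsimp \to \Grp \hookrightarrow \GSp$. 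Third, I would verify that the resulting bijection is canonical and natural, i.e.\ that it does not depend on choices, by noting that the equivalence relation generated by the similarity morphisms is exactly the relation defining connected components of $\ovP{S_*}{X\XGrsimp}$.

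The main subtlety — rather than a deep obstacle — is checking that $X\XGrsimp$ is genuinely a small category for which the colimit is well-defined and computed as expected. Because $X\XGrsimp$ may contain non-invertible morphisms (the vertex deletion morphisms $\delW$ and the maps $z, \kappa^m$), unlike the groupoid $X\Griso$ of the unpointed case, the colimit is no longer simply a coproduct of orbit sets but a genuine coequalizer-type quotient. I would therefore take care to confirm that the element category correctly encodes this: two structures $(\X^1, \alpha^1)$ and $(\X^2, \alpha^2)$ are identified in the colimit precisely when they are connected by a zigzag of similarity morphisms respecting the structures, which is exactly the definition of lying in the same connected component of $\ovP{S_*}{X\XGrsimp}$ given in \cref{Xgrsim}. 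The analysis of the special cases $X \cong \two$ and $X \cong \nul$ carried out in \cref{subs. similar} — in particular the behaviour of $(\shortmid, c)$ versus $(\shortmid, \omega c)$ and the double-cone diagram of \cref{fig: contracting units} — then serves as a concrete check that $\pi_0$ behaves as the definitions require, but no further argument beyond the general colimit-as-$\pi_0$ principle is needed.
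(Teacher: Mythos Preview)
Your proposal is correct and rests on the same underlying principle as the paper's proof: that the colimit of a $\Set$-valued functor is $\pi_0$ of its category of elements. The paper's argument is slightly more indirect---rather than invoking this general fact outright, it reduces to the unpointed groupoid case already established in equation (\ref{eq: component identity}) by observing that the inclusion $\ovP{S}{X\Griso}\subset \ovP{S_*}{X\XGrsimp}$ is surjective on $\pi_0$, so every similarity class has an admissible representative---but this is just a repackaging of the same principle, and your direct route is equally valid.
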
 \begin{proof}
Since $\ovP{S}{X\Griso}\subset \ovP{S_*}{X\XGrsimp}$, there is a surjection of connected components $\pi_0(\ovP{S}{X\Griso})\to \pi_0(\ovP{S_*}{X\XGrsimp})$. Every component of $\ovP{S_*}{X\XGrsimp}$ has a representative in $\ovP{S}{X\Griso}$, and the result follows from  (\ref{eq: component identity}) and \cref{Xgrsim}.
\end{proof}

\subsection{A distributive law for modular operads}\label{dist for CSM}

Let $S$ be a graphical species and $X$ a finite set. An element of $TDS_X$ is represented by an $ X$-graph $\X$, with a $DS$-structure $\alpha \in DS(\X) = S^+(\X)$. The idea is to construct a distributive law $\lambda\colon TD \Rightarrow DT$ such that $\lambda S$ is invariant under similarity morphisms. % in $\ovP{S^+}{X\XGrsimp}$. %In other words, $\lambda$ ignores vertices decorated by units. %This is the case if $\lambda$ 

%If $\alpha \neq im (o) \subset S(\C_\nul)$, then, by $\
%
%By (\ref{eq. D on graphs}), $TDS_X$ is described by a colimit, indexed by 
%
%	pairs $(\X, W)$, where $\X$ is a connected $X$-graph and $W \subset V_0 \amalg V_2$. If $W \neq V(\X)$ for $\X \cong \C_\nul$ an isolated vertex then, $W \subset V_2$ is a subset of bivalent vertices of $\X$ (or $W = V(\C_\nul)$) -- of objects $S(\X_{\setminus W})$ in $\E$. The distributive law $\lambda_{\DD, \TT} \colon TD \Rightarrow DT$ is induced by forgetting the pair $(\X, W)$, and replacing it with $\X_{\setminus W}$, with canonical map $S(\X_{\setminus W}) \to DTS_X$.

\begin{prop}\label{monads distribute} There is a distributive law $\lambda\colon TD \Rightarrow DT$ such that for all graphical species $S$ and finite sets $X$, and all elements $[\X, \alpha]$, $[\X', \alpha'] $ of $TDS_X$, 
\[ \lambda S [\X, \alpha] = \lambda S [\X', \alpha'] \text{ in } DTS_X \text{ if and only if } [\X, \alpha]\sim [\X', \alpha'] \in \ovP{S^+}{X\XGrsimp}.\]
%Hence, for all graphical species $S$ and all finite sets $X$,
%\[ \lambda (TDS_X) \cong \pi_0 (\ovP{S^+}{X\XGrsimp}).\]
\end{prop}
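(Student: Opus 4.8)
The plan is to construct $\lambda$ componentwise and then verify the distributive law axioms, with the bulk of the real work lying in showing that the colimit prescription is well-defined and captures exactly the similarity relation. First I would note that, by the formula $DS(\X) = \coprod_{W \subset V_2(\X)} S(\Gnov[W])$ of \cref{ex. DS} (when $\X$ has no isolated vertices), an element of $TDS_X$ is represented by an $X$-graph $\X$ together with a choice of bivalent subset $W \subset V_2(\X)$ and a reduced-away $S$-structure on $\Gnov[W]$; equivalently, by the discussion of vertex deletion and \cref{unit vertices}, by an $S^+$-structured $X$-graph $(\X, \alpha)$ whose $(\epsilon, o)$-decorated vertices record the adjoined units. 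The target $DTS_X$ similarly decomposes, and the idea is that $\lambda$ should send $[\X, \alpha]$ to the class of its \emph{reduced similar representative}: delete all the unit-decorated bivalent vertices $W_\alpha$ via $\delW[W_\alpha]$, contract the resulting graph structure through $T$, and record the surviving unit data in the outer $D$.

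The key steps, in order, are as follows. First I would define $\lambda S_X$ on a representative $(\X, \alpha)$ by passing to the unique reduced similar structure $(\X^\bot_\alpha, \alpha^\bot)$ produced in \cref{subs. similar}: when $W_\alpha \neq V(\X)$ this is an admissible $S$-structured graph, giving an element of $TS_X$, which we then hit with $\eta^\DD$ or with the appropriate adjoined $\epsilon/o$ data to land in $DTS_X$; the exceptional cases $\X \cong \Lk, \Wm, \C_\nul$ with $W_\alpha = V$ are handled separately using \cref{ex: morphisms to stick} and the double-cone diagram \eqref{eq. cone}. Second, I would verify well-definedness, i.e.\ that this assignment is constant on connected components of $\ovP{S^+}{X\XGrsimp}$: this is exactly \cref{lem: pi0}, which identifies $\mathrm{colim}_{X\XGrsimp} S_*(\G)$ with $\pi_0(\ovP{S_*}{X\XGrsimp})$, so $\lambda S_X$ factors through the similarity quotient by construction and the stated iff is immediate once $\lambda$ is seen to be exactly the quotient map to $\pi_0$. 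Third, I would check naturality of $\lambda S$ in $S$ (straightforward from functoriality of $D$, $T$ and the colimit) and that the projections $TDS(ch_x) $ and $DTS(ch_x)$ agree under $\lambda$, using that $\delW[W_\alpha]$ is boundary-preserving except in the $\Wm \to (\shortmid)$ case (\cref{Gnov construction}), so that arities are respected.

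Finally I would discharge the Beck distributive-law axioms of \cref{defn: dist}: compatibility of $\lambda$ with $\eta^\TT, \mu^\TT$ and with $\eta^\DD, \mu^\DD$. Compatibility with the units is essentially formal, since $\eta^\TT$ and $\eta^\DD$ correspond to the inclusions of corollas and of the adjoined unit elements, and reduction fixes already-reduced structures. The two hexagon (multiplication) axioms reduce to the statement that forming reduced representatives commutes with erasing inner nesting ($\mu^\TT$) and with collapsing nested unit data ($\mu^\DD$); here I would use \cref{colimit exists} and \cref{graph of graphs edges} to realise the relevant double colimits, together with the orthogonal factorisation $(\Grsimp, \Gr)$ of \cref{pp* sim}, which guarantees that any morphism factors uniquely as a similarity morphism followed by an \'etale one, so the two orders of reduction-then-substitution and substitution-then-reduction yield isomorphic colimits.

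\medspace

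The main obstacle, I expect, will be the bookkeeping around the non-boundary-preserving collapses $\kappa^m \colon \Wm \to (\shortmid)$ and $z \colon \C_\nul \to (\shortmid)$, which are precisely the loci where the `problem of loops' of \cref{degenerate} reappears. Concretely, the delicate point is that a wheel decorated entirely by units, $\Wm(\epsilon_c)$, is similar both to $(\shortmid, c)$ and, via $z$, to $(\shortmid, \omega c)$ and hence to $\W(\epsilon_{\omega c})$, so the similarity relation genuinely glues $c$ and $\omega c$ in arity $\nul$ but \emph{not} in arity $\two$ (see \cref{subs. similar} and \cref{fig: contracting units}). Verifying that $\lambda$ is nonetheless well-defined and that the multiplication axioms hold across this double-cone singularity is where the argument must be made with care; the orthogonal factorisation system of \cref{pp* sim} and the finality result \cref{lem: final} are the tools I would lean on to control it.
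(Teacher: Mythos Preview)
Your proposal is correct and follows essentially the same route as the paper: define $\lambda$ by sending $[\X,\alpha]$ to its unique reduced similar representative (with the exceptional $\Lk(\epsilon_c)$, $\Wm(\epsilon_c)$, $(\C_\nul,o_{\tilde c})$ cases mapped to the adjoined (contracted) units of $DTS$), then check the Beck axioms. Two small points of divergence: the paper verifies the $\mu^\DD$-compatibility square by an explicit case split on the two layers $W^1,W^2\subset V_2$ of adjoined units rather than by invoking the orthogonal factorisation of \cref{pp* sim}; and \cref{lem: pi0} by itself does not deliver the ``only if'' direction of the stated biconditional --- you still need the (easy) observation that distinct similarity classes have non-isomorphic reduced representatives, and that the three targets $TS_X$, $\{\epsilon^{DTS}_c\}$, $\{o^{DTS}_{\tilde c}\}$ are disjoint in $DTS_X$, so that the induced map from $\pi_0(\ovP{S^+}{X\XGrsimp})$ into $DTS_X$ is injective.
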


\begin{proof}
	
%Let $X \not \cong \nul$ be a non-empty finite set. Then
%\[\] by \cref{eq. D on graphs}, and 
%\[\]by \cref{??}.
%When $X = \nul$, 

Since the endofunctor $D$ just adjoins elements, there are canonical inclusions $ TS \hookrightarrow DTS$ and $TS\hookrightarrow TDS$. The natural transformation $\lambda\colon TD \Rightarrow DT$ will restrict to the identity on $TS$. 

For a finite set $X$, elements of $ TDS_X$ are represented by $DS$-structured $X$-graphs $(\X, \alpha) $, whereas elements of $DTS_X$ are either of the form $\epsilon^{DTS}_c, o^{DTS}_{\tilde c}$ for $c \in S_\S$, or are represented by $S$-structured $X$-graphs $(\X', \alpha')$. Observe also that an object $(\X, \alpha) \in \ovP{S^+}{X\XGrsimp}$ is reduced and admissible %(\cref{iota vertex deletion}) 
if and only if $(\X, \alpha) \in \ovP{S}{X\Griso}$, and hence $[\X, \alpha] \in TS_X$. %X{\Griso} (S)$ and hence represents an element of $TS_X$.

Let $(\X, \alpha)\in \ovP{DS}{X\Griso}$. If $X = \two$, and $ (\X, \alpha)$ has the form $\Lk(\epsilon_c)$, set 
\[\lambda S [\X, \alpha] = \epsilon^{DS}_c \in DTS_\two. \] 
And, if $X = \nul$, and $(\X, \alpha) = \Wl(\epsilon_c)$ or $(\X, \alpha) = (\C_\nul, o_{\tilde c})$, set 
\[ \lambda  S [\X, \alpha] =o^{DTS}_{\tilde c} \in DTS_{\nul}. \]

Otherwise, the component of $(\X, \alpha)$ in $ \ovP{S^+}{X\XGrsimp}$ has an admissible and reduced (hence terminal) object $(\X^\bot_\alpha, \alpha^\bot)$, so we can set
\[ \lambda S [\X, \alpha] = [\X^\bot_\alpha, \alpha^\bot] \in TS_X \subset DTS_X.\] 

The assignment so defined clearly extends to a natural transformation $\lambda \colon TD \Rightarrow DT$.
%As morphisms of pointed graphical species preserve (contracted) units, $\lambda$ is clearly natural in $S$.

The verification that $\lambda$ satisfies the four axioms \cite[Section~1]{Bec69} for a distributive law is straightforward but unenlightening, so I prove just one here, that the following diagram of natural transformations commutes: % \warn{So, I omit it here}, except to observe the role of \cref{piecewise vertex deletion} in the proof that the diagram 

%%   %\srnote{1203 replace this axiom}
\begin{equation}\label{eq: dist axiom}
\xymatrix{
TD^2 \ar@{=>}[rr]^-{\lambda D} \ar@{=>}[d]_{T (\mu^\DD)} && DTD \ar@{=>}[rr]^-{D \lambda } && D^2 T \ar@{=>}[d]^{(\mu^\DD) T}\\
TD \ar@{=>}[rrrr]_-\lambda &&&& DT } \end{equation} 

Let $[\X, \alpha] \in TD^2 S_X$. The result is immediate when $\X = \C_\nul$. Moreover, all the maps in (\ref{eq: dist axiom}) restrict to the identity on $TS\subset TD^2S$.

Therefore, we may assume that $[\X, \alpha] \not\in TS_X$ and that $\X \not \cong \C_\nul$. For $j = 1, 2$, define the sets $W^j$, % for $j = 1,2$, 
of {vertices decorated by distinguished elements} adjoined in the $j^{th}$ application of $D$:
\[ W^j \defeq \{ v|v \text{ has a minimal neighbourhood } (\CX, b) \text{ with } \ D^2S(b)(\alpha) \in im(\epsilon^{D^jS}) \} \subset V_2.\]
%and 
%\[ W^2 \defeq \{ v|v \text{ has a minimal neighbourhood } (\CX, b) \text{ with } \ D^2S(b)(\alpha) \in im(\epsilon^{D^2S}) \}.\]
Then $T\mu^\DD S [ \X, \alpha] = [\X,  \alpha^{\mu^\DD}]  \in TDS_X$, where $\alpha^{\mu^\DD} \in DS(\X)$ is given by
\[ {DS(b)(\alpha^{\mu^\DD})} =\left \{ \begin{array}{ll} \epsilon^{DS}_c &\text{ if } (\C , b) \text{ is a minimal neighbourhood of } v \in W^1 \amalg W^2,\\
 {D^2S(b)(\alpha) \in S(\C)}& \text{ otherwise.}
\end{array}\right .\]

If $W^1 \cup W^2 \neq V$, then(\ref{eq: dist axiom}) gives 
\[ \xymatrix{
[\X, \alpha] \ar@{|->}[rr]^-{\lambda DS} &&[\X_{\setminus W^2}, \alpha_{\setminus W^2}] \ar@{|->}[rr]^-{D \lambda S } &&[(\X_{\setminus W^2})_{\setminus W^1},(\alpha_{\setminus W^2})_{\setminus W^1}] \ar@{=}[d]\\
[\X, \alpha] \ar@{|->}[rr]_-{T (\mu^\DD)} &&[\X, \mu^\DD \alpha] 
\ar@{|->}[rr]_-{\lambda S} &&[\X_{\setminus (W^1 \amalg W^2)}, \alpha_{\setminus (W^1 \amalg W^2)}] \in TS_X}. \]

If $W^1 \amalg W^2 = V$, then $T(\mu^\DD)[\X, \alpha] $ has the form $ \Lk(\epsilon^{DS}_c)$ or $ \Wm(\epsilon^{DS}_c)$ and both paths map to the corresponding (contracted) unit in $DTS$. \end{proof}

It follows that there is a composite monad $\DD\TT$ on $\GS$, induced by $\lambda$. % induces a composite monad $\DD\TT$ on $\GS$.
Moreover, by \cite[Section~3]{Bec69}, %Therefore, as discussed in \cref{sec: Weber}, 
$\lambda\colon TD\Rightarrow DT$ induces a lift $\TTp$ of $\TT$ to $\GSp$, such that the EM categories $\GS^{\DD\TT}$ and $\GSp^{\TTp}$ are canonically isomorphic. (See also \cref{subs: dist}.)

%The following is immediate from \cref{monads distribute} and \cref{lem: pi0}:
%\begin{cor}\label{cor: lambda is nice}
%For all graphical species $S$, and all finite sets $X$,
%\[ \lambda (TDS_X) = \mathrm{colim}_{(\G, \rho) \in X\XGrsimp} S^+(\G)\] by \cref{monads distribute} and \cref{lem: pi0}. 
%\end{cor}

\begin{cor}
\label{prop: TpX} The monad $\TTp = (\Tp, \mup, \etap)$ on $\GSp$ is given by 
\[ \Tp S_\S = S_\S, \text { and } \Tp S_X = \mathrm{colim}_{\X\in X\XGrsimp} S(\X). \]
The unit $\etap\colon 1_{\GSp} \Rightarrow \Tp$ and multiplication $\mup\colon \Tp^2 \Rightarrow \Tp$ are induced by the unit $\eta^\TT \colon 1_{\GS} \Rightarrow T$ and multiplication $\mu^\TT \colon T^2 \Rightarrow T$ for $\TT$. In other words, if $[\X, \alpha]_*$ denotes the class of $[\X, \alpha] \in TS_X$ in $\Tp S_X$, then
\[ \etap (\phi) = [\eta^\TT \phi]_* \text { and } \mup [\X, \beta] = [\mu^\TT[\X, \beta]]_*. \]

\end{cor}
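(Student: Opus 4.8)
The plan is to read $\TTp$ off Beck's construction and then make its functor part, unit, and multiplication explicit using the description of $\lambda$ in \cref{monads distribute}. By \cite[Section~3]{Bec69} the distributive law produces the square (\ref{eq: adjunction diag}) of monadic adjunctions; its right-hand vertical exhibits $\CSM = \GS^{\DD\TT}$ as $\GSp^{\TTp}$, and the monad $\TTp = U'F'$ arises from the forgetful functor $U'\colon \CSM \to \GSp$ together with its left adjoint $F'$. Since left adjoints compose, the free functor $\GS \to \CSM$ equals $F'\circ F^{\DD}$, where $F^{\DD}\colon \GS \to \GSp$ is the free $\DD$-algebra functor of \cref{forget monadic}, and it sends $S$ to the free $\DD\TT$-algebra $(DTS, \mu_\lambda)$. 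Consequently, on free pointed graphical species one has $\Tp F^{\DD} \cong F^{\DD} T$, with underlying graphical species $DTS$; and since both $D$ and $T$ fix the $\S$-component, $\Tp S_\S = S_\S$.

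First I would identify the functor part on an arbitrary $S_*$. By construction of $\DD$ and $\TT$ together with (\ref{eq: component identity}), an element of $TDS_X$ is a class $[\X,\alpha]\in\pi_0(\ovP{DS}{X\Griso})$, and \cref{monads distribute} says precisely that $\lambda$ realises $DTS_X$ as the quotient of $TDS_X$ by the similarity relation in $\ovP{S^+}{X\XGrsimp}$ (where $S^+ = DS$); that is, $DTS_X \cong \pi_0(\ovP{S^+}{X\XGrsimp})$, matching the formula for $\Tp$ on the free algebra $F^{\DD}S$. For a general $\DD$-algebra I would transport this identification across the canonical presentation of $S_*$ as the reflexive coequaliser $F^{\DD}(DS)\rightrightarrows F^{\DD}S$ of free $\DD$-algebras, which $\Tp = U'F'$ preserves ($F'$ being a left adjoint and $U'$, being monadic, creating the relevant reflexive coequalisers); the colimit then collapses exactly to $\pi_0(\ovP{S_*}{X\XGrsimp})$. \cref{lem: pi0} rewrites this as $\mathrm{colim}_{(\G,\rho)\in X\XGrsimp} S(\G)$, giving the stated formula, and the comparison map $TS_X \twoheadrightarrow \Tp S_X$, $[\X,\alpha]\mapsto [\X,\alpha]_*$, is the one induced by passing from $X\Griso$ to $X\XGrsimp$.

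It remains to match the unit and multiplication. The unit of the induced monad is the inner unit $\eta^\TT$ followed by the quotient map, so $\etap(\phi) = [\eta^\TT\phi]_*$ is immediate; likewise $\mup$ is $\mu^\TT$ followed by the quotient, so the statement reduces to showing this composite is well defined on similarity classes. I expect this well-definedness to be the main obstacle: it is exactly the point at which the combinatorics of contracted units must cohere, governed by the double-cone diagram of \cref{fig: contracting units}. Concretely I would verify that a similarity morphism between $S_*$-structured $X$-graphs, after a single step of edge-collapse (the gluing and self-gluing of \cref{unpointed modular operad}), again produces similar structures, so that $[\mu^\TT[\X,\beta]]_*$ depends only on the class $[\X,\beta]_*$; the four distributive-law axioms checked in \cref{monads distribute}, in particular the compatibility of $\lambda$ with $\mu^{\DD}$, supply the coherence needed. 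The monad axioms for $\TTp$ then follow from those for $\TT$ together with naturality of the quotient, completing the identification.
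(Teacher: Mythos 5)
Your proposal is correct and follows essentially the same route as the paper: the paper likewise reads $\Tp S_*$ off Beck's coequaliser $DTDS\rightrightarrows DTS$ (its display (\ref{T* equiv}), which is exactly the image under $\Tp$ of the canonical free presentation of the $\DD$-algebra $S_*$), identifies the quotient with the similarity colimit using \cref{monads distribute} and \cref{lem: pi0}, and then checks that $\etap$ and $\mup$ descend from $\eta^\TT$ and $\mu^\TT$ by observing that vertex-deletion of unit-decorated bivalent vertices is compatible with $\mu^\TT$. The only cosmetic difference is that you derive the coequaliser from preservation of reflexive coequalisers by $U'F'$ rather than quoting it directly from \cite[Section~3]{Bec69}.
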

\begin{proof}
%By \cite[Section~3]{Bec69}, and the fact that 
Let $S_* = (S, \epsilon, o)$ be a pointed graphical species, and let $h_\DD\colon DS \to S$ denote the $\DD$-algebra structure on $S$, so $ \epsilon =h_{\DD}(\epsilon^{+})$ and $o = h_\DD(o^+)$.

Observe first that \[(\mu^\DD \mu^\TT)\circ (D \lambda T) \circ (DTD \eta^\TT)= (\mu^\DD T)\circ (D \lambda) \colon DTD\Rightarrow DT. \] So, by \cite[Section~3]{Bec69}, $\Tp(S_*)$ is described by the coequaliser %, for all pointed graphical species $S_* = (S, \epsilon, o)$ by the coequaliser\footnote{This makes use of the identity $(\mu^\DD T)\circ (D \lambda) = (\mu^\DD \mu^\TT)\circ (D \lambda T) \circ (DTD \eta^\TT)\colon DTD\Rightarrow DT$, for $\TT$ and $\DD$.}
\begin{equation}\label{T* equiv}\xymatrix{ DTD S \ar[rrrr]^-{DT h_\DD } \ar[drr]_-{ D \lambda S} &&&& DTS \ar [rr]^-{\pi} && \Tp S_*.\\
&& D^2 TS \ar[urr]_{ \mu^\DD TS}. &&&&} \end{equation}

The occurrence of $\lambda $ means that the lower path of (\ref{T* equiv}) identifies similar elements of $TDS \hookrightarrow DTDS$ and, since $ \epsilon =h_{\DD}(\epsilon^{+})$, it follows that similar elements of $TS \subset DTS$ are identified by the quotient $\pi$. Reduced elements of $TS_X \subset DTDS_X \to DTS_X \supset TS_X$ are unchanged under both paths in (\ref{T* equiv}). Therefore $\Tp S_X = \mathrm{colim}_{\X \in X\XGrsimp} S_*(\X)$ as required. Clearly $[\Lk(\epsilon)]_*$ and $[\Wm(\epsilon)]_* = [\C_\nul, o]_*$ provide (contracted) units for $\Tp S_*$. 

%
%  . Hence, similar elements of $TS \subset DTS$ are identified by the quotient $\pi$ and $\Tp S_X = \mathrm{colim}_{(\G, \rho) \in X\XGrsimp} S_*(\G)$ as required. 
%On $TS \subset DTDS$, both maps in (\ref{T* equiv}) restrict to the inclusion $TS \hookrightarrow DTS$. By the application of $h_\DD$, the upper path, $\pi\colon DTS \to \Tp S_*$ identifies all occurrences of $\epsilon$ and $\epsilon^{DS}$ in elements of $DTDS$ and, by the application of $\mu^\DD$ in the lower path, it identifies all occurrences of $\epsilon^{DS}$ and $\epsilon^{DTDS}$. %%%   %\srnotee for $o, o^{DS}$ and $o^{DTDS}$.
%The occurrence of $\lambda $ means that similar elements of $TDS \hookrightarrow DTDS$ are identified by the lower path. Hence, similar elements of $TS \subset DTS$ are identified by the quotient $\pi$ and $\Tp S_X = \mathrm{colim}_{(\G, \rho) \in X\XGrsimp} S_*(\G)$ as required. 
%
%The unit for $\Tp S_*$ is provided by the map $[\Lk(\epsilon)]_*\colon S_\S \to \Tp S_\two$ that takes $c$ to the class of all $\epsilon_c$ decorated line graphs $\Lk$, and the contracted unit by 
%$[\Wm(\epsilon)]_* = [\C_\nul, o]_*\colon S_\S \to \Tp S_\nul$ that takes $c$ to the class of all $\epsilon_c$ decorated wheel graphs, and the isolated vertex decorated by $\zeta (\epsilon_c)  = o_{\tilde c}$. %]]

Let $(\X, \beta)$ represent an element of $ T^2S_X$ such that $(\X, \beta) \not \cong (\C_\nul, (\C_\nul, o_{\tilde c}))$, $(\X, \beta) \not \cong (\C_\nul,[ \Wm(\epsilon_c)])$. And let $\delW \in X\XGrsimp((\X, \beta), (\X_{\setminus W}, \beta_{\setminus W}))$, so $W \subset W_{\setminus \alpha}$. Then $(\C_\two,b) \in \elG[\X]$ is a neighbourhood of $v \in W$ if and only if there is a $k \geq 1$, and a $c \in S_\S$, such that $S(b)(\beta) =[\Lk(\epsilon_c)]$ in $ TS_\two.$ In particular, $\mu^\TT [\X, \beta]$ and $\mu^\TT[\X_{\setminus W}, \beta_{\setminus W}]$ are represented by similar objects of $\ovP{S_*}{X\XGrsimp}$. 

Finally, let $(\X, \beta) \in TS (\X)$ be similar to $(\C_\nul, (\C_\nul, o_{\tilde c}))$, and hence also $(\X, \beta) \sim  (\C_\nul,[ \Wm(\epsilon_c)])$ in $\ovP{\Tp S_*}{\nul\XGrsimp}$. Then, either $\mu^\TT[\X, \beta] = (\C_\nul, o_{\tilde c})$ or $\mu^\TT[\X, \beta] = [\Wm[M](\epsilon_c)]$ for some $M \geq 1$. It follows that $\mu^\TT$ preserves similarity classes of $(\C_\nul,[ \Wm(\epsilon_c)])$ in $TS (\C_\nul) \to T^2 S_\nul$. 

Hence,  $\mup = [\mu^\TT(-)]_*$ is well-defined and provides the multiplication for $\TTp$. Clearly $\etap = [\eta^{\TT}(-)]_*$, and the corollary follows.
\end{proof}

%So, elements of $\Tp S_X$ may be viewed as similarity classes of $S_*$-structured $X$-graphs. Two $X$-labelled $S_*$-structured graphs $(\X, \alpha)$ and $(\X', \alpha') $ represent the same class $[\X, \alpha] \in TS_X$ precisely when there is a similarity morphism $g \in X\XGrsimp (\X,\X')$ such that $S(g)(\alpha') = \alpha$.

 In particular, to compute the image of $[\X, \beta] \in \Tp^2 S_*$ under the multiplication $\mup (S_*) \colon \Tp^2 S_* \to \Tp S_*$, it is sufficient to chose a non-degenerate representative of $\beta$ (i.e.~a non-degenerate $\X$-shaped graph of $S$-structured graphs) and quotient by similarity at the end. 
 
 \begin{rmk}\label{rmk: TD} There is also a distributive law $DT \Rightarrow TD$. Algebras for the composite monad $\TT\DD$ are just the cofibred coproducts of algebras for $\DD$ and $\TT$. There is no further relationship between the two structures. (See also \cref{ex: category composite}.)%\footnote{As discussed in \cref{ex: category composite}, the category monad on graphs is the composite of the semi-category monad followed by the reflexive graph monad. In this case as well there is a distributive law in the other direction: the reflexive graph monad that adjoins distinguished units to directed graphs distributes over the free semi-category monad, but the two structures do not interact in the composite.}
 	
 \end{rmk}
 \subsection{$\DD\TT$-algebras are modular operads.}

At last we are ready to prove the first main theorem -- that modular operads are $\DD\TT$-algebras in $\GS$. %or, equivalently, $\TTp$-algebras in $\GSp$. 

Let $(S, \diamond, \zeta, \epsilon)$ be a modular operad. Since $(S, \diamond, \zeta)$ is a non-unital modular operad, it is equipped with a $\TT$-algebra structure $p_\TT = p^{\diamond, \epsilon}_\TT\colon TS \to S$, by \cref{unpointed modular operad}. Moreover $S_* = (S, \epsilon, \zeta \epsilon)$ is a pointed graphical species, and hence also a $\DD$-algebra. %$(S, \diamond, \zeta)$ is a non-unital modular operad. Therefore, by \cref{unpointed modular operad}, $S$ is equipped with a $\TT$-algebra structure $p_\TT = p^{\diamond, \epsilon}_\TT\colon TS \to S$.

%Let $ (S, \diamond, \zeta, \epsilon)$ be a modular operad, and let $p_\TT\colon TS \to S$ denote the corresponding $\TT$-algebra structure on $S$. Let $S_* = (S, \epsilon , \zeta \epsilon)$ be the induced pointed graphical species.
\begin{lem}\label{lem: deletion lemma}
	The defining functor $\ovP{S}{X\Griso} \twoheadrightarrow TS_X \xrightarrow{p_\TT} S_X$ factors through $\pi_0(\ovP{S_*}{X\XGrsimp})$.
 %Admissible objects in the same connected component of $\ovP{S_*}{X\XGrsimp}$ have the same image under the defining functor $\ovP{S}{X\Griso} \twoheadrightarrow TS_X \xrightarrow{p_\TT} S_X. $ %where, $p_\TT$ is the induced $\TT$-structure map.

\end{lem}
\begin{proof}
Since $(S, \diamond, \zeta, \epsilon)$ is a modular operad, $p_\TT$ satisfies
\begin{equation}\label{eq. mult invariant} p_\TT[ \mathcal M_c(\phi, \epsilon_c)] = \phi \diamond_c \epsilon_c = \phi = p_\TT(\eta^\TT \phi) \text{ wherever defined.}\end{equation} %\phi \text{ such that } S(ch_x)(\phi) = c {(\underline c, c)},\] 
In particular, let $\alpha \in S(\X)$ for $\X \not \cong \C_\nul$ and let $W \subset W_\alpha$ be such that $W \neq V(\X)$. Then 
\[ p_\TT[\X, \alpha] = p_\TT[\X_{\setminus W}, \alpha_{\setminus W}]\] by the proof of \cref{unpointed modular operad}.

Therefore, it remains to check that $p_\TT[\C_\nul, o_{\tilde c}] = p_\TT[\W(\epsilon_c)]$ for all $c \in S_\S$. This is immediate since $W \cong \mathcal N^\nul_{1,2}$ (\cref{ex: N graph}), and hence 
 \[ p_\TT[\C_\nul, o_{\tilde c}] = o_{\tilde c}  \defeq \zeta (\epsilon_c) = p_\TT[\mathcal N(\epsilon_c)] =  p_\TT[\Wm(\epsilon_c)]\]
 by construction, since $(S, p_\TT)$ is a $\TT$-algebra.
% 
% 
%\[ p_\TT[\Wm(\epsilon_c)] = p_\TT [\W(\epsilon_c)] = \zeta (\epsilon_c) = p_\TT[\C_\nul, o_{\tilde c}]\]
%
% $(\X, \alpha)$ be an admissible object in $\ovP{S_*}{X\XGrsimp}$. 
%If $(\X, \alpha) \sim (\C_\two , \epsilon_c) $ in $ \ovP{S_*}{\two\XGrsimp}$, then $p_\TT[\X, \alpha]  = p_\TT[\C_\two , \epsilon_c] = \epsilon_c$ by (\ref{eq. mult invariant}). And, since $ p_\TT[ \mathcal N_c(\psi)] = \zeta (\psi) $ for all $\psi \in S_{(\underline c, c, \omega c)}$, \[ p_\TT[\C_\nul, o_{\tilde c} = o_{\tilde c}  = \zeta (\epsilon_c) = p_\TT[\mathcal N(\epsilon_c)] =  p_\TT[\Wm(\epsilon_c)]\] 
%
%Otherwise, the connected component of $(\X, \alpha)$ in $\ovP{S_*}{X\XGrsimp}$ contains a unique admissible reduced -- and hence terminal -- element $(\X^\bot_\alpha, \alpha^\bot)$, and $p_\TT [\X, \alpha] = p_\TT [\X^\bot_\alpha, \alpha^\bot]$ by (\ref{eq. mult invariant}).
%%Hence, by the monad algebra axioms, $p_\TT[ \Wl(\epsilon)] =\zeta \epsilon \colon S_\S \to S_\nul$ for all $m \geq 1$. 
%Finally, for all $[\X, \alpha] \not \in im (\zeta \epsilon)$, $p_\TT[ \X, \alpha] = p_\TT[\X_0, \alpha_0]$ where $\X_0$ is admissible. Hence, either $(\X_0, \alpha_0) $ is the reduced structure $ (\X{^\bot}_\alpha, \alpha{^\bot})$ if the latter is admissible, or $(\X_0, \alpha_0) = (\C_\two, \epsilon_c)$ if $(\X, \alpha)\sim \Lk(\epsilon_c)$.
\end{proof}

It is now straightforward to prove that $\DD\TT$ is the desired modular operad monad on $\GS$.
\begin{thm}\label{CSM monad DT}
The EM category $\GS^{\DD\TT}$ of algebras for $\DD\TT$ is canonically isomorphic to $\CSM$.
\end{thm}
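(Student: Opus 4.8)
The plan is to establish an isomorphism of categories $\GS^{\DD\TT} \cong \CSM$ by exhibiting mutually inverse functors, building directly on the machinery already assembled. Recall that by Beck's theorem (\cite[Section~3]{Bec69}) and the distributive law $\lambda$ of \cref{monads distribute}, we have a canonical isomorphism $\GS^{\DD\TT} \cong \GSp^{\TTp}$, so it suffices to identify $\GSp^{\TTp}$ with $\CSM$. A $\DD\TT$-algebra structure on a graphical species $S$ is equivalently a pointed graphical species $S_* = (S,\epsilon,o)$ (a $\DD$-algebra, by \cref{forget monadic}) together with a $\TTp$-algebra structure $p\colon \Tp S_* \to S_*$. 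The strategy is to show that such a structure is exactly the data of a unital multiplication and contraction satisfying (M1)--(M4) together with the unit and contracted-unit axioms.

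First I would construct the functor $\GSp^{\TTp} \to \CSM$. Given a $\TTp$-algebra $(S_*, p)$, the inclusion $TS \hookrightarrow \Tp S$ followed by $p$ yields a $\TT$-algebra structure $p_\TT = p_\TT^{\diamond,\epsilon}\colon TS \to S$, so by \cref{unpointed modular operad} we obtain a non-unital modular operad structure $(S,\diamond,\zeta)$ satisfying (M1)--(M4). It remains to check that $\epsilon\colon S_\S \to S_\two$ is a multiplicative unit for $\diamond$ and that $o = \zeta\epsilon$. This is exactly where \cref{lem: deletion lemma} does the work: the similarity relation forces $p_\TT[\mathcal M_c(\phi,\epsilon_c)] = \phi$ (since $\mathcal M_c(\phi,\epsilon_c) \sim \eta^\TT\phi$ as admissible structures in $\ovP{S_*}{X\XGrsimp}$), which is precisely the unit axiom of \cref{defn: multiplication}; and since $\W(\epsilon_c) \sim (\C_\nul, o_{\tilde c})$ in $\ovP{S_*}{\nul\XGrsimp}$, compatibility of $p$ with the $\DD$-structure gives $o_{\tilde c} = \zeta(\epsilon_c)$, matching equation (\ref{eq. unit 2}). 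Thus $(S,\diamond,\zeta,\epsilon)$ is a genuine modular operad.

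Conversely, I would construct $\CSM \to \GSp^{\TTp}$. Given a modular operad $(S,\diamond,\zeta,\epsilon)$, as observed just before \cref{lem: deletion lemma}, $S_* = (S,\epsilon,\zeta\epsilon)$ is a pointed graphical species and $(S,\diamond,\zeta)$ gives a $\TT$-algebra $p_\TT\colon TS \to S$ via \cref{unpointed modular operad}. The task is to show $p_\TT$ descends along the quotient $TS \twoheadrightarrow \Tp S$ of \cref{prop: TpX} to a well-defined $\TTp$-algebra $p\colon \Tp S_* \to S$. Well-definedness is exactly the content of \cref{lem: deletion lemma}: similar admissible $S_*$-structured $X$-graphs have equal image under $p_\TT$, and the non-admissible similarity classes (of the forms $\Lk(\epsilon_c)$, $\Wm(\epsilon_c)$, $(\C_\nul,o_{\tilde c})$) are sent consistently to $\epsilon_c$ and $\zeta(\epsilon_c)$. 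One then verifies the two algebra axioms for $p$: compatibility with $\etap$ follows from compatibility of $p_\TT$ with $\eta^\TT$, and compatibility with $\mup = [\mu^\TT(-)]_*$ follows from compatibility of $p_\TT$ with $\mu^\TT$ together with the description of $\mup$ in \cref{prop: TpX}.

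Finally I would check that these two functors are mutually inverse, which is immediate once the constructions are unwound: both directions restrict to the identity on underlying graphical species and on the operations $\diamond$, $\zeta$, $\epsilon$, and the functor $\GS^\TT \cong \nuCSM$ of \cref{unpointed modular operad} is already an isomorphism on the non-unital part. Morphisms correspond because morphisms of $\TTp$-algebras are precisely palette-respecting morphisms of graphical species preserving $p$ (hence $\diamond$, $\zeta$, $\epsilon$), which is the definition of morphisms in $\CSM$. I expect the main obstacle to be the verification of well-definedness of the descent $p\colon \Tp S_* \to S$ in the converse direction --- that is, confirming that \emph{every} similarity morphism (vertex deletions, the map $z$, and isomorphisms generating $\Grsimp$) is respected by $p_\TT$, and in particular handling the delicate degenerate classes at arities $\two$ and $\nul$ where the asymmetry between $(\shortmid,c)$ and $(\shortmid,\omega c)$ (as analysed in \cref{subs. similar} and \cref{fig: contracting units}) must be reconciled with the genuine identity $o = \zeta\epsilon$. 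This is precisely what \cref{lem: deletion lemma} is designed to settle, so the theorem should follow cleanly from it.
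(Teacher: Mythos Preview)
Your proposal is correct and closely parallels the paper's proof, but with one organisational difference worth noting: you work at the level of the lifted monad $\TTp$ on $\GSp$ (invoking the Beck isomorphism $\GS^{\DD\TT}\cong\GSp^{\TTp}$ at the outset), whereas the paper stays at the level of $\DD\TT$ on $\GS$ and verifies the composite-monad algebra axioms directly. Concretely, for the forward direction the paper chases an explicit element $[\mathcal M_c(\eta^\TT(\phi),\epsilon^{DTA}_c)]$ through the square expressing compatibility of $h$ with $\mu^\DD\mu^\TT$ and $D\lambda T$, rather than appealing to similarity; for the converse it defines $p\colon DTS\to S$ extending $p_\TT$ to the adjoined units and then does a four-case analysis to verify the $\DD\TT$-algebra square commutes. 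Your route is arguably cleaner, since once you pass to $\Tp S_*$ the similarity identifications are built in and the unit axiom $\phi\diamond_c\epsilon_c=\phi$ is immediate from $[\mathcal M_c(\phi,\epsilon_c)]_*=[\CX,\phi]_*$.

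One small point: in the forward direction your appeal to \cref{lem: deletion lemma} is slightly misplaced, since that lemma is stated under the hypothesis that $(S,\diamond,\zeta,\epsilon)$ is already a modular operad. What you actually need there is just that $p$ is well-defined on $\Tp S_*$, which holds by construction; no circularity arises, but the citation is unnecessary. The lemma is genuinely needed only in the converse direction, exactly as you use it, to show that the $\TT$-algebra map $p_\TT$ coming from a modular operad descends along $TS\twoheadrightarrow\Tp S_*$.
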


\begin{proof} %[Proof of \cref{CSM monad DT}]

Let $(A, h)$ be a $\DD\TT$-algebra with corresponding $\DD$ and $\TT$ structure morphisms
\[h_\DD \defeq h \circ (D\eta^\TT A)\colon DA \to A, \qquad \text{ and } \qquad h_\TT \defeq h \circ (D \eta^\DD A )\colon TA \to A.\] 
Since $\eta^\DD$ is just an inclusion, 
$ h_\TT = h|_{TA}\colon TA \to A$ is the restriction of $h$ to $TA \subset DTA$. By \cref{unpointed modular operad}, $A$ is equipped with a multiplication $\diamond = h \circ [\mathcal M( - , -)] $ and contraction $\zeta = h \circ [\mathcal N( -)] $, as described in the proof of \cref{lem: algebra operations}, so that $(A, \diamond, \zeta)$ is a non-unital modular operad.

%So, $ A$ admits a pointed graphical species structure $(A, \epsilon, o)$, with $\epsilon \defeq h(\epsilon^{DTA})\colon A_|S \to A_\two$ and $o = h(o^{DTA})\colon A_\S \to A_\nul$. And, 

It remains to show that $\epsilon$ provides a unit for the multiplication $\diamond$. By the monad algebra axioms, % \ref{alg1}, \ref{alg2}, 
there are commuting diagrams in $\GS$:

\begin{minipage}[t]{0.5\textwidth}
\begin{equation} \label{etaDT}
\xymatrix{ A \ar[rr]^{ \eta^\DD \eta^\TT A}\ar@{=}[drr] && DTA \ar[d]^h \\	
	&& A,													}\end{equation}

\end{minipage}
\begin{minipage}[t]{0.5\textwidth}
\begin{equation} \label{muDT}
\xymatrix{ (DT)^2 A \ar[r]^-{D \lambda T A} \ar[d] _{DT h} & D^2 T^2 A \ar[r]^-{\mu ^\DD \mu^\TT A} & DTA \ar[d]^h \\
	DTA \ar[rr]_-h && A. }\end{equation}
\end{minipage} 

Let $X$ be a finite set, $\underline c \in (A_\S)^X$, and let $\phi \in A_{\underline c}$. By definition of $\lambda$, 
\[D \lambda TA [\mathcal M_{c_x}(\eta^{\TT}A (\phi) , \epsilon^{DTA}_{c_x})] = [\CX, \eta^\TT A(\phi)] = (\eta^\TT)^2 A(\phi)\] for all $x \in X$. So $ [\mathcal M_{c_x}(\eta^{\TT}A (\phi) , \epsilon^{DTA}_{c_x})] \mapsto \phi$ under the top-right path in (\ref{muDT}).

% the image of the element $[\mathcal M_c(\eta^{\TT} (\phi) , \epsilon^{DTA}_c)] \in (TD)^2A_X$ under the top-right path in (\ref{muDT}) is just $\phi \in A_X$, since the application of $D\lambda T A$ deletes the unit $ \epsilon ^{DTA}_c$.

Now, $\phi\ \diamond_{c_x} \ \epsilon_{c_x}= h[\mathcal M_{c_x}(\phi, \epsilon_{c_x})] $ by definition, and $[\mathcal M_{c_x}(\phi, \epsilon_{c_x})]= DTh[\mathcal M_{c_x}(\eta^{\TT} (\phi) , \epsilon^{DTA}_{c_x})]$ by the monad algebra axioms. Then, since (\ref{muDT}) commutes,%$DTh[\mathcal M_{c_x}(\eta^{\TT} (\phi) , \epsilon^{DTA}_{c_x})] = [ \mathcal M_c(\phi, \epsilon^{DTA}_c)] \in DTA_X$ by construction, and, since (\ref{muDT}) commutes,
\[ \phi\ \diamond_{c_x} \ \epsilon_{c_x}= h[\mathcal M_{c_x}(\phi, \epsilon_{c_x})] = hDTh[\mathcal M_{c_x}(\eta^{\TT} (\phi) , \epsilon^{DTA}_{c_x})] = \phi, \]  %so $ \phi \diamond_c \epsilon_c = h[\mathcal M_c(\phi, \epsilon_c)] = \phi $ and 
and $\epsilon$ is a unit for $\diamond $. % as required. 

Conversely, a modular operad $ (S, \diamond, \zeta, \epsilon)$ induces a pointed graphical species $S_* = (S, \epsilon, o =\zeta \epsilon)$. By \cref{unpointed modular operad}, $(S, \diamond, \zeta)$ has a $\TT$-algebra structure 
$ p_\TT\colon TS \to S$ such that %be the corresponding structure maps. Then $p_\TT$ satisfies
\begin{equation}\label{eq. T alg}
	 \diamond = p_\TT \ \circ [\mathcal M (\cdot, \cdot)] \ \text { and }\ \zeta = p _\TT \ \circ [\mathcal N(\cdot)].\,
\end{equation}
and, for all $c \in S_\S$ and all $m \geq 1$, since $\epsilon $ is a unit for $\diamond$,
\begin{equation}
	\label{eq cont unit}
	o_{\tilde c} \defeq \zeta \epsilon_c =p_\TT [\mathcal N(\epsilon_c)] = p_\TT[ \W(\epsilon_c)]  =  p_\TT[\Wl (\epsilon_c)].
\end{equation}

Let $p \colon DTS \to S$ be defined by 
\begin{equation}\label{eq structure map}
	p (\epsilon^{DTS}) = \epsilon\colon S_\S \to S_\two, \ p (o^{DTS}) = o = \zeta \epsilon\colon S_\S\to S_\nul, \text{ and }  p = p_\TT\colon TS \to S \text { on } TS\subset DTS.
\end{equation}
Then (\ref{etaDT}) commutes for $(A,h) = (S, p)$ since $ (S, p_\TT)$ is a $\TT$-algebra.

%
%By \cref{lem: deletion lemma}, since $\epsilon$ is a unit for $\diamond$, for all $c \in S_\S$ and all $m \geq 1$,
%\[p_\TT [\mathcal N(\epsilon_c)] = p_\TT[ \W(\epsilon_c)] = p_\TT[\Wl (\epsilon_c)].\] So,
%$ o_{\tilde c} \defeq \zeta \epsilon_c = p_\TT[\Wl (\epsilon_c)]$.
%
%Then (\ref{etaDT}) commutes when $h = p\colon DTS\to S$ is defined by $ p_\TT\colon TS \to S$ on $TS$, and
%\[p (\epsilon^{DTS}) = \epsilon\colon S_\S \to S_\two, \text { and } p (o^{DTS}) = \zeta \epsilon\colon S_\S\to S_\nul. \]%, and $ p_\TT\colon TS \to S$ on $TS$, together with $p (\epsilon^{DTS}) = \epsilon\colon S_\S \to S_\two, \text { and } p (o^{DTS}) = \zeta \epsilon\colon S_\S\to S_\nul$. %commutes, since $p\colon DTS\to S$ may be defined by the restriction $p|_{TS_X} = p_\TT\colon TS \to S$, together with \[p (\epsilon^{DTS}) = \epsilon\colon S_\S \to S_\two, \text { and } p (o^{DTS}) = \zeta \epsilon\colon S_\S\to S_\nul. \] making (\ref{etaDT}) commute. 

It remains to check that (\ref{muDT}) commutes for $(A,h) = (S, p)$. This is clear for the adjoined (contracted) units $\epsilon ^{(DT)^2S}$, and $ o^{(DT)^2S}$ in $(DT)^2S$. Otherwise, if $[\X, \beta] \in TDTS_X$, then 
%So we must check that the restriction 
%\begin{equation} \label{composite algebra}\xymatrix {TDT S \ar[rr]^-{ \lambda T S} \ar[d] _{T p} && D T^2 S \ar[rr]^-{D \mu^\TT S} && DTS \ar[d]^p \\
%TS \ar[rrrr]_-p &&&& S }\end{equation} of (\ref{muDT}) commutes.
%
%To this end, let $[\X, \beta] \in TDTS_X$. 
% % be represented by Let $(\X,\beta) \in DTS(\X)$ represent an element. 
exactly one of the following four conditions holds:
\begin{enumerate}[(i)]
\item $X = \nul$ and $[\X, \beta] = [\C_\nul, o^{DTS}_{\tilde c}]$. In this case, it is immediate from the definitions of $p$ and $\lambda$ that the image of $[\X, \beta]$ under both paths in (\ref{muDT}) is $o_{\tilde c}$;
\item $X = \nul$ and $[\X, \beta] = [\Wl(\epsilon^{DTS}_c)]$ for some $m \geq 1$, and $c \in S_\S$. The application of $\lambda TS$ means that the top-right path takes $[\Wl(\epsilon^{DTS}_c)] $ to $o_{\tilde c} \in S_\nul$.

Since $ p (\epsilon^{DTS}) = \epsilon$, the bottom-left path takes $[\Wl (\epsilon^{DTS}_c)]$ first to $[\Wl(\epsilon_c)] \in TS_\nul$ and then, by (\ref{eq cont unit}), to $o_{\tilde c}$. % by applying $p$ to $\epsilon^{DTS}$, and then (by an application of \cref{lem: deletion lemma}) %and the definition of the contracted unit $o$ 
% to 
%\[ p[\Wl(\epsilon_c)] = p[\W(\epsilon_c)] = p_{\TT} [\mathcal N(\epsilon_c)] = \zeta \epsilon_c = o_{\tilde c};\] %by an application of \cref{lem: deletion lemma} and the definition of the contraction $o$.
\item $X = \two$ and $[\X, \beta] = [\Lk(\epsilon^{DTS}_c)]$ for some $k\geq 1$, and $c \in S_\S$. Once again, the application of $\lambda TS$ means that the top-right path takes $[\Lk(\epsilon^{DTS}_c)]$ to $\epsilon_{ c} \in S_\two$.

The bottom-left path takes $[\Lk(\epsilon^{DTS}_c)]$ first to $[\Lk (\epsilon_c)] \in TS_\two$ by applying $p$ inside and then, by \cref{lem: deletion lemma}, 
\[ p[\Lk(\epsilon_c)] = p_\TT[\Lk(\epsilon_c)] = \epsilon_c \in S_\two ; \]	
\item Otherwise, $\lambda TS[\X, \beta] = [\X^\bot_\beta, \beta^\bot] $ where $(\X^\bot_\beta, \beta^\bot) $ is the unique reduced and admissible $S$-structured graph that is similar to $(\X, \beta)$ in $\ovP{S_*}{X\XGrsimp}$, and hence $[\X^\bot_\beta, \beta^\bot] \in T^2S_X$. 

So, for the top-right path we have 
\[ [\X, \beta] \mapsto p \circ \mu^\TT S [\X^\bot_\beta, \beta^\bot] = p_\TT \circ \mu^\TT S [\X^\bot_\beta, \beta^\bot] \in S_X.\]% and therefore $T p [\X, \beta] =Tp_{\TT} [\X^\bot, \beta^\bot]$ by \cref{lem: deletion lemma}.
%This means precisely that $ [\X^\bot_\beta, \beta^\bot] \in T^2S_X$.
%$\lambda T[\X, \beta]= [\X^\bot_\beta, \beta^\bot]\in TS_X$. Moreover, 
 Since $(S, p_{\TT})$ is a $\TT$-algebra $p_\TT \circ \mu^\TT S [\X^\bot_\beta, \beta^\bot] = p_\TT \circ T p_\TT S [\X^\bot_\beta, \beta^\bot]$ and, by \cref{lem: deletion lemma}, this is precisely $ p \circ Tp[\X, \beta]$.  %\srnote{REALLY CHECK!}

%Hence $Tp[\X, \beta] = Tp_{\TT} [\X^\bot_\beta, \beta^\bot]$, and so (\ref{muDT}) commutes.
\end{enumerate}
Therefore (\ref{muDT}) commutes, and $(S, p)$ has the structure of a $\DD \TT$-algebra.

It is straightforward to verify that the assignment $(S, \diamond, \zeta, \epsilon) \mapsto ( S, p)$ is natural and that the functors $ \CSM \leftrightarrows \GS^{\DD\TT}$ so defined are each others' inverses.
\end{proof}
\begin{rmk}%   %\srnote{11-3 - RG??? added remark, but this is already in into. Can I delete?}
	\label{rmk. alg free} In particular, $\DD\TT$ is the algebraically free monad \cite{Kel80} on the endofunctor $\TJK$ from \cref{degenerate}.
\end{rmk}

%\begin{rmk}\label{rmk: TD} There is also a distributive law in the other direction $DT \Rightarrow TD$. Algebras for the composite monad $\TT\DD$ are just the cofibred coproducts of algebras for $\DD$ and $\TT$. There is no further relationship between the two structures. (See also \cref{ex: category composite}.)%\footnote{As discussed in \cref{ex: category composite}, the category monad on graphs is the composite of the semi-category monad followed by the reflexive graph monad. In this case as well there is a distributive law in the other direction: the reflexive graph monad that adjoins distinguished units to directed graphs distributes over the free semi-category monad, but the two structures do not interact in the composite.}
%
%\end{rmk}

\section{A nerve theorem for modular operads}\label{s. nerve}

By \cref{CSM monad DT} and \cite{Bec69}, %\cref{s. Unital},
 there is a diagram of functors
\begin{equation} \label{eq: nerve big picture}
\xymatrix{ 
	&&\Klgr\ar@{^{(}->} [rr]_-{\text{f.f.}}						&& \CSM\ar@<2pt>[d]^-{\text{forget}^{\TTp}} \ar [rr]^-N				&&\pr{\Klgr}\ar[d]^{j^*}	\\
	\fisinvp \ar@{^{(}->} [rr]	^-{\text{dense}}_-{\text{ f.f.}}		&& \Grp \ar@{^{(}->} [rr] ^-{\text{dense}}_-{\text{ f.f.}} \ar[u]^{j}_{\text{b.o.}}			&& \GSp \ar@<2pt>[d]^-{\text{forget}^\DD} \ar@{^{(}->} [rr]^-{\text{ }}_-{\text{f.f.}}	 \ar@<2pt>[u]^-{\text{free}^{\TTp}}	&& \pr{\Grp}\ar[d] \\
	\fisinv\ar@{^{(}->} [rr]^-{\text{dense}}_-{\text{ f.f.}}\ar[urr]^-{\text{dense}}\ar[u]^{\text{b.o.}}	&& \Gr \ar@{^{(}->} [rr]^-{\text{dense}}_-{\text{ f.f.}}	 \ar[u]_{\text{b.o.}}	&& \GS \ar@{^{(}->} [rr]_-{\text{ }}_-{\text{f.f.}}	 \ar@<2pt>[u]^-{\text{free}^\DD}		&& \pr{\Gr}.}
\end{equation}
 where $\Klgr$ is the category obtained in the bo-ff factorisation of $ \Gr \to \GS \to \CSM$, \textit{and} also in the bo-ff factorisation of $\Grp \to \GSp \to \CSM$. %This observation will be fundamental in the proof of the nerve theorem for modular operads, which is the main goal of this section.
 
 The goal of this section is to prove the following nerve theorem for modular operads using the abstract machinery described in \cref{sec: Weber}.

\begin{thm}\label{nerve theorem}The functor $N\colon\CSM \to\pr{\Klgr}$ is full and faithful. Its essential image consists of precisely those presheaves $P$ on $\Klgr$ whose restriction to $\pr{\Klgr}$ are graphical species. In other words, 	
	\begin{equation}
	\label{eq. Segal nerve sec}P(\G) = \mathrm{lim}_{(\C,b) \in \elG} P(\C) \text{ for all graphs } \G.
	\end{equation}% In other words, for all graphs $\G$
\end{thm}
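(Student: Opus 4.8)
The plan is to deduce the theorem from Weber's abstract nerve theory (\cref{sec: Weber}) applied to the monad $\TTp$ on $\GSp$. By \cref{CSM monad DT} together with the distributive law of \cref{monads distribute}, there is a canonical isomorphism $\CSM \cong \GSp^{\TTp}$, and by construction $\Klgr$ is precisely the category arising in the bijective-on-objects--fully faithful factorisation of the composite $\Grp \hookrightarrow \GSp \xrightarrow{\mathrm{free}^{\TTp}} \GSp^{\TTp} \cong \CSM$. Thus $\Klgr$ plays the role of $\boffcat$ in the diagram (\ref{eq: arities}), with $\CCat = \GSp$, $\MM = \TTp$, and dense full subcategory $\DCat = \Grp$. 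Since $\Grp$ is dense and fully faithful in $\GSp$, while $\GSp = \pr{\fisinvp}$ with $\fisinvp$ dense in $\Grp$ (see (\ref{eq: nerve big picture})), the hypotheses of the generalised Segal criterion of \cref{sec: Weber} are in force as soon as one knows that $\TTp$ has arities $\Grp$.

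The crux is therefore to establish \cref{prop: arities intro}: that $N_{\Grp} \circ \Tp$ carries each canonical colimit cocone $\Grp \ov S_* \to \GSp$ to a colimit cocone in $\pr{\Grp}$. Using the explicit formula $\Tp S_X = \mathrm{colim}_{(\G, \rho) \in X\XGrsimp} S(\G)$ of \cref{prop: TpX}, this amounts to showing that the free $\TTp$-algebra on $S_*$ is reconstructed graph-by-graph from its restriction along $\fisinvp \ov \G = \elpG$. The key inputs are that every connected graph is canonically the colimit of its (pointed) elements (\cref{lem: essential cover}, \cref{lem: final}), that non-degenerate graphs of graphs admit colimits in $\Grp$ (\cref{colimit exists}, \cref{connected colimit}), and that passing to $\GSp$ and quotienting by similarity (\cref{monads distribute}, \cref{prop: TpX}) always permits one to work with admissible, non-degenerate representatives, so that the degenerate loop phenomenon of \cref{degenerate} never actually obstructs the colimit. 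This verification is the main obstacle and is exactly the content of the arities lemma.

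Granting that $\TTp$ has arities $\Grp$, \cite[Section~4]{Web07} and \cite[Theorem~1.10]{BMW12} give at once that the full inclusion $\Klgr \hookrightarrow \CSM$ is dense and that the induced nerve $N \colon \CSM \to \pr{\Klgr}$ is fully faithful. Moreover, a presheaf $P$ on $\Klgr$ lies in the essential image of $N$ if and only if its restriction $j^*P$ to $\Grp$ lies in the essential image of $N_{\Grp}\colon \GSp \to \pr{\Grp}$, i.e.\ is a pointed graphical species. Since $\fisinvp$ is dense in $\Grp$ and $\GSp = \pr{\fisinvp}$, the generalised Segal condition (\ref{eq: sheaf condition}) (with $\E = \fisinvp$ and $\DCat = \Grp$) reformulates this as $P(\G) = \mathrm{lim}_{(\C, b) \in \elpG} P(\C)$ for every graph $\G$.

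Finally, by the finality of $\elG \hookrightarrow \elpG$ (\cref{lem: final}), the limit over $\elpG$ coincides with the limit over $\elG$, so the condition becomes $P(\G) = \mathrm{lim}_{(\C,b) \in \elG} P(\C)$, which is precisely (\ref{eq. Segal nerve sec}). By \cref{prop: GS sheaves} this says exactly that $j^*P$ is a sheaf for the \'etale topology on $\Gr$, equivalently a graphical species, completing the characterisation of the essential image and hence the proof.
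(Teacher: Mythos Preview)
Your overall architecture matches the paper exactly: reduce to showing that $\TTp$ has arities $\Grp$, then invoke \cite{BMW12} and use finality of $\elG \hookrightarrow \elpG$ to pass from the limit over $\elpG$ to the limit over $\elG$. The framing, the identification of $\Klgr$ as the relevant bo--ff factorisation, and the final rewriting of the Segal condition are all correct and essentially identical to what the paper does.

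The gap is at the ``crux'': you do not actually prove that $\TTp$ has arities $\Grp$. You describe the arities condition in its original colimit-preservation form and list ingredients (\cref{lem: essential cover}, \cref{colimit exists}, \cref{prop: TpX}, the similarity relation), but then write ``Granting that $\TTp$ has arities $\Grp$\dots'' and move on. The listed ingredients are the right ones, but the sentence ``passing to $\GSp$ and quotienting by similarity always permits one to work with admissible, non-degenerate representatives'' is precisely the thing that needs to be made into an argument, not asserted. The paper's actual proof does not verify the colimit-preservation condition directly. Instead it uses the equivalent criterion of \cite[Proposition~2.5]{BMW12}: $\TTp$ has arities $\Grp$ if and only if, for every $\G$ and every $\beta \in \GSp(\G, \Tp S_*)$, the associated \emph{factorisation category} $\factcat$ is connected. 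This is the content of \cref{connected}, and its proof is short precisely because the formula $\Tp S_X = \mathrm{colim}_{X\XGrsimp} S(\G)$ of \cref{prop: TpX} already builds the similarity quotient into $\Tp$: two representatives $(\Gg^1,\alpha^1)$, $(\Gg^2,\alpha^2)$ of $\beta$ are objectwise similar on $\elG$, hence connected by zig-zags of vertex-deletion morphisms in $\Grp$, and since $\G \not\cong \C_\nul$ implies no element has the form $(\C_\nul,b)$, the problematic case of \cref{rmk: Gr no arities} does not arise. That last observation is the substantive point your sketch gestures at but does not isolate.
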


\begin{rmk}
	%\footnote
	{A version of this theorem was stated in \cite{JK11}, and another version was proved, by different methods from those presented here, in \cite[Theorem~3.8]{HRY19a}. In \cite{Ray18}, I proved a version of \cref{nerve theorem} by almost the same methods as presented here, but without the use of the distributive law. In all these versions, the statement of the Segal condition (\ref{eq. Segal nerve sec}) is the same.} 
\end{rmk}

An overview (following \cite[Sections~1-3]{BMW12}) of monads with arities was given in \cref{sec: Weber}. If the monad $\DD\TT$ on $\GS$ had arities $\Gr$ \cref{nerve theorem}, then \cref{nerve theorem} would follow immediately from \cite[Section~1]{BMW12}. Unfortunately, this is not the case. The obstruction, unsurprisingly, relates to the contracted units (see \cref{rmk: Gr no arities}).
%
%However, by Propositions \ref{monads distribute} and \ref{prop: TpX}, the nerve $N:\CSM \to \pr{\Klgr}$ is fully faithful if the monad $\TTp$ has arities $\Grp$. And, because $\fisinv$ is dense in $\Grp$, the essential image of $N$ is characterised by $\Klgr$-presheaves $P$ that satisfy the Segal condition \ref{eq. Segal nerve sec}. %n this case, the essential \warn{Moreover, the induced Segal condition REF is the same, $\fisinv$ in $\Grp$. }

The remainder of this work is devoted to showing instead that $\TTp$ {has arities $\Grp \subset \GSp$}. In this case, the nerve $N\colon\CSM \to \pr{\Klgr}$ is fully faithful. Moreover, because $\fisinv$ is dense in $\Grp$, the essential image of $N$ is characterised by the $\Klgr$-presheaves $P$ that satisfy the Segal condition (\ref{eq. Segal nerve sec}). 

By construction, $\Klgr \subset \CSM$ is the full subcategory on the modular operads $\Klgr(\G)$, free on connected graphs $\G \in \Gr$. So, the first step is to study these in more detail.

%Since the proof of \cref{nerve theorem} will be in terms of $\TTp$ and $\Grp$, we'll describe $\Klgr$ in terms of the categories $\Grp \hookrightarrow \GSp$ and the monad $\TTp$.\footnote{ This is also more elegant that the description in terms of $\Gr \hookrightarrow \GS$ and the monad $\DD\TT$, since it avoids reference to formally adjoined elements.}%or, in terms of $\Grp \hookrightarrow \GSp$ and the monad $\TTp$. 

%Given any graphical species $S$, the species $DTS$ is obtained from $TS$ simply by formally adjoining (contracted) units. 
%\begin{ex}
\subsection{The free modular operad on a graph}\label{T*H}

Fix a connected graph $\H = (E, H,V,s,t, \tau) $. To streamline the notation, let $T\H \defeq T\yet\H$ denote the free non-unital modular operad on $\H$, and $ \Tp \H \defeq \Tp \yetp \H$ the corresponding free unital modular operad on $\H$. 

	Of course, $\Tp \H (\shortmid) = \{ ch_e\}_{e \in E} = \Grp(\shortmid, \H)$. % with involution $\tau\colon ch_e \mapsto ch_{\tau e}$. 
Recall that the unit for $\yetp \H$ is given by $ch_e \mapsto \epsilon^{\H}_e \defeq ch_e \circ u \in \Grp(\C_\two, \H) $, and the contracted unit for $\yetp \H$ is given by $ch_e \mapsto o^{\H}_{\tilde e} \defeq ch_e \circ z \in \Grp(\C_\nul, \H)$.
	
	So, by \cref{monads distribute}, $\Tp \H$ has units
	\[ ch_e \mapsto \epsilon^{\Tp \H}_e \defeq [\eta^\TT\epsilon_e^{\H}]_* = [\Lk, ch_e \circ u^k]_*, \] and contracted units \[ ch_e \mapsto o^{\Tp \H}_{\tilde e} \defeq [\eta^\TT o^\H_e]_* = [\C_\nul,ch_e \circ z]_* = [\Wm,ch_e \circ \kappa ^m]_*. \]
%	\[ \epsilon_e^{\Tp \H}= [{\Lk}(\epsilon_e)]_*, \ \text{ and } \ o^{\Tp \H}_{\tilde e} \defeq [{\Wl}(\epsilon_e)]_* = [\C_\nul, o_{\tilde e}]_* \] for each $e \in E$.
	
%	In other words, $\epsilon_e^{\Tp \H}$ is represented by morphisms of the form $ch_e \circ u^k \in \Grp (\Lk, \H)$ for $k \geq 1$, and $ o_{\tilde e}^{\Tp \H}$ is represented by $ ch_e \circ z \in \Grp (\C_\nul, \H)$, and $ch_e \circ \kappa^m \in \Grp (\Wl, \H)$. 
	
Let $X$ be a finite set. By \cref{prop: TpX}, elements of $ \Tp \H_X$ are represented by pairs $(\X, f)$ where $\X $ is an admissible $X$-graph and $f \in \Grp(\X, \H)$. Pairs $(\X^1, f^1)$ and $(\X^2, f^2)$ represent the same element $[\X,f]_* \in \Tp \H_X$ if and only if there is a commuting diagram 
	\begin{equation}\label{connected graph factors}\xymatrix{ 
		\X^1 \ar[rr]^-{g^1} \ar[drr]_{f^1}& & \X^\bot \ar[d]^{f^\bot }&&\ar[ll]_-{g^2} \X^2 \ar[dll]^{f^2} \\&&\H&&}\end{equation} in $\Grp$ such that, for $j = 1,2$, $g^j$ is a morphism in $X\XGrsimp$, and $f^\bot\colon \X^\bot \to \H$ is an (unpointed) \'etale morphism in $\Gr$. 
	
%In particular, every object in $\Tp\H_X$ has a representative of the form $(\X, f')$ where $\X$ is an $X$-graph (so has non-empty vertex set), and $f \in \Grp(\X, \H)$. Moreover, 
Outside the (contracted) units, $\X^\bot$ is admissible. Otherwise $f^\bot = ch_e \in \Gr(\shortmid, \H)$ for some $e \in E$. % and therefore not admissible, if and only if $[\X,f]_*$ is a (contracted) unit in $\Tp S_X$. 
In particular, the following special case of (\ref{connected graph factors}) commutes in $\Grp$ for all $ e \in E$ and all $m \geq 1$:

	\begin{equation}\label{0 connected} \xymatrix{ \C_\nul \ar[rr]^-{ z} \ar[drr]_{ch_e \circ z } & &(\shortmid)\ar[d]^{ch_e}&& \Wl \ar[ll]_{\kappa^m } \ar[dll]^-{ch_e \circ \kappa^m}\\
		&&\H.&&}\end{equation} %commutes in $\Grp$. %Hence, also the subcategory of representatives of $o^{\Tp \H}_{\tilde e}$ is connected in $ \Grp \ov \H$.
	This will be essential in the proof of \cref{nerve theorem}. 
	
%\end{ex}
\subsection{ The category $\Klgr$}\label{Klgr}
By (\ref{eq: nerve big picture}), $\Klgr$ is the restriction to $\Grp$ of the Kleisli category of $\TTp$. So, for all pairs $(\G, \H)$ of graphs \[\Klgr(\G, \H) = \GSp (\G, \Tp \H) \cong \Tp \H (\G).\] 
In particular, for $\G \cong \CX$ or $\G \cong (\shortmid)$, $ \Klgr(\G, \H) \cong \Tp \H (\G)$ has been described in \cref{T*H}. % when $\G$ is elementary.

For the general case, it follows from \cref{T*H} that a morphism $\alpha \in \Klgr(\G, \H)$ is represented by a non-degenerate $ \G$-shaped graph of graphs $\Gg$ with colimit $\Gg(\G)$, together with a morphism $f\in \Grp(\Gg(\G),\H)$. 
%since
%$\Klgr(\G,\H) \cong T_*\H(\G) = \mathrm{lim}_{(\C,b) \in \elpG} T_*(\H)(\C)$ and $\Tp \H$ is a quotient of $T\H$,
% $\Klgr(\G, \H)\cong T_*\H(\G)$ is obtained as a quotient of $T\H (\G) $. Hence, a morphism $\gamma \in \Klgr(\G, \H)$ is represented by a non-degenerate $ \G$-shaped graph of graphs $\Gg$ with colimit $\Gg(\G)$, together with a morphism $f\in \Grp(\Gg(\G),\H)$. % from the colimit $\Gg(\G) = \mathrm{colim}_{ \elpG}\Gg$, to $\H$.

%\begin{prop}
%	\label{prop. ternary fact} 
%The composite structure of the monad $\DD\TT$ induces a ternary factorisation system on $\Klgr$. 
%
%A morphism $f \in \Klgr(\G, \H)$ is represented by a triple $(\Gg, d, f)$ such that $\Gg$ is a non-degenerate $\G$-shaped graph of graphs with colimit $\Gg(\G)$, $d \colon \Gg(\G) \to \Gg(\G)_{\setminus W}$ is a similarity morphism, and $f \in \Gr(\Gg(\G)_{\setminus W}, \H)$ is an \'etale morphism of graphs.
%\end{prop} %By \cref{pp iso}Moreover, any morphism $f \in \Klgr(\G, \Gg(\G))$ represented by the pair $(\Gg, id_{\Gg(\G)})$ is boundary preserving since $\Gg$ is non-degenerate. %, and, by \cref{pp iso}, a morphism in $\Gr
Let $\G\not \cong \C_\nul$ and $\H$ be graphs. For $i = 1,2$, let $\Gg^i$ be a non-degenerate $\G$-shaped graph of graphs with colimit $\Gg^i(\G)$, and let $f^i \in \Grp(\Gg^i(\G), \H)$. For each $(\C, b) \in \elG$, let $\iota^i_b \colon \Gg^i(b) \to \Gg^i(\G)$ denote the defining embedding.
 \begin{lem}\label{lem: Klgr representatives} 
The pairs $(\Gg^1, f^1), (\Gg^2, f^2)$ represent the same morphism $\alpha \in \Klgr(\G, \H)$ if and only if there is a non-degenerate $\G$-shaped graph of graphs $\Gg$ with colimit $\Gg(\G)$, and a morphism $f \in \Grp(\Gg(\G), \H)$ % such that $(\Gg, f)$ of $\alpha$, 
such that there is a commuting diagram 
\begin{equation}\label{eq: well-defined kleisli}\xymatrix{\Gg^1(\G) \ar[rr] \ar[drr]_{f^1} && \Gg(\G)\ar[d]^{f} &&\ar[ll] \Gg^2(\G) \ar[dll]^{f^2}\\&&\H.&&}\end{equation}
 \end{lem}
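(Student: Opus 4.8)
The statement characterises when two representatives of a morphism in $\Klgr(\G,\H)$ coincide. Recall from \cref{Klgr} that $\Klgr(\G,\H) \cong \Tp\H(\G)$, and that elements of $\Tp\H(\G)$ are similarity classes of pairs $(\Gg^i, f^i)$ of non-degenerate $\G$-shaped graphs of graphs together with morphisms $f^i \in \Grp(\Gg^i(\G), \H)$. By \cref{prop: TpX} and \cref{monads distribute}, two such pairs represent the same element precisely when they are connected by a zig-zag of similarity morphisms in $X\XGrsimp$ that is compatible with the structure maps $f^i$ to $\H$. So the content of the lemma is to collapse an arbitrary such zig-zag down to a single \emph{cospan} of vertex deletion morphisms as in (\ref{eq: well-defined kleisli}). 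The plan is therefore to show that the relevant category of reductions admits a common upper bound, i.e.\ that the poset of similarity reductions of a graph of graphs is suitably directed.

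First I would make precise the ``$\Leftarrow$'' direction, which is the easy half: given a commuting diagram (\ref{eq: well-defined kleisli}) in $\Grp$ whose top-row morphisms are vertex deletions, \cref{Gnov construction} guarantees these exist in $\Grp$, and since vertex deletion morphisms are generated by the similarity morphisms of \cref{def: similar}, both $(\Gg^1, f^1)$ and $(\Gg^2, f^2)$ are connected to $(\Gg, f)$ inside $\ovP{\Comm^+}{X\XGrsimp}$ (via the induced structures on colimits). Hence they lie in the same similarity class and represent the same $\alpha \in \Klgr(\G,\H)$.

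For the ``$\Rightarrow$'' direction, the key structural input is the orthogonal factorisation system $(\Grsimp, \Gr)$ on $\Grp$ established in \cref{pp* sim}, together with the fact (\cref{Gnov construction}) that vertex deletions corresponding to $W = W_1 \amalg W_2$ compose, so that the assignment $W \mapsto \Gnov[W]$ is functorial on the poset of subsets of bivalent vertices decorated by (contracted) units. I would argue as follows: each $\Gg^i(\G)$ carries a canonical reduced similar object, obtained by deleting exactly the vertices $\alpha$-decorated by units (in the sense of \cref{unit vertices, iota vertex deletion}), and by \cref{monads distribute} this reduced representative is the \emph{terminal} object in the connected component of $\ovP{\Comm^+}{X\XGrsimp}$. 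Since both $(\Gg^1,f^1)$ and $(\Gg^2,f^2)$ lie in this same component, each admits a vertex deletion morphism $\Gg^i(\G) \to \Gg(\G)$ landing at a common reduced (terminal) representative $\Gg(\G)$; the induced $f\in\Grp(\Gg(\G),\H)$ is then forced, and compatibility of the $f^i$ with the deletions gives the commutativity of (\ref{eq: well-defined kleisli}). The hypothesis $\G \not\cong \C_\nul$ is needed precisely so that the degenerate behaviour of the isolated vertex (where $\nul\XGrsimp$ identifies $(\shortmid,c)$ with $(\shortmid,\omega c)$, see \cref{subs. similar}) does not interfere.

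The main obstacle I anticipate is the bookkeeping around the non-boundary-preserving corner cases, namely when $\Gg(\G)$ or its reduction is a wheel graph $\Wm$ or when unit-decorated bivalent vertices form an entire component that collapses to a stick (as in Examples \ref{line deletion, wheel deletion}). In these situations the terminal reduced object in $X\XGrsimp$ may fail to be admissible, so one must verify that the universal property of the colimit still produces a \emph{single} cospan rather than a longer zig-zag — this is exactly where \cref{subs. similar} and the double-cone diagram of \cref{fig: contracting units} do the work, and I would lean on the explicit description of morphism sets into $(\shortmid)$ from \cref{ex: morphisms to stick} to handle each arity ($X\not\cong\nul,\two$; $X\cong\two$; $X\cong\nul$) separately. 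Once these cases are dispatched, reducing the zig-zag to the cospan (\ref{eq: well-defined kleisli}) is a direct consequence of the directedness of the vertex-deletion poset.
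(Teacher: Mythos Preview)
Your proposal has a genuine gap at the heart of the ``$\Rightarrow$'' direction: you assume that if $(\Gg^1,f^1)$ and $(\Gg^2,f^2)$ represent the same element of $\Klgr(\G,\H)$ then the \emph{colimits} $(\Gg^1(\G),f^1)$ and $(\Gg^2(\G),f^2)$ already lie in the same connected component of $\ovP{\yetp\H}{X\XGrsimp}$ for $X=E_0(\G)$, so that you can pass to a common terminal reduced object. But this is not the definition, and it is essentially what the lemma is asserting. By construction $\Klgr(\G,\H)\cong\Tp\H(\G)=\lim_{(\C,b)\in\elG}\Tp\H(\C)$, so equality of representatives means only that for each $(\CX[X_b],b)\in\elG$ the \emph{local} pieces $(\Gg^1(b),f^1\circ\iota^1_b)$ and $(\Gg^2(b),f^2\circ\iota^2_b)$ are similar in $\ovP{\yetp\H}{X_b\XGrsimp}$. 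Your invocation of \cref{prop: TpX} and \cref{monads distribute} for a global zig-zag is therefore unjustified; those results describe $\Tp\H_X$, not $\Tp\H(\G)$. (The appeal to $\ovP{\Comm^+}{X\XGrsimp}$ is also off: the similarity must be over $\H$, not the terminal species, since the $f^i$ are part of the data.)

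The paper closes exactly this gap by working elementwise. For each $(\CX[X_b],b)\in\elG$ it uses the description of $\Tp\H_{X_b}$ in \cref{T*H} (diagram~(\ref{connected graph factors})) to produce an admissible $\Gg(b)$ together with vertex deletions $\Gg^i(b)\to\Gg(b)$ over $\H$; the hypothesis $\G\not\cong\C_\nul$ guarantees each $X_b$ is nonempty, so this step goes through. These local cospans are then assembled into a non-degenerate $\G$-shaped graph of graphs $\Gg$ whose colimit furnishes the global cospan~(\ref{eq: well-defined kleisli}). In other words, the passage from local similarity to a global vertex-deletion cospan is precisely the content of the proof, and your argument skips it. Your proposed case analysis for $X\cong\two$ and $X\cong\nul$ is the right instinct, but it belongs at the level of each $X_b$, not at the level of $E_0(\G)$.
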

 in $\Grp$ where the morphisms in the top row are vertex deletion morphisms.
 \begin{proof}
 If $(\Gg^1, f^1)$ and $ (\Gg^2,f^2)$ represent the same morphisms $\alpha \in \Klgr(\G, \H)$, then, for all $(\CX[X_b],b) \in \elG$, $(\Gg^1(b), f^1 \circ \iota^1_b)$ and $(\Gg^2(b), f^2 \circ \iota^2_b)$ are similar in $\ovP{\yetp \H}{X_b \XGrsimp}$ by definition. %\subset \Grp \ov \H$. %represent the same element of $\Tp \H_{X_b}$ . 
Therefore, by \cref{T*H}, since 
%Since $\Gg^1(b)$ and $\Gg^2(b)$ are similar in $\Grp$ and 
$\G \not \cong \C_\nul$, there is an admissible graph $\Gg(b)$ and a morphism $f_b \in \Grp(\Gg(b), \H)$ such that the following diagram -- in which the horizontal morphisms are vertex deletion morphisms between graphs with non-empty boundaries -- commutes in $\Grp$: %pair $(\Gg(b), f_b) \in \Grp \ov \H$ where $\Gg(b)$ is admissible and a cospan 
\[ \xymatrix{\Gg^1(b) \ar[rr] \ar[drr]_{f^1 \circ \iota^1_b} && \Gg(b) \ar[d]^{f_b} && \ar[ll] \Gg^2(b)\ar[dll]^{f^2 \circ \iota^2_b}\\
&& \H.&&} \]%of vertex deletion morphisms in $\Grp \ov \H$ by \cref{T*H}. %Namely, we can choose % $\G \not \cong \C_\nul$, so the component of $\Gg^i(b)$ has a reduced object $(\Gg^i(b))^\bot$. If this is admissible (if it has non-empty vertex set), then we choose 
%$\Gg(b) = (\Gg^i(b))^\bot$ if this is admissible. Otherwise, if $(\Gg^i(b))^\bot$ is not admissible, then $\Gg^i \cong \Lk[k_i]$, so we can take $\Gg_b = \C_\two$. 

%Moreover, $(\Gg^i(b), f^i\circ b)$ and $(\Gg(b), f \circ b)$ represent the same element of $\Tp \H_{X_b}$ for $i = 1,2$ and all $(\C_{X_b}, b) \in \elG$. 
If $(\Gg(\G), f)$ is the colimit of the non-degenerate $\G$-shaped graph of $\yetp \H$-structured graphs defined by $(\CX[X_b], b) \mapsto (\Gg(b), f_b)$ for all $(\CX[X_b], b) \in \elG$, then 
%So, the $\G$-shaped graph of $\Tp \H$-structured graphs $\Gg_{\Tp \H}\colon \elG \to \ovP{\Tp \H} {\Gr}, b \mapsto (\Gg(b), f_b)$ so obtained is non-degenerate and hence admits a colimit $(\Gg(\G),f)$ in $\ovP{\Tp \H} {\Gr}$ such that 
(\ref{eq: well-defined kleisli}) commutes by construction. The converse follows immediately from the definitions.	
 \end{proof}

Since every graph $\G$ is trivially the colimit of the identity $\G$-shaped graph of graphs $\Gid \colon (\C, b) \mapsto \C$ (\cref{subs: gluing}), the assignment $f \mapsto [\Gid, f] \in \Klgr(\G, \H)$ induces an inclusion of categories $\Grp \hookrightarrow \Klgr$. % may be viewed as a subcategory of $\Klgr$ under the inclusion $. % induces an inclusion $\Grp \hookrightarrow \Klgr$. 

It follows that there is weak ternary factorisation system on $\Klgr$: Morphisms in $\Klgr$ factor as boundary-preserving morphisms $[\Gg] \colon \G \to \Gg(\G)$ represented by non-degenerate graphs of graphs $\Gg$, followed by morphisms in $\Grp$, which themselves factor as $(\Grsimp, \Gr)$ by \cref{pp* sim}. %and the right class is induced by morphisms in $\Grp$. 

\subsection{Factorisation categories}\label{subs: factorisation categories}

More generally, let ${\GSp}_{\TTp}$ be the Kleisli category of $\Tp$ given by ${\GSp}_{\TTp}(S_*, S_*') = \GSp(S_*, \Tp S_*')$ for all $S_*, S_*' \in \GSp$. In particular, the graphical category $\Klgr \subset {\GSp}_{\TTp}$ is the full subcategory whose objects are graphs $\G \in \Gr$. %on graphs of the Kleisli category ${\GSp}_{\TTp}$ of $\Tp$ with ${\GSp}_{\TTp}(S_*, S_*') = \GSp(S_*, \Tp S_*')$ for all $S_*, S_*' \in \GSp$.

Let $S_*$ be a pointed graphical species. Elements of $\Tp S_X$ correspond to similarity classes of $ S_* $-structured $X$-graphs $(\X, \gamma)$. So, for any graph $ \G$, a morphism $\beta \in \GSp(\G, \Tp S_*)$ % \cong \Tp S_*(\G)$
is represented by a non-degenerate $\G$-shaped graph of $S$-structured graphs $\Gg_S $. The colimit of $\Gg_S$ describes an $S_*$ structured graph $(\Gg(\G), \alpha)$, where $\Gg(\G)$ is the colimit of the underlying $\G$-shaped graph of graphs $\Gg\colon \elG[\G] \to \Grs \to \Gr$, which represents a morphism $[\Gg]\in {\GSp}_{\TTp}(\yetp \G, \yetp \Gg(\G))$, as in \cref{Klgr}.% that admits a colimit $\overline {\Gg }$ in $ \ovP{S_*}{\Grp}$. 

%By \cite[Proposition~2.5]{BMW12}, the monad $\TTp$ has arities $\Grp$ if certain categories associated to factorisations of morphisms in the Kleisli category ${\GSp}_{\TTp}$ (\cite[Section~2.4]{BMW12}) are connected. % \ref{nerve theorem}, we define the following categories. 

So, let $S_*$ be a pointed graphical species, $\G$ a graph, and let $ \beta \in \GSp(\G, \Tp S)$. The following definition is from \cite[Section~2.4]{BMW12}:
\begin{defn}\label{factorisation category}

The \emph{factorisation category $\factcat$ of $\beta$} is the category whose objects are pairs $(\Gg, \alpha)$, where $ \Gg $ is a non-degenerate $\G$-shaped graph of graphs with colimit $\Gg(\G)$ and $\alpha \in \GSp (\Gg(\G), S)\cong S(\Gg(\G))$ is such that $\beta $ is given by the composition of morphisms in ${\GSp}_{\TTp}$:
\[ \xymatrix{ \G \ar[rr]^-{[\Gg]} &&\Gg(\G)\ar[rr] ^-{ \alpha} && S_*.}\] %in $\GSp$ is equal to $\beta$.% \in \GSp(\G, \Tp S) ={\GSp}_{\TTp}(\G, S) $. 
 Morphisms in $\factcat((\Gg^1, \alpha^1), (\Gg^2, \alpha^2))$ are commuting diagrams in ${\GSp}_{\TTp}$
 \begin{equation}\label{factcat mor}
 \xymatrix{&& {\Gg^1}(\G)\ar[dd]_{g} \ar[drr]^{ \alpha^1}&&\\
 \G \ar[urr]^-{[\Gg^1]}\ar[drr]_-{[\Gg^2]}&&&& S_*\\
 &&{\Gg^2}(\G) \ar[urr]_{ \alpha^2}&&}
 \end{equation} such that $g $ is a morphism in $ \Grp{\subset}{\GSp}_{\TTp}$.

 \end{defn}

By \cite[Proposition~2.5]{BMW12}, the monad $\TTp$ has arities $\Grp$ if the following lemma holds for all pointed graphical species $S_*$, all graphs $\G \in \Grp$ and all $\beta \in \GSp(\G, \Tp S_*)$:% certain categories associated to factorisations of morphisms in the Kleisli category ${\GSp}_{\TTp}$ (\cite[Section~2.4]{BMW12}) are connected. % \ref{nerve theorem}, we define the following categories. 

 %\factcat$ is a connected category for all $ \beta \in \GSp(\G, \Tp S)$

\begin{lem}\label{connected} %For all graphs $\G$ and all $ \beta \in \GSp(\G, \Tp S)$, 
	The category $\factcat$ is connected.
 \end{lem}

\begin{proof} This follows easily from the discussion above, and in particular \cref{T*H}:

Let $S_*$ be a pointed graphical species. For $X$ a finite set, $S_*$-structured $X$-graphs $(\X^1, \alpha^1), (\X^2, \alpha^2)$ represent the same element of $\Tp S_X$ if and only if they are similar in $\ovP{S_*}{X\XGrsimp} \cong \GSp(\CX, \Tp S_*)$. So, by \cref{Klgr}, the lemma holds whenever $\G = (\shortmid)$ or $\G = \CX$ is a corolla (including $\C_\nul$, by (\ref{0 connected})).

Now, let $\G \not \cong \C_\nul$ be any connected graph. Elements of $\GSp(\G, \Tp S) \cong \Tp S(\G)$ are represented by non-degenerate $\G$-shaped graphs of $S_*$-structured graphs. Since there is no object of the form $(\C_\nul, b)$ in $\elG$, two such non-degenerate $S_*$-structured graphs of graphs, $\Gg^1_{S_*}, \Gg^2_{S_*}$ represent the same element of $\Tp S(\G)$ if and only if  for all $(\CX[X_b], b) \in \elG$, 
$\Gg^1_{S_*} (\CX[X_b],b) \sim \Gg^2_{S_*} (\CX[X_b],b)$ in $ \ovP{S_*}{X_b \XGrsimp }$, whereby the colimits $\Gg^1_{S_*} (\G) $ and $\Gg^2_{S_*}(\G)$ are also similar in $\ovP{S_*}{\Grp}$. Hence, $\factcat$ is connected by \cref{prop: TpX}. 
\end{proof}

 \cref{nerve theorem} now follows from \cite[Sections~1~\&~2]{BMW12}.
 
 \begin{proof}[Proof of \cref{nerve theorem}]The category $\Grp$ is dense in $ \GSp$. By \cite[Proposition~2.5]{BMW12}, the monad $\TTp$ on $\GSp$ has arities $\Grp$ if and only if $\factcat$ is connected for all $S_*$, $\G$ and $\beta \in \GSp (\G, \Tp S_*)$. %)statement of \cref{connected} is equivalent to the statement that the monad $\TTp$ has arities $\Grp$. 
 	
 Hence, by \cref{connected}, $\TTp$ has arities $\Grp \subset \GSp$ and the induced nerve functor $N\colon \CSM \to \pr{\Klgr}$ is fully faithful by \cite[Propositions~1.5~\&~1.9]{BMW12}.
 	
 	 Moreover, by \cite[Theorem~1.10]{BMW12}, its essential image is the subcategory of those presheaves on $\Klgr$ whose restriction to $\Grp$ are in the image of the fully faithful embedding $\GS_* \hookrightarrow \pr{\Grp}$. %In other words, the essential image of the inclusion $\CSM \hookrightarrow \pr{\Klgr}$ consists of precisely 
 	%These are those presheaves whose restriction to $\Grp$ is a $J_*$-sheaf.
 	
In other words, a presheaf $P$ on $\Klgr$ is in the essential image of $N$ if and only if, for all $\G$, $P(\G) = \mathrm{lim}_{(\C,b) \in \elpG} P(\C)$. By finality of $\elG \subset \elpG$, this is the case precisely when $ P(\G) = \mathrm{lim}_{(\C,b) \in \elG} P(\C)$. %That is, the Segal condition (\ref{eq. Segal nerve sec}) is satisfied.
 \end{proof}

\begin{rmk}\label{rmk: Gr no arities} To see that the modular operad monad $\DD\TT$ on $\GS$ does not have arities, let us use the method of \cite[Section~2]{BMW12} to construct its \textit{unpointed} factorisation categories.

For any graphical species $S$ and graph $\G$, %consider morphisms $\yet \G \to S$ in the Kleisli category $\GS_{\DD\TT}\xrightarrow{\text{f.f.} }\CSM$. Since
$\GS_{\DD\TT}(\yet \G, S) %\cong \CSM (DT(\yet \G), DTS) 
\cong {\GSp} (\yetp \G, \Tp S^+) \cong \Tp S^+(\G)$ canonically.

So, a morphism $\beta \colon \yet \G \to S$ in the Kleisli category $\GS_{\DD\TT}$ is represented by a $\G$-shaped graph of graphs $\Gg$ with colimit $\Gg(\G)$, and a $DS$-structure $\alpha\in \GS(\Gg(\G), DS) \cong DS (\Gg(\G))$. 

Such pairs $(\Gg, \alpha)$ are the objects of the unpointed factorisation category $\factcatup$. Morphisms in $ \factcatup((\Gg, \alpha), (\Gg', \alpha'))$ are morphisms in $\Gr(\Gg(\G), \Gg(\G'))$ making the diagram (\ref{factcat mor}) commute. 

By \cite[Proposition~2.5]{BMW12}, $\DD\TT$ has arities $\Gr$ if and only if $\factcatup$ is connected for all $S$, $ \G$, and $\beta$.% \in \GS_{\DD\TT} (\yet \G, S)$. % all such unpointed factorisation categories are connected. 

To see that this is not the case, let $S = \yet(\shortmid)$, and so $TS \cong S$. Let $\G = \C_\nul$ and let $\beta = o= z \colon \C_\nul \to (\shortmid)$. Then the diagrams $\C_\nul \to \C_\nul \xrightarrow {z} (\shortmid)$, and $\C_\nul \to \W \xrightarrow {\kappa}(\shortmid)$ describe objects in $\factcatup$. Since there are no non-trivial morphisms in $\Gr$ with domain or codomain $\C_\nul$, these objects are in disjoint components of %since $\C_\nul$ is disjoint in $\Gr$, these are disjoint in 
$\factcatup$. Therefore, $\factcatup$ is not connected and $\DD\TT$ does not have arities $\Gr$.

	\end{rmk}

\subsection{ Weak modular operads} %and related results}

In \cite{HRY19a, HRY19b}, Hackney, Robertson and Yau have proved a version of \cref{nerve theorem} in terms of a bijective on objects subcategory $U $ of $\Klgr$ that was constructed precisely so as to have a generalised Reedy structure. The inclusion $U \hookrightarrow \CSM$ is not fully faithful since the category $U$ does not contain those morphisms in $\Grp \hookrightarrow \Klgr$ that factor through $z\colon \C_\nul \to (\shortmid)$ or $\kappa^m \colon \to (\shortmid)$, $m \geq 1$, nor does it contain any morphisms of $\Gr$ that are not embeddings. % Morphisms in $\Gr$ that are not embeddings are not contained in the graphical category $U$. Now are those morphisms in $\Grp \hookrightarrow \Klgr$ that factor through $z\colon \C_\nul \to (\shortmid)$ or $\kappa^m \colon \to (\shortmid)$, $m \geq 1$. %%%   %\srnote{The most important observation in this work is that the construction also forces stuff to hold in arity 0.} 
 However, by \cite[Theorem~3.6]{HRY19b}, $U$ is dense in $\CSM$ and hence induces a fully faithful nerve. %, it is not itself fully faithful in $\CSM$.

Furthermore, by \cite[Theorem~3.8]{HRY19a}, the category $\prs{U}$ of $\sSet$-valued presheaves on $U$ admits a  cofibrantly generated model structure, obtained by localising the Reedy model structure at the \textit{Segal morphisms}\[ \mathrm{lim}_{(\C, b) \in \elG} P(\C) \longrightarrow P(\G),\] and the fibrant objects for this model structure are those simplicial presheaves on $U$ that satisfy the \textit{weak Segal condition}
\begin{equation}
\label{eq: weak segal}P(\G) \simeq \mathrm{lim}_{(\C,b) \in \elG} P(\C), \ \text { for all graphs } \G \in U. 
\end{equation}

The method of \cite{HRY19a, HRY19b} cannot be applied in the current case since there is no (obvious) generalised Reedy structure on $\Klgr$. However, in \cite{CH15}, Caviglia and Horel describe a general class of rigidification results whereby, given a dense inclusion $\DCat \hookrightarrow \CCat$ satisfying certain conditions, an equivalence is established between $\sSet$-valued presheaves on $\DCat$ that satisfy a weak Segal condition, and $\CCat$ objects internal to $\sSet$ that satisfy the Segal condition on the nose. In \cite[Section~7]{CH15}, this result is applied to a certain class of monads with arities. % sa are mapped to a general conditions under which there is a Quillen equivalence between (a localised projective model structure on) presheaves in $\sSet$ and simplicially valued presheaves can be valwhich simplicially valued presheavesfor rigidifying , given a dense inclusion of categories $\DCat \to \CCat$ satistyinf a number of assumptions then there is a lift of the projective model strucure on $\sSet^{\DCat^\mathrm{op}}$ to the ca, including monads with arities that satisfy certain conditions \cite[]{}. 
This leads directly to the following corollary of \cref{nerve theorem}:

\begin{cor}\label{cor: weak}
There is a model category structure on the category $\prs{\Klgr}$ %$\Fun[\Klgr^\mathrm{op}, \sSet]$ 
of functors $P\colon \Klgr^\mathrm{op} \to \sSet$ whose fibrant objects are those $P$ that satisfy the weak Segal condition:
\begin{equation}
\label{eq: weak segal 1} P(\G) \simeq \mathrm{lim}_{ (\C, b) \in \elG}P(\C) \ \text{ for all graphs } \G \in \Gr.
\end{equation}
\end{cor}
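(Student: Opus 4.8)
The plan is to deduce the statement directly from the Nerve \cref{nerve theorem} by appealing to the rigidification machinery of \cite{CH15}. The essential input is already in place: by (the proof of) \cref{nerve theorem}, the monad $\TTp$ on $\GSp$ has arities $\Grp$, the inclusion $\fisinvp \hookrightarrow \Grp$ is dense and fully faithful, $\Grp \hookrightarrow \GSp$ is dense and fully faithful, and $\Klgr$ is the full subcategory of the Kleisli category of $\TTp$ obtained in the bo-ff factorisation of $\Grp \to \GSp \to \CSM$. This is precisely the shape of data to which \cite[Section~7]{CH15} applies, so the first step is to record that the data $\fisinvp \hookrightarrow \Grp \hookrightarrow \GSp$ together with the monad $\TTp$ meets the standing hypotheses (Assumptions~7.9) of \cite{CH15}.

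Second, I would verify these hypotheses in turn. The base $\GSp \cong \pr{\fisinvp}$ (\cref{lem: elp presheaves}) is a presheaf topos, hence locally presentable and cocomplete; the category of arities $\Grp$ is small and dense in $\GSp$; and $\TTp$ is an accessible monad with arities $\Grp$ by \cref{connected}. The remaining condition is that $\Klgr$ carry the combinatorial structure that \cite{CH15} requires of the arities category of the lifted monad --- a generalised Reedy (EZ-) structure --- so that the Reedy model structure on $\prs{\Klgr}$ exists and its Segal-type localisation is well-behaved. Here I would use the ternary factorisation system on $\Klgr$ from \cref{subs: factorisation categories}, together with the explicit descriptions of morphism sets in \cref{T*H} and \cref{lem: Klgr representatives}, to exhibit the requisite structure.

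Third, with the hypotheses verified, \cite[Section~7]{CH15} furnishes a cofibrantly generated model structure on $\prs{\Klgr}$, obtained as the localisation of the (Reedy) model structure at the Segal maps $\mathrm{lim}_{(\C,b)\in\elpG}P(\C) \to P(\G)$, whose fibrant objects are exactly the Segal-local presheaves. A fibrant $P$ therefore satisfies $P(\G) \simeq \mathrm{lim}_{(\C,b)\in\elpG}P(\C)$ for all $\G \in \Grp$. By finality of $\elG \hookrightarrow \elpG$ (\cref{lem: final}) the indexing diagram may be replaced by $\elG$, and since $\fisinv$ is dense in $\Grp$ it suffices to impose the condition over connected graphs; this yields exactly the weak Segal condition (\ref{eq: weak segal 1}) over $\Gr$, exactly as in the final reduction in the proof of \cref{nerve theorem}.

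The main obstacle will be the verification in the second step that $\Klgr$ satisfies the precise structural assumptions of \cite{CH15}. As \cref{rmk: HRY solution} makes clear, it is exactly the morphisms $z\colon \C_\nul \to (\shortmid)$ and $\kappa^m\colon \Wm \to (\shortmid)$ arising from contracted units --- which are neither monomorphic nor boundary-preserving --- that obstruct a naive generalised Reedy structure on all of $\Klgr$, and which forced Hackney--Robertson--Yau to pass to the smaller, non-full subcategory $U$. Reconciling the full graphical category $\Klgr$ with the hypotheses of \cite{CH15}, rather than restricting to $U$, is where the real work lies, and the factorisation-system analysis of \cref{subs: factorisation categories} is the tool I would rely on to carry it out.
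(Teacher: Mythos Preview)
Your overall strategy---apply the machinery of \cite{CH15} to the monad-with-arities $(\TTp,\Grp)$---matches the paper's. But you have misidentified what \cite[Assumptions~7.9]{CH15} actually demand, and this leads you to invent an obstacle that is not there.

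The paper's proof is two sentences of verification: $\TTp$ has arities $\Grp$ (already established in \cref{connected}), and for every connected graph $\G$ the element category $\elG$ is connected and essentially small. That is the entire content of Assumptions~7.9 in this situation. No generalised Reedy or EZ-structure on $\Klgr$ is required. The model structure is not obtained by localising a Reedy model structure on $\prs{\Klgr}$; rather, \cite[Section~7.5]{CH15} identifies $\CSM$ with models for a limit sketch $L=(\Grp,\{(\G\ov{\fisinv}^{\mathrm{op}})_{\G\in\Gr}\})$, there is a Segal model structure on $\sSet$-valued $L$-models, and \cite[Proposition~7.1]{CH15} transfers this along a Quillen equivalence to $\prs{\Klgr}$.

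So your ``main obstacle''---reconciling the morphisms $z$ and $\kappa^m$ with a Reedy structure on $\Klgr$---is a phantom. You are conflating the route of \cite{HRY19a}, which \emph{does} pass through a Reedy structure (and for exactly this reason restricts to the non-full subcategory $U$), with the route of \cite{CH15}, which bypasses Reedy considerations entirely via the sketch formulation. Drop the second step's Reedy verification and your argument collapses to the paper's: check connectedness and smallness of each $\elG$, then cite \cite{CH15}. The finality reduction $\elpG\hookleftarrow\elG$ you mention in your third step is correct and is exactly how the weak Segal condition over $\Grp$ becomes the stated condition over $\Gr$.
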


\begin{proof}
The monad $\TTp$ has arities $\Grp$ and $\elG$ is connected and essentially small for all connected graphs $\G$. Therefore the assumptions of \cite[Assumptions~7.9]{CH15} are satisfied. By \cite[Section~7.5]{CH15}, $\CSM$ is equivalent to the category of models in $\Set$ of the limit sketch $L = (\Grp,\{(\G \ov {\fisinv}^\mathrm{op})_{\G\in \Gr} \})$. 

Moreover, there is a Segal model structure on the category of $\sSet$ valued models for $L$ and, by \cite[Proposition~7.1]{CH15}, this can be transferred to a model structure on $\prs{\Klgr}$ whose fibrant objects are those presheaves that satisfy the weak Segal condition.
\end{proof}

In current work with M.\ Robertson, we are comparing the existing models for weak modular operads. We expect that there is a direct Quillen equivalence between the model structure on $\prs{\Klgr}$ of \cref{cor: weak} and the model structure on $\prs{U}$ of \cite{HRY19a}. %We are also investigating whether there can be a generalised Reedy structure on $\Klgr$. 

 \begin{rmk} \label{comment on JK HRY}\label{more on JK HRY} \cref{nerve theorem} was originally formulated in \cite{JK11}, in terms of the graphical category $\overline{ Gr }$, whose morphisms are described in \cite[Section~6]{JK11}. This is the bijective on objects subcategory of $\Klgr$ that does not contain any morphisms in $\Grp$ that factor through $z\colon \C_\nul \to (\shortmid)$ or $\kappa \colon \W \to (\shortmid)$. %any morphisms of the form $ch_e \circ \kappa^m \in \Grp(\Wl, \G)$. 
In particular $\overline{ Gr }$ does not embed fully in $\CSM$. 
 
There are bijective on objects inclusions $U \subset \overline {Gr} \subset \Klgr$. Hence, since $\Klgr$ and $U$ are both dense in $\CSM$, so is $ \overline {Gr}$, and the inclusion yields a fully faithful nerve functor $\CSM \to \pr{ \overline {Gr}}$ whose essential image satisfies the same Segal condition (\ref{eq. Segal nerve sec}). (See also \cite[Theorem~3.6~\&~Section~4]{HRY19b} for more details.)
 %   %\srnote{last comment in \cite{HRY19b}}
 
 %It is worthwhile to note that, though the morphisms in $\Grp$ that are missing from $\overline{ Gr }$ are somewhat mysterious when viewed as maps of graphs, they are easy to understand as morphisms of CSMs in $\Klgr$. For example, the morphism $z\in \CSM(\Tp \C_\nul, \Tp \shortmid)$ maps the unique arity $\nul$ element of the CSM $\Tp \C_\nul$ with empty palette, to the unique contracted unit in $\Tp (\shortmid)_\nul$. 
 
%These are those -- somewhat mysterious -- morphisms $ g \in \Gr(\G, \G')$ such that $ E_0(\G)= E_0(\G')$ but $\G$ is not an isomorphism (so $E_0(\G) = \emptyset$ by \cref{pp iso}). 
 
% In \cite{HRY19a}, it is shown that, both $ U$ and $\overline {Gr}$ yield a fully faithful nerve functor on $\CSM$ whose essential image consists of those presheaves that satisfy the Segal condition.
 
 \end{rmk}

	\bibliography{CSMbib}{}
	\bibliographystyle{plain}

\end{document}